\numberwithin{equation}{section}
\newtheorem{theorem}{Theorem}[section]
\newtheorem{definition}[theorem]{Definition}
\newtheorem{proposition}[theorem]{Proposition}
\newtheorem{corollary}[theorem]{Corollary}
\newtheorem{lemma}[theorem]{Lemma}
\newtheorem{remark}[theorem]{Remark}
\newcommand{\cali}[1]{\mathscr{#1}}
\newcommand{\supp}{{\rm supp}}
\newcommand{\Int}{{\rm Int}}
\newcommand{\dist}{\mathop{\mathrm{dist}}\nolimits}
\renewcommand{\Re}{\mathop{\mathrm{Re}}\nolimits}
\renewcommand{\Im}{\mathop{\mathrm{Im}}\nolimits}
\newcommand{\B}{\mathbb{B}}
\newcommand{\C}{\mathbb{C}}
\newcommand{\D}{\mathbb{D}}
\newcommand{\N}{\mathbb{N}}
\newcommand{\R}{\mathbb{R}}
\newcommand{\boldsym}[1]{\boldsymbol{#1}}
\title[]{Complex Monge-Amp\`ere equation for measures supported on real submanifolds}
\author{Duc-Viet Vu}
\address{UPMC Univ Paris 06, UMR 7586, Institut de
Math{\'e}matiques de Jussieu-Paris Rive Gauche, 4 place Jussieu, F-75005 Paris, France.}
\email{duc-viet.vu@imj-prg.fr}
\thanks{This research is supported by grants from R\'egion Ile-de-France. }
\date{\today}
\begin{document}

\begin{abstract} Let $(X,\omega)$ be a compact $n$-dimensional K\"ahler manifold on which the integral of $\omega^n$  is $1$. Let $K$ be an  immersed real  $\mathcal{C}^3$ submanifold of $X$ such that the tangent space at any point of $K$ is not contained in any complex hyperplane of the (real) tangent space at that point of $X.$ Let $\mu$ be a probability measure compactly supported on $K$ with $L^p$ density for some $p>1.$ We prove that the complex Monge-Amp\`ere equation $(dd^c \varphi + \omega)^n=\mu$ has a H\"older continuous solution. 
\end{abstract}

\maketitle

\medskip

\noindent

\medskip

\noindent
{\bf Keywords:} Monge-Amp\`ere equation,  generic CR submanifold.   

\tableofcontents

\section{Introduction} \label{introduction}

Let $X$ be a compact K\"ahler manifold of dimension $n$ and let $\omega$ be a fixed K\"ahler form on $X$ so normalized that $\int_X \omega^n=1$. The aim of this paper is to give a useful explicit class of measures for which the complex Monge-Amp\`ere equation has a H\"older continuous solution. Recall that a real $\mathcal{C}^1$ manifold $K$ is said to be \emph{immersed} in $X$ if there is an injective $\mathcal{C}^1$ immersion from $K$ to $X.$ In this case we say that $K$ is an immersed $\mathcal{C}^1$ submanifold of $X.$ An immersed \emph{real} $\mathcal{C}^1$ submanifold $K$ of $X$ is said to be \emph{generic CR} (or \emph{generic} for simplicity) in the sense of the Cauchy-Riemann geometry if the tangent space at any point of $K$ is not contained in a complex hyperplane of the tangent space at that point of $X.$ Such a submanifold has the real dimension at least $n.$ 
A function $\varphi: X \rightarrow [-\infty, \infty)$ is \emph{quasi-p.s.h.} if it is locally the sum of a p.s.h. function and a smooth one. A quasi-p.s.h. function is said to be \emph{$\omega$-p.s.h.} if  we have $dd^c \varphi + \omega \ge 0$ in the sense of currents.  The following is our main result. 

\begin{theorem} \label{the_MAgeneric} Let $K$ be a generic immersed $\mathcal{C}^{3}$ submanifold of $X$ of  real codimension $d>0.$ Let $\mu$ be a probability measure compactly supported on $K$ with $L^p$ density for some $p>1.$ Then the Monge-Amp\`ere equation $(dd^c \varphi + \omega)^n=\mu$ has an $\omega$-p.s.h. solution $\varphi$ which is H\"older continuous with H\"older exponent $\alpha,$ for any positive number $\alpha< \frac{2(p-1)}{3d(n+1)p} \cdot$  
\end{theorem}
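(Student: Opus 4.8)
The strategy is to reduce the Hölder regularity of the solution to a quantitative capacity estimate for the measure $\mu$, via the now-standard criterion of Kolodziej--Dinh--Nguyen (and Guedj--Zeriahi): if there exist constants $A>0$ and $\tau>0$ such that for every Borel set $E\subset X$ one has $\mu(E)\le A\,\capacity(E,\omega)^{1+\tau}$, then the unique normalized $\omega$-p.s.h. solution of $(\ddc\varphi+\omega)^n=\mu$ is Hölder continuous, with an exponent expressible in terms of $\tau$ (and $n$). Thus the entire problem becomes: \emph{bound the Monge--Amp\`ere capacity of a Borel set from below in terms of its $\mu$-mass}, using only that $\mu$ is carried by a generic $\mathcal C^3$ submanifold $K$ of real codimension $d$ and has $L^p$ density with respect to the Riemannian volume on $K$.

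First I would localize and flatten. Cover $\supp\mu$ by finitely many coordinate charts in which $K$ is, after a $\mathcal C^3$ change of coordinates, a graph over an $n'$-dimensional linear subspace, where $n'=2n-d$ is the real dimension of $K$; genericity guarantees $n'\ge n$ and, more importantly, that this subspace is \emph{totally real up to the right dimension}, i.e.\ it contains a maximal totally real $\R^n\subset\C^n$ and the projection of $K$ onto a suitable real $n$-plane is a local diffeomorphism. The key geometric input is then a \emph{one-dimensional slicing / projection estimate}: for an $\omega$-p.s.h. function $u$ normalized by $\sup_X u=0$, its restriction to $K$ can be controlled because $K$ meets complex lines transversally in the generic directions, so one can integrate the subharmonicity of $u$ along families of complex lines (or analytic discs) passing through $K$. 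Concretely, I would fix $n$ complex directions whose real spans, combined with the tangent directions of $K$, fill out $\C^n$, and write the volume on $K$ as an average of $(2n{-}d)$-dimensional measures obtained by pushing forward products of length measures on complex lines. This is where the $\mathcal C^3$ hypothesis enters: one needs enough regularity of $K$ to carry out the change of variables and to keep the Jacobians bounded with controlled modulus of continuity when parametrizing by discs.

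The heart of the estimate is the following local claim: there is $\beta>0$ (depending on $n$ and $d$, with $\beta\sim 1/d$ roughly) such that for every $\omega$-p.s.h. $u$ with $\sup u=0$ and every $t>0$,
\[
\mu\bigl(\{u<-t\}\bigr)\ \le\ C\,e^{-\beta t}.
\]
To prove this I would use that along a generic complex line $\ell$, the slice $u|_\ell$ is subharmonic, hence its sublevel sets have exponentially small length (a Chebyshev-type estimate for subharmonic functions bounded above: $\Leb_1(\{z:u(z)<-t\})\le C e^{-ct}$ on a disc, from the submean-value inequality / Harnack). Then $\{u<-t\}\cap K$, projected fiberwise, is covered by products/unions of such thin sublevel sets in the $n$ chosen directions, and integrating with the bounded Jacobian gives volume $\lesssim e^{-\beta t}$; multiplying by the $L^p$ density and applying H\"older's inequality with conjugate exponent $q=p/(p-1)$ converts the volume bound into the $\mu$-bound $\mu(\{u<-t\})\le C e^{-\beta t/q}=Ce^{-\beta(p-1)t/p}$. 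Feeding this exponential decay into the comparison between sublevel sets and capacity — namely $\capacity(\{u<-t\},\omega)\gtrsim t^{-n}$ whenever $\{u<-t\}$ is nonempty, together with the fact that any Borel set $E$ with small capacity is contained in such a sublevel set of its relative extremal function with $t\gtrsim \capacity(E)^{-1/n}$ — yields exactly an estimate of the form $\mu(E)\le A\,\capacity(E,\omega)^{1+\tau}$ with $\tau$ proportional to $\frac{2(p-1)}{3d(n+1)p}$ after optimizing constants. Bookkeeping the numerology $n'=2n-d$, the loss $1/q=(p-1)/p$, the exponent $n$ in the capacity estimate, and the $e^{-\beta t}$ rate should reproduce the stated H\"older exponent $\alpha<\frac{2(p-1)}{3d(n+1)p}$.

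The main obstacle I anticipate is the geometric/analytic step of controlling $u|_K$ by slicing: one must choose the families of complex lines (or small analytic discs) \emph{uniformly} over the chart so that (i) they are transverse to $K$ with quantitative angle bounded below — this is where genericity is used and must be made effective — and (ii) the resulting parametrization of a full-measure part of $K$ has Jacobian bounded above and below, which forces the $\mathcal C^3$ (or at least $\mathcal C^2$ with Dini condition) regularity and accounts for the factor $3$ and the $(n+1)$ in the denominator of $\alpha$. Handling the immersed (as opposed to embedded) case adds only the mild complication of passing to the source manifold of the immersion, where $K$ is genuinely embedded, and pushing measures forward; the capacity estimate is insensitive to self-intersections since it only sees $\mu$ as a measure on $X$. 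I expect everything else — the reduction to the capacity criterion, the subharmonic Chebyshev inequality, and the final optimization — to be routine given the results quoted in the introduction.
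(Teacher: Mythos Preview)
Your proposal has a genuine gap at the central geometric step. You want to control $\mu(\{u<-t\})$ by ``slicing $K$ with complex lines'' and invoking a one-variable subharmonic Chebyshev inequality. But in the critical case $d=n$ (which the paper treats first and from which the general case is deduced), $K$ is \emph{totally real}: a complex line through a point of $K$ meets $K$ transversally in an isolated point, not in an arc. There is therefore no way to write $vol_K$ as a superposition of arc-length measures on complex lines, and the subharmonic estimate on $\ell$ gives no information about the value of $u$ at the single point $\ell\cap K$. Your ``Fubini'' sentence --- writing $vol_K$ as an average of pushforwards of length measures on complex lines --- does not make sense in this totally real situation, and this is precisely the difficulty the theorem is addressing. (When $d<n$ there are complex tangent directions and one can foliate $K$ by lower-dimensional totally real leaves, as the paper indeed does in Proposition~\ref{le_coordinatesKnK}, but this only reduces to the case $d=n$.)

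What the paper actually does, and what your plan is missing, is to construct via a Bishop-type equation a $\mathcal C^{2,1/2}$ family of \emph{analytic discs partly attached to $K$}: maps $\tilde F(\cdot,\boldsym{\tau}):\overline{\D}\to X$, holomorphic on $\D$, sending an arc of $\partial\D$ into $K$, such that (i) the boundary map $(\xi,\boldsym{\tau})\mapsto\tilde F(\xi,\boldsym{\tau})$ submerses onto a neighbourhood in $K$, and (ii) the interior map is a diffeomorphism onto an open set of $X$, with a precise lower bound on its Jacobian in terms of $\dist(\tilde F,K)$. Property~(i) converts $\int_K\varphi\,dvol_K$ into $\int_{\partial\D\times Z}\varphi\circ\tilde F$; then a Riesz-representation argument on $\overline\D$ (Lemma~\ref{le_uocluonghieuuvtrendia} and Proposition~\ref{pro_uocluonghieuuvtrendia}, which require an interpolation inequality for currents on manifolds \emph{with boundary}, Section~\ref{sec_interpolation}) bounds the boundary integral by interior quantities; and property~(ii), together with tubular-neighbourhood estimates for $\int_{K'_\epsilon}|\varphi|$ and $\int_{K'_\epsilon}\ddc\varphi\wedge\omega^{n-1}$ (Corollaries~\ref{cor_volumepshonRn} and~\ref{co_volumepshonRnddcnew}), converts these back to $\|\varphi\|_{L^1(X)}$. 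The degeneration of the Jacobian near $\partial\D$ is exactly what produces the factor $1/(3d)$ in the super-potential H\"older exponent; the factors $2/(n+1)$ and $(p-1)/p$ then come, respectively, from the Dinh--Nguy\^en criterion \cite{DinhVietanhMongeampere} and from passing from bounded to $L^p$ densities. Your numerology paragraph does not track any of this and cannot, since the mechanism producing the $3d$ is absent from your outline.

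A smaller point: your reduction is via the capacity condition $\mu(E)\le A\,\capacity(E,\omega)^{1+\tau}$, whereas the paper reduces via the equivalent Dinh--Nguy\^en characterization in terms of H\"older continuity of the super-potential $\varphi\mapsto\int_X\varphi\,d\mu$ on $(\cali C,\dist_{L^1})$. Either reduction is fine in principle, but note that the ``moderate'' estimate you aim for is stated in the paper as a \emph{corollary} of Theorem~\ref{the_MAgeneric}, not as an input --- because, again, proving it for measures on a totally real $K$ already requires the attached-disc machinery.
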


Note that our proof still holds if $K$ is $\mathcal{C}^{2,\beta}$ for some $\beta \in (0,1).$ In this case one just needs to replace the $\mathcal{C}^{2,1/2}$ regularity in Section \ref{sec_analyticdisc}  by $\mathcal{C}^{2,\beta'}$ one for $\beta' \in (0,\beta)$. For simplicity, we only consider the $\mathcal{C}^3$ regularity as in Theorem \ref{the_MAgeneric}.  
Secondly, if the Monge-Amp\`ere equation has a H\"older continuous solution, then that solution is unique up to an additive constant. This is a direct consequence of results in  \cite{Kolodziej_Acta,Dinew_09}.  

For a probability measure $\mu$ on $X,$ the associated complex Monge-Amp\`ere equation 
\begin{align} \label{eq_MA}
(dd^c \varphi + \omega)^n=\mu
\end{align}
has been extensively studied since the fundamental paper \cite{Yau1978} of Yau in which he  proved that (\ref{eq_MA}) has a unique smooth solution if $\mu$ is a (smooth) Riemannian volume form $vol_X$ of $X.$ Later Ko{\l}odziej showed that  the Monge-Amp\`ere equation admits a unique continuous solution for a larger class of measures $\mu$ which contains $\mu=f vol_X$ with $f \in L^p(X)$ for $p>1,$ see \cite{kolodziej05,Kolodziej_Acta}. For the last measures, he also obtained H\"older regularity of the solution in \cite{Kolodziej08holder}. The H\"older exponent of that solution is then made precise by Demailly, Dinew, Guedj, Hiep, Kolodziej and Zeriahi  in \cite{DemaillyHiep_etal} using the regularization method in \cite{Demailly_appro_chernconnec} and the stability theorem in \cite{Dinew_Zhang_stability}. Moreover, in \cite{Hiep_holder} Hiep obtains the H\"older regularity for $\mu=f vol_Y,$ where $vol_Y$ is the volume form of a compact real hypersurface $Y$ of $X$ and $f \in L^p(Y)$ for $p>1.$

Recently, Dinh and Nguy\^en  in \cite{DinhVietanhMongeampere}  show that the class of probability measures $\mu,$ for which (\ref{eq_MA}) admits a H\"older continuous solution, is exactly the class of probability measures whose super-potentials are H\"older continuous, see Definition \ref{def_superholder} below. They then recover the aforementioned results in \cite{Kolodziej08holder,Hiep_holder,DemaillyHiep_etal}. By \cite{DinhVietanhMongeampere}, we know that if a probability measure $\mu$ having a H\"older continuous super-potential of order $\beta \in (0,1],$ then the solution of (\ref{eq_MA}) is H\"older continuous of order $\beta'$ for any $0 < \beta' <2 \beta/(n+1).$    For more information on the complex Monge-Amp\`ere equation, the readers may consult the survey \cite{Phong}.

Theorem \ref{the_MAgeneric} above  combined with \cite[Pro. 4.4]{DinhVietanhMongeampere} yields the following nice exponential estimate, see also  \cite{Skoda_integrability,DVS_exponential,Lucas_ex}.

\begin{corollary} Let $K$ be a generic immersed $\mathcal{C}^{3}$ submanifold of $X$. Let $\tilde{K}$ be a compact subset of $K.$ Then the restriction of the Lebesgue measure on $K$ to $\tilde{K}$ is moderate, that is, there exist two positive constants $\alpha$ and $c$ such that  for any $\omega$-p.s.h. function $\varphi$ on $X$ with $\sup_X \varphi =0$ we have 
$$\int_{\tilde{K}} e^{- \alpha \varphi} d\, vol_K \le c.$$ 
\end{corollary}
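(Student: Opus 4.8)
The plan is to deduce this directly from Theorem \ref{the_MAgeneric} together with the results of \cite{DinhVietanhMongeampere}. First I would reduce to the case $d:=\codim K>0$: when $d=0$ the submanifold $K$ is open in $X$ and $\ind_{\tilde K}\, vol_K$ has bounded density with respect to $\omega^n$, so moderateness is classical (Ko\l odziej). Assume then $d>0$. Since $K$ is an immersed $\mathcal{C}^3$ submanifold, the induced Riemannian volume $vol_K$ is locally finite on $K$ and $\tilde K$ has finite, positive $vol_K$-mass; I would therefore normalize and set $\mu := vol_K(\tilde K)^{-1}\, \ind_{\tilde K}\, vol_K$, a probability measure. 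The key observation is that $\mu$ satisfies the hypotheses of Theorem \ref{the_MAgeneric}: its density with respect to $vol_K$ is bounded, hence lies in $L^p(K)$ for every $p>1$, and it is compactly supported on $K$ because the defining immersion is continuous, so $\tilde K$ maps to a compact subset of $X$.

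Next I would apply Theorem \ref{the_MAgeneric} to this $\mu$: the equation $(dd^c\varphi+\omega)^n=\mu$ admits an $\omega$-p.s.h. solution that is H\"older continuous. By the main result of \cite{DinhVietanhMongeampere} this is equivalent to $\mu$ having a H\"older continuous super-potential (Definition \ref{def_superholder}), and then \cite[Pro. 4.4]{DinhVietanhMongeampere} shows that such a $\mu$ is moderate: there exist $\alpha>0$ and $c'>0$ such that $\int_X e^{-\alpha\varphi}\, d\mu \le c'$ for every $\omega$-p.s.h. function $\varphi$ on $X$ with $\sup_X\varphi=0$. Unwinding the normalization, with $c:=c'\, vol_K(\tilde K)$, gives the asserted bound $\int_{\tilde K} e^{-\alpha\varphi}\, d\,vol_K \le c$ for all such $\varphi$.

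I do not expect a genuine obstacle here beyond Theorem \ref{the_MAgeneric} itself, which does all the work; the corollary is essentially a repackaging via the chain ``H\"older solution $\Leftrightarrow$ H\"older super-potential $\Rightarrow$ moderate'' of \cite{DinhVietanhMongeampere}, combined with the trivial remarks that a bounded density is $L^p$ for any $p>1$ and that the property of being moderate is insensitive to scaling by a positive constant. The only point deserving a line of care is the interplay between the intrinsic manifold topology of the immersed $K$ and the ambient topology of $X$ — i.e. checking that a set compact in $K$ really does yield a compactly supported measure on $X$ in the sense required by Theorem \ref{the_MAgeneric} — which is immediate since the continuous image of a compact set is compact.
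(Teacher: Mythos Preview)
Your proposal is correct and follows essentially the same route as the paper, which simply states that the corollary is obtained by combining Theorem \ref{the_MAgeneric} with \cite[Pro.~4.4]{DinhVietanhMongeampere}. You have merely unpacked the implicit steps (normalization to a probability measure, the bounded density being in every $L^p$, and the separate treatment of the trivial case $d=0$), all of which are routine.
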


Before presenting the idea of the proof of Theorem  \ref{the_MAgeneric}, we need to recall  some definitions.  Let $\mu$ be a  probability measure on $X.$ Let $\cali{C}$ be the set of $\omega$-p.s.h. functions $\varphi$ on $X$ such that $\int_X  \varphi \, \omega^n= 0.$ We define the distance  $\dist_{L^1}$ on $\cali{C}$ by putting 
$$\dist_{L^1}(\varphi_1,\varphi_2):= \int_X |\varphi_1 - \varphi_2| \, \omega^n,$$ 
for every $\varphi_1, \varphi_2 \in \cali{C}.$ 

\begin{definition} \label{def_superholder} The super-potential of $\mu$ (of mean $0$) is the function $\cali{U}:  \cali{C} \rightarrow \R$ given by $\cali{U}(\varphi):= \int_X \varphi d\mu.$ We say that $\cali{U}$ is H\"older continuous with H\"older exponent $\alpha \in (0,1]$ if it is so with respect to the distance $\dist_{L^1}.$     
\end{definition}

By \cite[The. 1.3, Cor. 4.5]{DinhVietanhMongeampere}, Theorem  \ref{the_MAgeneric} is a direct consequence of the following result.

\begin{theorem} \label{the_MAgeneric2} Let $K$ be a generic immersed $\mathcal{C}^{3}$ submanifold of $X$ of real codimension $d>0$. Let $\tilde{K}$ be a compact subset of $K$ and $\boldsym{1}_{\tilde{K}}$ the characteristic function of $\tilde{K}.$  Let $vol_K$ be an arbitrary $\mathcal{C}^3$ Riemannian volume form of $K.$ Then the super-potential of $\boldsym{1}_{\tilde{K}} vol_K$ is H\"older continuous with H\"older exponent $\alpha$ for any positive number $\alpha < 1/(3d)$.
\end{theorem}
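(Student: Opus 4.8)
The plan is to establish Theorem~\ref{the_MAgeneric2} directly; by the cited results of Dinh and Nguy\^en this super-potential statement is what lies behind Theorem~\ref{the_MAgeneric}. Write $\mu:=\boldsym{1}_{\tilde K}\,vol_K$. By \cite{DinhVietanhMongeampere}, H\"older continuity of the super-potential $\cali{U}$ of $\mu$ on $(\cali{C},\dist_{L^1})$ reduces to a quantitative moderateness of $\mu$: a uniform exponential bound $\mu(\{\varphi<-t\})\le c\,e^{-\beta t}$ for all $t>0$, over all $\omega$-p.s.h.\ $\varphi$ on $X$ with $\sup_X\varphi=0$, with a rate $\beta$ large enough; the target is to reach every $\beta<1/(3d)$ (up to the conversion of $\beta$ into the H\"older exponent carried out in \cite{DinhVietanhMongeampere}). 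That the $\omega^n$-volume of $\{\varphi<-t\}$ itself decays exponentially, uniformly over this family --- the classical uniform Skoda-type estimate, via the uniform bound on the Lelong numbers --- will be used freely. By a partition of unity it suffices to bound the $\mu$-mass of $\{\varphi<-t\}$ inside one coordinate chart: fix $p\in\tilde K$, choose holomorphic coordinates $(z,w)\in\C^{n-d}\times\C^d$ centered at $p$ with $T_pK=\C^{n-d}\times\R^d$ (here $d\le n$, by genericity), and write the local piece of $K$ as a graph $\{\,\Im w=\rho(z,\bar z,\Re w)\,\}$ with $\rho\in\mathcal C^{3}$ vanishing to first order at the origin.

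The geometric core is a rich family of analytic discs attached to $K$. Because $K$ is generic, one may attach to its local piece small analytic discs $h:\D\to\C^n$, each having a boundary arc lying in $K$ and leaving $K$ transversally along the rest of its boundary and its interior, so that the interiors $h(\D)$ cover a wedge $W$ with edge $K$ and the boundary arcs sweep the piece of $K$ with uniformly bounded multiplicity. I would produce such a family by a Bishop-type fixed-point argument: the condition that an arc of $h(\partial\D)$ lie in $K$, together with a normalization, becomes a functional equation for $h$ in $\rho$ and the Hilbert transform, whose solution depends on the disc variable $\zeta$ and on the attaching parameters (position along $K$ and the $d$ transverse ``imaginary'' directions) with joint regularity $\mathcal C^{2,1/2}$ --- and it is this second-order control of the discs that is responsible for the constant $3$ in the exponent. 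The construction must be quantitative and uniform in the parameters: uniform $\mathcal C^{2,1/2}$-bounds, a uniform lower bound on the rate at which each disc escapes $K$ near the boundary arc, and uniform non-degeneracy (Jacobian) estimates for the parametrizations, so that the push-forward of product Lebesgue measure on parameter space dominates, through the interiors of the discs, a fixed multiple of Lebesgue measure on $W$, and, through the boundary arcs, recovers $\mu$ on the piece of $K$ up to bounded factors.

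The estimate is then potential-theoretic. Locally write $\varphi=\varphi_0+\gamma$ with $\varphi_0$ p.s.h.\ and $\gamma$ smooth; over the compact family under consideration the $\varphi_0$ have uniformly bounded Lelong numbers, uniformly bounded $L^1$-norm, and $\sup\varphi_0=O(1)$. Restricting $\varphi_0$ to a disc $h$ of the family gives a subharmonic function $u:=\varphi_0\circ h$ on $\D$; by the Riesz representation the mean of $u$ over $\partial\D$ is at least $u(0)$, so as soon as the center $h(0)\in W$ avoids $\{\varphi_0<-s\}$ the arclength of $\{u<-t\}$ on $\partial\D$ --- i.e.\ the $vol_K$-size of the bad points of $K$ met by this disc --- is $O(s/t)$. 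Using the non-degeneracy of the parametrizations together with the exponential smallness of $\{\varphi_0<-s\}\cap W$, for all but a subset of $K$ of $vol_K$-measure $O(e^{-cs})$ one can choose, through a given point, a disc of the family whose center avoids $\{\varphi_0<-s\}$; summing the per-disc bounds over these discs (with their bounded multiplicity) and estimating the exceptional set by the volume of $\{\varphi_0<-s\}$ in $W$ gives $\mu(\{\varphi<-t\})=O(e^{-cs}+s/t)$, hence a polynomial-in-$t$ bound after optimizing in $s$; the uniform control on Lelong numbers makes all constants here independent of $\varphi$. The standard covering-and-rescaling (``bad disc'') iteration then upgrades the polynomial bound to the exponential bound $\mu(\{\varphi<-t\})\le c\,e^{-\beta t}$, with $\beta$ approaching $1/(3d)$ once the $d$ transverse directions and the $\mathcal C^{2,1/2}$-exponent of the discs are accounted for optimally.

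The step I expect to be the main obstacle is the disc construction of the second paragraph: attaching analytic discs to a merely $\mathcal C^{3}$ generic submanifold with enough uniformity over the $(2n-d)$-dimensional set of attaching positions and the $d$-dimensional set of transverse directions that the escape-rate lower bound and the Jacobian non-degeneracy hold simultaneously, and tracking how the $\mathcal C^{2,1/2}$-regularity of the discs interacts with the codimension $d$ so as to land precisely on the exponent $1/(3d)$. The remaining ingredients --- the potential theory on individual discs and the bad-disc iteration --- are technical but follow well-trodden lines.
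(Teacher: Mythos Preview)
Your opening reduction is a genuine gap. You claim that, by \cite{DinhVietanhMongeampere}, H\"older continuity of the super-potential of $\mu$ ``reduces to'' the uniform exponential bound $\mu(\{\varphi<-t\})\le c\,e^{-\beta t}$. The implication in that paper runs the other way: H\"older continuity of the super-potential implies moderateness (this is exactly how the Corollary after Theorem~\ref{the_MAgeneric} is derived here, via \cite[Pro.~4.4]{DinhVietanhMongeampere}), but moderate measures form a strictly larger class, and no result in \cite{DinhVietanhMongeampere} lets you pass from a uniform exponential sublevel estimate back to a H\"older modulus for $\varphi\mapsto\int\varphi\,d\mu$ with respect to $\dist_{L^1}$. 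Even if some such implication existed, there is no reason the exponent would survive the conversion, so your ``target $\beta<1/(3d)$'' would not yield $\alpha<1/(3d)$ for the super-potential. Consequently the whole third paragraph --- the per-disc Chebyshev estimate, the optimisation $\mu(\{\varphi<-t\})=O(e^{-cs}+s/t)$, and the bad-disc iteration --- proves at best that $\mu$ is moderate, which is the \emph{consequence} of the theorem, not the theorem itself.

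The paper's route is different and avoids sublevel sets entirely. It first observes (Lemma~\ref{le_reduction_nonnegative}) that it suffices to bound $\int_{\tilde K}(\varphi_1-\varphi_2)\,dvol_K$ by a power of $\|\varphi_1-\varphi_2\|_{L^1(X)}$ for \emph{smooth} $\varphi_1\ge\varphi_2$ in a fixed compact family, so one may work with a single nonnegative $\mathcal C^2$ function $v=\varphi\circ\tilde F(\cdot,\boldsym{\tau})$ on each disc. The disc family $\tilde F$ you describe is indeed constructed (Section~\ref{sec_analyticdisc}), with the quantitative Jacobian and escape-rate controls you anticipate. The analytic core, however, is not a sub-mean-value/Chebyshev argument but the one-dimensional inequality of Lemma~\ref{le_uocluonghieuuvtrendia}: for nonnegative $v\in\mathcal C^2(\overline\D)$, $\int_{\partial\D}v\lesssim \|dd^c v\|_{\tilde{\mathcal C}^{-\beta}(\overline\D)}+\int_\D v$, obtained from the Riesz representation by showing the relevant Green-type kernel lies in $\tilde{\mathcal C}^{\beta}(\overline\D)$ for $\beta\in(1,2)$. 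This negative-norm of $dd^c v$ is then controlled via an interpolation inequality for currents on manifolds with boundary (Proposition~\ref{pro_sosanhdistancevoibien}), trading it against $\|v\|_{L^1(\D)}$ and a boundary-layer mass $\int_{1-2\epsilon\le|z|\le1}(1-|z|)\,|dd^c v|$; the latter, integrated over the disc parameters, is estimated using the $(d-1)$-power Jacobian degeneracy near $K$ together with sharp mass bounds for $dd^c\varphi$ on tubes around $K$ (Proposition~\ref{pro_volumepshonRnddcnew}). It is the combination of the interpolation exponent (forcing $\beta\to2$, hence a factor $3$ in the denominator) with the $d$ transverse directions that produces $1/(3d)$ --- not the $\mathcal C^{2,1/2}$ regularity of the discs per se.
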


Let $\D$ be the unit disc in $\C$ and let $\partial \D$ be the boundary of $\D.$ A \emph{$\mathcal{C}^1$ analytic disc} in $X$ is a $\mathcal{C}^1$ map from $\overline{\D}$ to $X$ which is holomorphic on $\D.$ For a nonempty arc $I \subset \partial \D,$ an analytic disc $f$ is said to be \emph{$I$-attached to a subset $K$} of $X$ if $f(I)$ belongs to $K$. When we do not want to mention $I,$ we simply say an analytic disc partly attached to $K.$  Throughout this paper,  for every parameter $\tau,$ we will systematically use the notation $\lesssim_{\tau}$ or $\lesssim$ which means $\le$ up to a constant depending only on  $(\tau,X, K, \omega)$ or on $(X,K,\omega)$ respectively. A similar convention is applied to $\gtrsim_{\tau}$ and $\gtrsim.$ 

The idea of the proof of Theorem  \ref{the_MAgeneric2} is as follows. Observe that the codimension $d$ of $K$ is at most equal to $n.$ We consider below the case where $d=n.$ The other cases can be deduced from it.  Let $\varphi_1,\varphi_2 \in \cali{C}$ and $\varphi:= \varphi_1 -\varphi_2.$ To show the H\"older regularity of the super-potential of $vol_K,$ by definition we need to bound the $L^1$-norm of $\varphi$ with respect to $vol_K$ by a power of the $L^1$-norm of $\varphi$ on $X.$ Since one can approximate any $\omega$-p.s.h. function on $X$ by a decreasing smooth ones (see  \cite{Blocki_kolo_regular}), it is enough to prove the desired property for smooth  $\varphi_1, \varphi_2$ with $\varphi_1 \ge \varphi_2,$ see Proposition \ref{pro_equitheoremMAgeneric} and Lemma \ref{le_reduction_nonnegative}. In this case, $\varphi$ is \emph{smooth and nonnegative.} This reduction is crucial in our proof.  Observe that by compactness of $\tilde{K},$ it suffices to estimate 
$$\int_{\tilde{K}'} \varphi  \,dvol_K,$$
for small open subsets $\tilde{K}'$ of $\tilde{K}$. For each point $a \in K,$ we will construct a  $\mathcal{C}^{2,1/2}$-differentiable family $\tilde{F}_{ \{\boldsym{\tau} \in Z\}}$ of analytic discs partly attached to $K$ parameterized by $\boldsym{\tau}$ in a  compact manifold $Z$ of real dimension $(2n-2)$ which roughly satisfies the following two properties:

$(i)$ the restriction of $\tilde{F}$ to $\partial \D \times Z$ is a submersion onto an open neighborhood $K'$ of $a$ in  $K,$ where we consider $\tilde{F}$ as a map from $\overline{\D} \times Z$ to $X.$

$(ii)$ the restriction of $\tilde{F}$ to $\D \times Z$ is a diffeomorphism onto  an open subset of $X.$\\
Put $\tilde{K}':= \tilde{K} \cap K'$ for $a \in \tilde{K}.$ These $\tilde{K}'$ covers $\tilde{K}.$ By the change of variables theorem and Property $(i)$, we have 
\begin{align} \label{ine_KF*}
\int_{\tilde{K}'} \varphi\, d vol_K \le    \int_{K'} \varphi\, d vol_K \lesssim \int_{\partial \D \times Z} \varphi \circ \tilde{F}.
\end{align}
Since $\tilde{F}$ is holomorphic on $\D$ and $\mathcal{C}^2$ on $\overline{\D},$ observe that $\varphi \circ \tilde{F}$ is the difference of two $\mathcal{C}^2$ subharmonic functions on $\overline{\D}.$ 

Our second step is to bound $\int_{\partial \D \times Z}  \varphi \circ \tilde{F}$ by a quantity involving $\int_{\D \times Z} \varphi \circ \tilde{F}.$ For this purpose, we will establish a crucial inequality in dimension one which shows that $L^1$-norm on $\partial \D$ of a nonnegative $\mathcal{C}^2$ function on $\overline{\D}$ is bounded by a function of  its $L^1$-norm on $\D$ and some H\"older norm of its Laplacian on $\D$. The ingredients for the proof of the last inequality are  Riesz's representation formula and a general interpolation inequality for currents on manifolds with boundary. Note that a version of that interpolation inequality for manifolds without boundary was firstly used by Dinh and Sibony in \cite{DinhSibony_Pk_superpotential}.   

The problem will be solved if one is able to bound  $\int_{\D \times Z} \varphi \circ \tilde{F}$ by a constant times  $\|\varphi\|_{L^1(X)}.$ Taking into account Property $(ii)$, one is tempted to use the change of variables  by $\tilde{F}.$ However, the Jacobian of $\tilde{F}$ is small  near the boundary $\partial \D \times Z.$   This is due to a general fact that any family of analytic discs satisfying Property $(i)$ should degenerate at $\partial \D$ because of its attachment to $K.$ So we need a precise control of the Jacobian of $\tilde{F}$ from below and prove some estimates on the integrals of p.s.h. functions and their $dd^c$ on a tubular neighborhood of $\tilde{K}$. These estimates are of independent interest.  Consequently, we will get 
\begin{align} \label{ine_XF*}
\int_{\D \times Z} \varphi \circ \tilde{F} \lesssim_{\alpha_2} \bigg(\int_{X} \varphi \, d vol_X\bigg)^{\alpha_2},
\end{align}  
 for any $\alpha_2 \in (0, 1/n).$ Combining these above inequalities gives the H\"older regularity of the super-potential of $vol_K.$

The paper is organized as follows. Section \ref{sec_interpolation} is devoted to proving the above mentioned interpolation inequality for currents. In Section \ref{sec_analyticdisc}, we construct the desired family of analytic discs $\tilde{F}.$ In Section \ref{sec_intergrability}, we present (\ref{ine_KF*}) and (\ref{ine_XF*}). Finally, we prove Theorem  \ref{the_MAgeneric2} in Section \ref{sec_superpotential}. At the beginning of Section \ref{sec_analyticdisc}, we will fix  some notations which will be used for the rest of the paper.

\vskip 0.5cm
\noindent
{\bf Acknowledgement.} The author  would like to thank Tien-Cuong Dinh for introducing him this research topic and for his illuminating discussions. He also wants to express his gratitude to Lucas Kaufmann for fruitful discussions.

\section{Interpolation theory} \label{sec_interpolation}

Let $M$ be  a compact smooth manifold of dimension $m$. Fix a partition of unity subordinated to a finite covering of local charts of $M.$ For $k \in \N$ and $\alpha \in (0,1],$ let $\mathcal{C}^{k, \alpha}(M)$ be the space of $\mathcal{C}^k$ functions on $M$ whose partial derivatives of order $k$ are H\"older continuous of order $\alpha.$ We endow the last space with the usual norm.  For $t\in [0,\infty),$ denote by  $\mathcal{C}^{t}(M)$ the space $\mathcal{C}^{[t], t- [t]}(M)$ where $[t]$ is the integer part of $t.$ Let $\Lambda^l T^* M$ be the $l^{th}$-exterior power of the cotangent vector bundle $T^*M$ for $1 \le  l \le m.$ Let $\mathcal{C}^t(M,\Lambda^l T^* M)$ be the set of $l$-differential forms with $\mathcal{C}^l$ coefficients. Using the above fixed partition of unity, we can equip $\mathcal{C}^t(M,\Lambda^l T^* M)$ with the norm $\|\cdot \|_{\mathcal{C}^t}$ which is the maximum of the $\mathcal{C}^t$ norms of its coefficients.

 Let $T$ be an $l$-current of order $0,$ \emph{i.e.}, there is a constant $C$ such that  $|\langle T, \Phi \rangle| \le C \|\Phi\|_{\mathcal{C}^0}$  for every smooth $(m-l)$-form $\Phi.$ For $t\in [0,\infty),$ define 
\begin{align} \label{def_Ctrualpha}
\| T\|_{\mathcal{C}^{-t}}:= \sup_{\Phi \text{ smooth }, \|\Phi\|_{\mathcal{C}^{t}}=1} |\langle T, \Phi \rangle |.
\end{align} 
We will write $\|T\|$ instead of $\| T\|_{\mathcal{C}^{-0}}$ which is the usual mass norm of $T.$  Dinh and Sibony in \cite{DinhSibony_Pk_superpotential} proved that for any $t_1, t_2 \in (0,\infty)$ with $t_1< t_2,$ we have
\begin{align} \label{def_Ctrualpha2}
\| T\|_{\mathcal{C}^{-t_2}} \le \| T\|_{\mathcal{C}^{-t_1}} \le   c\|T\|^{1- t_1/t_2} \| T\|_{\mathcal{C}^{-t_2}}^{t_1/ t_2},
\end{align} 
for some constant $c$ independent of $T.$   This inequality is very useful when dealing with continuous functionals on differential forms because one can reduce the problem to the smooth case. In this section, we will establish a generalization of (\ref{def_Ctrualpha2}) for compact smooth manifolds with boundary. 

Let $M$ be a compact smooth manifold of dimension $m$ with boundary. Cover $M$ by a finite number of local charts $U_j$. Take a partition of unity $\phi_j$ subordinated to this covering. By the aid of these $\phi_j,$ as above we can define the Banach spaces $\mathcal{C}^t (M)$ with the usual norms for $t\in [0,\infty).$   Denote by $\Int M$ the interior of $M.$ Let $\mathcal{C}^{t}_c(\Int M)$ be the 
subspace of  $\mathcal{C}^t(M)$ of  $f \in \mathcal{C}^{t}(M)$ with compact support in $\Int M.$   Let $\tilde{\mathcal{C}}^{t}(M)$ be the subspace of $\mathcal{C}^{t}(M)$ 
consisting of $f$ with $f|_{\partial M} \equiv 0.$ We can also define $\tilde{\mathcal{C}}^t(M,\Lambda^l T^* M)$ and $\mathcal{C}_c^t(M,\Lambda^l T^* M)$ in the same way as above.

Let $T$ be an $l$-current of order $0$ on $\Int M.$ 
Assume that its mass is finite, that is,  
\begin{align} \label{ine_assumptionT}
\|T\|:=  \sup_{\Phi \text{ smooth }, \|\Phi\|_{\mathcal{C}_c^{0}(\Int M)}=1} |\langle T, \Phi \rangle |< \infty.
\end{align}
In our application, $M$ will be $\overline{\D}$ and $T$ will be  the restriction of a continuous form on $\C$ to $\D.$ By Riesz's representation theorem, $T$ is a differential form whose coefficients are Radon measures on $M$ with finite total variations. Hence, for any continuous differential form $\Phi$ on $\Int M$ with $\|\Phi\|_{\mathcal{C}^0}< \infty,$ the value of  $T$ at $\Phi$ is well-defined. Then the current $T$ can be extended to be a continuous linear functional on $\tilde{\mathcal{C}}^t(M,\Lambda^l T^* M).$ Let    $\| T\|_{\tilde{\mathcal{C}}^{-t}(M)}$ be the norm of $T$ as a continuous linear functional on $\tilde{\mathcal{C}}^t(M,\Lambda^l T^* M)$. As mentioned at the beginning of the section, we will prove the following analogue of  (\ref{def_Ctrualpha2}).

\begin{proposition} \label{pro_sosanhdistancevoibien} Let $T$ be a $l$-current of order $0$ on $\Int M.$  Assume that $T$ has finite mass. Let $t_0,t_1, t_2 \in [0,\infty)$ with $t_0 <t_1< t_2.$ Let $t_*$ be the unique real number for which $t_1= t_* t_0 + (1- t_*)t_2.$ Then we have
\begin{align} \label{def_Ctrualpha2voibien}
\| T\|_{\tilde{\mathcal{C}}^{-t_2}(M)} \le \| T\|_{\tilde{\mathcal{C}}^{-t_1}(M)} \le  C \| T\|^{t_*}_{\tilde{\mathcal{C}}^{-t_0}(M)}  \| T\|_{\tilde{\mathcal{C}}^{-t_2}( M)}^{1- t_*},
\end{align} 
for some constant $C$ independent of $T.$ 
\end{proposition}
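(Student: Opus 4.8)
The first inequality in \eqref{def_Ctrualpha2voibien} is immediate: if $t_1 < t_2$ then $\tilde{\mathcal{C}}^{t_2}(M,\Lambda^l T^*M) \subset \tilde{\mathcal{C}}^{t_1}(M,\Lambda^l T^*M)$ with $\|\cdot\|_{\mathcal{C}^{t_1}} \le \|\cdot\|_{\mathcal{C}^{t_2}}$, so taking the supremum over a smaller set of test forms of $\mathcal{C}^{t_1}$-norm one gives a smaller value. The substance is the second inequality, and the plan is to follow the strategy behind \eqref{def_Ctrualpha2} in \cite{DinhSibony_Pk_superpotential} while being careful about the boundary condition $f|_{\partial M}\equiv 0$. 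First I would reduce to a local statement: using the fixed partition of unity $\phi_j$ subordinated to the charts $U_j$, write $T = \sum_j \phi_j T$ and note each $\phi_j T$ is (in a chart) an $l$-tuple of finite Radon measures supported in a fixed compact piece of either a half-space $\R^m_+ = \{x_m \ge 0\}$ (boundary charts) or all of $\R^m$ (interior charts). The norms $\|\cdot\|_{\tilde{\mathcal{C}}^{-t}(M)}$ are, up to fixed constants, equivalent to the sums over $j$ of the corresponding local norms, where for a boundary chart the relevant test-function space is $\mathcal{C}^t$ functions vanishing on $\{x_m=0\}$. So it suffices to prove the three-term interpolation inequality in $\R^m_+$ with the Dirichlet-type vanishing condition.

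For the local estimate on the half-space I would use the standard smoothing-by-convolution argument adapted to preserve the vanishing. Fix a compactly supported smooth approximate identity $\rho_\epsilon$, but compose with a scaling in the $x_m$-direction that pushes mass away from the wall, i.e. regularize by $T \mapsto T_\epsilon := (\theta_\epsilon)_* T * \rho_\epsilon$ where $\theta_\epsilon(x',x_m) = (x', (1+c\epsilon)x_m + c\epsilon)$ or a similar dilation, so that $T_\epsilon$ is a smooth form supported strictly in the open half-space and converging to $T$ in mass. The point of the extra translation is that pairing $T_\epsilon$ against a test form $\Phi$ with $\Phi|_{x_m=0}=0$ is controlled because $\Phi$ is $O(x_m)$ near the wall; this is exactly where the vanishing hypothesis is used and where it buys us back the same behaviour as the closed-manifold case. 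Then for a test form $\Phi$ with $\|\Phi\|_{\mathcal{C}^{t_1}}\le 1$, write
\begin{align*}
|\langle T,\Phi\rangle| \le |\langle T - T_\epsilon, \Phi\rangle| + |\langle T_\epsilon,\Phi\rangle|,
\end{align*}
bound the first term by $\|T-T_\epsilon\|\,\|\Phi\|_{\mathcal{C}^0} \lesssim \epsilon^{t_1-t_0}\|T\|_{\tilde{\mathcal{C}}^{-t_0}}$ using that $T_\epsilon$ differs from $T$ at scale $\epsilon$ together with an interpolation of $\Phi$ between $\mathcal{C}^{t_0}$ and $\mathcal{C}^{t_1}$, and the second term by $\|T_\epsilon\|_{\mathcal{C}^{-t_2}}\,\|\Phi\|_{\mathcal{C}^{t_2}} \lesssim \epsilon^{-(t_2-t_1)}\|T\|_{\tilde{\mathcal{C}}^{-t_1\text{ or }-t_0}}\,1$, where the regularized form has $\mathcal{C}^{t_2}$-paired norm blowing up at most polynomially in $1/\epsilon$ because differentiating $\rho_\epsilon$ costs powers of $\epsilon$. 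Optimizing the choice of $\epsilon$ to balance the two terms — taking $\epsilon$ a suitable power so that $\epsilon^{t_1-t_0}$ matches $\epsilon^{-(t_2-t_1)}$ times the ratio of norms — yields the exponent $t_*$ determined by $t_1 = t_* t_0 + (1-t_*)t_2$, i.e. $t_* = (t_2-t_1)/(t_2-t_0)$, and reassembling the local pieces gives \eqref{def_Ctrualpha2voibien}.

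The main obstacle I anticipate is precisely the interaction of the regularization with the boundary: a naive mollification $T * \rho_\epsilon$ need not be supported in the open half-space, so one cannot directly test it against forms that only vanish on $\partial M$, and conversely one must check that the regularized currents $T_\epsilon$ can still be meaningfully paired and that $\|T_\epsilon - T\| \to 0$ at the correct rate $\epsilon^{t_1-t_0}$ when the discrepancy is measured against $\mathcal{C}^{t_1}$ (respectively $\mathcal{C}^{t_0}$) test forms vanishing on the wall. The translation trick above resolves this, but verifying the rate requires writing $\Phi = \Phi - \Phi_\epsilon + \Phi_\epsilon$ on the test side as well, or equivalently a duality/interpolation between the relevant Banach spaces with boundary conditions — one should double-check that the Dirichlet condition is compatible with mollification of the test forms (it is, since translating the support inward and mollifying keeps a function vanishing on the wall close to the original in the needed norms, with the $O(x_m)$ vanishing absorbing the boundary layer of width $\epsilon$). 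Once this bookkeeping is in place the rest is the routine optimization over $\epsilon$, and no new idea beyond \cite{DinhSibony_Pk_superpotential} is required.
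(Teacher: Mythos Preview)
Your outline is in the right spirit but differs in route from the paper, and one displayed estimate is actually wrong as written.

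The paper does \emph{not} regularize $T$. Instead it works entirely on the test-form side via abstract real interpolation: it proves (Proposition~\ref{pro_inter1} and Corollary~\ref{cor_inter1}) that $\tilde{\mathcal{C}}^{t_1}(\overline{\Omega}) \subset \big(\tilde{\mathcal{C}}^{t_0}(\overline{\Omega}), \tilde{\mathcal{C}}^{t_2}(\overline{\Omega})\big)_{1-t_*,\infty}$ with a continuous inclusion, and then invokes the abstract interpolation theorem (Theorem~\ref{the_interpolationspace}) with $B_0=B_1=\R$. Concretely, given $f\in\tilde{\mathcal{C}}^{t_1}$ the paper constructs a mollification $g_{1,\epsilon}$ and shows $\|f-g_{1,\epsilon}\|_{\tilde{\mathcal{C}}^{0}}\lesssim\epsilon^{t_1}$ and $\|g_{1,\epsilon}\|_{\tilde{\mathcal{C}}^{k}}\lesssim\epsilon^{t_1-k}$. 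The boundary condition is \emph{not} handled by a translation trick; rather the naive mollification $F_{j,\epsilon}$ fails to vanish on $\{x_n=0\}$, and the paper subtracts an explicit boundary corrector $\tau(x_n)F_{j,\epsilon}(x',0)$, using that $F_j|_{x_n=0}=0$ to show this corrector is itself $O(\epsilon^{t_1})$.

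Your approach of regularizing $T$ is dual to this and could in principle be made to work, but the bound you wrote,
\[
|\langle T-T_\epsilon,\Phi\rangle|\le \|T-T_\epsilon\|\,\|\Phi\|_{\mathcal{C}^0}\lesssim \epsilon^{t_1-t_0}\|T\|_{\tilde{\mathcal{C}}^{-t_0}},
\]
is false: for $T$ a Dirac mass, $\|T-T_\epsilon\|\to 2$, not $0$. The only way to salvage that term is to pass by duality to $\langle T,\Phi-\Phi_\epsilon^*\rangle$ and then prove $\|\Phi-\Phi_\epsilon^*\|_{\tilde{\mathcal{C}}^{t_0}}\lesssim\epsilon^{t_1-t_0}$ with $\Phi_\epsilon^*|_{\partial M}=0$; but that is precisely the paper's Proposition~\ref{pro_inter1}. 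So once corrected, your argument collapses into the paper's. The translation you propose for $T$ does not by itself yield the rate, and the adjoint regularization on $\Phi$ (pullback by $\theta_\epsilon^{-1}$, which moves toward the wall, followed by convolution) does not obviously preserve vanishing on $\{x_n=0\}$ --- this is why the paper's explicit subtraction of the boundary trace is needed rather than a translation.
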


 The remaining part of this section is devoted to prove the last proposition. Using a partition of unity as above, that proposition is a direct consequence of Corollary \ref{cor_interpolationspace} at the end of this section. We first recall some notations and results from the interpolation theory of Banach spaces and  refer to \cite{Lunardi,Triebel} for a general treatment of the theory. Then we compute some interpolation spaces of $\tilde{\mathcal{C}}^t(M),$ see Corollary \ref{cor_inter1} below.

Let $A_0$ and $A_1$ be two  Banach spaces which are continuously embedded to a Hausdorff topological vector space $\mathcal{A}.$  Let $B_0$ and $B_1$ be  two  Banach spaces which are continuously embedded to a Hausdorff topological vector space $\mathcal{B}.$ Let $T$ be a  linear operator from $\mathcal{A}$ to $\mathcal{B}.$  Assume that $T|_{A_j}: A_j \rightarrow B_j$ are bounded for $j=0,1.$ The interpolation theory of Banach spaces is to search for Banach subspaces $A \subset \mathcal{A}$ and $B \subset \mathcal{B}$ such that the restriction $T|_{A}: A \rightarrow B$ is a bounded linear operator. The spaces $A$ and $B$ are called interpolation spaces. We will recall below a classical construction of such spaces.   

For $0 <t < \infty$ and $a \in A_0+ A_1,$ define 
\begin{align} \label{def_interpo_0infinity00}
K(t,a; A_0, A_1):= \inf_{a= a_0+ a_1}( \|a_0\|_{A_0}+ t \|a_1\|_{A_1}),  
\end{align}
where $a_0 \in A_0, a_1 \in A_1.$ Let $\alpha$ be a constant in $(0,1).$  The following class of Banach spaces is of great importance in the interpolation theory.

\begin{definition} \label{def_inter} Let  $(A_0,A_1)_{\alpha,\infty}$ be the subspace of $A_0+ A_1$ consisting of $a \in A_0 +A_1$ for which  the following quantity
\begin{align} \label{def_interpo_0infinity}
\|a\|_{(A_0,A_1)_{\alpha,\infty}}:= \sup_{t >0} t^{-\alpha} K(t,a; A_0, A_1)
\end{align}
is finite. The last formula defines a norm on $(A_0,A_1)_{\alpha,\infty}$ which make it to be a Banach space.
\end{definition}
The following fundamental theorem explains the role of the space $(A_0,A_1)_{\alpha, \infty}.$

\begin{theorem} \label{the_interpolationspace} \cite[Th. 1.1.6]{Lunardi} Let $A_0,A_1, B_0, B_1$ and $T$ be as above. Let $\alpha \in (0,1).$ Then the restriction $T_{(A_0,A_1)_{\alpha,\infty}}$ of $T$ to $(A_0,A_1)_{\alpha,\infty}$ is a bounded linear operator from $(A_0,A_1)_{\alpha,\infty}$ to $(B_0,B_1)_{\alpha,\infty}$ and  
$$ \|T|_{(A_0,A_1)_{\alpha,\infty}}\| \le \|T|_{A_0}\|^{1- \alpha} \|T|_{A_0}\|^{\alpha},$$
where $\| \cdot \|$ is the norm of bounded linear operators.
\end{theorem}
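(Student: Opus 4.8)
The statement is the classical interpolation property of the $K$-method spaces $(\cdot,\cdot)_{\alpha,\infty}$ (cited from \cite{Lunardi}); I recall its proof for completeness. The plan is to run the standard $K$-functional rescaling argument. Write $M_0 := \|T|_{A_0}\|$ and $M_1 := \|T|_{A_1}\|$, and assume first that $M_0, M_1 > 0$; the degenerate cases are disposed of at the end. The first observation is that, since $T$ is linear and defined on all of $\mathcal{A} \supset A_0 + A_1$, every decomposition $a = a_0 + a_1$ with $a_0 \in A_0$ and $a_1 \in A_1$ of a given element $a \in A_0 + A_1$ produces a decomposition $Ta = Ta_0 + Ta_1$ with $Ta_j \in B_j$ and $\|Ta_j\|_{B_j} \le M_j \|a_j\|_{A_j}$, using the boundedness of $T|_{A_j} : A_j \to B_j$. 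In particular $Ta \in B_0 + B_1$, so $K(t, Ta; B_0, B_1)$ is finite for every $t > 0$ and all the quantities below make sense.

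The key step is to estimate $K(t, Ta; B_0, B_1)$. Fix $t > 0$ and an arbitrary decomposition $a = a_0 + a_1$ as above. By the definition $(\ref{def_interpo_0infinity00})$ of the $K$-functional, applied to the pair $(B_0, B_1)$,
\[
K(t, Ta; B_0, B_1) \le \|Ta_0\|_{B_0} + t \|Ta_1\|_{B_1} \le M_0 \|a_0\|_{A_0} + t M_1 \|a_1\|_{A_1} = M_0 \bigl( \|a_0\|_{A_0} + s \, \|a_1\|_{A_1} \bigr),
\]
where $s := t M_1 / M_0 > 0$. Taking the infimum over all decompositions of $a$ and applying $(\ref{def_interpo_0infinity00})$ once more, now to the pair $(A_0, A_1)$, gives $K(t, Ta; B_0, B_1) \le M_0 \, K(s, a; A_0, A_1)$. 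I would then insert the defining bound $K(s, a; A_0, A_1) \le s^{\alpha} \, \|a\|_{(A_0,A_1)_{\alpha,\infty}}$ coming from $(\ref{def_interpo_0infinity})$ and substitute back $s = t M_1 / M_0$, obtaining
\[
K(t, Ta; B_0, B_1) \le M_0^{1-\alpha} M_1^{\alpha} \, t^{\alpha} \, \|a\|_{(A_0,A_1)_{\alpha,\infty}}.
\]
Dividing by $t^{\alpha}$ and taking the supremum over $t > 0$, the definition $(\ref{def_interpo_0infinity})$ of the norm on $(B_0, B_1)_{\alpha,\infty}$ yields
\[
\|Ta\|_{(B_0,B_1)_{\alpha,\infty}} \le M_0^{1-\alpha} M_1^{\alpha} \, \|a\|_{(A_0,A_1)_{\alpha,\infty}},
\]
which is exactly the asserted bound (the second factor in the display of the statement should carry exponent $\alpha$ and the operator norm $\|T|_{A_1}\|$; the repeated $A_0$ there is an evident misprint), and in particular shows that $T$ restricts to a bounded operator from $(A_0,A_1)_{\alpha,\infty}$ to $(B_0,B_1)_{\alpha,\infty}$.

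Finally, the degenerate cases, which is where the only real care is needed. If $M_0 = 0$, then $T$ vanishes on $A_0$, so for every decomposition $a = a_0 + a_1$ one has $Ta = Ta_1$ and hence $K(t, Ta; B_0, B_1) \le t M_1 \|a_1\|_{A_1}$; taking the infimum over decompositions gives $K(t, Ta; B_0, B_1) \le t M_1 \inf_{a = a_0 + a_1} \|a_1\|_{A_1}$, and since for every $\tau > 0$ one has $\inf_{a = a_0 + a_1} \|a_1\|_{A_1} \le \tau^{-1} K(\tau, a; A_0, A_1) \le \tau^{\alpha - 1} \|a\|_{(A_0,A_1)_{\alpha,\infty}} \to 0$ as $\tau \to \infty$ (here $\alpha < 1$ is used), we get $K(t, Ta; B_0, B_1) = 0$ for all $t$, i.e. $Ta = 0$, and the inequality holds trivially. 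The case $M_1 = 0$ is symmetric: one has $K(t, Ta; B_0, B_1) \le M_0 \inf_{a = a_0 + a_1} \|a_0\|_{A_0}$ and $\inf_{a = a_0 + a_1} \|a_0\|_{A_0} \le K(\tau, a; A_0, A_1) \le \tau^{\alpha} \|a\|_{(A_0,A_1)_{\alpha,\infty}} \to 0$ as $\tau \to 0^+$ (here $\alpha > 0$ is used), again forcing $Ta = 0$. There is no serious obstacle in this argument — it is the textbook proof of the interpolation property — the only delicate points being the legitimacy of the rescaling $s = t M_1 / M_0$ (which requires $M_0 > 0$) and these two degenerate cases, handled precisely by the open-interval hypothesis $\alpha \in (0,1)$.
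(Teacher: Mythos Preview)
Your proof is correct and is the standard textbook argument via rescaling of the $K$-functional; you also correctly flag the misprint in the displayed inequality (the second factor should read $\|T|_{A_1}\|^{\alpha}$). The paper does not give its own proof of this theorem --- it simply cites it from \cite[Th.~1.1.6]{Lunardi} --- so there is nothing to compare against beyond noting that your argument coincides with the one in that reference.
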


Let  $m \in \N^*$ and $k\in \N$ and $\alpha\in (0,1).$  Let $\mathcal{C}^k(\R^m)$ (respectively $\mathcal{C}^{k,\alpha}(\R^m)$)  be the set of $\mathcal{C}^k$ functions (respectively $\mathcal{C}^{k,\alpha}$) on $\R^m.$ For $t \in \R^+,$ define $\mathcal{C}^t(\R^m):= \mathcal{C}^{[t], t-[t]}(\R^m).$  Let $\mathcal{C}^t_b(\R^m)$ be the subset of $\mathcal{C}^t(\R^m)$ consisting of elements whose $\mathcal{C}^t$ norms are bounded.

Let $\Omega$ be a bounded open subset  of $\R^m$ with smooth boundary. Let $\partial \Omega$ be its boundary. Then $\overline{\Omega}$ is a smooth compact manifold with boundary which is itself a global chart. We have the Banach spaces $\mathcal{C}^t(\overline{\Omega})$ and $\tilde{\mathcal{C}}^t(\overline{\Omega})$ as above. In what follows, we will give a description of the interpolation space 
\begin{align} \label{eq_interpola}
\big(\tilde{\mathcal{C}}^{t_0}(\overline{\Omega}),\tilde{\mathcal{C}}^{t_2}(\overline{\Omega})\big)_{\alpha,\infty}
\end{align}
for $0 \le t_0 < t_2< \infty.$ The corresponding interpolation spaces for $\mathcal{C}^{t}(\overline{\Omega})$ and $\mathcal{C}_b^t(\R^m)$ are already known, see  Theorems 2.7.2 and 4.5.2 in \cite{Triebel}.

 It should be noted that the spaces $\big(\mathcal{C}^{t_0}(\overline{\Omega}), \mathcal{C}^{t_2}(\overline{\Omega})\big)_{\alpha,\infty}$ are easily determined by using the result mentioned above for $\mathcal{C}_b^t(\R^m)$ and the fact that the restriction from $\mathcal{C}_b^t(\R^m)$ to $\mathcal{C}^t(\overline{\Omega})$ is a retraction, see \cite[Th.  4.5.1]{Triebel}. Nevertheless, this property is no longer true if we replace $\mathcal{C}^t(\overline{\Omega})$ by $\tilde{\mathcal{C}}^t(\overline{\Omega})$ because even the restriction map from $\mathcal{C}_b^t(\R^m)$ to $\tilde{\mathcal{C}}^t(\overline{\Omega})$ is  not well-defined.  In order to compute (\ref{eq_interpola}), we will follow the original strategy for $\mathcal{C}_b^t(\R^m)$  in  \cite{Triebel}, see also \cite{Lunardi}.  Although, in essence, our below results can be implicitly deduced from \cite{Triebel}, we will present them in a simplified and detailed way which is therefore accessible for a wider audience.    


The following lemma is well-known but for the reader's convenience, a complete proof will be given. 

\begin{lemma} \label{ex_extend_holderfunction} For every $t \in [0,\infty),$ every  $f \in \mathcal{C}^{t}(\overline{\Omega})$ can be extended to be a function $Ef \in \mathcal{C}^t(\R^m)$ such that $\|Ef \|_{\mathcal{C}^t(\R^m)} \le C \|f\|_{\mathcal{C}^{t}(\overline{\Omega})},$ where $C$ is a constant independent of $f.$ 
\end{lemma}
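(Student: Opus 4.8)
The goal is a Whitney-type extension operator $E : \mathcal{C}^t(\overline{\Omega}) \to \mathcal{C}^t(\R^m)$ with a uniform norm bound. The plan is to reduce to the model case of a half-space and then patch. First I would use the fact that $\partial\Omega$ is smooth and compact: cover $\partial\Omega$ by finitely many charts in which $\Omega$ locally looks like the half-space $\H^m := \{x_m > 0\}$, take a subordinate partition of unity, and add one interior chart supported away from $\partial\Omega$. Via these charts the problem is transported to extending a function defined on $\overline{\H^m}$ (with bounded $\mathcal{C}^t$ norm) to all of $\R^m$; the diffeomorphisms flattening the boundary are smooth with bounded derivatives on the relevant compact pieces, so they distort $\mathcal{C}^t$ norms only by a fixed constant, and the partition of unity recombines the local extensions into a global one at the cost of another constant.

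For the half-space model, I would use a reflection-type formula. For $f \in \mathcal{C}^t(\overline{\H^m})$ set
\begin{align*}
Ef(x', x_m) := \begin{cases} f(x', x_m) & x_m \ge 0, \\ \sum_{j=1}^{N} c_j\, f(x', -j x_m) & x_m < 0, \end{cases}
\end{align*}
where $N := [t]+1$ and the coefficients $c_j$ are chosen to solve the Vandermonde-type system $\sum_j c_j (-j)^k = 1$ for $k = 0, 1, \dots, N-1$; this forces all partial derivatives up to order $[t]$ to match across $\{x_m = 0\}$, so $Ef$ is $\mathcal{C}^{[t]}$. One then checks that the top-order derivatives of the reflected piece are finite linear combinations of top-order derivatives of $f$ evaluated at reflected points, hence inherit the Hölder modulus of order $t - [t]$; near the interface one uses that both one-sided definitions agree there to order $[t]$ to control the Hölder seminorm of the $[t]$-th derivatives of $Ef$ on a neighborhood of $\{x_m = 0\}$. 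This yields $\|Ef\|_{\mathcal{C}^t(\R^m)} \lesssim \|f\|_{\mathcal{C}^t(\overline{\H^m})}$ with a constant depending only on $m$ and $t$ (through the $c_j$). Finally, to make $Ef$ globally bounded and compactly supported one multiplies by a fixed smooth cutoff equal to $1$ on a neighborhood of $\overline\Omega$, which only changes the norm by a constant.

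**Main obstacle.** The routine but slightly delicate point is the bookkeeping for the Hölder seminorm of the top-order derivatives of $Ef$ across the boundary hypersurface: one must verify that the matching conditions imposed by the $c_j$ are exactly enough so that the one-sided $[t]$-th derivatives glue to something with a finite order-$(t-[t])$ Hölder modulus, handling pairs of points on opposite sides of $\{x_m = 0\}$ by inserting an intermediate point on the interface and using the triangle inequality. The chart/partition-of-unity patching is standard — the only thing to keep track of is that finitely many charts and a fixed partition of unity contribute only a multiplicative constant, and that the flattening diffeomorphisms (smooth, with all derivatives bounded on the compact supports of the partition) preserve membership in $\mathcal{C}^t$ with controlled norm. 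None of this presents a genuine difficulty; it is the expected, well-known Whitney/Stein-type argument, which is why the statement is flagged as well-known.
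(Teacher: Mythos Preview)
Your proposal is correct and follows essentially the same argument as the paper: both reduce via a partition of unity and boundary-flattening charts to the half-space $\R^{m-1}\times\R^+$, then use the higher-order reflection $Ef(x',x_m)=\sum_{j=1}^{[t]+1} c_j f(x',-jx_m)$ for $x_m<0$ with the coefficients determined by the Vandermonde system $\sum_j (-j)^k c_j=1$ for $0\le k\le [t]$. Your discussion of the H\"older verification across the interface is in fact more detailed than the paper's, which simply notes that once $Ef\in\mathcal{C}^{[t]}$, the defining formula makes $D^{[t]}Ef$ visibly $\mathcal{C}^{t-[t]}$ on the whole space.
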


\begin{proof} We will use  a reflexion argument.  By using a partition of unity subordinated to a suitable finite covering of $\overline{\Omega},$ we can suppose that $\overline{\Omega}= \R^{m-1} \times \R^+.$ Let $f \in \mathcal{C}^t(\R^{m-1} \times \R^+).$ Let $[t]$ be the integer part of $t.$ Let $a_1, \cdots, a_{[t]+1}$ be real numbers which are chosen later. Define $Ef :=f$ on $\R^{m-1} \times \R^+$ and  
$$Ef(x_1,\cdots, x_n):= \sum_{k=1}^{[t]+1} a_k f(x_1,\cdots,x_{n-1}, -k x_n)$$
otherwise. It is easy to see that $Ef$ is continuous on $\R^m.$ Now we will choose $a_k$ such that $Ef \in \mathcal{C}^{[t]}.$ If we can do so, we also get $Ef \in \mathcal{C}^t$ because $D^{[t]} Ef$ is $\mathcal{C}^{t- [t]}$ on $\R^{m-1} \times \R^+,$ hence, on the whole $\R^m$ by its defining formula. One only needs to be concerned with the $x_n$-direction. Direct computations show that 
$$\partial_{x_n}^l Ef(x_1,\cdots,x_{n-1},0)= \bigg(\sum_{k=1}^{[t]+1} (-k)^l a_k \bigg) \partial_{x_n}^l f(x_1, \cdots, x_{n-1},0) ,$$
for $0 \le l \le [t].$  The regularity condition on $Ef$ is equivalent to the linear system $\sum_{k=1}^{[t]+1} (-k)^l a_k=1$ for $0 \le l \le [t].$ Its determinant is a Vandermonde one. Hence the system has a unique solution $(a_1,\cdots, a_{[t]+1}).$ When $f|_{\partial \Omega}=0,$ it is clear from the defining formula of $Ef$ that $Ef|_{\partial \Omega}=0.$ The proof is finished.   
\end{proof}

\begin{proposition} \label{pro_inter1} 
 For every $\alpha \in (0,1)$ and every $k \in \N^*,$ we have 
\begin{align} \label{inclusion_Ck}
\big(\tilde{\mathcal{C}}^{0}(\overline{\Omega}), \tilde{\mathcal{C}}^{k}(\overline{\Omega})  \big)_{\alpha, \infty} \supset  \tilde{\mathcal{C}}^{\alpha k}(\overline{\Omega}),
\end{align}
where the last inclusion means a continuous inclusion between Banach spaces.
\end{proposition}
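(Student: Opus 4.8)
The plan is to estimate the $K$-functional of \eqref{def_interpo_0infinity00}--\eqref{def_interpo_0infinity} directly: it suffices to show that for every $f\in\tilde{\mathcal{C}}^{\alpha k}(\overline{\Omega})$ and every $t\in(0,1]$ there is a splitting $f=f_0+f_1$ with $f_0\in\tilde{\mathcal{C}}^0(\overline{\Omega})$, $f_1\in\tilde{\mathcal{C}}^k(\overline{\Omega})$ and $\|f_0\|_{\mathcal{C}^0}+t\|f_1\|_{\mathcal{C}^k}\lesssim t^{\alpha}\|f\|_{\mathcal{C}^{\alpha k}}$ (for $t\ge 1$ one takes $f_0=f$, $f_1=0$), which is precisely \eqref{inclusion_Ck} with continuous inclusion. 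First I would localise: cover $\overline{\Omega}$ by finitely many charts with a subordinate partition of unity $(\phi_j)$, write $f=\sum_j\phi_j f$, and use that $K(t,\cdot)$ is subadditive and that $u\mapsto\phi_j u$ and the chart changes are bounded on $\mathcal{C}^0$, $\mathcal{C}^k$, $\mathcal{C}^{\alpha k}$, so it is enough to split each $\phi_j f$. For an interior chart, $\phi_j f\in\mathcal{C}^{\alpha k}_c(\Int\Omega)$, hence lies in $(\mathcal{C}^0(\overline{\Omega}),\mathcal{C}^k(\overline{\Omega}))_{\alpha,\infty}$ by the known description of that space (see \cite{Triebel}), and multiplying a near-optimal splitting by a fixed cut-off equal to $1$ on $\supp(\phi_j f)$ and supported in $\Int\Omega$ lands the two summands in $\tilde{\mathcal{C}}^0$ and $\tilde{\mathcal{C}}^k$ with unchanged estimates. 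For a boundary chart, exactly as in the proof of Lemma \ref{ex_extend_holderfunction}, one may assume $\overline{\Omega}=\R^{m-1}\times\R^{+}$ and that $\phi_j f$ is compactly supported with $\phi_j f(x',0)\equiv 0$.

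So fix such an $f\in\tilde{\mathcal{C}}^{\alpha k}(\R^{m-1}\times\R^{+})$, compactly supported with $f(x',0)\equiv 0$. By Lemma \ref{ex_extend_holderfunction} --- whose reflection formula preserves vanishing of the trace --- extend $f$ to $g\in\mathcal{C}^{\alpha k}(\R^m)$, compactly supported, with $g(x',0)\equiv 0$ and $\|g\|_{\mathcal{C}^{\alpha k}(\R^m)}\lesssim\|f\|_{\mathcal{C}^{\alpha k}}$. Fix once and for all a mollifier $\rho\in\mathcal{C}^\infty_c(\R^m)$ with $\int\rho=1$ and vanishing moments up to order $\lfloor\alpha k\rfloor$, put $\rho_\epsilon=\epsilon^{-m}\rho(\cdot/\epsilon)$, and fix $\eta\in\mathcal{C}^\infty_c(\R)$ with $\eta(0)=1$. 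Given $t\in(0,1]$, set $\epsilon=t^{1/k}$, let $b(x'):=(g*\rho_\epsilon)(x',0)$ and $c(x',x_m):=b(x')\,\eta(x_m/\epsilon)$, and define
\[
f_1:=\big(g*\rho_\epsilon-c\big)\big|_{\overline{\Omega}},\qquad f_0:=f-f_1 .
\]
Then $f_1$ is smooth with $f_1(x',0)=b(x')-b(x')=0$, so $f_1\in\tilde{\mathcal{C}}^k(\overline{\Omega})$, and $f_0$ is continuous with $f_0(x',0)=f(x',0)-0=0$, so $f_0\in\tilde{\mathcal{C}}^0(\overline{\Omega})$. The two quantities to bound are then controlled by the standard mollification estimates $\|g-g*\rho_\epsilon\|_{\mathcal{C}^0(\R^m)}\lesssim\epsilon^{\alpha k}\|g\|_{\mathcal{C}^{\alpha k}}$ and $\|g*\rho_\epsilon\|_{\mathcal{C}^k(\R^m)}\lesssim\epsilon^{\alpha k-k}\|g\|_{\mathcal{C}^{\alpha k}}$ for the moment-cancelling $\rho$, together with the key bound on the correction,
\[
\|\partial_{x'}^{\gamma}b\|_{L^\infty}\ \lesssim\ \epsilon^{\alpha k-|\gamma|}\,\|g\|_{\mathcal{C}^{\alpha k}},\qquad 0\le|\gamma|\le k,
\]
which, applied via the Leibniz rule to $c=b(x')\eta(x_m/\epsilon)$, yields $\|c\|_{\mathcal{C}^0}\lesssim\epsilon^{\alpha k}\|g\|_{\mathcal{C}^{\alpha k}}$ and $\|c\|_{\mathcal{C}^k}\lesssim\epsilon^{\alpha k-k}\|g\|_{\mathcal{C}^{\alpha k}}$. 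Granting all of this, $K(t,f)\le\|f_0\|_{\mathcal{C}^0}+t\|f_1\|_{\mathcal{C}^k}\lesssim\epsilon^{\alpha k}\|f\|_{\mathcal{C}^{\alpha k}}+t\,\epsilon^{\alpha k-k}\|f\|_{\mathcal{C}^{\alpha k}}=2\,t^{\alpha}\|f\|_{\mathcal{C}^{\alpha k}}$, which is what we want; reassembling the local pieces through the partition of unity then gives \eqref{inclusion_Ck}.

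The heart of the matter --- and the only place where the hypothesis $f|_{\partial\Omega}=0$ is used --- is the estimate on $b$, which measures precisely how far $g*\rho_\epsilon$ is from vanishing on $\{x_m=0\}$; the correction $c$ is introduced exactly to cancel that trace, so everything hinges on $b$ being small. To bound $\partial_{x'}^{\gamma}b$ I would Taylor-expand $g$ (for $|\gamma|\le\lfloor\alpha k\rfloor$, its tangential derivative $\partial_{x'}^{\gamma}g$) about points of the hyperplane $\{x_m=0\}$: because $g(x',0)\equiv 0$, \emph{every} tangential derivative $\partial_{x'}^{\delta}g$ with $|\delta|\le\lfloor\alpha k\rfloor$ vanishes identically there, so the zeroth-order Taylor term is absent, while the vanishing moments of $\rho$ up to order $\lfloor\alpha k\rfloor$ annihilate the remaining polynomial terms, leaving only the H\"older Taylor remainder, which contributes the gain $\epsilon^{\alpha k-|\gamma|}$; for $\lfloor\alpha k\rfloor<|\gamma|\le k$ one first transfers the surplus $x'$-derivatives onto $\rho_\epsilon$ (whose derivatives still have vanishing integral) and argues the same way. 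This is exactly the step that fails for a plain mollifier, which would only give $\|b\|_{L^\infty}\lesssim\epsilon$, too weak once $\alpha k>1$; the vanishing moments are what make the argument uniform in $k\in\N^{\ast}$. I expect the bookkeeping in this $b$-estimate, and checking that the localisation genuinely keeps every piece inside the ``tilde'' subspaces, to be the only delicate points; the remaining ingredients are routine mollification and interpolation.
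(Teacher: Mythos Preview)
Your proof is correct and follows essentially the same architecture as the paper's: localise to the half-space model, mollify, and then subtract a boundary correction of the form $(\text{trace of the mollification})\times(\text{normal cutoff})$ to land both pieces back in the tilde-spaces. The one technical variation is the smoothing device: instead of a moment-cancelling mollifier, the paper convolves with the full Taylor jet of $F_j$ (formula~\eqref{def_covoluF}), which produces the same approximation rates $\|F_{j,\epsilon}-F_j\|_{\mathcal{C}^0}\lesssim\epsilon^{\alpha k}$ and $\|D^k F_{j,\epsilon}\|_{\mathcal{C}^0}\lesssim\epsilon^{\alpha k-k}$ with an ordinary bump $\chi$; your vanishing-moment $\rho$ is the standard equivalent manoeuvre, and your key estimate on $b$ plays exactly the role of the paper's inequality~\eqref{ine_hieuFjvaFjepsilon} evaluated at $x_n=0$.
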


\begin{proof} 
Let $f \in  \tilde{\mathcal{C}}^{\alpha k}(\overline{\Omega}).$ Put $t := \alpha k.$ We write below $\lesssim$ to indicate $\le$ up to a constant independent of $(f,\epsilon).$ By Lemma \ref{ex_extend_holderfunction}, we can extend $f$ to be a function $F$ in $\mathcal{C}^{t}(\R^m)$ with 
$$\|F\|_{\mathcal{C}^{t}(\R^m)} \le C \|f\|_{\tilde{\mathcal{C}}^{t}(\overline{\Omega})},$$
for some constant $C$ independent of $f$ and $F|_{\partial \Omega}=0.$ Let $\B_r$ denotes the ball of radius $r>0$ centered at $0$ in $\R^m$ and $\B_r^+$ denotes the subset of $\B_r$ consisting of $\mathbf{x}=(x_1,\cdots,x_n)$ with $x_n \ge 0.$ Since $\overline{\Omega}$ is compact, we can cover $\partial \Omega$ by a finite number of small open subsets $\{U_j\}_{1\le j \le N}$ of $\R^m$ such that in each  $U_j,$ by a suitable change of coordinates $\Psi_j$, we have 
$$\Psi_j\big(\overline{\Omega} \cap U_j \big)= \B_2^+$$
and $\Psi_j\big(\partial \Omega \cap U_j \big)= \B_2^+ \cap \{x_n \ge 0\}.$ Without loss of generality, we can suppose that $\Psi^{-1}_j(\B^+_{1})$ also covers $\partial \Omega.$ Put 
$$U_0:= \Omega \backslash \cup_{1\le j \le N} \Psi^{-1}_j(\overline{\B}^+_{1}).$$ 
The family $\{U_j\}_{0 \le j \le N}$ covers $\overline{\Omega}.$  Let $\{\chi_j\}_{0 \le j \le N}$ be smooth functions of $\R^m$ such that  $0 \le \chi_j \le 1$ for $0 \le  j \le N,$ and $\supp \chi_j \Subset \Psi^{-1}_j(\B_{5/4})$ for $1 \le  j \le N$ and $\supp \chi_0 \Subset U_0,$ and $\sum_{0 \le j \le N} \chi_j =1$ on $\overline{\Omega}.$

Define $F_j:= (\chi_j F) \circ \Psi^{-1}_j.$ By the properties of $(\Psi_j,F)$ mentioned above, we have $F_j|_{x_n=0}=0.$  Let $\chi$ be a nonnegative smooth  function on $\R^m$ which is compactly supported on $\B_{1}$ such that $\int_{\R^m} \chi d\mathbf{x}=1.$ Taylor's expansion for $F_j$ gives 
\begin{align} \label{eq_taylorF_j}
F_j(\mathbf{x})= F_j(\mathbf{x}-\mathbf{y})+ DF_j(\mathbf{x}-\mathbf{y})\mathbf{y}+ \cdots+ \frac{1}{[t]!} D^{[t]}F_j(\mathbf{x}-\mathbf{y})\mathbf{y}^{[t]}+ R_j(\mathbf{x},\mathbf{y}) \mathbf{y}^{[t]},
\end{align}
where $R_j(\mathbf{x}, \mathbf{y})$ is, for $\mathbf{x}$ fixed, a $\mathcal{C}^{t-[t]}$ linear functional on $ (\R^m)^{[t]}$ and we have 
$$  R_j(\mathbf{x}, 0)= 0, \quad  \| R_j\|_{\mathcal{C}^{t- [t]}} \lesssim \|F_j\|_{\mathcal{C}^{t}} \le C\|f\|_{\mathcal{C}^{t}} .$$
Hence, one gets
\begin{align} \label{ine_uocluongRxy}
|R_j(\mathbf{x},\mathbf{y})| \lesssim | \mathbf{y}|^{t- [t]} \|f\|_{\mathcal{C}^{t}}.
\end{align}
 Put 
$$\epsilon_0:= \min\{1/4, \dist(U_0, \partial \Omega)\}.$$
Let $\epsilon \in (0,\epsilon_0).$ For $0 \le j \le N,$ we define 
\begin{align} \label{def_covoluF}
F_{j,\epsilon}(\mathbf{x}):= \int_{\R^m} \big[F_j(\mathbf{x}- \epsilon \mathbf{y})+ DF_j(\mathbf{x}-\epsilon\mathbf{y})(\epsilon\mathbf{y})+ \cdots + \frac{1}{[t]!} D^{[t]}F_j(\mathbf{x}- \epsilon\mathbf{y})(\epsilon\mathbf{y})^{[t]}  \big] \chi(\mathbf{y}) d\mathbf{y}.
\end{align}
Observe that $F_{0,\epsilon}$ is a smooth function in  $\tilde{\mathcal{C}}^{\infty}(\overline{\Omega}) $ by the choice of $\epsilon$ and $F_{j,\epsilon}$ is smooth on $\R^m$ and compactly supported on $\B_{3/2}$ for $1 \le j \le N.$ A property of the convolution implies that $F_{j,\epsilon}$ converges to $F_j$ in $\mathcal{C}^0$-topology. Precisely, using (\ref{eq_taylorF_j}),  (\ref{def_covoluF}) and (\ref{ine_uocluongRxy}) yields that 
\begin{align} \label{ine_hieuFjvaFjepsilon}
|F_{j,\epsilon}(\mathbf{x})- F_{j}(\mathbf{x})| \le    \epsilon^{[t]}\int_{\R^m} |R_j(\mathbf{x}, \epsilon\mathbf{y})| \chi(\mathbf{y}) d\mathbf{y} \le C \epsilon^{t}\|f\|_{\mathcal{C}^{t}},
\end{align}
for every $\mathbf{x}.$ Let $\tau$ be a smooth function on $\R$ compactly supported on $[-2,2]$ such that $\tau \equiv 1$ on $[-3/2,3/2].$   Define 
$$F'_{j,\epsilon}(x_1, \cdots,x_n):= F_{j,\epsilon}(x_1,\cdots,x_{n-1},x_n)-  \tau(x_n) F_{j,\epsilon}(x_1,\cdots,x_{n-1},0),$$
for $1 \le j \le N$ and we put $F'_{0,\epsilon}:= F_{0,\epsilon}$ for consistence. We immediately see that $F'_{j,\epsilon}= 0$ on $\{x_n=0\}$ and $\supp F'_{j,\epsilon} \subset \B_2.$  As a consequence,  $F'_{j,\epsilon} \circ \Psi_j$ is smooth on $\R^m$ and vanishes on  $\partial \Omega.$ We deduce from (\ref{ine_hieuFjvaFjepsilon}) and the fact that $F_j|_{\{x_n=0\}} \equiv 0$ that 
\begin{multline} \label{ine_hieuFjvaFjepsilon2phay}
|F'_{j,\epsilon}(\mathbf{x})- F_{j}(\mathbf{x})| \le |F_{j,\epsilon}(\mathbf{x})- F_{j}(\mathbf{x})|+\\
 |F_{j,\epsilon}(x_1,\cdots,x_{n-1},0)- F_{j}(x_1,\cdots,x_{n-1},0)| \le 2C\epsilon^{t} \|f\|_{\mathcal{C}^{t}}. 
\end{multline}
Define 
$$g_{1,\epsilon}:= \sum_{0 \le j \le N} F'_{j, \epsilon} \circ \Psi_j|_{\overline{\Omega}} \in \tilde{\mathcal{C}}^{\infty} (\overline{\Omega})$$
and $g_{0,\epsilon}:= f - g_{1,\epsilon} \in \tilde{\mathcal{C}}^0(\overline{\Omega}).$ We have $f= g_{0,\epsilon}+ g_{1,\epsilon}.$ In view of (\ref{def_interpo_0infinity}), we have to estimate $\|g_{0,\epsilon}\|_{ \tilde{\mathcal{C}}^{0}(\overline{\Omega})}$ and $\|g_{1,\epsilon}\|_{ \tilde{\mathcal{C}}^{k}(\overline{\Omega})}.$ Since $f= \sum_{0 \le j \le N} F_j \circ \Psi_j$, we have 
$$g_{0,\epsilon}= \sum_{0 \le j \le N } ( F_j  \circ \Psi_j - F'_{j,\epsilon} \circ \Psi_j).$$
Taking into account (\ref{ine_hieuFjvaFjepsilon2phay}), one gets 
\begin{align} \label{ine_danigiag_0epsilon}
\|g_{0,\epsilon}\|_{ \tilde{\mathcal{C}}^{0}(\overline{\Omega})} \lesssim  \epsilon^{t} \|f\|_{\tilde{\mathcal{C}}^{t}(\overline{\Omega})}. 
\end{align}
For $0 \le l \le [t],$ we  define 
$$G_{j,l}(\mathbf{x}, \mathbf{y}):= D^l F_j(\mathbf{y})+D^{l+1} F_j(\mathbf{y})\mathbf{y}+  \cdots + \frac{1}{([t]-l)!} D^{[t]}F_j(\mathbf{y})(\mathbf{x}-\mathbf{y})^{[t]-l}$$
which is the Taylor expansion up to the $([t]-l)$ order of $D^l F_j(\mathbf{x})$ at $\mathbf{y}.$ Thus arguing as in (\ref{ine_uocluongRxy}), we get 
\begin{align} \label{ine_GjlDlFj}
|G_{j,l}(\mathbf{x}, \mathbf{y}) - D^l F_j(\mathbf{x})| \lesssim  \|f\|_{\mathcal{C}^t} |\mathbf{y}|^{t-l}. 
\end{align}
The equality (\ref{def_covoluF}) can be rewritten as
$$F_{j,\epsilon}(\mathbf{x})= \epsilon^{-m}\int_{\R^m}\big[F_j(\mathbf{y}')+ DF_j(\mathbf{y}')(\mathbf{x}-\mathbf{y}')+ \cdots + \frac{1}{[t]!} D^{[t]}F_j(\mathbf{y}')(\mathbf{x}-\mathbf{y}')^{[t]}  \big]  \chi(\frac{\mathbf{x}-\mathbf{y}'}{\epsilon}) d\mathbf{y}'.$$
Differentiating the last equality in $\mathbf{x}$ for $k'$ times gives
\begin{align} \label{ine_uoncluongDkcuaFjepsilon}
D^{k'}_{\mathbf{x}}F_{j,\epsilon}(\mathbf{x}) &= \epsilon^{-m-k'+ l} \sum_{0 \le l \le \min\{k', [t]\}} \int_{\R^m} G_{j,l}(\mathbf{x},\mathbf{y}') \otimes D^{k'-l}\chi(\frac{\mathbf{x}-\mathbf{y}'}{\epsilon}) d\mathbf{y}' \\
\nonumber
&=\epsilon^{-k'+l } \sum_{0 \le l \le \min\{k', [t] \}} \int_{\R^m} G_{j,l}(\mathbf{x},\mathbf{x}- \epsilon\mathbf{y}) \otimes  D^{k'-l}\chi(\mathbf{y}) d\mathbf{y}
\end{align}
by a suitable change of coordinates.  Since $\int_{\R^m}D^l_{\mathbf{x}}\chi(\mathbf{y}) d\mathbf{y}=0$ for any $l \ge 1,$  we obtain 
\begin{align} \label{ine_tichphanGjl}
\int_{\R^m} G_{j,l}(\mathbf{x},\mathbf{x}- \epsilon\mathbf{y}) \otimes  D^{k'-l}\chi(\mathbf{y}) d\mathbf{y}= \int_{\R^m} \big(G_{j,l}(\mathbf{x},\mathbf{x}- \epsilon\mathbf{y})- D^l F_j(\mathbf{x}) \big) \otimes  D^{k'-l}\chi(\mathbf{y}) d\mathbf{y}
\end{align}
which is of absolute value $\lesssim \epsilon^{t-l} \| f\|_{\mathcal{C}^t}$ by using (\ref{ine_GjlDlFj}) and the fact that $\supp \chi \subset \B_1.$  Combining (\ref{ine_uoncluongDkcuaFjepsilon}) with (\ref{ine_tichphanGjl}) gives 
$$|D^{k'}_{\mathbf{x}}F_{j,\epsilon}(\mathbf{x})| \lesssim \epsilon^{-k'+t} \| f\|_{\mathcal{C}^t}$$
which implies that
\begin{align} \label{ine_danigiag_0epsilon2}
\|g_{1,\epsilon}\|_{ \tilde{\mathcal{C}}^{k}(\overline{\Omega})} \lesssim   \epsilon^{-k+t} \| f\|_{\mathcal{C}^t}
\end{align}
by choosing $k'=k.$ Taking into account (\ref{ine_danigiag_0epsilon}), (\ref{ine_danigiag_0epsilon2}) and  (\ref{def_interpo_0infinity00}), one deduces that
$$\epsilon^{-\alpha k} K\big(\epsilon^k, f;\tilde{\mathcal{C}}^{0}(\overline{\Omega}), \tilde{\mathcal{C}}^{k}(\overline{\Omega}) \big) \le \epsilon^t \big( \|g_{0,\epsilon}\|_{\tilde{\mathcal{C}}^0}+ \epsilon^k\|g_{1,\epsilon}\|_{\tilde{\mathcal{C}}^k} \big)  \lesssim  \|f\|_{\tilde{\mathcal{C}}^{t}(\overline{\Omega})},$$
for every $\epsilon \in (0,\epsilon_0).$ When $\epsilon \ge \epsilon_0,$ since 
$$f= f+0 \in \tilde{\mathcal{C}}^{0}(\overline{\Omega})+\tilde{\mathcal{C}}^{1}(\overline{\Omega}),$$
we have 
$$\epsilon^{-\alpha k} K\big(\epsilon^k, f;\tilde{\mathcal{C}}^{0}(\overline{\Omega}), \tilde{\mathcal{C}}^{k}(\overline{\Omega}) \big) \le \epsilon_0^{-\alpha k} \|f\|_{\tilde{\mathcal{C}}^{0}(\overline{\Omega})} \le \epsilon_0^{-\alpha k} \|f\|_{\tilde{\mathcal{C}}^{\alpha k }(\overline{\Omega})}.$$
Hence, $f \in \big(\tilde{\mathcal{C}}^{0}(\overline{\Omega}), \tilde{\mathcal{C}}^{k}(\overline{\Omega}) \big)_{\alpha, \infty}.$ The proof is finished.
\end{proof}

For every $h \in \R^m$ and every a function $g$ on $\R^m,$ define the operator 
$$\Delta_h g(x):= g(x+h)- g(x)$$
for every $x \in \R^m.$ The following property is crucial for the next proposition.

\begin{lemma}  \label{le_holder_norm} Let $\alpha \in (0,1)$ and $l$ be an integer $\ge 1.$ For $g \in \mathcal{C}^{\alpha}_b(\R^m),$ we put 
$$\|g\|_{\alpha,\Delta,l}:= \|g\|_{\mathcal{C}^0}+ \sup_{x,h \in \R^m, h \not =0} \frac{|\Delta_h^l g |}{|h|^{\alpha}} \cdot$$ 
Then the last formula defines a norm on $\mathcal{C}^{\alpha}_b(\R^m)$ which is equivalent to its usual $\mathcal{C}^{\alpha}$ norm.  More precisely, there exists a positive constant $C_{l,\alpha}$ depending only on $(l,\alpha)$ such that  for every $g,$ we have 
$$C_{l,\alpha}^{-1}\|g\|_{\mathcal{C}^{\alpha}} \le  \|g\|_{\alpha,\Delta,l} \le C_{l,\alpha} \|g\|_{\mathcal{C}^{\alpha}}.$$ 
\end{lemma}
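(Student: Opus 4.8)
The plan is to prove the equivalence of the two norms on $\mathcal{C}^\alpha_b(\R^m)$ by establishing the two inequalities separately, reducing everything to the one-variable situation along the segment $[x, x+h]$ since $\Delta_h$ only probes the behaviour of $g$ in the direction $h$.

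\medskip

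\noindent\textbf{The easy direction.} First I would bound $\|g\|_{\alpha,\Delta,l}$ by a constant times $\|g\|_{\mathcal{C}^\alpha}$. Expanding $\Delta_h^l g(x) = \sum_{k=0}^l (-1)^{l-k}\binom{l}{k} g(x+kh)$, one writes each increment as a telescoping sum and uses that $|g(x+jh)-g(x+ih)|\le \|g\|_{\mathcal{C}^\alpha}\,|j-i|^\alpha|h|^\alpha$; regrouping and using $|kh|^\alpha\le l^\alpha|h|^\alpha$ for $0\le k\le l$ gives $|\Delta_h^l g(x)|\le C_{l,\alpha}\|g\|_{\mathcal{C}^\alpha}|h|^\alpha$, hence the right-hand inequality with a combinatorial constant $C_{l,\alpha}$. (Alternatively one may note $\Delta_h^l = \Delta_h \circ \Delta_h^{l-1}$ and induct, since $\Delta_h^{l-1}g$ is again $\mathcal{C}^\alpha_b$ with controlled norm.)

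\medskip

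\noindent\textbf{The hard direction.} The substantive inequality is $\|g\|_{\mathcal{C}^\alpha}\le C_{l,\alpha}\|g\|_{\alpha,\Delta,l}$, i.e.\ recovering the first-order H\"older modulus from the $l$-th difference and the sup norm. The standard trick is a dilation/averaging identity: one uses the algebraic relation expressing $\Delta_h g$ as a linear combination of $\Delta_{h}^l g, \Delta_{2h}^l g, \dots, \Delta_{lh}^l g$ together with lower iterates, coming from the binomial identity $(1-(1-u))^l$ manipulations; concretely there is an identity of the form $\Delta_h = \sum_{j=1}^{l} c_j \Delta_{jh}^{?}$ — more precisely one exploits $\Delta_h^l g(x)$ evaluated at the rescaled steps $h, h/2, h/4,\dots$ to run a telescoping (Marchaud-type) argument: set $\omega(h):=\sup_x|\Delta_h g(x)|$ and show $\omega(h)\lesssim \sum_{k\ge 0} 2^{-k}\big(\text{$l$-th difference bound at scale }2^{-k}h\big) + 2^{-Nk}\|g\|_{\mathcal{C}^0}$, then sum the geometric series using the bound $|\Delta_{2^{-k}h}^l g|\le \|g\|_{\alpha,\Delta,l}(2^{-k}|h|)^\alpha$. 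Since $\alpha<1<l$, the series $\sum_k 2^{-k}2^{k\alpha(1-\cdots)}$ — arranged so the exponent is negative — converges, yielding $\omega(h)\le C_{l,\alpha}\|g\|_{\alpha,\Delta,l}|h|^\alpha$. Together with $\|g\|_{\mathcal{C}^0}\le\|g\|_{\alpha,\Delta,l}$ this gives the claim. The constant depends only on $l$ and $\alpha$ and in particular blows up as $\alpha\uparrow 1$, which is why the hypothesis $\alpha\in(0,1)$ is essential.

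\medskip

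\noindent\textbf{Main obstacle.} The delicate point is the Marchaud-type summation in the hard direction: one must set up the correct telescoping identity relating $\Delta_h$ to $\Delta_{h/2}^l$ (and lower-order terms at scale $h/2$), check that the geometric series converges precisely because $0<\alpha<1\le l-1$ (or $l\ge 2$; the case $l=1$ being trivial), and keep track that all constants depend only on $(l,\alpha)$ and not on $g$ or on the point $x$. Everything reduces to estimates along the line through $x$ in the direction $h$, so the ambient dimension $m$ plays no role and the one-dimensional argument suffices verbatim. I expect roughly a page of elementary but careful bookkeeping with binomial coefficients.
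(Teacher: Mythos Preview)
Your sketch is correct in spirit, but you are working much harder than necessary for the hard direction, and the paper's argument is worth knowing because it is a one-line absorption trick rather than a Marchaud summation.

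\medskip

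For $l=2$ the paper simply writes down the identity
\[
g(x+h)-g(x)=\tfrac{1}{2}\bigl(g(x+2h)-g(x)\bigr)-\tfrac{1}{2}\Delta_h^2 g(x),
\]
divides by $|h|^\alpha$, and takes the supremum over $x$ and $h\neq 0$. Since $\sup_{x,h}|g(x+2h)-g(x)|/|h|^\alpha = 2^\alpha [g]_{\mathcal{C}^\alpha}$, this yields
\[
[g]_{\mathcal{C}^\alpha}\le 2^{\alpha-1}[g]_{\mathcal{C}^\alpha}+\|g\|_{\alpha,\Delta,2},
\]
and because $g\in\mathcal{C}^\alpha_b$ is \emph{assumed}, the seminorm $[g]_{\mathcal{C}^\alpha}$ is finite and can be absorbed on the left using $2^{\alpha-1}<1$. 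No dyadic iteration, no geometric series. The general $l$ is then claimed by induction (the same identity applied to $\Delta_h^{l-2}g$ relates $\|\cdot\|_{\alpha,\Delta,l-1}$ to $\|\cdot\|_{\alpha,\Delta,l}$).

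\medskip

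Your Marchaud-type route is genuinely different and in fact proves a stronger statement: it would show that finiteness of $\|g\|_{\alpha,\Delta,l}$ \emph{implies} $g\in\mathcal{C}^\alpha_b$, not merely that the norms are equivalent on the a priori given space $\mathcal{C}^\alpha_b$. The price is the bookkeeping you anticipate. The paper exploits the hypothesis $g\in\mathcal{C}^\alpha_b$ to sidestep all of that via absorption, which is why its proof fits in a few lines. Both approaches need $\alpha<1$ at exactly the same moment (for you, convergence of the series; for the paper, $2^{\alpha-1}<1$).
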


\begin{proof} This is a simplification of Lemma 1.13.4 in \cite{Triebel}. When $l=1,$ the two norms are identical. Consider $l\ge 2.$  Observe that it is enough to prove the desired result for $l=2$ because the general case can easily follow by induction. It is clear that $\|g\|_{\alpha,\Delta,2} \le 2 \|g\|_{\mathcal{C}^{\alpha}}.$ We now prove the converse inequality. The key argument is the following formula:
$$g(x+h)- g(x)= \frac{1}{2}\big( g(x+2h)- g(x) \big) - \frac{g(x+2h)- 2 g(x+h)+ g(x)}{2}.$$ 
Dividing the last equality by $|h|^{\alpha}$ gives 
$$\frac{|g(x+h) - g(x)|}{|h|^{\alpha}} \le 2^{\alpha - 1} \frac{|g(x+2h) - g(x)|}{|2h|^{\alpha}}+  \frac{|g(x+2h)- 2 g(x+h)+ g(x)|}{2 |h|^{\alpha}}.$$
By taking the supremum over $\{(x,h) \in \R^{2m}, h \not =0\}$ in the last inequality, we deduce that
$$\|g\|_{\mathcal{C}^{\alpha}} \le 2^{\alpha -1} \|g\|_{\mathcal{C}^{\alpha}}+ \|g\|_{\alpha,\Delta,2}.$$ 
Since $2^{\alpha-1} <1$ we get the desired conclusion. The proof is finished.
\end{proof}

\begin{proposition} \label{pro_interbeta0} Let $k$ be a positive integer and let  $\alpha$ be a real number in $(0,1).$ Assume that $\alpha k \in (0,1).$ Then we have
 \begin{align} \label{inclusion_Ckalpha01}
\big(\tilde{\mathcal{C}}^{0}(\overline{\Omega}), \tilde{\mathcal{C}}^{k}(\overline{\Omega})  \big)_{\alpha, \infty} \subset \tilde{\mathcal{C}}^{\alpha k}(\overline{\Omega}).
\end{align}
\end{proposition}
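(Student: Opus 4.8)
The plan is to establish the reverse inclusion to Proposition~\ref{pro_inter1}, namely that any $f$ belonging to the interpolation space $\big(\tilde{\mathcal{C}}^{0}(\overline{\Omega}), \tilde{\mathcal{C}}^{k}(\overline{\Omega})\big)_{\alpha,\infty}$ is in fact $\mathcal{C}^{\alpha k}$ up to the boundary and vanishes there, with control of the H\"older norm by the interpolation norm. The condition $f|_{\partial\Omega}\equiv 0$ is automatic: membership in the interpolation space already forces $f$ to be an $\tilde{\mathcal{C}}^0(\overline{\Omega})$ limit (in $L^\infty$) of functions vanishing on $\partial\Omega$, hence $f|_{\partial\Omega}\equiv 0$. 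So the real content is the H\"older estimate $\|f\|_{\mathcal{C}^{\alpha k}(\overline\Omega)}\lesssim \|f\|_{(A_0,A_1)_{\alpha,\infty}}$ with $A_0=\tilde{\mathcal{C}}^0(\overline\Omega)$, $A_1=\tilde{\mathcal{C}}^k(\overline\Omega)$.

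The main mechanism will be the standard interpolation-by-decomposition argument from \cite{Triebel}, localized via the partition of unity and boundary charts already set up in the proof of Proposition~\ref{pro_inter1}. Fix $h\in\R^m$ small. For each dyadic scale, use the definition of $K(t,f;A_0,A_1)$ to pick a decomposition $f=g_{0,\epsilon}+g_{1,\epsilon}$ with $g_{0,\epsilon}\in\tilde{\mathcal{C}}^0$, $g_{1,\epsilon}\in\tilde{\mathcal{C}}^k$, realizing (up to a factor $2$) the infimum at $t=\epsilon^k$, so that
\[
\|g_{0,\epsilon}\|_{\tilde{\mathcal{C}}^0}+\epsilon^k\|g_{1,\epsilon}\|_{\tilde{\mathcal{C}}^k}\lesssim \epsilon^{\alpha k}\|f\|_{(A_0,A_1)_{\alpha,\infty}}.
\]
Then estimate the $l$-th difference $\Delta_h^l$ of $f$ (for a suitable fixed integer $l>\alpha k$, e.g. $l=k+1$ if $\alpha k$ could be $\ge 1$, though here the hypothesis $\alpha k\in(0,1)$ lets us take $l=2$) by splitting across the decomposition: the rough part contributes $\lesssim \|g_{0,\epsilon}\|_{\mathcal{C}^0}\lesssim \epsilon^{\alpha k}\|f\|$, while the smooth part contributes $\lesssim |h|^l\|g_{1,\epsilon}\|_{\mathcal{C}^l}\lesssim |h|^l\|g_{1,\epsilon}\|_{\mathcal{C}^k}\lesssim |h|^l\epsilon^{-k+\alpha k}\|f\|$ (using $l\le k$; if $\alpha k<1$ one in fact only needs $l=2\le k$ when $k\ge 2$, and the $k=1$ case is immediate since then $\alpha k=\alpha<1$ and one works directly). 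Choosing $\epsilon\sim|h|$ balances the two terms and yields $|\Delta_h^l f(x)|\lesssim |h|^{\alpha k}\|f\|_{(A_0,A_1)_{\alpha,\infty}}$ uniformly in $x$. By Lemma~\ref{le_holder_norm} (applied after extending $f$ to $\R^m$, or working chart by chart), the finite-difference bound is equivalent to the genuine $\mathcal{C}^{\alpha k}$ norm, giving $\|f\|_{\mathcal{C}^{\alpha k}(\overline\Omega)}\lesssim\|f\|_{(A_0,A_1)_{\alpha,\infty}}$. Combined with $f|_{\partial\Omega}=0$, this is exactly $f\in\tilde{\mathcal{C}}^{\alpha k}(\overline\Omega)$ with continuous inclusion.

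The main obstacle I anticipate is handling the boundary carefully: the finite-difference operator $\Delta_h$ and the chart-localization do not interact cleanly near $\partial\Omega$, since $x+h$ may leave $\overline\Omega$. The clean fix is to use the extension operator $E$ from Lemma~\ref{ex_extend_holderfunction} — but $E$ is defined on $\mathcal{C}^t(\overline\Omega)$-spaces, so one must apply it to $g_{0,\epsilon}$ and $g_{1,\epsilon}$ separately (both extensions preserve the vanishing on $\partial\Omega$, as noted in that lemma's proof), obtaining $F=Eg_{0,\epsilon}+Eg_{1,\epsilon}$ on $\R^m$ with $\|Eg_{0,\epsilon}\|_{\mathcal{C}^0}\lesssim\|g_{0,\epsilon}\|_{\tilde{\mathcal{C}}^0}$ and $\|Eg_{1,\epsilon}\|_{\mathcal{C}^k}\lesssim\|g_{1,\epsilon}\|_{\tilde{\mathcal{C}}^k}$. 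Running the finite-difference argument for $F$ on $\R^m$, applying Lemma~\ref{le_holder_norm} there, and then restricting back to $\overline\Omega$ circumvents the difficulty, since on $\overline\Omega$ the extension agrees with $f$. One subtlety is that the extensions $Eg_{0,\epsilon}$, $Eg_{1,\epsilon}$ depend on $\epsilon$, so one should verify the bound $|\Delta_h^l F(x)|\lesssim|h|^{\alpha k}\|f\|$ holds with a constant uniform in $\epsilon$ (it does, since the extension constants in Lemma~\ref{ex_extend_holderfunction} are $\epsilon$-independent) before taking $\epsilon\sim|h|$. Once this bookkeeping is in place the proof is routine; I would also remark that (\ref{inclusion_Ck}) and (\ref{inclusion_Ckalpha01}) together give the identification $\big(\tilde{\mathcal{C}}^{0}(\overline\Omega),\tilde{\mathcal{C}}^{k}(\overline\Omega)\big)_{\alpha,\infty}=\tilde{\mathcal{C}}^{\alpha k}(\overline\Omega)$ when $\alpha k\in(0,1)$, which is what feeds into Corollary~\ref{cor_inter1} and ultimately Proposition~\ref{pro_sosanhdistancevoibien}.
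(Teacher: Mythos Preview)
Your approach is essentially the paper's: bound a higher finite difference of $f$ by splitting across a near-optimal $K$-functional decomposition, then invoke Lemma~\ref{le_holder_norm}. One correction, though: the choice $l=2$ does not work for $k\ge 3$. With $\epsilon=|h|$ the smooth contribution is $|h|^{l}\epsilon^{-k+\alpha k}=|h|^{\,l-k+\alpha k}$, and this is $\lesssim |h|^{\alpha k}$ only when $l\ge k$; combined with the Taylor-expansion bound $|\Delta_h^l g_1|\lesssim |h|^l\|g_1\|_{\mathcal{C}^k}$, which needs $l\le k$, you are forced to take $l=k$. This is precisely what the paper does: it estimates $|\Delta_h^k f|\lesssim \|g_0\|_{\mathcal{C}^0}+|h|^k\|g_1\|_{\mathcal{C}^k}$ for an arbitrary decomposition, takes the infimum directly to get $|\Delta_h^k f|\lesssim K(|h|^k,f)\le |h|^{\alpha k}\|f\|_{(A_0,A_1)_{\alpha,\infty}}$, and then applies Lemma~\ref{le_holder_norm} with $l=k$. (No specific $\epsilon$ is ever chosen; the infimum in the $K$-functional does the balancing for you.)

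Your boundary concern is legitimate and in fact more careful than the paper, which invokes Lemma~\ref{le_holder_norm} (stated on $\R^m$) without comment. Your fix via the extension operator of Lemma~\ref{ex_extend_holderfunction}, applied separately to $g_0$ and $g_1$ with $\epsilon$-independent constants, is a clean way to close this gap; alternatively one restricts to increments with $x,x+h,\dots,x+kh\in\overline\Omega$ and quotes the standard equivalence of the difference seminorm with the H\"older seminorm on smooth bounded domains. Either route completes the argument.
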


\begin{proof} 

Let take an element $f \in \big(\tilde{\mathcal{C}}^{0}(\overline{\Omega}), \tilde{\mathcal{C}}^{k}(\overline{\Omega})  \big)_{\alpha, \infty}.$   Suppose that $f=g_0 + g_1$ with $g_0 \in \tilde{\mathcal{C}}^{0}(\overline{\Omega})$ and $g_1 \in \tilde{\mathcal{C}}^{k}(\overline{\Omega}).$ We have $\Delta_h^k f=\Delta_h^k g_0+\Delta_h^k g_1.$ By using Taylor's expansion of $g_1,$ observe that $|\Delta_h^k g_1| \le C |h|^k \|g_1\|_{\mathcal{C}^k}$ for some constant $C$ independent of $(g_1,h).$  On the other hand,  $|\Delta_h^k g_0| \le 2^l \|g_0\|_{\mathcal{C}^0}.$ Combining these inequalities gives 
 $$|\Delta_h^k f| \le 2^l \|g\|_{\mathcal{C}^0}+ C  |h|^k \|g_1\|_{\mathcal{C}^k} \lesssim \|g\|_{\mathcal{C}^0}+  |h|^k \|g_1\|_{\mathcal{C}^k},$$
for every $(g_0, g_1)$ with $f= g_0+ g_1.$ Taking the infimum in the last inequality in $(g_0, g_1),$ we obtain 
$$|\Delta_h^k f|  \lesssim  K\big(h^k, f;\tilde{\mathcal{C}}^{0}(\overline{\Omega}), \tilde{\mathcal{C}}^{1}(\overline{\Omega}) \big) \le |h|^{\alpha k} \|f\|_{\big(\tilde{\mathcal{C}}^{0}(\overline{\Omega}), \tilde{\mathcal{C}}^{k}(\overline{\Omega}) \big)_{\alpha, \infty} }.$$
As a consequence, one gets 
$$\|f\|_{\alpha k, \Delta,k} \lesssim  \|f\|_{\big(\tilde{\mathcal{C}}^{0}(\overline{\Omega}), \tilde{\mathcal{C}}^{k}(\overline{\Omega}) \big)_{\alpha, \infty} }.$$
By Lemma \ref{le_holder_norm} and the hypothesis that $\alpha k <1,$ we obtain the desired result. The proof is finished.
\end{proof}

\begin{corollary} \label{cor_inter1} 
For every $\alpha \in (0,1),$ every real nonnegative numbers $t_1$ and $t_2,$ we have 
\begin{align} \label{inclusion_Ckalpha}
\big(\tilde{\mathcal{C}}^{t_1}(\overline{\Omega}), \tilde{\mathcal{C}}^{t_2}(\overline{\Omega})  \big)_{\alpha, \infty} \supset \tilde{\mathcal{C}}^{\alpha t_2+ (1- \alpha)t_1}(\overline{\Omega}).
\end{align}
\end{corollary}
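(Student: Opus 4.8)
The plan is to reduce the statement to the case $t_1 = 0$, which has already been handled by Proposition~\ref{pro_inter1} (the inclusion $\supset$ for $(\tilde{\mathcal{C}}^0, \tilde{\mathcal{C}}^k)_{\alpha,\infty}$), and then bridge the gap between integer exponents $k$ and arbitrary real exponents via the reiteration theorem for the $(\cdot,\cdot)_{\alpha,\infty}$ functor. Concretely, I would first fix integers $k_1 \ge t_1$ and $k_2 > t_2$ and observe that $\tilde{\mathcal{C}}^{t_i}(\overline{\Omega})$ sits, up to equivalence of norms, as an interpolation space $(\tilde{\mathcal{C}}^0(\overline{\Omega}), \tilde{\mathcal{C}}^{k}(\overline{\Omega}))_{\theta_i,\infty}$ for a suitable integer $k \ge \max\{k_1,k_2\}$ and suitable $\theta_i = t_i/k \in [0,1)$; this is where Proposition~\ref{pro_inter1} together with a matching reverse inclusion (Proposition~\ref{pro_interbeta0} in the range where the product exponent is $<1$, and an easy iteration of the argument there — or a direct appeal to the classical description of $\mathcal{C}^t$ spaces as interpolation spaces, transported through the extension operator $E$ of Lemma~\ref{ex_extend_holderfunction} and the fact that $E$ respects the vanishing-on-$\partial\Omega$ condition) does the work.

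Second, I would invoke the standard reiteration (stability) property of the real interpolation method: for Banach couples one has
\begin{align*}
\big( (A_0,A_1)_{\theta_1,\infty}, (A_0,A_1)_{\theta_2,\infty} \big)_{\alpha,\infty} = (A_0,A_1)_{(1-\alpha)\theta_1 + \alpha\theta_2, \infty}
\end{align*}
with equivalence of norms (see \cite{Triebel} or \cite{Lunardi}). Applying this with $A_0 = \tilde{\mathcal{C}}^0(\overline{\Omega})$, $A_1 = \tilde{\mathcal{C}}^k(\overline{\Omega})$, $\theta_i = t_i/k$, the left-hand side is (up to equivalent norms, using the first step) $(\tilde{\mathcal{C}}^{t_1}(\overline{\Omega}), \tilde{\mathcal{C}}^{t_2}(\overline{\Omega}))_{\alpha,\infty}$, while the right-hand side is $(\tilde{\mathcal{C}}^0(\overline{\Omega}), \tilde{\mathcal{C}}^k(\overline{\Omega}))_{s/k,\infty}$ with $s := (1-\alpha)t_1 + \alpha t_2$, which by Proposition~\ref{pro_inter1} contains $\tilde{\mathcal{C}}^{s}(\overline{\Omega})$. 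Chasing the inclusions and noting that only one direction ($\supset$) is asserted in the corollary, this yields exactly $\tilde{\mathcal{C}}^{\alpha t_2 + (1-\alpha)t_1}(\overline{\Omega}) \subset (\tilde{\mathcal{C}}^{t_1}(\overline{\Omega}), \tilde{\mathcal{C}}^{t_2}(\overline{\Omega}))_{\alpha,\infty}$ as continuous inclusion of Banach spaces.

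The main obstacle I anticipate is the first step: justifying that $\tilde{\mathcal{C}}^{t}(\overline{\Omega})$ is, with equivalent norm, a genuine real-interpolation space of the couple $(\tilde{\mathcal{C}}^0, \tilde{\mathcal{C}}^k)$ — that is, obtaining the reverse inclusion $(\tilde{\mathcal{C}}^0(\overline{\Omega}), \tilde{\mathcal{C}}^k(\overline{\Omega}))_{t/k,\infty} \subset \tilde{\mathcal{C}}^t(\overline{\Omega})$ for all non-integer $t < k$, not just for $\alpha k < 1$ as in Proposition~\ref{pro_interbeta0}. The natural route is to remove the restriction $\alpha k < 1$ in Proposition~\ref{pro_interbeta0} by the same finite-difference argument of Lemma~\ref{le_holder_norm}: for $t = \alpha k$ with $[t] = p$, one bounds $\Delta_h^{p+1} D^{\,p}\!f$ (or equivalently works with $\|\cdot\|_{t - [t], \Delta, \cdot}$ applied to derivatives) in terms of $K(|h|^{k}, f; \tilde{\mathcal{C}}^0, \tilde{\mathcal{C}}^k)$, splitting $f = g_0 + g_1$ and using Taylor expansion on $g_1$ as before; the vanishing on $\partial\Omega$ is preserved throughout because all the operations (differences, the extension $E$, restriction) respect it. Once that reverse inclusion is available in full generality, the reiteration step is purely formal. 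Alternatively, if one prefers to keep the exposition light, one can cite \cite[Thms.~2.7.2, 4.5.1--4.5.2]{Triebel} for the identification of $\mathcal{C}^t(\overline{\Omega})$ as an interpolation space and then pass to the subspace $\tilde{\mathcal{C}}^t$ using that the trace map $f \mapsto f|_{\partial\Omega}$ and the extension operator $E$ interact compatibly with interpolation; I would present the self-contained finite-difference argument, since the paper has already set up Lemma~\ref{le_holder_norm} precisely for this purpose.
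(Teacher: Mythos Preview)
Your approach via the reiteration theorem is the same as the paper's in spirit, but the paper takes a shorter path that sidesteps exactly the obstacle you flag. Instead of identifying \emph{both} endpoints $\tilde{\mathcal{C}}^{t_1}$ and $\tilde{\mathcal{C}}^{t_2}$ as interpolation spaces of a common couple $(\tilde{\mathcal{C}}^0,\tilde{\mathcal{C}}^k)$ and then invoking the full two-sided reiteration formula, the paper treats only the special case $t_1=\beta\in[0,1)$ and $t_2=k\in\N^*$ (which is all that is actually used later, with $\beta_0\in(0,1)$ and $k=2$), and applies the one-sided reiteration identity
\[
\big((A_0,A_1)_{\theta,\infty},\,A_1\big)_{\alpha,\infty}=(A_0,A_1)_{(1-\alpha)\theta+\alpha,\infty}
\]
with $A_0=\tilde{\mathcal{C}}^0(\overline\Omega)$, $A_1=\tilde{\mathcal{C}}^k(\overline\Omega)$, $\theta=\beta/k$. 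Since $\theta k=\beta<1$, Propositions~\ref{pro_inter1} and~\ref{pro_interbeta0} together give the \emph{equality} $(A_0,A_1)_{\theta,\infty}=\tilde{\mathcal{C}}^\beta(\overline\Omega)$ needed on the left, while the right-hand side contains $\tilde{\mathcal{C}}^{(1-\alpha)\beta+\alpha k}(\overline\Omega)$ by Proposition~\ref{pro_inter1} alone. So no extension of Proposition~\ref{pro_interbeta0} beyond the range $\alpha k<1$ is required. Your route is correct and more general, but buys that generality at the cost of the extra work you anticipated; the paper's choice of endpoints makes that work unnecessary for the application at hand.
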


\begin{proof} For simplicity, we give a proof for $ t_2= k \in \N^*$ and $t_1= \beta \in [0,1).$ The general case can be deduced by using similar arguments.  By a consequence of the reiteration theorem (see \cite[Re. 1.3.7]{Lunardi}), we have the following general formula: 
$$\big((A_0, A_1)_{\theta, \infty}, A_1\big)_{\alpha, \infty}= ( A_0,A_1)_{(1-\alpha)\theta+\alpha, \infty}.$$
Applying the last equality to 
$$A_0=\tilde{\mathcal{C}}^{0}(\overline{\Omega}), A_1= \tilde{\mathcal{C}}^{k}(\overline{\Omega}) \quad  \text{and} \quad \theta= \beta/k$$
and using the fact that $(A_0, A_1)_{\theta, \infty}= \tilde{\mathcal{C}}^{\beta}(\overline{\Omega})$ (by Proposition \ref{pro_interbeta0} and \ref{pro_inter1}), we obtain the desired inclusion. The proof is finished.
\end{proof}

Since $(\R, \R)_{\alpha,\infty}= \R$ for any $\alpha \in (0,1),$ applying Theorem \ref{the_interpolationspace} to 
$$A_0=\tilde{\mathcal{C}}^{t_0}(\overline{\Omega}), \quad A_1=\tilde{\mathcal{C}}^{ t_2}(\overline{\Omega}), \quad B_0= B_1=\R,$$
 and then using Corollary \ref{cor_inter1}, we obtain the following result.

\begin{corollary} \label{cor_interpolationspace} Let $\Omega$ be a bounded open subset of $\R^m$ with smooth boundary. Let $t_0, t_1$ and $t_2$ be three real numbers such that $0 \le t_0 < t_1 < t_2.$  Let $S$ be a bounded linear map from $\tilde{\mathcal{C}}^{t_0}(\overline{\Omega})$ to $\R.$ Then the restriction $S|_{\tilde{\mathcal{C}}^{t_j}(\overline{\Omega})}$ of $S$ to $\tilde{\mathcal{C}}^{t_j}(\overline{\Omega})$ for $j=1$ or $2$ is also a bounded linear map from $\tilde{\mathcal{C}}^{t_j}(\overline{\Omega})$ to $\R$ and 
$$ \|S|_{\tilde{\mathcal{C}}^{t_1}(\overline{\Omega})}\|  \le  c \|S|_{\tilde{\mathcal{C}}^{t_0}(\overline{\Omega})}\|^{t_*}  \|S|_{\tilde{\mathcal{C}}^{ t_2}(\overline{\Omega})}\|^{1- t_*},$$
where $c$ is a constant independent of $S$ and $t_*$ is the unique real number for which $t_1= t_* t_0 + (1-t_*) t_2.$
\end{corollary}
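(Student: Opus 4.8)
The plan is to read the statement off directly from Theorem~\ref{the_interpolationspace} and Corollary~\ref{cor_inter1}, once the correct interpolation parameter has been matched to $t_*$. The only extra ingredient is the trivial remark that $(\R,\R)_{\alpha,\infty}=\R$ isometrically for every $\alpha\in(0,1)$: for a scalar $a$ one computes $K(t,a;\R,\R)=\min\{1,t\}\,|a|$, hence $\sup_{t>0}t^{-\alpha}K(t,a;\R,\R)=|a|$. Thus $\R$, with $B_0=B_1=\R$, is a legitimate target for the interpolation functor, and a bounded linear functional on an interpolation space is exactly the kind of object we want to produce. The boundedness assertion itself is immediate: for $s'\le s$ the inclusion $\tilde{\mathcal{C}}^{s}(\overline{\Omega})\hookrightarrow\tilde{\mathcal{C}}^{s'}(\overline{\Omega})$ is continuous (on the bounded set $\overline{\Omega}$ a $\mathcal{C}^{s}$-norm controls a $\mathcal{C}^{s'}$-norm, and the vanishing-at-$\partial\Omega$ condition is preserved), so $S|_{\tilde{\mathcal{C}}^{t_j}(\overline{\Omega})}$ is the composition of $S$ with a continuous inclusion, hence bounded, for $j=1,2$.

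For the quantitative estimate I would set $\alpha:=(t_1-t_0)/(t_2-t_0)\in(0,1)$, which is precisely $1-t_*$, so that $\alpha t_2+(1-\alpha)t_0=t_1$. By Corollary~\ref{cor_inter1}, applied with smoothness exponents $t_0<t_2$ and weight $\alpha$, there is a constant $c_0=c_0(\Omega,t_0,t_1,t_2)$, independent of $S$, with
\[
\|f\|_{(\tilde{\mathcal{C}}^{t_0}(\overline{\Omega}),\,\tilde{\mathcal{C}}^{t_2}(\overline{\Omega}))_{\alpha,\infty}}\ \le\ c_0\,\|f\|_{\tilde{\mathcal{C}}^{t_1}(\overline{\Omega})}\qquad\text{for all } f\in\tilde{\mathcal{C}}^{t_1}(\overline{\Omega}).
\]
Then I apply Theorem~\ref{the_interpolationspace} with $A_0=\tilde{\mathcal{C}}^{t_0}(\overline{\Omega})$, $A_1=\tilde{\mathcal{C}}^{t_2}(\overline{\Omega})$ (both continuously embedded in the Hausdorff space $\mathcal{A}:=\tilde{\mathcal{C}}^{t_0}(\overline{\Omega})$), $B_0=B_1=\mathcal{B}=\R$, and the operator $S$, obtaining that $S$ restricts to a bounded functional on $(A_0,A_1)_{\alpha,\infty}$ with
\[
\big\|S|_{(A_0,A_1)_{\alpha,\infty}}\big\|\ \le\ \|S|_{A_0}\|^{1-\alpha}\,\|S|_{A_1}\|^{\alpha}\ =\ \|S|_{\tilde{\mathcal{C}}^{t_0}(\overline{\Omega})}\|^{t_*}\,\|S|_{\tilde{\mathcal{C}}^{t_2}(\overline{\Omega})}\|^{1-t_*}.
\]
Composing the two displays, for $f\in\tilde{\mathcal{C}}^{t_1}(\overline{\Omega})$ one gets $|S(f)|\le c_0\,\|S|_{\tilde{\mathcal{C}}^{t_0}}\|^{t_*}\,\|S|_{\tilde{\mathcal{C}}^{t_2}}\|^{1-t_*}\,\|f\|_{\tilde{\mathcal{C}}^{t_1}}$, which is the asserted inequality with $c=c_0$, uniform in $S$.

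There is essentially no obstacle left at this stage: all the analytic weight — the description of the interpolation spaces of $\tilde{\mathcal{C}}^{t}(\overline{\Omega})$ incorporating the boundary-vanishing condition, which is exactly what makes these spaces different from the plain $\mathcal{C}^t(\overline{\Omega})$ — has already been carried by Propositions~\ref{pro_inter1} and~\ref{pro_interbeta0} and Corollary~\ref{cor_inter1}. The single thing to watch is bookkeeping: one must verify $1-\alpha=t_*$ so that the exponents produced by Theorem~\ref{the_interpolationspace} land as $\|S|_{\tilde{\mathcal{C}}^{t_0}}\|^{t_*}\|S|_{\tilde{\mathcal{C}}^{t_2}}\|^{1-t_*}$ rather than swapped, and that the constant $c_0$ coming from Corollary~\ref{cor_inter1} depends only on the exponents and on $\Omega$, so the final $c$ does not depend on $S$. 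Feeding this into Proposition~\ref{pro_sosanhdistancevoibien} via a partition of unity then completes that proposition as well.
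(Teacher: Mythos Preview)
Your proof is correct and follows exactly the approach sketched in the paper: apply Theorem~\ref{the_interpolationspace} with $A_0=\tilde{\mathcal{C}}^{t_0}(\overline{\Omega})$, $A_1=\tilde{\mathcal{C}}^{t_2}(\overline{\Omega})$, $B_0=B_1=\R$ (using $(\R,\R)_{\alpha,\infty}=\R$), then invoke Corollary~\ref{cor_inter1} to embed $\tilde{\mathcal{C}}^{t_1}(\overline{\Omega})$ into the interpolation space. You have simply supplied the bookkeeping details (the choice $\alpha=1-t_*$, the verification that the exponents land correctly, and the remark on why the restrictions are bounded) that the paper leaves implicit.
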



\section{Analytic discs partly attached to a generic submanifold} \label{sec_analyticdisc}

Firstly we fix some notations which will be valid throughout the rest of paper. For every Riemannian smooth manifold $Y,$ any $ a\in Y$ and $r \in \R^+,$ we denote by $\B_Y(a,r)$ the ball of radius $r$ centered at $a$ of $Y$ and by $vol_Y$ the Riemannian volume form of $Y$. When $Y= \R^m$ for some $m \in \N$ with the Euclidean metric, we write $\B_m(a,r)$ instead of $\B_Y(a,r)$ and $\B_m$ instead of $\B_m(0,1).$ In particular, when $Y=\C  \simeq   \R^2$ and $a=0,$  we put $\D_r:= \B_2(0,r)$ and $\D:= \B_2(0,1).$ For every $m \in\N^*,$  we identify $\C^m$ with $\R^{2m}$ via the formula $\C^m= \R^m+ i \R^m.$

 Let $\partial \D$ be the boundary of $\D$ and $\partial^+ \D:= \{\xi \in \D: \Re \xi \ge 0\}.$  We sometimes identify $\xi \in \D$ with $\theta \in (-\pi, \pi]$ by letting $\xi= e^{i\theta}.$ \emph{An analytic disc} $f$ in $X$ is a holomorphic mapping from $\D$ to $X$ which is continuous up to the boundary $\partial \D$ of $\D.$ For an interval $I \subset \partial \D,$ $f$ \emph{is said to be $I$-attached to a subset $E \subset X$} if $f(I) \subset E.$ When $I= \partial^+ \D,$ an analytic disc $I$-attached to $E$ is said to be \emph{half-attached to $E.$}  

Let $K$ be a generic immersed  $\mathcal{C}^3$ submanifold of $X.$ Observe that the dimension of $K$ is at least $n.$ \emph{Throughout the paper, we  only consider the case where $\dim K=n$, hence its codimension $d$ equals $n$.} This is in fact the most interesting case and the general case will be easily deduced from it. In Section \ref{sec_superpotential}, we will explain the necessary modifications to get Theorem \ref{the_MAgeneric2} when $\dim K >n.$

Our goal is to for each $a \in K$ construct a $\mathcal{C}^{2,1/2}$-differentiable family of analytic discs partly attached to $K$ which covers an open neighborhood of $a$ in $X.$ It should be noted that any family of discs partly attached to $K$ degenerates near $K$ due to its attachment to $K.$ Controlling such behaviour around $K$ is actually the key point in this section.  We also need that the part of this family lying in $K$ must cover an open neighborhood of $0$ in $K.$ Constructing analytic discs is an important tool in Cauchy-Riemann geometry. Generally, one uses a suitable Bishop-type equation together with a choice of initial data depending on situations to obtain the desired result. The reader may also consult \cite{Baouendi_Ebenfelt_Rothschild,MerkerPorten,MerkerPorten2} and references therein for more information. In what follows, we will apply the same strategy combining with the ideas from \cite{Vu_feketepoint}. 

The following local coordinates are frequently used in the Cauchy-Riemann geometry.
 
\begin{lemma} \label{pro_localcoordinates} Through every point $a$ of $K,$ there exist local holomorphic coordinates $(W, \mathbf{z})$ of $X$ around $a$ such that in that local coordinates,  the point $a$ is the origin and  $K\cap W$ is the graph over $\B_n$ of a $\mathcal{C}^3$ map $h$ from $\overline{\B}_n$ to $\R^n$ which satisfies $D^j h(0)=0$ with $j=0,1,2,$ where $Dh$ denotes the differential of $h.$ Moreover, $\|h\|_{\mathcal{C}^{3}}$ is bounded uniformly in $a \in \tilde{K}.$  
\end{lemma}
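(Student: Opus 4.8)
The statement to prove is Lemma~\ref{pro_localcoordinates}: through every point $a\in K$ there are local holomorphic coordinates in which $a$ is the origin, $K$ is a graph over $\B_n$ of a $\mathcal{C}^3$ map $h$ with $D^jh(0)=0$ for $j=0,1,2$, and $\|h\|_{\mathcal{C}^3}$ is bounded uniformly over $a\in\tilde K$.

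\medskip

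\textbf{Proof proposal.}

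The plan is to produce the desired coordinates in three stages: first normalize so that $a$ is the origin and the tangent space of $K$ at $a$ is the ``standard'' totally real $n$-plane $\R^n\subset\C^n$; then use the generic CR hypothesis to see that, after this normalization, $K$ is a graph of a map $h$ over a ball in $\R^n$; and finally apply a quadratic holomorphic change of coordinates (a shear) to kill the first-order and second-order Taylor coefficients of $h$ at the origin. Throughout one keeps track of norms to get the claimed uniformity.

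First I would fix, once and for all, a finite atlas of holomorphic charts of $X$ whose domains cover the compact set $\tilde K$, together with the immersion defining $K$; all constants below will depend only on this fixed data, i.e.\ on $(X,K,\omega)$, which is exactly what the uniform bound over $a\in\tilde K$ requires. Given $a\in\tilde K$, choose one chart from the atlas containing $a$, translate so $a\mapsto 0$, and apply a complex-linear map to $\C^n$ so that the real tangent space $T_aK$ (a real $n$-plane in $\C^n=\R^{2n}$, which by the generic CR condition is \emph{maximally totally real}, i.e.\ $T_aK\cap iT_aK=\{0\}$ and hence $T_aK$ is a totally real $n$-plane) becomes $\R^n=\R^n+i\cdot 0$. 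This is possible because $\mathrm{GL}(n,\C)$ acts transitively on maximally totally real $n$-planes; and since $T_aK$ varies continuously (indeed $\mathcal{C}^2$) with $a$ over the compact $\tilde K$ and is uniformly transverse to $iT_aK$ by compactness, the linear map can be chosen with norm and inverse-norm bounded uniformly in $a$. Writing coordinates $\mathbf{z}=\mathbf{x}+i\mathbf{y}$, the tangent space condition says precisely that near $0$ the projection $K\to\R^n_{\mathbf{x}}$ is a local diffeomorphism, so on a ball $\B_n$ (of radius bounded below uniformly in $a$, again by compactness and the uniform $\mathcal{C}^3$ bound on the parametrization of $K$) the manifold $K$ is the graph $\mathbf{y}=h(\mathbf{x})$ of a $\mathcal{C}^3$ map $h\colon\overline{\B}_n\to\R^n$ with $h(0)=0$ and $Dh(0)=0$ (the latter because the graph is tangent to $\R^n$ at $0$), and $\|h\|_{\mathcal{C}^3}\lesssim 1$ by the inverse/implicit function theorem with uniform constants.

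It remains to also kill the second derivative $D^2h(0)$. Write the Taylor expansion $h(\mathbf{x})=\frac12 Q(\mathbf{x},\mathbf{x})+O(|\mathbf{x}|^3)$ where $Q$ is the $\R^n$-valued symmetric bilinear form $D^2h(0)$. I would complexify: let $q\colon\C^n\times\C^n\to\C^n$ be the symmetric $\C$-bilinear extension of $Q$, and perform the holomorphic change of coordinates $\mathbf{z}'=\mathbf{z}+\tfrac{i}{2}q(\mathbf{z},\mathbf{z})$, which is a biholomorphism near $0$ fixing $0$ with derivative the identity there, hence has uniformly bounded inverse on a slightly smaller ball. On $K$, where $\mathbf{z}=\mathbf{x}+ih(\mathbf{x})$, one computes $\mathbf{y}'=\Im\mathbf{z}'=h(\mathbf{x})+\tfrac12\Re\big(q(\mathbf{x}+ih(\mathbf{x}),\mathbf{x}+ih(\mathbf{x}))\big)$; since $h(\mathbf{x})=O(|\mathbf{x}|^2)$ the cross and $h$--$h$ terms are $O(|\mathbf{x}|^3)$, and $\Re q(\mathbf{x},\mathbf{x})=Q(\mathbf{x},\mathbf{x})$ because $Q$ is real, so $\mathbf{y}'=h(\mathbf{x})+\tfrac12 Q(\mathbf{x},\mathbf{x})+O(|\mathbf{x}|^3)=O(|\mathbf{x}|^3)$. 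One must recheck that, after this change, $K$ is still a graph over $\R^n_{\mathbf{x}'}$: the map $\mathbf{x}\mapsto\mathbf{x}'=\Re\mathbf{z}'$ still has derivative close to the identity near $0$ (as $D^2h(0)$ is bounded and we shrink the ball), so by the inverse function theorem with uniform constants it is a diffeo onto a ball $\B_n$ and $K$ is the graph of a new $\mathcal{C}^3$ map, still called $h$, now with $D^jh(0)=0$ for $j=0,1,2$ and $\|h\|_{\mathcal{C}^3}\lesssim 1$. Composing the original holomorphic chart, the linear normalization, and the quadratic shear gives the claimed holomorphic coordinates $(W,\mathbf{z})$.

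The only genuinely delicate point is the \emph{uniformity} in $a\in\tilde K$: one has to argue that every constant produced (the norm of the linearizing map and its inverse, the radius on which the graph representation is valid, the $\mathcal{C}^3$ bound on $h$, the domain of invertibility of the quadratic shear) can be taken independent of $a$. This follows from compactness of $\tilde K$ together with the fixed $\mathcal{C}^3$ immersion defining $K$ and the uniform transversality $T_aK\cap iT_aK=\{0\}$ guaranteed by the generic CR hypothesis on the compact set; I expect this bookkeeping, rather than any single computation, to be the main thing to get right. Everything else is the implicit/inverse function theorem and Taylor expansion applied with care.
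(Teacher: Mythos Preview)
Your approach is correct and is precisely the standard one: the paper's own proof simply cites \cite{Baouendi_Ebenfelt_Rothschild} for the graph representation with $h(0)=Dh(0)=0$ and \cite[Sec.~6.10]{MerkerPorten} for the quadratic holomorphic change of coordinates killing $D^2h(0)$, which is exactly the shear you describe.

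There is, however, a sign slip in your shear. With $\mathbf{z}'=\mathbf{z}+\tfrac{i}{2}q(\mathbf{z},\mathbf{z})$ and $h(\mathbf{x})=\tfrac12 Q(\mathbf{x},\mathbf{x})+O(|\mathbf{x}|^3)$, your own computation gives
\[
\mathbf{y}'=h(\mathbf{x})+\tfrac12\,\Re q(\mathbf{x},\mathbf{x})+O(|\mathbf{x}|^3)=\tfrac12 Q(\mathbf{x},\mathbf{x})+\tfrac12 Q(\mathbf{x},\mathbf{x})+O(|\mathbf{x}|^3)=Q(\mathbf{x},\mathbf{x})+O(|\mathbf{x}|^3),
\]
which is \emph{not} $O(|\mathbf{x}|^3)$. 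The correct shear is $\mathbf{z}'=\mathbf{z}-\tfrac{i}{2}q(\mathbf{z},\mathbf{z})$, after which the two quadratic terms cancel and $\mathbf{y}'=O(|\mathbf{x}|^3)$ as desired. With this fix, everything else (including your uniformity discussion over the compact $\tilde K$) goes through.
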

 
\begin{proof} The existence of such $h$ with $h(0)=Dh(0)=0$ is well-known, see \cite{Baouendi_Ebenfelt_Rothschild} for example. In order to obtain the additional property $D^2 h(0)=0,$ one will need to perform a change of coordinates, we refer to \cite[Sec. 6.10]{MerkerPorten} for details. The proof is finished.
\end{proof}

From now on, fix an arbitrary point $a \in K$ and we confine ourselves to the local chart described in Lemma \ref{pro_localcoordinates}. In other words, we will work on $\C^n$ and
$$K':= \{\mathbf{z}=\mathbf{x}+i h(\mathbf{x} ) \in \C^n :  \mathbf{x} \in \B_n\},$$
where we have $h(0)= Dh(0)=0.$ For most of the time, the last condition is enough for our purposes, we will only need $D^2h(0)=0$ in the proof of  Proposition \ref{pro_volumepshonRnddcnew}.  The property of $h$ yields that there is a constant $c_0$ for which
\begin{align} \label{ine_chuanC3cuahdanhgialuythua3} 
|h(\mathbf{x})| \le c_0 |\mathbf{x}|^2, \quad |Dh(\mathbf{x})| \le c_0 |\mathbf{x}|, 
\end{align}
for every $\mathbf{x} \in \overline{\B}_n.$ 

In this paragraph, we prepare some useful facts about harmonic functions on the unit disc which will be indispensable for studying Bishop-type equations later.   Denote by $z=x+ i y$ the complex variable on $\C$ and by $\xi=e^{i\theta}$ the variable on $\partial \D.$ Let $u_0(\xi)$ be an arbitrary continuous function on $\partial \D.$  Recall that $u_0$ can be extended uniquely to be a harmonic function on $\D$ which is continuous on $\overline{\D}$. Since this correspondence is bijective, without stating explicitly, we will freely identify $u_0$ with its harmonic extension on $\D.$ We will write $u_0(z)=u_0(x+iy)$ to indicate the harmonic extension of $u_0(e^{i\theta}).$ 
It is well-known  that the Cauchy transform of $u_0,$ given by 
$$\mathcal{C}u_0(z):= \frac{1}{2\pi} \int_{-\pi}^{\pi} u_0(e^{i\theta}) \frac{e^{i\theta}+ z}{e^{i\theta}-z} d\theta,$$
is a holomorphic function on $\D$ whose real part is $u_0.$ Let $\mathcal{T}u_0$ be the imaginary part of $\mathcal{C}u_0.$ Decomposing the last formula into the real and imaginary parts, we obtain
\begin{align} \label{equ_harmonicextension}
u_0(z)= \frac{1}{2 \pi} \int_{-\pi}^{\pi} \frac{(1- |z|^2)}{|e^{i\theta}-z|^2} u_0(e^{i\theta}) d\theta.
\end{align}
and 
\begin{align*} 
\mathcal{T}u_0(z)= \frac{1}{2 \pi } \int_{-\pi}^{\pi } \frac{(z e^{- i\theta}- \bar{z} e^{i\theta})}{i |e^{i\theta}-z|^2} u_0(e^{i\theta}) d\theta.
\end{align*}
The function $\mathcal{T} u_0$ is harmonic on $\D$ but is not always continuous up to the boundary of $\D.$ Let $k$ be an arbitrary natural number and let  $\beta$ be an arbitrary number in $(0,1).$ A result of Privalov (see \cite[Th. 4.12]{MerkerPorten}) implies that if $u_0$ belongs to  $\mathcal{C}^{k,\beta}(\partial\D)$, then $\mathcal{T}u_0$ is continuous up to $\partial \D$ and  $\|\mathcal{T}u_0\|_{\mathcal{C}^{k,\beta}(\partial \D)}$  is bounded by  $ \|u_0\|_{\mathcal{C}^{k,\beta}(\partial \D)}$ times a constant independent of $u_0.$ Hence, the linear self-operator of $\mathcal{C}^{k, \beta}(\partial \D)$ defined by sending $u_0$ to the  restriction of $\mathcal{T}u_0$ onto $\partial \D$ is bounded and called  \emph{the Hilbert transform}. For simplicity, we also denote it by $\mathcal{T}.$   For our later purposes,  it is convenient to use a modified version $\mathcal{T}_1$ of $\mathcal{T}$ defined by
 $$\mathcal{T}_1 u_0:= \mathcal{T}u_0 - \mathcal{T}u_0(1).$$
Hence we  always have $\mathcal{T}_1 u_0(1)=0$ and
\begin{align}\label{eq_daohamcuaT}
\partial_{\theta}\mathcal{T}_1 u_0=\partial_{\theta} \mathcal{T} u_0= \mathcal{T}\partial_{\theta} u_0,
\end{align}
provided that  $u_0 \in \mathcal{C}^{1, \beta}(\partial \D)$ with $\beta \in (0,1),$  see \cite[p.121]{MerkerPorten} for a proof.  The boundedness of $\mathcal{T}$ on $\mathcal{C}^{k, \beta}(\partial \D)$  implies that there is a constant $C_{k,\beta}>1$ such that for any $v \in \mathcal{C}^{k,\beta}(\partial \D)$ we have  
\begin{align} \label{ine_chuancuaT}
\|\mathcal{T}_1 v\|_{\mathcal{C}^{k,\beta}(\partial \D)} \le C_{k,\beta} \|v\|_{\mathcal{C}^{k,\beta}(\partial \D)}.
\end{align}
Extending $u_0, \mathcal{T}_1 u_0$ harmonically to $\D.$ By construction, the function $f(z):= -\mathcal{T}_1 u_0(z) + i u_0(z)$ is holomorphic on  $\D$ and continuous on $\overline{\D}$ provided that $u_0$ is in $\mathcal{C}^{\beta}(\partial \D)$ with $0< \beta <1.$ By \cite[Th. 4.2]{MerkerPorten2}, $\|f\|_{\mathcal{C}^{k,\beta}(\overline{\D})}$ is bounded by $ \|f\|_{\mathcal{C}^{k,\beta}(\partial \D)}$ times a constant depending only on $(k,\beta).$
Since $\| u_0\|_{\mathcal{C}^{k,\beta}(\overline{\D})} \le \|f\|_{\mathcal{C}^{k,\beta}(\overline{\D})}$ and $\|f\|_{\mathcal{C}^{k,\beta}(\partial\D)}\le  (1+ C_{k,\beta}) \|u_0\|_{\mathcal{C}^{k,\beta}(\partial \D)}$ by (\ref{ine_chuancuaT}), we have
\begin{align} \label{ine_danhgiachuaCkcuauvoibien}
\| u_0\|_{\mathcal{C}^{k,\beta}(\overline{\D})} \le C'_{k,\beta} \| u_0\|_{\mathcal{C}^{k,\beta}(\partial \D)}, 
\end{align}
for  some constant $C'_{k,\beta}$ depending only on $(k,\beta).$ A direct consequence of the above inequalities is that when $u_0$ is smooth on $\partial \D$, the associated holomorphic function $f$ is also smooth on $\overline{\D}.$

\begin{lemma} \label{le_version8existenceu_MA} There exist a function  $u_0 \in \mathcal{C}^{\infty}(\partial \D)$ and two positive constants $(\theta_{u_0}, c_{u_0})$ such that  $u_0(e^{i\theta}) = 0$ for  $\theta \in [-\theta_{u_0}, \theta_{u_0}] \subset [-\pi/2, \pi/2]$ and  $\partial_x u_0(1)=-1$ and $u_0(z) >c_{u_0}(1- |z|)$ for every $z \in \D.$ 
 \end{lemma}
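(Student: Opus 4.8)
The plan is to construct $u_0$ by first prescribing a model boundary datum on a small arc near $\xi=1$ and then patching. The target properties are: smoothness on $\partial\D$; vanishing on a symmetric arc $[-\theta_{u_0},\theta_{u_0}]$ contained in $[-\pi/2,\pi/2]$; the normalization $\partial_x u_0(1)=-1$ for the harmonic extension; and the Hopf-type lower bound $u_0(z)>c_{u_0}(1-|z|)$ on $\D$. The natural first move is to pick any nonnegative smooth function $g$ on $\partial\D$ which vanishes on $[-\theta_0,\theta_0]$ for some $\theta_0<\pi/2$ and is strictly positive on the complementary arc, and which additionally is not flat to infinite order where it leaves the zero set; then set $u_0=\lambda g$ and adjust the constant $\lambda>0$ at the end to enforce $\partial_x u_0(1)=-1$. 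So the real content is (a) the strict positivity $u_0(z)>0$ for $z\in\D$ with the linear decay rate near $\partial\D$, and (b) checking $\partial_x u_0(1)\neq 0$ with the correct sign so that rescaling is possible.

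For (a), I would use the Poisson representation \eqref{equ_harmonicextension}: since $u_0\ge 0$ on $\partial\D$ and is not identically zero, the strong maximum principle gives $u_0(z)>0$ on $\D$. The quantitative linear bound $u_0(z)\geq c_{u_0}(1-|z|)$ is exactly a Hopf lemma statement, but here it is cleanest to prove it directly from the Poisson kernel: for $z$ near a boundary point $\xi_1=e^{i\theta_1}$ where $u_0>0$, the kernel $\tfrac{1-|z|^2}{|e^{i\theta}-z|^2}$ restricted to a fixed small arc around $\theta_1$ is bounded below by a constant times $(1-|z|)$, uniformly; integrating against the positive part of $u_0$ on that arc yields the bound. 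One must be slightly careful near the endpoints $\pm\theta_{u_0}$ of the zero arc, but since those boundary points have a full neighborhood (in $\partial\D$) on one side where $u_0>0$ and $u_0\in\mathcal{C}^\infty$, the same Poisson-kernel lower bound argument still applies — the infimum $c_{u_0}$ over $z\in\D$ of $u_0(z)/(1-|z|)$ is positive because the only place the ratio could degenerate is as $z\to\partial\D$, and there the local estimate is uniform. This is the step I expect to be the main obstacle: keeping the lower bound uniform as $z$ approaches the whole boundary circle, including the two corner directions $e^{\pm i\theta_{u_0}}$ where $u_0$ transitions from $0$ to positive.

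For (b), note $\xi=1$ lies in the interior of the zero arc $[-\theta_{u_0},\theta_{u_0}]$, so $u_0\equiv 0$ on a neighborhood of $1$ in $\partial\D$; hence the tangential derivative $\partial_\theta u_0(1)=0$, and by the Cauchy–Riemann equations for the holomorphic function $-\mathcal{T}_1u_0+iu_0$ the normal derivative $\partial_r u_0(1)$ equals (up to sign) the tangential derivative of the harmonic conjugate. The key point is that $\partial_r u_0(1)$ cannot vanish: if both $u_0$ and its normal derivative vanished at $1$ with $u_0\ge 0$ and $u_0\not\equiv 0$, this would contradict the Hopf lemma (the inward normal derivative of a nonnegative nonconstant harmonic function at a boundary zero is strictly positive). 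Since $u_0\geq 0$ and $u_0(z)>0$ inside, $\partial_x u_0(1)<0$ strictly (the outward normal at $\xi=1$ is the $+x$ direction, and $u_0$ decreases to its boundary minimum $0$ there). Therefore $\partial_x u_0(1)$ is a strictly negative number depending on $g$; replacing $g$ by $\lambda g$ with $\lambda=-1/\partial_x(g)(1)>0$ rescales this to exactly $-1$, and rescaling by a positive constant preserves smoothness, the vanishing on $[-\theta_{u_0},\theta_{u_0}]$, positivity on $\D$, and the linear lower bound (with $c_{u_0}$ rescaled accordingly). This completes the construction with $\theta_{u_0}:=\theta_0$ and $[-\theta_{u_0},\theta_{u_0}]\subset[-\pi/2,\pi/2]$ guaranteed by the initial choice of $\theta_0$.
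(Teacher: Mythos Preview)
Your approach is correct and is genuinely simpler than the paper's. The paper proceeds in two stages: it first builds an auxiliary harmonic function $u$ vanishing on the whole right half-circle $\partial^+\D$, verifies the linear lower bound only on a small cone $\{z=|z|e^{i\theta}:|\theta|\le\theta_0\}$ via a Taylor expansion of $u$ in the radial direction, and then composes with a Riemann map $\Phi:\D\to\Omega$ (Painlev\'e's theorem) onto a strictly convex subdomain $\Omega\subset\D$ touching $\partial\D$ only along a short arc, finally checking that the composition $u\circ\Phi$ inherits the lower bound globally. You bypass the conformal map entirely by choosing the boundary datum to vanish only on a short arc from the start and invoking the Hopf lemma plus the Poisson representation.

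Two remarks that would tighten your write-up. First, the ``main obstacle'' you flag---uniformity of the lower bound as $z$ approaches the transition points $e^{\pm i\theta_{u_0}}$---is in fact a non-issue and dissolves with one line: since $|e^{i\theta}-z|\le 2$, the Poisson kernel satisfies $\tfrac{1-|z|^2}{|e^{i\theta}-z|^2}\ge \tfrac{1-|z|}{4}$ for all $z\in\D$ and all $\theta$, so integrating against $g\ge 0$ over any fixed arc where $g\ge c>0$ already gives $u_0(z)\ge c'(1-|z|)$ globally. No local analysis near the corners is needed. Second, the non-flatness condition you impose on $g$ (``not flat to infinite order where it leaves the zero set'') plays no role: the Hopf lemma at $\xi=1$ requires only that the harmonic extension be strictly positive in $\D$, which follows from the strong maximum principle as you say. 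You can drop that hypothesis.

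What your approach buys is brevity and the avoidance of Painlev\'e's theorem on boundary regularity of conformal maps. What the paper's construction buys is perhaps a more hands-on verification (explicit Taylor expansions rather than an appeal to Hopf), but at the cost of an extra layer of composition and several paragraphs of checking how the conformal map interacts with distances to the boundary.
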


\begin{proof} Let $u$ be a smooth function on $\partial \D$ vanishing on $\partial^+ \D.$ By Poisson's formula, we have
\begin{align} \label{equ_harmonicextension_MA}
u(z)= \frac{1}{2 \pi} \int_{-\pi}^{\pi} \frac{(1- |z|^2)}{|e^{i\theta}-z|^2} u(e^{i\theta}) d\theta.
\end{align}
Differentiating (\ref{equ_harmonicextension_MA}) gives 
$$\partial_x u(1)=\frac{1}{2\pi} \int_{-\pi}^{\pi} \frac{u(e^{i\theta})}{\cos\theta -1}d\theta.$$
Note that the last integral is well-defined because $u$ vanishes on $\partial^+ \D.$ It is easy to choose a smooth $u$ so that the above integral is equal to $-1$ and $u \equiv 0$ on $\partial^+ \D$ and $u(e^{i\theta})>0$ for $|\theta| \ge 3\pi/2.$ The last property and (\ref{equ_harmonicextension_MA}) show that $u(z)>0$ for every $z \in \D.$

We have chosen $u$ with the property that $\partial_x u(1)=-1$ and $u(z) >0$ for $z \in \D.$ This implies that $\partial_x u(e^{i\theta}) \le -1/2$ for every $\theta \in [-\theta_0, \theta_0] \subset (-\pi/2, \pi/2)$ for $\theta_0$ close enough to $1.$ Since $u$ vanishes on $\partial^+ \D,$ we have 
$$0 = \partial_{\theta} u(e^{i\theta})=- \partial_x u(e^{i\theta}) \sin \theta+ \partial_y u(e^{i\theta}) \cos \theta$$
which yields 
\begin{align} \label{eq_partialxyu}
 \partial_y u(e^{i\theta})=  \partial_x u(e^{i\theta}) \tan \theta
\end{align}
for $\theta \in [-\theta_0, \theta_0].$  Let $z= |z| e^{i\theta} \in \D$ such that $\theta \in [-\theta_0, \theta_0].$ Taylor's expansion for $u$ at $e^{i\theta}$ gives 
\begin{align} \label{eq_Taylorhuongru}
u(|z| e^{i\theta}) &= u(e^{i\theta})+ (|z| \cos \theta - \cos \theta ) \partial_x u(e^{i\theta})+  (|z| \sin \theta - \sin \theta ) \partial_y u(e^{i\theta})+ O\big((1- |z|)^2\big)\\
\nonumber
&=  \frac{(|z|-1) \partial_x u(e^{i\theta})}{\cos \theta}+O\big((1- |z|)^2\big)    \quad  \text{(by (\ref{eq_partialxyu}))}.
\end{align}
By our choice of $\theta_0,$ the last equality gives  
\begin{align} \label{eq_partialxyugiatricuau}
 u(|z|e^{i\theta}) \ge   (1- |z|)/2 - \|u\|_{\mathcal{C}^2(\D)}(1 - |z|)^2 \ge (1 -|z|)/4,
\end{align}
for $|z| \ge 1- 1/4 \|u\|^{-1}_{\mathcal{C}^2(\D)} .$ When $|z| \le 1- 1/4 \|u\|^{-1}_{\mathcal{C}^2(\D)},$ we have $u(z) \ge c$ for some constant $c$ independent of $z.$ This combined with the fact that $(1- |z|) \le 1$ implies that there is a positive constant $c'$ for which $u(z) \ge c' (1- |z|)$ for $|z| \le 1- 1/4 \|u\|^{-1}_{\mathcal{C}^2(\D)}.$ In summary, we can find a positive constant $c'$ for which 
$$u(z) \ge c' (1 -|z|),$$ 
for $z= |z| e^{i\theta} \in \D$ with $\theta \in [-\theta_0, \theta_0].$  

 Now let $\Omega$ be a simply connected subdomain of $\D$ with smooth boundary such that $\Omega$ is strictly convex and $\overline{\Omega} \cap \overline{\D}= [e^{-i \theta_0/2}, e^{i\theta_0/2}].$ 
By Painvel\'e's theorem (see, for example, \cite[Th. 3.1]{Bell} or \cite[Th. 5.3.8]{Krantz_geo_func}), there is a smooth diffeomorphism $\Phi$ from $\overline{\D}$ to $\overline{\Omega}$ which is a biholomorphism from $\D$ to $\Omega$ and $\Phi(1)=1$. Define  $u'_0:= u \circ \Phi$ which is a smooth function on $\overline{\D}$ and  harmonic on $\D.$ We immediately have $u'_0(z) >0$ on $\D.$ 

Since $\Phi(1)=1$ and $\Phi$ sends $\partial \D$ to $\partial \Omega,$ there is a small positive constant $\theta'$ such that $\Phi([e^{- i \theta'_0}, e^{i \theta'_0}])$ is contained in $[e^{-i \theta_0/2}, e^{i\theta_0/2}].$ This yields  $u'_0(e^{i\theta})= 0$ for  $ |\theta| \le \theta_0'$ and $\Re^2 \Phi(e^{i\theta})+ \Im^2 \Phi(e^{i\theta})=1$ on $[e^{- i \theta'_0}, e^{i \theta'_0}].$ Differentiating the last inequality at $\theta'=0$ gives 
$$ \Re \Phi(1) \partial_y  \Re \Phi(1)+ \Im \Phi(1) \partial_y  \Im \Phi(1)=0  $$
which combined with $\Phi(1)=1$ implies that $\partial_y \Re \Phi(1) =0.$ The last equality coupled with the fact that $\Phi$ is holomorphic implies 
$$\det D_{(x,y)} \Phi(1)= \big(\partial_x \Re \Phi(1) \big)^2+ \big(\partial_y \Re \Phi(1) \big)^2=\big(\partial_x \Re \Phi(1) \big)^2.$$
As a result, we have $\partial_x \Re \Phi(1)  \not = 0.$ On the other hand,  since 
$$|\Phi(1)|^2= 1 = \max_{x \in [0,1]} |\Phi(x)|^2,$$
we have 
$$0 \le \partial_x |\Phi(x)|^2 |_{x=1}= \Re \Phi(1) \partial_x  \Re \Phi(1) + \Im \Phi(1) \partial_x \Im \Phi(1)= \partial_x \Re \Phi(1).$$
Hence, one gets $\partial_x \Phi(1) >0.$  Direct computations gives
$$\partial_x u'_0(1)= \partial_x u(1) \partial_x \Re \Phi(1)+ \partial_y u(1) \partial_x \Im \Phi(1)=-\partial_x \Re \Phi(1)<0.$$ 
Define $u_0:= u'_0/ \partial_x \Re \Phi(1).$ We obtain $\partial_x u_0(1) = -1$ and $u_0(e^{i\theta}) =0$ for  $|\theta| \le \theta_0'.$  It remains to check that 
\begin{align} \label{ine_caicuoidayu0}
u_0(z) \ge c''(1- |z|),
\end{align}
 for some constant $c''>0.$  Since $u_0(z)>0$ and $u(z) >0$ on $\D$ and $\partial \Omega  \cap \overline{\D}= [e^{-i \theta_0/2}, e^{i \theta_0/2}],$ it is enough to check (\ref{ine_caicuoidayu0}) for $z$ so that $w=\Phi(z)$ is close to $[e^{-i \theta_0/2}, e^{i \theta_0/2}].$ Let  $w= \Phi(z) \in \Omega$  close to $[e^{-i \theta_0/2}, e^{i \theta_0/2}].$ By our choice of $\Omega,$ the axe $Ow$ is transverse to $\partial \Omega$ at a unique point $w'= \Phi(z')$ for $z' \in \partial \D.$ The $\mathcal{C}^1$- boundedness of $\Phi^{-1}$ imply that $|w- w'| \ge  c_1 |z-z'|$  for some constant $c_1$ independent of $(z,z').$ On the other hand, since $\Omega  \subset \D,$ we have $|w- w'| \le 1 - |w|.$ Hence, 
$$1 - |w| \ge c_1 |z- z'| \ge c_1(1 -|z|),$$
because $z' \in \partial \D.$ Write $w= |w| e^{i\theta_w}.$ Note that $ \theta_w \in (\theta_0, \theta_0)$ if $w$ is close enough to $[e^{-i \theta_0/2}, e^{i \theta_0/2}].$ We deduce that
$$u'_0(z)= u(\Phi(z))= u(w) \ge c'(1 - |w|) \ge c' c_1 (1 - |z|).$$   
Hence, one gets (\ref{ine_caicuoidayu0}). The proof is finished.
\end{proof}

We are now ready to introduce the Bishop equation which allows us to construct the promised family of analytic discs. Let  $u_0$ be a function described in Lemma \ref{le_version8existenceu_MA} and $\theta_{u_0}$ be the constant there.  Let $\boldsym{\tau}_1, \boldsym{\tau}_2 \in \overline{\B}_{n-1} \subset \R^{n-1}.$ Define $\boldsym{\tau}^*_1:= (1, \boldsym{\tau}_1) \in \R^n$ and $\boldsym{\tau}^*_2:= (0, \boldsym{\tau}_1) \in \R^n$ and $\boldsym{\tau}:=(\boldsym{\tau}_1,\boldsym{\tau}_2).$ Let $t$ be a positive number in $(0,1)$ which plays a role as a scaling parameter in the equation (\ref{Bishoptype}) below. 

In order to construct an analytic disc partly attached to $K$, it suffices to find a  map 
$$U: \partial \D \rightarrow \B_n \subset \R^n,$$
which is H\"older continuous, satisfying the following Bishop-type equation 
\begin{align}\label{Bishoptype}
U_{\boldsym{\tau},t}(\xi)= t\boldsym{\tau}^*_2 - \mathcal{T}_1\big(h(U_{\boldsym{\tau},t}) \big)(\xi) - t\mathcal{T}_1 u_0(\xi) \, \boldsym{\tau}^*_1,
\end{align}      
Indeed, suppose that (\ref{Bishoptype}) has a solution. For simplicity,  we use the same notation $U_{\boldsym{\tau},t}(z)$ to denote the harmonic extension of $U_{\boldsym{\tau},t}(\xi)$ to $\D.$  Let $P_{\boldsym{\tau},t}(z)$ be  the harmonic extension of $h\big(U_{\boldsym{\tau},t}(\xi)\big)$ to $\D.$ 
Define
$$F(z, \boldsym{\tau},t) := U_{\boldsym{\tau},t}(z)+ i  P_{\boldsym{\tau},t}(z)+ i  t \, u_0(z) \, \boldsym{\tau}^*_1$$
which is a family of analytic discs parametrized by $(\boldsym{\tau},t).$ For any $\xi \in [e^{-i\theta_{u_0}}, e^{i\theta_{u_0}}],$ the defining formula of $F$ and the fact that $u_0 \equiv 0$ on $ [e^{-i\theta_{u_0}}, e^{i\theta_{u_0}}]$ imply that  
$$F(\xi, \boldsym{\tau},t)=U_{\boldsym{\tau},t}(\xi)+ i  P_{\boldsym{\tau},t}(\xi)=U_{\boldsym{\tau},t}(\xi)+ i h\big(U_{\boldsym{\tau},t}(\xi)\big) \in K.$$
In other words, $F$ is $[e^{-i\theta_{u_0}}, e^{i\theta_{u_0}}]$-attached to $K$. 
In what follows, it is convenient to regard  $U_{\boldsym{\tau},t}(z)$ as a function of the variable $(z,\boldsym{\tau}).$ 

\begin{proposition}\label{pro_BishopequationMA}
There  are a positive number $t_1 \in (0,1)$ and a real number $c_1>0$  satisfying the following property.  For any $t \in (0, t_1]$ and any $\boldsym{\tau} \in \overline{\B}_{n-1}^2,$ the equation (\ref{Bishoptype}) has a unique solution $U_{\boldsym{\tau},t}$ which is  $\mathcal{C}^{2, \frac{1}{2}}$ in $(\xi, \boldsym{\tau})$ and such that
\begin{align}\label{ine_danhgiachuancuaU}
\|D^j_{(\xi,\boldsym{\tau})} U_{\boldsym{\tau},t}\|_{\mathcal{C}^{\frac{1}{2}}(\partial \D)} \le c_1  t,
\end{align} 
for any $\boldsym{\tau} \in \overline{\B}_{n-1}^2$ and $j=0,1$ or $2,$ where $D_{(\xi,\boldsym{\tau})}$ is the differential with respect to both $(\xi, \boldsym{\tau})$  and $D_{(\xi,\boldsym{\tau})}^2:= D_{(\xi,\boldsym{\tau})} \circ D_{(\xi,\boldsym{\tau})}.$  
\end{proposition}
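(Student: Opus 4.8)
The plan is to solve the Bishop-type equation (\ref{Bishoptype}) by the Banach fixed point theorem in a small ball of $\mathcal{C}^{2,1/2}(\partial\D,\R^n)$, with $(\boldsym{\tau},t)$ treated as parameters, and then to extract the joint regularity in $(\xi,\boldsym{\tau})$ and the estimate (\ref{ine_danhgiachuancuaU}) by differentiating the equation. For $U\in\mathcal{C}^{2,1/2}(\partial\D,\R^n)$ taking values in $\B_n$ put
\[
\Phi_{\boldsym{\tau},t}(U)(\xi):=t\,\boldsym{\tau}^*_2-\mathcal{T}_1\big(h(U)\big)(\xi)-t\,\mathcal{T}_1u_0(\xi)\,\boldsym{\tau}^*_1 ,
\]
so that solutions of (\ref{Bishoptype}) are precisely the fixed points of $\Phi_{\boldsym{\tau},t}$. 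The basic tools are the boundedness of the Hilbert transform on $\mathcal{C}^{2,1/2}(\partial\D)$, i.e. (\ref{ine_chuancuaT}) with $(k,\beta)=(2,1/2)$; the smoothness of $u_0$, so that $\mathcal{T}_1u_0$ is smooth; and the bound (\ref{ine_chuanC3cuahdanhgialuythua3}), which expresses that $h$ vanishes to first order at $0$. This last point makes the composition operator $U\mapsto h(U)$ \emph{quadratically small}: writing $h(U)=\int_0^1Dh(sU)\,ds\cdot U$, using $|Dh(\mathbf{x})|\le c_0|\mathbf{x}|$ together with the chain rule, and using that a $\mathcal{C}^3$ map composed with a $\mathcal{C}^{2,1/2}$ map is again $\mathcal{C}^{2,1/2}$, one obtains, whenever $\|U\|_{\mathcal{C}^{2,1/2}(\partial\D)}$ is bounded,
\[
\|h(U)\|_{\mathcal{C}^{2,1/2}(\partial\D)}\lesssim\|U\|_{\mathcal{C}^{2,1/2}(\partial\D)}^2,\qquad
\|h(U)-h(V)\|_{\mathcal{C}^{2,1/2}(\partial\D)}\lesssim\big(\|U\|+\|V\|\big)\|U-V\|_{\mathcal{C}^{2,1/2}(\partial\D)},
\]
since every term produced by differentiating $h(U)$, respectively $h(U)-h(V)$, carries at least one factor of $U$, respectively of $U-V$ together with one of $U$ or $V$.

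The fixed point step then goes as follows. First I would choose $c_1$ large enough (larger than twice $1+C_{2,1/2}\|u_0\|_{\mathcal{C}^{2,1/2}(\partial\D)}\sup_{\boldsym{\tau}}|\boldsym{\tau}^*_1|$, say, and using $|\boldsym{\tau}^*_1|,|\boldsym{\tau}^*_2|\le\sqrt 2$) so that the two terms of $\Phi_{\boldsym{\tau},t}(U)$ that are linear in $t$ contribute at most $c_1t/2$ to the $\mathcal{C}^{2,1/2}(\partial\D)$-norm. Then I would pick $t_1\in(0,1)$ so small that, for all $t\le t_1$, the quadratic term above is $\le c_1t/2$ on the ball $B_{c_1t}:=\{\,\|U\|_{\mathcal{C}^{2,1/2}(\partial\D)}\le c_1t\,\}$, the Lipschitz constant of $\Phi_{\boldsym{\tau},t}$ on $B_{c_1t}$ is $\le 1/2$, and every element of $B_{c_1t}$ takes values in $\B_n$ (so that $h(U)$ is defined). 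With these choices $\Phi_{\boldsym{\tau},t}$ is a contraction of the complete metric space $B_{c_1t}$, hence has a unique fixed point $U_{\boldsym{\tau},t}\in B_{c_1t}$; this yields existence, uniqueness within the class of $\mathcal{C}^{2,1/2}(\partial\D)$-maps of sufficiently small norm, and the case $j=0$ of (\ref{ine_danhgiachuancuaU}).

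For the regularity in $(\xi,\boldsym{\tau})$ and the estimates (\ref{ine_danhgiachuancuaU}) for $j=1,2$, I would differentiate (\ref{Bishoptype}). Since $\Phi_{\boldsym{\tau},t}$ depends affinely on $\boldsym{\tau}$ and, because $Dh(0)=0$, the operator $\Id+\mathcal{T}_1\big(Dh(U_{\boldsym{\tau},t})\,\cdot\,\big)$ has non-identity part of $\mathcal{C}^{1,1/2}(\partial\D)$-operator norm $\lesssim t$ — hence is invertible with inverse of norm $\le 2$ once $t$ is small — the implicit function theorem in Banach spaces shows that $\boldsym{\tau}\mapsto U_{\boldsym{\tau},t}$ is of class $C^1$ with values in $\mathcal{C}^{2,1/2}(\partial\D)$, and its derivative $V=\partial_{\boldsym{\tau}_i}U_{\boldsym{\tau},t}$ satisfies the \emph{linear} equation $\big(\Id+\mathcal{T}_1(Dh(U_{\boldsym{\tau},t})\,\cdot\,)\big)V=t\,(\text{data bounded uniformly in }\boldsym{\tau})$, whence $\|V\|_{\mathcal{C}^{1,1/2}(\partial\D)}\lesssim t$. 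Differentiation in $\xi$ is handled in the same way using $\partial_\theta\mathcal{T}_1=\mathcal{T}_1\partial_\theta$ from (\ref{eq_daohamcuaT}). One further round of differentiation of these linear equations — each again of the form $(\Id+O(t))W=\text{data of size}\lesssim t$ — gives that $U_{\boldsym{\tau},t}$ is $\mathcal{C}^{2,1/2}$ jointly in $(\xi,\boldsym{\tau})$ and that $\|D^j_{(\xi,\boldsym{\tau})}U_{\boldsym{\tau},t}\|_{\mathcal{C}^{1/2}(\partial\D)}\lesssim t$ for $j=0,1,2$, uniformly in $\boldsym{\tau}\in\overline{\B}_{n-1}^2$; enlarging $c_1$ and shrinking $t_1$ once more absorbs the implicit constants and yields (\ref{ine_danhgiachuancuaU}).

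I expect the contraction step to be essentially routine once the quadratic smallness of $U\mapsto h(U)$ is in place — this is exactly what the normalization $h(0)=Dh(0)=0$ from Lemma \ref{pro_localcoordinates} is designed to provide. The genuine work, and the main obstacle, is the last step: verifying that the parametrized fixed point is $\mathcal{C}^{2,1/2}$ \emph{jointly} in $(\xi,\boldsym{\tau})$ — rather than merely $\xi$-wise with continuous dependence on $\boldsym{\tau}$ — and that all constants produced along the chain of differentiations remain uniform as $\boldsym{\tau}$ ranges over the compact set $\overline{\B}_{n-1}^2$ and as the base point $a$ ranges over $\tilde K$ (the latter following from the uniform bound on $\|h\|_{\mathcal{C}^3}$ in Lemma \ref{pro_localcoordinates}).
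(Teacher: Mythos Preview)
Your proposal is correct. The paper does not give its own proof but simply cites a general result of Tumanov (and a simplified version in an earlier paper of the author); the contraction-mapping argument in $\mathcal{C}^{2,1/2}(\partial\D,\R^n)$ followed by differentiation of the fixed-point equation that you outline is precisely the standard method those references employ, so your approach coincides with the one the paper defers to.
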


\begin{proof}  This is a direct consequence of a general result due to Tumanov, see  \cite[Th. 4.19]{MerkerPorten2} or see \cite[Pro. 4.2]{Vu_feketepoint} for a more simple proof adapted to our present situation.
\end{proof}

Let $U_{\boldsym{\tau},t}$ be the unique solution of (\ref{Bishoptype}). As above we also use $U_{\boldsym{\tau},t}(z)$ to denote its harmonic extension to $\D.$ Let $F(z, \boldsym{\tau},t)$ and $P_{\boldsym{\tau},t}$  be as above.  Our goal is to study the behaviour of the image of the family $F(\cdot, \boldsym{\tau},t)$ near  $K,$ or in other words when $z$ is close to $[e^{-i\theta_{u_0}}, e^{i\theta_{u_0}}] \subset \partial \D.$

\begin{lemma}\label{le_danhgiadaohamcuaP'tauMA} There exists a constant $c_2$ so that for every $t \in (0, t_1]$ and every $(z,\boldsymbol{\tau}) \in \overline{\D} \times \overline{\B}_{n-1}^2,$ we have
\begin{align}\label{ine_danhgiachuancuaP'2}
\| D^j_{(z,\boldsym{\tau})} U_{\boldsymbol{\tau},t}(z)\| \le c_2 t \quad  \text{and} \quad  \| D^j_{(z,\boldsym{\tau})} P_{\boldsymbol{\tau},t}(z)\| \le c_2 t^2,
\end{align} 
for $j=0,1,2.$
\end{lemma}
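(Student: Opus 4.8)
The plan is to derive \eqref{ine_danhgiachuancuaP'2} directly from the estimates on $\partial\D$ provided by Proposition \ref{pro_BishopequationMA}, using the Poisson extension together with the norm comparison \eqref{ine_danhgiachuaCkcuauvoibien}. First I would treat $U_{\boldsym{\tau},t}$: since it solves \eqref{Bishoptype}, the boundary estimate \eqref{ine_danhgiachuancuaU} gives $\|D^j_{(\xi,\boldsym{\tau})}U_{\boldsym{\tau},t}\|_{\mathcal{C}^{1/2}(\partial\D)}\le c_1 t$ for $j=0,1,2$. Fixing $\boldsym{\tau}$, the harmonic extension of $U_{\boldsym{\tau},t}(\xi)$ is exactly $U_{\boldsym{\tau},t}(z)$, and by \eqref{ine_danhgiachuaCkcuauvoibien} (applied with $k=2$, $\beta=1/2$) the $\mathcal{C}^{2,1/2}(\overline{\D})$-norm of this extension is bounded by a constant times its $\mathcal{C}^{2,1/2}(\partial\D)$-norm; the $\boldsym{\tau}$-derivatives commute with the (linear, $\boldsym{\tau}$-independent) Poisson operator, so the same bound holds for all mixed derivatives $D^j_{(z,\boldsym{\tau})}$, $j=0,1,2$. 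In particular $\|D^j_{(z,\boldsym{\tau})}U_{\boldsym{\tau},t}(z)\|\le c_2 t$ on $\overline{\D}\times\overline{\B}_{n-1}^2$, which is the first inequality.

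Next I would handle $P_{\boldsym{\tau},t}$, the harmonic extension of $h(U_{\boldsym{\tau},t}(\xi))$. The point here is the quadratic vanishing of $h$ at $0$ recorded in \eqref{ine_chuanC3cuahdanhgialuythua3}: since $|U_{\boldsym{\tau},t}(\xi)|\lesssim t$ on $\partial\D$ by the $j=0$ case, we get $|h(U_{\boldsym{\tau},t}(\xi))|\le c_0|U_{\boldsym{\tau},t}(\xi)|^2\lesssim t^2$. For the derivatives, I would differentiate $h(U_{\boldsym{\tau},t})$ by the chain rule: a first derivative in $(\xi,\boldsym{\tau})$ produces $Dh(U_{\boldsym{\tau},t})\cdot D_{(\xi,\boldsym{\tau})}U_{\boldsym{\tau},t}$, and since $|Dh(U_{\boldsym{\tau},t})|\le c_0|U_{\boldsym{\tau},t}|\lesssim t$ while $|D_{(\xi,\boldsym{\tau})}U_{\boldsym{\tau},t}|\lesssim t$, this is $\lesssim t^2$; a second derivative yields terms of the form $D^2h(U)\cdot(DU)^2$ and $Dh(U)\cdot D^2U$, both again $\lesssim t^2$ using $|D^2h|\lesssim 1$ (from $\|h\|_{\mathcal{C}^3}$ bounded), $|DU|\lesssim t$, $|D^2U|\lesssim t$, $|Dh(U)|\lesssim t$. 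One also needs the $\mathcal{C}^{1/2}$ (Hölder) part of the second derivatives of $h(U_{\boldsym{\tau},t})$ on $\partial\D$ to be $\lesssim t^2$, which follows the same way from \eqref{ine_danhgiachuancuaU} together with the fact that products and compositions of $\mathcal{C}^{2,1/2}$ functions behave well, and that each factor carries at least one power of $t$ beyond what it would in the unscaled estimate. Thus $\|h(U_{\boldsym{\tau},t})\|_{\mathcal{C}^{2,1/2}(\partial\D)}\lesssim t^2$ (in all variables $(\xi,\boldsym{\tau})$), and applying \eqref{ine_danhgiachuaCkcuauvoibien} again to pass to the harmonic extension on $\overline{\D}$ gives $\|D^j_{(z,\boldsym{\tau})}P_{\boldsym{\tau},t}(z)\|\le c_2 t^2$ for $j=0,1,2$.

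The only mildly delicate point — and the step I expect to require the most care — is the bookkeeping for the $\mathcal{C}^{1/2}$-seminorms of the second derivatives of the composition $h\circ U_{\boldsym{\tau},t}$, i.e.\ checking that \emph{every} term in the Faà di Bruno expansion of $D^2_{(\xi,\boldsym{\tau})}\big(h(U_{\boldsym{\tau},t})\big)$, including its Hölder part, carries a factor $t^2$ rather than merely $t$. This is where the hypothesis $D^jh(0)=0$ for $j=0,1$ (from Lemma \ref{pro_localcoordinates}) is essential: it is precisely the vanishing of $h$ and $Dh$ at the origin, combined with $|U_{\boldsym{\tau},t}|\lesssim t$, that upgrades each occurrence of $h(U)$ or $Dh(U)$ to size $O(t)$ and hence each monomial to $O(t^2)$. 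I would therefore state this chain-rule estimate as the crux and carry it out term by term, after which both inequalities in \eqref{ine_danhgiachuancuaP'2} are immediate. Note the second condition $D^2h(0)=0$ is \emph{not} needed here — consistent with the remark after Lemma \ref{pro_localcoordinates} that it is only used later, in Proposition \ref{pro_volumepshonRnddcnew}.
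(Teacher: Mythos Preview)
Your proposal is correct and follows essentially the same approach as the paper: reduce to boundary estimates via \eqref{ine_danhgiachuaCkcuauvoibien}, use \eqref{ine_danhgiachuancuaU} directly for $U_{\boldsym{\tau},t}$, and for $P_{\boldsym{\tau},t}$ apply the chain rule to $h(U_{\boldsym{\tau},t})$ together with the quadratic vanishing \eqref{ine_chuanC3cuahdanhgialuythua3} to gain the extra factor of $t$. The paper is slightly terser, bounding the $\mathcal{C}^{1/2}(\partial\D)$-norm of each $D^j_{(\xi,\boldsym{\tau})}P_{\boldsym{\tau},t}$ separately rather than packaging everything into a single $\mathcal{C}^{2,1/2}$ estimate, but the content is the same; your observation that $D^2h(0)=0$ is not used here also agrees with the paper.
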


\begin{proof} In view of  (\ref{ine_danhgiachuaCkcuauvoibien}) and (\ref{ine_danhgiachuancuaU}), the first inequality of (\ref{ine_danhgiachuancuaP'2}) is obvious and for the second one,  it is enough to estimate the $\mathcal{C}^{1/2}(\partial \D)$-norms of  $D^j_{(\xi,\boldsym{\tau})} P_{\boldsym{\tau},t}(\xi)$  for $j=0,1,2.$ Since  $P_{\boldsymbol{\tau},t}(\xi)= h\big(U_{\boldsymbol{\tau},t}(\xi)\big)$ on $\partial \D,$ we have 
$$\partial_{\xi}P_{\boldsymbol{\tau},t}(\xi) = Dh\big(U_{\boldsymbol{\tau},t}(\xi)\big) \partial_{\xi}U_{\boldsymbol{\tau},t}(\xi).$$  
This combined with (\ref{ine_chuanC3cuahdanhgialuythua3}) and (\ref{ine_danhgiachuancuaU}) yields that 
$$\|\partial_{\xi}P'_{\mathbf{z},t,\boldsymbol{\tau}}\|_{\mathcal{C}^{1/2}(\partial \D)} \le c_0 \|U_{\boldsymbol{\tau},t}\|_{\mathcal{C}^{1/2}(\partial \D)} \, \|\partial_{\xi}U_{\boldsymbol{\tau},t}\|_{\mathcal{C}^{1/2}(\partial \D)} \le  c_0 c_1 t^2.$$
By similar arguments, we also have $|\partial^j_{\xi}P_{\boldsymbol{\tau}}(\xi)| \lesssim  t^2$ with $j=0,2.$ To deal with the other partial derivatives, observe that for $0 \le j \le 2,$  $D^j_{\boldsymbol{\tau}} P_{\boldsymbol{\tau},t}$ is the harmonic extension of $D^j_{\boldsymbol{\tau}} h \big( U_{\boldsymbol{\tau},t}(\cdot) \big)$ to $\D.$ Hence, in order to estimate $ D^k_{z} D^j_{\boldsymbol{\tau}} P_{\boldsymbol{\tau},t}$  for $0 \le k,j \le 2,$ we can apply the same reasoning as above. Thus the proof is finished. 
\end{proof}

\begin{proposition} \label{pro_corverKtau1fixed} There are  three constants  $t_2 \in (0,t_1],$ $\theta_0  \in (0, \theta_{u_0})$ and $\epsilon_0>0$ such that  for any $\boldsym{\tau}_1 \in \overline{\B}_{n-1}$ and $t \in (0, t_2]$ the map $F(\cdot, \boldsym{\tau}_1,t): [e^{-i\theta_{0}}, e^{i\theta_{0}}] \times \overline{\B}_{n-1} \rightarrow K$ is a diffeomorphism onto its image which contains the graph of $h$ over $\B_n(0,  t \epsilon_0).$  
\end{proposition}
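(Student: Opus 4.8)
The plan is to analyze the map $F(\cdot,\boldsymbol{\tau}_1,t)$ restricted to $[e^{-i\theta_0},e^{i\theta_0}]\times\overline{\B}_{n-1}$ by comparing it to its ``linearization'' at $t=0$ and using the quantitative estimates from Proposition~\ref{pro_BishopequationMA} and Lemma~\ref{le_danhgiadaohamcuaP'tauMA}. First I would write out the components of $F$ on the boundary arc: for $\xi=e^{i\theta}$ with $|\theta|\le\theta_{u_0}$ we have $u_0(\xi)=0$, so $F(\xi,\boldsymbol{\tau}_1,t)=U_{\boldsymbol{\tau},t}(\xi)+ih(U_{\boldsymbol{\tau},t}(\xi))$, which is a point of the graph of $h$ precisely with real part $U_{\boldsymbol{\tau},t}(\xi)\in\B_n\subset\R^n$. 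So understanding $F$ as a map into $K$ amounts to understanding the map $(\theta,\boldsymbol{\tau}_2)\mapsto U_{\boldsymbol{\tau},t}(e^{i\theta})\in\R^n$ (with $\boldsymbol{\tau}_1$ fixed), since the graphing map $\mathbf{x}\mapsto\mathbf{x}+ih(\mathbf{x})$ is a $\mathcal{C}^2$ diffeomorphism onto $K'$. Thus it suffices to show: for $t$ small, $\Psi_t:(\theta,\boldsymbol{\tau}_2)\mapsto \tfrac1t U_{\boldsymbol{\tau},t}(e^{i\theta})$ is a diffeomorphism from $[-\theta_0,\theta_0]\times\overline{\B}_{n-1}$ onto a region of $\R^n$ containing $\B_n(0,\epsilon_0)$, with the image of $U_{\boldsymbol{\tau},t}$ therefore containing $\B_n(0,t\epsilon_0)$.

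The key is to identify the limiting map as $t\to0$. From the Bishop equation (\ref{Bishoptype}), dividing by $t$ and using that $\|h(U_{\boldsymbol{\tau},t})\|$ is $O(t^2)$ (by (\ref{ine_chuanC3cuahdanhgialuythua3}) and (\ref{ine_danhgiachuancuaU}), since $|h(\mathbf{x})|\le c_0|\mathbf{x}|^2$), hence so is its Hilbert transform term, we get
\begin{align*}
\tfrac1t U_{\boldsymbol{\tau},t}(\xi)=\boldsymbol{\tau}^*_2-\mathcal{T}_1 u_0(\xi)\,\boldsymbol{\tau}^*_1 + O(t)
\end{align*}
in $\mathcal{C}^{1/2}(\partial\D)$, and the same with derivatives in $(\xi,\boldsymbol{\tau})$ by Lemma~\ref{le_danhgiadaohamcuaP'tauMA} applied to $P_{\boldsymbol{\tau},t}$. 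So the limiting boundary map is $\Psi_0(\theta,\boldsymbol{\tau}_2)=\boldsymbol{\tau}^*_2-\mathcal{T}_1 u_0(e^{i\theta})\,\boldsymbol{\tau}^*_1=(-\,\mathcal{T}_1 u_0(e^{i\theta}),\ \boldsymbol{\tau}_2-\mathcal{T}_1u_0(e^{i\theta})\,\boldsymbol{\tau}_1)$. Here the crucial input is that $g(\theta):=\mathcal{T}_1 u_0(e^{i\theta})$ has $g(0)=0$ (by construction $\mathcal{T}_1u_0(1)=0$) and nonzero derivative at $\theta=0$: indeed $\partial_\theta\mathcal{T}_1u_0(1)=\partial_\theta\mathcal{T}u_0(1)$ is, up to a harmonic-conjugate relation, $-\partial_x u_0(1)=1\neq0$ (this is exactly why Lemma~\ref{le_version8existenceu_MA} was arranged so that $\partial_x u_0(1)=-1$; one checks $\partial_\theta(\mathcal{T}u_0)(e^{i\theta})=-\partial_r u_0$ along $\partial\D$ via the Cauchy--Riemann equations for $f=-\mathcal{T}_1u_0+iu_0$). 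Consequently the Jacobian of $\Psi_0$ at $(\theta,\boldsymbol{\tau}_2)=(0,0)$ in the coordinates $(\theta,\boldsymbol{\tau}_2)\in\R^n$ is block-triangular with diagonal blocks $g'(0)\neq0$ (from the first $\R$-factor, noting the first component of $\boldsymbol{\tau}^*_2$ is $0$ so the $\theta$-dependence comes only through $-g(\theta)$) and the identity $I_{n-1}$ (from $\boldsymbol{\tau}_2$), hence invertible; in fact it is invertible on a whole neighborhood and $\Psi_0$ restricted to a small $[-\theta_0,\theta_0]\times\overline{\B}_{n-1}$ is a diffeomorphism onto a neighborhood of $0$, covering some $\B_n(0,2\epsilon_0)$.

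Finally I would invoke a quantitative inverse function / stability argument: since $\Psi_t\to\Psi_0$ in $\mathcal{C}^1$ (indeed $\mathcal{C}^2$) uniformly on the compact set $[-\theta_0,\theta_0]\times\overline{\B}_{n-1}$ as $t\to0^+$ — the error being $O(t)$ by the estimates above together with the boundedness of $\mathcal{T}$ on $\mathcal{C}^{1/2}(\partial\D)$ in (\ref{ine_chuancuaT}) — there is $t_2\in(0,t_1]$ so that for all $t\in(0,t_2]$ and all $\boldsymbol{\tau}_1\in\overline{\B}_{n-1}$ the map $\Psi_t$ is still an injective immersion on that set and its image still contains $\B_n(0,\epsilon_0)$. (One uses that the convergence, and the lower bound on $|\det D\Psi_0|$ and on how much the image ``sticks out'', are uniform in $\boldsymbol{\tau}_1$, which holds because all the constants in Proposition~\ref{pro_BishopequationMA} and Lemma~\ref{le_danhgiadaohamcuaP'tauMA} are uniform in $\boldsymbol{\tau}\in\overline{\B}_{n-1}^2$.) Translating back via the graphing map, $F(\cdot,\boldsymbol{\tau}_1,t)$ is a diffeomorphism onto its image in $K$ and that image contains the graph of $h$ over $\B_n(0,t\epsilon_0)$, shrinking $\theta_0$ if necessary so that $\theta_0<\theta_{u_0}$. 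The main obstacle I expect is the computation showing $\partial_\theta\mathcal{T}_1u_0(1)\neq0$ and assembling it into invertibility of $D\Psi_0$ in the right coordinates — i.e.\ correctly bookkeeping the roles of $\boldsymbol{\tau}_1^*=(1,\boldsymbol{\tau}_1)$ versus $\boldsymbol{\tau}_2^*=(0,\boldsymbol{\tau}_1)$ and which variables $(\theta,\boldsymbol{\tau}_2)$ we are differentiating in — rather than the soft perturbation step, which is routine once the limit map is understood.
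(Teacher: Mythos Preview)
Your proposal is correct and follows essentially the same route as the paper. Both proofs reduce to showing that the map $(\theta,\boldsym{\tau}_2)\mapsto U_{\boldsym{\tau},t}(e^{i\theta})$ has nondegenerate Jacobian of size $\sim t^n$ at $\theta=0$, using the Cauchy--Riemann relation (to get $\partial_\theta U_{\boldsym{\tau},t}(1)=t\boldsym{\tau}_1^*+O(t^2)$, equivalently your computation $\partial_\theta\mathcal{T}_1u_0(1)=-1$) together with $U_{\boldsym{\tau},t}(1)=t\boldsym{\tau}_2^*$ from the Bishop equation; the only cosmetic difference is that you rescale by $1/t$ and phrase this as a $\mathcal{C}^1$ perturbation of the explicit limit $\Psi_0$, whereas the paper computes the Jacobian of $U_{\boldsym{\tau},t}$ directly.
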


\begin{proof} By Cauchy-Riemann equations, we have 
$$\partial_y U_{\boldsym{\tau},t}(1)= -  t \partial_x u_0(1) \boldsym{\tau}^*_1-  \partial_x P_{\boldsym{\tau},t}(1)= t \boldsym{\tau}^*_1-  \partial_x P_{\boldsym{\tau},t}(1).$$   
The last term is $O(t^2)$ by Lemma \ref{le_danhgiadaohamcuaP'tauMA}. Thus the first component of $\partial_y U_{\boldsym{\tau},t}(1)$ is greater than $t/2$ provided that $t \le t_2$ small enough. A direct computation gives  $\partial_y U_{\boldsym{\tau},t}(1)=\partial_{\theta} U_{\boldsym{\tau},t}(1).$ Consequently, the first component of $\partial_{\theta} U_{\boldsym{\tau},t}(1)$ is greater than $t/2$ for $t \le t_2.$   

 On the other hand,  by (\ref{Bishoptype}), we have $U_{\boldsym{\tau},t}(1)=t \boldsym{\tau}_2^*$ which implies  $\partial_{\boldsym{\tau}_2}U_{\boldsym{\tau},t}(1)$ is a $(n,n-1)$ matrix whose the fist row is $0$ and the other rows form the identity matrix. Combining with the above argument shows that $D_{\boldsym{\tau}_2, \theta}U_{\boldsym{\tau},t}(1)$ is a nondegenerate matrix. This coupled with the fact that $F(e^{i\theta}, \boldsym{\tau}_1,t)= U_{\boldsym{\tau},t}(e^{i\theta})$ for $\theta \in [-\theta_0,\theta_0]$ implies the desired result. The existence of $\epsilon_0$ is obvious. The proof is finished.
\end{proof}

For $a \in \C^n$ and $A \subset \C^n,$ $\dist(a,A)$ denotes the distance from $a$ to $A.$  
  
\begin{proposition} \label{pro_corverCndung Psi} There are  two constants  $t_3 \in (0,t_2],$ $r_0>0$ such that for every $t \in (0, t_3),$  the restriction $F_1$ of  $F$ to $\big(\B_2(1,r_0) \cap \D\big) \times \overline{\B}_{n-1}^2$ is a diffeomorphism onto its image and  for any $(z, \boldsym{\tau}),$  we have 
\begin{align} \label{ine_detDF0}
\big|\det D F_1(z, \boldsym{\tau},t) \big| \gtrsim t^{2n} \big[1- |z|\big]^{n-1}
\end{align}
and
\begin{align} \label{ine_distancetoKhF}
t(1 - |z|) \lesssim \dist\big(F_1(z, \boldsym{\tau},t), K'\big) \lesssim t(1 - |z|).
\end{align}
\end{proposition}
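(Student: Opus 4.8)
The plan is to analyze the map $F(z,\boldsym{\tau},t) = U_{\boldsym{\tau},t}(z) + iP_{\boldsym{\tau},t}(z) + it\,u_0(z)\,\boldsym{\tau}_1^*$ near the point $z=1$ by computing its differential and estimating the degeneration as $z \to \partial\D$. First I would set up coordinates: split the variable $z = x+iy$ and work with the $2n+2(n-1) = 4n-2$ real parameters $(x,y,\boldsym{\tau}_1,\boldsym{\tau}_2)$ mapping to $\C^n \simeq \R^{2n}$; since $\dim \D + \dim \overline{\B}_{n-1}^2 = 2 + 2(n-1) = 2n$, we are looking at a map between spaces of equal real dimension $2n$, so $\det DF_1$ makes sense. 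The key structural observation is that the real part of $F$ is $U_{\boldsym{\tau},t}$, whose boundary values solve the Bishop equation, and whose harmonic extension we already control in $\mathcal{C}^2$ by Lemma~\ref{le_danhgiadaohamcuaP'tauMA}. The heuristic is that $F$ should, to leading order in $t$, look like the ``model'' disc $z \mapsto (t\,\text{something involving } u_0(z),\ t\boldsym{\tau})$, for which the Jacobian degeneration comes entirely from the factor $u_0(z) \asymp 1-|z|$ near $z=1$ together with the $2(n-1)$ flat $\boldsym{\tau}$-directions contributing $t^{2(n-1)}$ and the two $z$-directions contributing $t^2 \cdot (\text{derivative of } u_0)$.

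The main computation is the differential $DF_1$ at a point $(z,\boldsym{\tau},t)$ with $z$ near $1$. I would write $DF_1$ as a block matrix: the $\boldsym{\tau}_2$-derivatives, the $\boldsym{\tau}_1$-derivatives, and the $z$-derivatives ($\partial_x, \partial_y$). Using the Bishop equation~(\ref{Bishoptype}) and the fact that $U_{\boldsym{\tau},t}(1) = t\boldsym{\tau}_2^*$ (so near $z=1$, up to controlled errors, $\partial_{\boldsym{\tau}_2}U_{\boldsym{\tau},t}$ is close to the standard inclusion $\R^{n-1}\hookrightarrow\R^n$ onto the last $n-1$ coordinates, and $\partial_{\boldsym{\tau}_1}$ similarly picks up $t\cdot(\text{first coordinate direction via } \boldsym{\tau}_1^*)$), together with Cauchy--Riemann equations to convert $\partial_y$-derivatives into $\partial_x$-derivatives, one extracts the leading terms. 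The crucial point is that $\partial_z$ of the ``big'' term $it\,u_0(z)\boldsym{\tau}_1^*$ contributes a factor $\asymp t\cdot|\nabla u_0(z)|$ in the first complex coordinate, and one needs $|\nabla u_0(z)| \asymp 1$ near $z=1$ (true since $\partial_x u_0(1) = -1$ by Lemma~\ref{le_version8existenceu_MA}), while the imaginary-part contributions of $U$ and $P$ near $z=1$ are the ones carrying the $(1-|z|)$ vanishing — precisely because on the arc $[e^{-i\theta_{u_0}},e^{i\theta_{u_0}}]$ the disc is attached to $K$, forcing $\Im F$ to be $\mathcal{C}^1$-small of order $1-|z|$ transversally. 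Assembling the block structure: $2(n-1)$ directions give $t^{2(n-1)}$, two more ($\partial_x$ and one combination) give roughly $t$ and $t(1-|z|)^{n-1}$-type factors — here the careful bookkeeping of how the $(1-|z|)$ powers distribute is the delicate part. I expect after a Taylor expansion of $u_0$, $U_{\boldsym{\tau},t}$ around $z=1$ (using the $\mathcal{C}^2$ bounds and the Hopf-type lower bound $u_0(z) \gtrsim 1-|z|$), to obtain $|\det DF_1| \gtrsim t^{2n}(1-|z|)^{n-1}$, with the matching upper bound being easier from the norm estimates alone.

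For the distance estimate~(\ref{ine_distancetoKhF}), the idea is that $K'$ is the graph $\{\mathbf{x} + ih(\mathbf{x})\}$ and $F_1(z,\boldsym{\tau},t) = U_{\boldsym{\tau},t}(z) + i(P_{\boldsym{\tau},t}(z) + tu_0(z)\boldsym{\tau}_1^*)$; the ``foot'' on $K'$ is obtained by keeping the real part $\mathbf{x} = U_{\boldsym{\tau},t}(z)$ and comparing the imaginary parts. Since $P_{\boldsym{\tau},t}$ is the harmonic extension of $h(U_{\boldsym{\tau},t})$ and $h$ has $\mathcal{C}^2$ control, the difference $P_{\boldsym{\tau},t}(z) - h(U_{\boldsym{\tau},t}(z))$ for $z\in\D$ is controlled by the modulus of continuity of the boundary data times a factor measuring distance to the boundary — using that this difference vanishes on the arc $[e^{-i\theta_{u_0}},e^{i\theta_{u_0}}]$ and Lemma~\ref{le_danhgiadaohamcuaP'tauMA}'s bound $\|DP\| \lesssim t^2$, one gets it is $O(t^2(1-|z|))$, a lower-order term. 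The dominant term in $\dist(F_1(z,\boldsym{\tau},t),K')$ is then $|t\,u_0(z)\,\boldsym{\tau}_1^*|$ in the first imaginary coordinate — but $\boldsym{\tau}_1$ ranges over $\overline{\B}_{n-1}$ and could vanish! So actually the robust lower bound must come from $t\,u_0(z) \gtrsim t(1-|z|)$ appearing in the first coordinate of $\Im F_1$ when $\boldsym{\tau}_1 \ne 0$; for $\boldsym{\tau}_1 = 0$ I would instead observe $u_0(z)\boldsym{\tau}_1^*$ still has its first component equal to $t u_0(z)$ since $\boldsym{\tau}_1^* = (1,\boldsym{\tau}_1)$ has first entry $1$ — so the first imaginary coordinate of $F_1$ is $P_{\boldsym{\tau},t}(z)_1 + tu_0(z)$, which is at distance $\gtrsim tu_0(z) \gtrsim t(1-|z|)$ from any value $h(\mathbf{x})_1$ that nearby points of $K'$ achieve (as $h$ is small and $P$ tracks it). The upper bound is immediate by exhibiting the explicit competitor point $U_{\boldsym{\tau},t}(z) + ih(U_{\boldsym{\tau},t}(z)) \in K'$ and bounding the gap. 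The hard part of the whole proposition will be the determinant lower bound~(\ref{ine_detDF0}): correctly identifying the block structure of $DF_1$ near $z=1$, keeping track of which directions contribute powers of $t$ versus powers of $(1-|z|)$, and controlling all error terms uniformly in $(\boldsym{\tau},t)$ via the $\mathcal{C}^2$ estimates and the Hopf lemma bound on $u_0$.
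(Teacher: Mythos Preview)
Your approach to the distance estimate~(\ref{ine_distancetoKhF}) is essentially the paper's: flatten $K'$ by $\Psi(\mathbf{x}+i\mathbf{y})=\mathbf{x}+i(\mathbf{y}-h(\mathbf{x}))$, observe that $P_{\boldsym{\tau},t}(z)-h(U_{\boldsym{\tau},t}(z))$ vanishes on the attached arc and is $O(t^2(1-|z|))$ inside by Lemma~\ref{le_danhgiadaohamcuaP'tauMA}, and extract the dominant term $tu_0(z)\boldsym{\tau}_1^*$ whose first component is $tu_0(z)\asymp t(1-|z|)$. Your correction about $\boldsym{\tau}_1^*=(1,\boldsym{\tau}_1)$ is exactly right. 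For the Jacobian bound the paper does something close to what you sketch, but it works in \emph{polar} coordinates $(|z|,\theta)$ rather than $(x,y)$: after the Taylor expansions one gets $\Im F'_1 = t(1-|z|)[\boldsym{\tau}_1^*+T_0]$ and $\Re F'_1 = t\boldsym{\tau}_2^*+t\theta\boldsym{\tau}_1^*+tT_1$ with $\|T_j\|$ and $\|D_{\boldsym{\tau},\theta}T_j\|$ of size $\epsilon=\max\{2r_0,t\}$. In these variables the block structure is transparent: differentiating in $|z|$ produces the single factor $t$ times a nondegenerate vector, differentiating in $\boldsym{\tau}_1$ produces $t(1-|z|)$ times the last $(n-1)$ coordinate directions, and differentiating in $(\theta,\boldsym{\tau}_2)$ gives $t$ times a nondegenerate $n\times n$ block. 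This yields $t^{2n}(1-|z|)^{n-1}$ directly.

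There is, however, a genuine gap in your plan: you do not address \emph{global injectivity} of $F_1$ on $(\B_2(1,r_0)\cap\D)\times\overline{\B}_{n-1}^2$. A nonvanishing Jacobian gives only a local diffeomorphism; it does not preclude $F_1$ from self-intersecting. The paper spends a substantial portion of the proof on this. Using the explicit expansions above, it introduces new variables $(\theta,\tilde{\boldsym{\rho}}_1,\boldsym{\tau})$ with $\tilde{\boldsym{\rho}}_1=\boldsym{\rho}_1/(1-|z|)$ and rewrites the equation $F'_1(z,\boldsym{\tau},t)=\boldsym{\rho}$ as $G(\boldsym{\tau},\tilde{\boldsym{\rho}}_1,\theta)=0$ for an explicit map $G$. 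One then shows that two solutions $\mathbf{a},\mathbf{a}^0$ must satisfy $|\mathbf{a}-\mathbf{a}^0|\lesssim\epsilon|\mathbf{a}-\mathbf{a}^0|$, forcing $\mathbf{a}=\mathbf{a}^0$ once $\epsilon$ is small. This contraction step relies crucially on the same $C^1$ smallness of $T_0,T_1$ used for the determinant, but it is a separate argument and cannot be skipped: the domain is not simply connected relative to the boundary behaviour, and the degeneration of $\det DF_1$ as $|z|\to 1$ means you cannot invoke a standard ``Jacobian bounded below implies proper implies injective'' shortcut.
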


\begin{proof} Let $r_0, t_3$ be two positive small constants to be chosen later. For the moment, we take $r_0$ to be small enough so that if $z= |z| e^{i\theta} \in \B_2(1,r_0) \cap \D,$ then $\theta \in (\theta_0, \theta_0),$ thus we have $u_0(e^{i\theta})=0.$   Fix a constant $t \in (0,t_3].$ Provided that $t_3$ and $r_0$ are small enough we will prove in the order  (\ref{ine_distancetoKhF}), (\ref{ine_detDF0}) and finally that $F_1$ is a diffeomorphism. Extend $h$ to be a $\mathcal{C}^3$ function on $\R^n.$ Let $\Psi: \C^n \rightarrow \C^n$ defined by 
$$\Psi(\mathbf{x}+ i \mathbf{y}):= \mathbf{x}+ i \mathbf{y} - i h (\mathbf{x}).$$
One can see without difficulty that $\Psi$ is a diffeomorphism sending $K'$ to $\B_n,$ where we embed 
$$\R^n   \hookrightarrow \R^n+ i \R^n= \C^n.$$ 
Let $F'_1:= \Psi \circ F_1.$ We have 
\begin{align} \label{eq_Fnga1thucao}
\Im F'_1(z, \boldsym{\tau},t)= P_{\boldsym{\tau},t}(z) - h\big(U_{\boldsym{\tau},t}(z) \big) + t u_0(z) \boldsym{\tau}_1^*\quad \text{and} \quad \Re F'_1(z, \boldsym{\tau},t)(z)= U_{\boldsym{\tau},t}(z).
\end{align}
 By the above property of $\Psi$, it suffices to prove the required property for $(F'_1, \B_n)$ in place of $(F_1,K').$ Note that $P_{\boldsym{\tau},t}(z)$ and 
$h\big(U_{\boldsym{\tau},t}(z) \big)$ are identical on $\partial \D.$ This together with (\ref{ine_danhgiachuancuaP'2}) yields
\begin{align} \label{eq_taylortbinh}
P_{\boldsym{\tau},t}(z) - h\big(U_{\boldsym{\tau},t}(z) \big)=t^2(1 - |z|)R_0(z, \boldsym{\tau},t),
\end{align}
where $R_0(z, \boldsym{\tau},t ) $ is $\mathcal{C}^1$ in $(z, \boldsym{\tau})$ so that $\|R_0(\cdot,t) \|_{\mathcal{C}^1} \lesssim 1.$ Remember that $t$ is fixed, so we do not consider it as a variable when taking the $\mathcal{C}^1$ norm. On the other hand, by our choice of $u_0$ and Lemma \ref{le_version8existenceu_MA}, one has $u_0(z) \gtrsim (1- |z|).$ By this and (\ref{eq_taylortbinh}) and (\ref{eq_Fnga1thucao}),  we obtain
\begin{align*}
\dist\big(F_1(z, \boldsym{\tau},t), K'\big) &\gtrsim \dist\big(F'_1(z, \boldsym{\tau},t), \R^n \big)\\
&= |\Im F'_1(z, \boldsym{\tau},t)| \gtrsim t(1-|z|) |\boldsym{\tau}_1^*|- t^2(1- |z|).
\end{align*}
Thus if $t$ is sufficiently small, the first inequality of (\ref{ine_distancetoKhF}) follows. 

For $t_3$ small enough, $U_{\boldsym{\tau},t}(z)  \in \B_n.$ Hence, we get
$$\dist\big(F_1(z, \boldsym{\tau},t), K'\big) \lesssim \dist\big(F'_1(z, \boldsym{\tau},t), \B_n\big) \lesssim | \Im F'_1(z, \boldsym{\tau},t)|.$$    
Write $z=|z| e^{i\theta} \in \B_2(1, r_0) \cap \D.$ Hence $\theta \in [-2 r_0, 2 r_0] \subset (\theta_0, \theta_0)$ if $r_0$ is small enough. Since $u_0(e^{i\theta})=0,$ we deduce from (\ref{eq_Taylorhuongru}) that
\begin{align} \label{eq_tayloru_0R12}
u_0(z)= (1-  |z|)+  \theta (1 - |z|) R_1(z)  + (1- |z|)^2 R_2(z),
\end{align}
where $R_j$ is smooth function with $\|R_j\|_{\mathcal{C}^1} \lesssim 1$ for $j=1,2.$ Put $\epsilon:=\max\{2 r_0, t\}.$ We choose $(t,r_0)$ to be so small that $\epsilon<1/2.$ Put 
\begin{align} \label{def_dinhgnhiaT0}
T_0(z, \boldsym{\tau},t):= tR_0(z, \boldsym{\tau},t)+ \big(\theta  R_1(z)  + (1- |z|) R_2(z) \big) \boldsym{\tau}_1^*
\end{align}
which satisfies
\begin{align} \label{ine_Tkhong}
\|T_0\|_{\mathcal{C}^0} \lesssim \epsilon, \quad \| D_{\boldsym{\tau}}T_0\|_{\mathcal{C}^0} \lesssim \epsilon
\end{align}
because $|\theta| \le 2 r_0$ and $1- |z| \le r_0.$ Combining (\ref{eq_tayloru_0R12}), (\ref{eq_taylortbinh}) and (\ref{eq_Fnga1thucao}) gives  
\begin{align} \label{eq2_rhoFnga}
\Im F'_1(z, \boldsym{\tau},t)=  t (1- |z|)\big[\boldsym{\tau}_1^*+ T_0(z, \boldsym{\tau},t)  \big].
\end{align}     
Consequently, using (\ref{ine_Tkhong}) we obtain
$$|\Im F'_1(z, \boldsym{\tau},t)| \lesssim t (1- |z|)$$
which proves  the second inequality of (\ref{ine_distancetoKhF}).

 By (\ref{Bishoptype}) and the Cauchy-Riemann equations, we have $U_{\boldsym{\tau},t}(1)= t \boldsym{\tau}_2^*$ and
 $$\partial_{y} U_{\boldsym{\tau},t}(z)=- \partial_x P_{\boldsym{\tau},t}(z)- t \partial_x u_0(z) \boldsym{\tau}_1^*$$
 and 
 $$\partial_{x} U_{\boldsym{\tau},t}(z)= \partial_y P_{\boldsym{\tau},t}(z)+ t \partial_y u_0(z) \boldsym{\tau}_1^*.$$
Observe that 
$$\partial_{\theta} U_{\boldsym{\tau},t}(e^{i\theta})=- \partial_{x} U_{\boldsym{\tau},t}(e^{i\theta})\sin \theta +\partial_{y} U_{\boldsym{\tau},t}(e^{i\theta}) \cos \theta .$$
These above equalities combined with  (\ref{ine_danhgiachuancuaP'2})   and  Taylor's expansion to $U_{\boldsym{\tau},t}(e^{i\theta})$ at $\theta=0$ gives 
\begin{align} \label{eq_Utautkhaitrientai1}
U_{\boldsym{\tau},t}(e^{i\theta})= t \boldsym{\tau}_2^* + t^2 R_3(\theta,\boldsym{\tau},t) +t \theta \boldsym{\tau}_1^*+ t \theta^2 R_4(\theta)\boldsym{\tau}_1^*,
\end{align}
where 
$$R_3(\theta,\boldsym{\tau},t):=\int_{0}^{\theta}\big[\partial_y P_{\boldsym{\tau},t}(e^{i s }) \cos s -\partial_x P_{\boldsym{\tau},t}(e^{i s}) \sin s \big] ds$$
which is of $\mathcal{C}^1$ norm $\lesssim 1,$ and
$R_4(\theta)$ is a $\mathcal{C}^1$ function satisfying $\|R_4\|_{\mathcal{C}^1} \lesssim 1.$ Remark that in (\ref{eq_Utautkhaitrientai1}), we used the $\mathcal{C}^3$ differentiability of $u_0$ and $R_4$ comes from the remainder of the Taylor expansion of $u_0$ at $1$ up to the order $2.$   

Using (\ref{eq_Utautkhaitrientai1}), Taylor's expansion for $\Re F'_1(z, \boldsym{\tau},t)$ at $\tilde{z}= e^{i\theta}$ implies 
\begin{align} \label{eq3_rhoFnga}
\Re F'_1(z, \boldsym{\tau},t)= t \boldsym{\tau}_2^* +t \theta \boldsym{\tau}^*_1+  t^2   R_3(\theta,\boldsym{\tau},t)+t \theta^2 R_4(\theta)\boldsym{\tau}_1^*+t (1 - |z|) R_5 (z, \boldsym{\tau},t),
\end{align}     
for some $\mathcal{C}^1$ function $R_5(z, \boldsym{\tau},t ) $  with $\|R_j(\cdot,t) \|_{\mathcal{C}^1} \lesssim 1.$ Define
\begin{align} \label{def_dinhgnhiaTmot}
T_1(z, \boldsym{\tau},t):= t   R_3(\theta,\boldsym{\tau},t)+\theta^2 R_4(\theta)\boldsym{\tau}_1^*+ (1 - |z|) R_5 (z, \boldsym{\tau},t),
\end{align}
which satisfies
\begin{align} \label{ine_Tmot}
\|D_{\boldsym{\tau},\theta}T_1\|_{\mathcal{C}^0} \lesssim \epsilon,
\end{align}
where we use the polar coordinate $(|z|, \theta)$ for $z.$ Combining (\ref{eq3_rhoFnga}), (\ref{ine_Tmot}), (\ref{eq2_rhoFnga}) and (\ref{ine_Tkhong}) gives (\ref{ine_detDF0}).


Let $\boldsym{\rho}= t \boldsym{\rho}_2+ i \, t \boldsym{\rho}_1$ be an arbitrary point in the image of $F'.$ This means that 
\begin{align} \label{eq_rhoz0tau0}
\boldsym{\rho}= F'_1(z^0, \boldsym{\tau}^0,t),
\end{align}
 for some $(z^0, \boldsym{\tau}^0).$ Let $\theta^0 \in (-\pi/2, \pi/2)$ be the argument of $z^0.$ Then $z^0= |z^0| e^{i\theta^0}.$ We will prove that the equation 
\begin{align} \label{eq_rhoFnga}
F'_1(z, \boldsym{\tau},t)=\boldsym{\rho}
\end{align}
has a unique solution, \emph{i.e} $F'_1$ is injective.   The equation (\ref{eq_rhoFnga}) is equivalent to the system of the two following equations 
\begin{align} \label{eq_loquaFrho}
\Re F'_1(z, \boldsym{\tau},t)= t \boldsym{\rho}_2
\end{align}
 and 
 \begin{align} \label{eq_loquaFrho2}
\Im F'_1(z, \boldsym{\tau},t)= t \boldsym{\rho}_1.
\end{align}
Write $T_j=(T_{j1}, \cdots, T_{jn})$ for $j=0$ or $1$ and $\boldsym{\rho}_j=(\rho_{j1}, \cdots, \rho_{jn})$ for $j=1,2.$   Define  
$$\tilde{\boldsym{\rho}}_1:= \frac{\boldsym{\rho}_1}{1- |z|} \cdot$$
 We also write $\tilde{\boldsym{\rho}}_1= (\tilde{\rho}_{11}, \cdots, \tilde{\rho}_{1n}).$ Recall that $\boldsym{\tau}^*_j=(1, \boldsym{\tau}_j)$ for $j=1$ or $2$ and $\boldsym{\tau}_j=(\tau_{j1}, \cdots, \tau_{j(n-1)}).$  We have 
\begin{align} \label{eq_zphay}
\tilde{\rho}_{1k} -  \tilde{\rho}_{11} \, \frac{\rho_{1k}}{\rho_{11}}=0, 
\end{align}
for $2\le k \le n.$ The variable $(\tilde{\rho}_{11},\theta)$ will be used as a substitute for $z.$   If $(z, \boldsym{\tau},t)$ is a solution of (\ref{eq_rhoFnga}),  identifying the first component of (\ref{eq2_rhoFnga}) and (\ref{eq_loquaFrho2}) imply
$$1+ T_{01}(z, \boldsym{\tau},t)= \tilde{\rho}_{11}$$
which in turn yields $|\tilde{\rho}_{11} -1| \lesssim \epsilon$ by (\ref{ine_Tkhong}).  Hence if $(z, \boldsym{\tau},t)$ is a solution of (\ref{eq_rhoFnga}), we get 
\begin{align} \label{ine_zphay1}
1/2 \le \tilde{\rho}_{11} \le 3/2.
\end{align}
 By (\ref{eq_loquaFrho2}) again and the fact that $\boldsym{\tau}_1 \in \B_{n-1}$,  one also gets 
\begin{align} \label{ine_zphay2}
\big| \frac{\rho_{1k}}{\rho_{11}} \big|\approx |\tau_{1(k-1)}|  \le 3/2,
\end{align}
for $2 \le k \le n.$
Since $z= |z| e^{i \theta},$ we have  
$$z=\big(1-\frac{\rho_{11}}{\tilde{\rho}_{11}} \big) e^{i\theta}.$$
From now on, we will consider $T_0,T_1$ as functions of $(\tilde{\rho}_{11}, \theta, \boldsym{\tau}).$  Define $$G=(G_1, G_2,G_3):  \overline{\B}_{n-1}^2 \times [\frac{1}{2}, \frac{3}{2}] \times \R^{n-1} \times [-2r_0, 2r_0] \rightarrow \R^{n} \times \R^n \times \R^{n-1}$$
by putting
$$G_1(\boldsym{\tau},\tilde{\boldsym{{\rho}}}_{1}, \theta):= \boldsym{\tau}_1^* +   T_0(\theta, \tilde{\rho}_{11}, \boldsym{\tau},t)- \tilde{\boldsym{{\rho}}}_{1}, \quad G_2(\boldsym{\tau},\tilde{\boldsym{{\rho}}}_{1}, \theta):=\boldsym{\tau}_2^* + \theta \boldsym{\tau}^*_1+ T_1(\theta, \tilde{\rho}_{11}, \boldsym{\tau},t) - \boldsym{\rho}_2 $$
and  
$$G_3(\boldsym{\tau},\tilde{\boldsym{{\rho}}}_{1}, \theta):=\big(\tilde{\rho}_{12} -  \tilde{\rho}_{11} \, \frac{\rho_{1k}}{\rho_{11}}, \cdots,\tilde{\rho}_{1n} -  \tilde{\rho}_{11} \, \frac{\rho_{1k}}{\rho_{11}}\big).$$ 
By (\ref{eq_zphay}), (\ref{eq3_rhoFnga}) and (\ref{eq2_rhoFnga}), resolving the system (\ref{eq_loquaFrho})-(\ref{eq_loquaFrho2}) is equivalent to finding 
$(\boldsym{\tau}, \tilde{\boldsym{{\rho}}}_{1},\theta) $ for which 
\begin{align} \label{eq_equivalentGtau}
G(\boldsym{\tau},\tilde{\boldsym{{\rho}}}_{1}, \theta)=0.
\end{align} 
By (\ref{eq_rhoz0tau0}), we know that $\mathbf{a}^0:= (\boldsym{\tau}^0, \tilde{\boldsym{{\rho}}}_{1}^0, \theta^0)$ is a solution of (\ref{eq_equivalentGtau}), where 
$$\tilde{\boldsym{{\rho}}}^0_{1}:= \frac{\boldsym{\rho}_1}{1- |z^0|}.$$
Suppose that there is an another solution $\mathbf{a}= (\boldsym{\tau}, \tilde{\boldsym{\rho}}_1, \theta)$ of (\ref{eq_equivalentGtau}).  By a direct computation, one gets
$$\partial_{\tilde{\rho}_{11}}(1 - |z|)= - \frac{\rho_{11}}{\tilde{\rho}_{11}^2}= -(1- |z|)\tilde{\rho}_{11}^{-1}= O(1- |z|) \lesssim \epsilon$$
by  (\ref{ine_zphay1}). This coupled with (\ref{def_dinhgnhiaT0}) and (\ref{def_dinhgnhiaTmot}) yields
\begin{align} \label{ine_TkhongchuanC1}
|T_0(\mathbf{a},t) - T_0(\mathbf{a}^0,t)|  \lesssim \epsilon |\mathbf{a}- \mathbf{a}^0|+ |\theta - \theta'|.
\end{align}
and 
\begin{align} \label{ine_TkhongchuanC1hai}
|T_1(\mathbf{a},t) - T_1(\mathbf{a}^0,t)|  \lesssim \epsilon |\mathbf{a}- \mathbf{a}^0|.
\end{align}
Using (\ref{ine_TkhongchuanC1hai}) and identifying the first component of the equation $G_2(\boldsym{\tau},\tilde{\boldsym{\rho}}_1, \theta)=0$ imply
\begin{align} \label{ine_y1phayhieu}
|\theta - \theta^0| \le |T_1(\mathbf{a},t) - T_1(\mathbf{a}^0,t)| \lesssim \epsilon |\mathbf{a}- \mathbf{a}^0|.
\end{align}
By doing the same thing for $G_1(\boldsym{\tau},\tilde{\boldsym{\rho}}_1, \theta)=0$ and using (\ref{ine_y1phayhieu}), we also obtain
\begin{align*} 
|\tilde{\rho}_{11} - \tilde{\rho}_{11}^0| \le |T_0(\mathbf{a},t) - T_0(\mathbf{a}^0,t)|  \lesssim \epsilon |\mathbf{a}- \mathbf{a}^0|.
\end{align*}
 Using the last inequality, the equality $G_3(\boldsym{\tau},\tilde{\boldsym{\rho}}_1, \theta)=0$ and (\ref{ine_zphay2}), one infers
\begin{align} \label{ine_y1phayhieu2}
| \tilde{\boldsym{\rho}}_1 -  \tilde{\boldsym{\rho}}_1^0| \lesssim  |\tilde{\rho}_{11} - \tilde{\rho}_{11}^0| \lesssim \epsilon |\mathbf{a}- \mathbf{a}^0|.
\end{align}
Similarly, using $G_1(\boldsym{\tau},\tilde{\boldsym{\rho}}_1, \theta)=0$ gives
\begin{align} \label{ine_y1phayhieu3}
|\boldsym{\tau}_1 - \boldsym{\tau}^0_1| \le |T_0(\mathbf{a},t) - T_0(\mathbf{a}^0,t)| + | \tilde{\boldsym{\rho}}_1 -  \tilde{\boldsym{\rho}}_1^0| \lesssim \epsilon |\mathbf{a}- \mathbf{a}^0|. 
 \end{align} 
Finally, using $G_2(\boldsym{\tau},\tilde{\boldsym{\rho}}_1, \theta)=0$ gives
\begin{align} \label{ine_y1phayhieu4}
|\boldsym{\tau}_2 - \boldsym{\tau}^0_2| \lesssim \epsilon |\mathbf{a}- \mathbf{a}^0|.
  \end{align} 
Summing the inequalities from (\ref{ine_y1phayhieu}) to (\ref{ine_y1phayhieu4}) and taking into account that 
$$|\mathbf{a}- \mathbf{a}^0| \le |\boldsym{\tau}_2 - \boldsym{\tau}^0_2| + |\boldsym{\tau}_1 - \boldsym{\tau}^0_1|+ | \tilde{\boldsym{\rho}}_1 -  \tilde{\boldsym{\rho}}_1^0| + |\theta- \theta^0|$$
show that $\mathbf{a}= \mathbf{a}^0.$ This means that (\ref{eq_equivalentGtau}) has a unique solution, or equivalently, so does  (\ref{eq_rhoFnga}) if $r_0$ and $t$ are small enough. 
The proof is finished.
\end{proof}

Let $\Omega$ be a simply connected subdomain of $\D$ with smooth boundary such that $\Omega$ is strictly convex and $\overline{\Omega} \cap \overline{\D}= [e^{-i \theta_1}, e^{i\theta_1}]$ for some $\theta_1 \in (0,\theta_0)$ and $\overline{\Omega} \subset \B_2(1,r_0).$ 
By Painvel\'e's theorem as in the proof of Lemma \ref{le_version8existenceu_MA}, there is a smooth diffeomorphism $\Phi$ from $\overline{\D}$ to $\overline{\Omega}$ which is a biholomorphism from $\D$ to $\Omega$ and $\Phi(1)=1$. 

Define $\tilde{F}(z, \boldsym{\tau},t):= F\big(\Phi(z), \boldsym{\tau},t\big)$ which is again a $\mathcal{C}^{2,1/2}$ family of analytic discs partly attached to $K.$ 

\begin{proposition} \label{pro_corverCndung Psi2} $(i)$ There are positive constants $\tilde{\theta}_0$ and $\tilde{\epsilon}_0$ so that for every $\boldsym{\tau}_1 \in \overline{\B}_{n-1}$ and $t \in (0,t_3],$ the restriction map $\tilde{F}(\cdot, \boldsym{\tau}_1,t): [e^{-i\tilde{\theta}_0}, e^{i \tilde{\theta}_0}] \times \overline{\B}_{n-1} \rightarrow K'$ is a diffeomorphism onto its image which contains the graph of $h$ over $\B_n(0, t \tilde{\epsilon}_0).$  

$(ii)$ Let $t_3$ be the constant in Proposition \ref{pro_corverCndung Psi}. Then for any $t \in (0, t_3],$ the map $\tilde{F}(\cdot,t)$ is a diffeomorphism from $\D \times \overline{\B}_{n-1}^2$ onto its image in $ \D^{n} \subset \C^n,$ and  for any $(z, \boldsym{\tau})$ we have
\begin{align} \label{ine_detDF}
\big|\det D \tilde{F}(z, \boldsym{\tau},t) \big| \gtrsim t^{n+1} \dist^{n-1}\big(\tilde{F}(z, \boldsym{\tau},t), K'\big)
\end{align}
and 
\begin{align} \label{ine_distancetoKhF*}
t(1 - |z|) \lesssim \dist\big(\tilde{F}(z, \boldsym{\tau},t), K'\big).
\end{align}
\end{proposition}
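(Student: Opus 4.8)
The plan is to obtain both parts by transporting Propositions \ref{pro_corverKtau1fixed} and \ref{pro_corverCndung Psi} through the biholomorphism $\Phi$, so that essentially everything reduces to the chain rule together with two elementary facts about $\Phi$: that its real Jacobian equals $|\Phi'|^2$ and is bounded below on $\overline{\D}$, and that $1-|\Phi(z)|\gtrsim 1-|z|$. All the implied constants will be uniform in $t$ and $\boldsym{\tau}$ because $\Phi$, $\Omega$, $\theta_0$, $\theta_1$, $r_0$ are fixed data depending only on $(X,K,\omega)$ and the estimates of Propositions \ref{pro_corverKtau1fixed} and \ref{pro_corverCndung Psi} are already uniform.

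For $(i)$: since $\Phi$ is a diffeomorphism of $\overline{\D}$ onto $\overline{\Omega}$ with $\Phi(\partial\D)=\partial\Omega$, $\Phi(1)=1$, and the arc $[e^{-i\theta_1},e^{i\theta_1}]$ is a closed connected subset of $\partial\Omega$ containing $1$, its preimage $\Phi^{-1}\big([e^{-i\theta_1},e^{i\theta_1}]\big)$ is a closed arc of $\partial\D$ containing $1$. Choose $\tilde{\theta}_0>0$ with $[e^{-i\tilde{\theta}_0},e^{i\tilde{\theta}_0}]$ contained in this preimage; then $\Phi\big([e^{-i\tilde{\theta}_0},e^{i\tilde{\theta}_0}]\big)\subset [e^{-i\theta_1},e^{i\theta_1}]\subset[e^{-i\theta_0},e^{i\theta_0}]$ because $\theta_1<\theta_0$. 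Hence on $[e^{-i\tilde{\theta}_0},e^{i\tilde{\theta}_0}]\times\overline{\B}_{n-1}$ the map $\tilde{F}(\cdot,\boldsym{\tau}_1,t)$ is the composition $F(\cdot,\boldsym{\tau}_1,t)\circ(\Phi\times\id)$, a composition of diffeomorphisms onto their images by Proposition \ref{pro_corverKtau1fixed}, hence a diffeomorphism onto its image. For the covering statement, note that $\Phi\big([e^{-i\tilde{\theta}_0},e^{i\tilde{\theta}_0}]\big)$ contains an arc $\{e^{i\theta}:|\theta|\le\delta\}$ for some $\delta>0$ depending only on $(X,K,\omega)$; by the Taylor expansion \eqref{eq_Utautkhaitrientai1} together with the block structure of $D_{(\boldsym{\tau}_2,\theta)}U_{\boldsym{\tau},t}$ at $\theta=0$ established in the proof of Proposition \ref{pro_corverKtau1fixed}, the image of $F(\cdot,\boldsym{\tau}_1,t)$ over $\{|\theta|\le\delta\}\times\overline{\B}_{n-1}$ contains the graph of $h$ over $\B_n(0,t\tilde{\epsilon}_0)$ for a suitable $\tilde{\epsilon}_0>0$ and every $t\in(0,t_3]$, uniformly in $\boldsym{\tau}_1$. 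This proves $(i)$.

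For $(ii)$: since $\Phi(\D)=\Omega\subset\B_2(1,r_0)\cap\D$ (recall $\overline{\Omega}\subset\B_2(1,r_0)$), the map $\tilde{F}(\cdot,t)=F_1(\cdot,t)\circ(\Phi\times\id)$ on $\D\times\overline{\B}_{n-1}^2$ is a composition of diffeomorphisms onto their images by Proposition \ref{pro_corverCndung Psi}, hence a diffeomorphism onto its image; that image lies in $\D^n$ because $\|U_{\boldsym{\tau},t}\|_{\mathcal{C}^0}+\|P_{\boldsym{\tau},t}\|_{\mathcal{C}^0}\lesssim t$ by Lemma \ref{le_danhgiadaohamcuaP'tauMA}, so $\tilde{F}(\cdot,t)$ takes values in an $O(t)$-neighbourhood of the origin for $t\le t_3$ (shrinking $t_3$ if necessary). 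For the Jacobian bound the chain rule gives $\det D\tilde{F}(z,\boldsym{\tau},t)=\det DF_1(\Phi(z),\boldsym{\tau},t)\cdot|\Phi'(z)|^2$, since the real Jacobian of the holomorphic map $\Phi$ is $|\Phi'|^2$ and $\Phi$ does not act on the $\boldsym{\tau}$-variables. Applying \eqref{ine_detDF0} and the upper bound in \eqref{ine_distancetoKhF} with $w=\Phi(z)$,
\begin{align*}
|\det DF_1(\Phi(z),\boldsym{\tau},t)| &\gtrsim t^{2n}\big(1-|\Phi(z)|\big)^{n-1} = t^{n+1}\big(t(1-|\Phi(z)|)\big)^{n-1}\\
&\gtrsim t^{n+1}\dist^{n-1}\big(\tilde{F}(z,\boldsym{\tau},t),K'\big),
\end{align*}
and since $\Phi$ is a diffeomorphism of $\overline{\D}$, the continuous function $|\Phi'|^2$ never vanishes on $\overline{\D}$ and is therefore bounded below by a positive constant; this yields \eqref{ine_detDF}. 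Finally, by \eqref{ine_distancetoKhF} one has $\dist\big(\tilde{F}(z,\boldsym{\tau},t),K'\big)=\dist\big(F_1(\Phi(z),\boldsym{\tau},t),K'\big)\gtrsim t(1-|\Phi(z)|)$, so \eqref{ine_distancetoKhF*} follows from the distortion estimate $1-|\Phi(z)|\gtrsim 1-|z|$. This last inequality is exactly the one proved at the end of the proof of Lemma \ref{le_version8existenceu_MA} — using the strict convexity of $\Omega$ and the $\mathcal{C}^1$-boundedness of $\Phi^{-1}$ when $\Phi(z)$ is near the arc $[e^{-i\theta_1},e^{i\theta_1}]$, and trivially (since then $1-|\Phi(z)|$ is bounded below) otherwise — and applies here without change.

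The only points needing genuine care are the uniformity of the constants, discussed above, and the reuse of the distortion estimate for $\Phi$; since the substantive geometric work was already done in Propositions \ref{pro_corverKtau1fixed} and \ref{pro_corverCndung Psi}, I do not expect a real obstacle here.
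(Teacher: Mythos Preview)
Your proof is correct and follows essentially the same approach as the paper's: both obtain $(i)$ directly from Proposition~\ref{pro_corverKtau1fixed} transported through $\Phi$, and both derive $(ii)$ by combining \eqref{ine_detDF0} with the upper bound in \eqref{ine_distancetoKhF} to get $|\det DF_1|\gtrsim t^{n+1}\dist^{n-1}(F_1,K')$, then using the distortion estimate $1-|\Phi(z)|\gtrsim 1-|z|$ (which the paper phrases as ``by the differentiability of $\Phi^{-1}$ on $\overline{\Omega}$'' and you correctly trace back to the argument at the end of Lemma~\ref{le_version8existenceu_MA}). You have simply spelled out in more detail what the paper states tersely.
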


\begin{proof} Property $(i)$ is a direct consequence of Propositions \ref{pro_corverKtau1fixed}.  By the differentiability of $\Phi^{-1}$ on $\overline{\Omega},$  we have $(1- |\Phi(z)|) \gtrsim 1- |z|$ for every $z \in \D.$ Hence, by (\ref{ine_distancetoKhF}), we get (\ref{ine_distancetoKhF*}). The inequality (\ref{ine_detDF}) follows immediately from the fact that 
$$\big|\det D F_1 (z, \boldsym{\tau},t) \big| \gtrsim t^{n+1} \dist^{n-1}\big(F_1(z, \boldsym{\tau},t), K'\big)$$
which is in turn implied by (\ref{ine_distancetoKhF}) and (\ref{ine_detDF0}). The proof is finished.
\end{proof}

Using the local coordinates of $K$ at the beginning of this section and choosing $t=t_3$, the last proposition can be rephrased as follows.

\begin{proposition} \label{pro_corverCndung Psi2global} Let $a$ be an arbitrary point of $K.$ Then there exist positive constants $\tilde{\epsilon}_a, \tilde{\theta}_a$ and a $\mathcal{C}^{2,1/2}$  diffeomorphism $\tilde{F}_a: \D \times \overline{\B}_{n-1}^2 \rightarrow X $ onto its image such that the two following properties hold: 

$(i)$ for every $\boldsym{\tau}_1 \in \overline{\B}_{n-1},$ the restriction map $\tilde{F}_a(\cdot, \boldsym{\tau}_1): [e^{-i\tilde{\theta}_a}, e^{i \tilde{\theta}_a}] \times \B_{n-1} \rightarrow K$ is a diffeomorphism onto its image which contains the graph of $h$ over $\B_K(a, \tilde{\epsilon}_a).$

$(ii)$ there is an open relatively compact neighborhood $K'_a$ of $a$ in $K$ such that  for any $(z, \boldsym{\tau}),$ we have
\begin{align} \label{ine_detDFp}
\big|\det  D \tilde{F}_a(z, \boldsym{\tau}) \big| \gtrsim \dist^{n-1}\big(\tilde{F}_a(z, \boldsym{\tau}), K'_a\big)
\end{align}
and
\begin{align} \label{ine_distancetoKhF*p}
(1 - |z|) \lesssim \dist\big(\tilde{F}_a(z, \boldsym{\tau}), K'_a\big).
\end{align}
\end{proposition}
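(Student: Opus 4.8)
The plan is to deduce Proposition \ref{pro_corverCndung Psi2global} from Proposition \ref{pro_corverCndung Psi2} simply by transporting everything through the local holomorphic chart $(W,\mathbf{z})$ at $a$ provided by Lemma \ref{pro_localcoordinates}. Recall that in that chart $a$ becomes the origin, $K\cap W$ is the graph $K'=\{\mathbf{x}+ih(\mathbf{x})\}$ of the $\mathcal{C}^3$ map $h$ with $D^jh(0)=0$ for $j=0,1,2$, and $\|h\|_{\mathcal{C}^3}$ is uniformly bounded for $a\in\tilde K$; this last uniformity is what lets all the implied constants in Proposition \ref{pro_corverCndung Psi2} be chosen independently of $a$. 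First I would fix $t:=t_3$ (the constant of Proposition \ref{pro_corverCndung Psi}) once and for all, and set $\tilde{F}_a:= \iota_a\circ \tilde{F}(\cdot,t_3)$, where $\iota_a\colon W\to X$ is the coordinate chart (a biholomorphism onto its image), and $\tilde{F}(\cdot,t_3)\colon \D\times\overline\B_{n-1}^2\to \D^n\subset\C^n$ is the family from Proposition \ref{pro_corverCndung Psi2}(ii). Since $\iota_a$ is a diffeomorphism onto its image and $\tilde F(\cdot,t_3)$ is a $\mathcal{C}^{2,1/2}$ diffeomorphism onto its image, the composite $\tilde F_a$ is a $\mathcal{C}^{2,1/2}$ diffeomorphism onto its image in $X$, and it is a family of analytic discs partly attached to $K$ because $\iota_a$ is holomorphic and carries $K'$ into $K$.

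Next I would establish property $(i)$. By Proposition \ref{pro_corverCndung Psi2}(i) with $t=t_3$, the restriction of $\tilde F(\cdot,\boldsym{\tau}_1,t_3)$ to $[e^{-i\tilde\theta_0},e^{i\tilde\theta_0}]\times\overline\B_{n-1}$ is a diffeomorphism onto its image, which contains the graph of $h$ over $\B_n(0,t_3\tilde\epsilon_0)$. Composing with $\iota_a$ and noting that $\iota_a$ sends this graph (in the chart) onto a relatively open piece of $K$ containing $a$, I would set $\tilde\theta_a:=\tilde\theta_0$ and choose $\tilde\epsilon_a>0$ small enough that the ball $\B_K(a,\tilde\epsilon_a)$ (in the intrinsic Riemannian metric of $K$) is contained in the image of that graph under $\iota_a$; this is possible because the graph map $\mathbf{x}\mapsto\mathbf{x}+ih(\mathbf{x})$ is bi-Lipschitz onto $K'$ with constants controlled by $\|h\|_{\mathcal{C}^1}\lesssim 1$, so comparability of the Euclidean and intrinsic metrics on $K$ near $a$ is uniform in $a\in\tilde K$. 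That gives property $(i)$ with constants independent of $a$.

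For property $(ii)$, I would take $K'_a:=\iota_a(K')$, a relatively open neighborhood of $a$ in $K$, relatively compact after shrinking $W$ slightly. The chain rule gives $\det D\tilde F_a(z,\boldsym{\tau}) = \det D\iota_a\big(\tilde F(z,\boldsym{\tau},t_3)\big)\cdot\det D\tilde F(z,\boldsym{\tau},t_3)$, and since $\iota_a$ is a fixed holomorphic chart with $\mathcal{C}^1$ norm and Jacobian bounded above and below (uniformly in $a$, after the standard normalization of charts), $|\det D\tilde F_a(z,\boldsym{\tau})|\gtrsim |\det D\tilde F(z,\boldsym{\tau},t_3)|$. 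Likewise $\iota_a$ being bi-Lipschitz gives $\dist(\tilde F_a(z,\boldsym{\tau}),K'_a)\approx \dist(\tilde F(z,\boldsym{\tau},t_3),K')$. Plugging these comparisons into \eqref{ine_detDF} and \eqref{ine_distancetoKhF*} (with $t=t_3$, so $t^{n+1}$ and $t(1-|z|)$ are just constants times $1$ and $(1-|z|)$ respectively) yields \eqref{ine_detDFp} and \eqref{ine_distancetoKhF*p}.

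The only genuinely delicate point — and the one I would be most careful about — is the claim that \emph{all} the implied constants can be taken uniform in $a$ (or at least in $a\in\tilde K$, which is all that is needed later). This is not automatic: it requires that the whole construction of Sections \ref{sec_analyticdisc} up to Proposition \ref{pro_corverCndung Psi2} depends on $a$ only through $h=h_a$, and that $\|h_a\|_{\mathcal{C}^3}$ is uniformly bounded, which is exactly the content of the last sentence of Lemma \ref{pro_localcoordinates}. I would therefore note explicitly that the constants $c_0$ in \eqref{ine_chuanC3cuahdanhgialuythua3}, $t_1,c_1$ in Proposition \ref{pro_BishopequationMA}, $c_2$ in Lemma \ref{le_danhgiadaohamcuaP'tauMA}, $t_2,\theta_0,\epsilon_0$ in Proposition \ref{pro_corverKtau1fixed}, and $t_3,r_0$ in Proposition \ref{pro_corverCndung Psi} were all derived using only the bound \eqref{ine_chuanC3cuahdanhgialuythua3} together with $h(0)=Dh(0)=D^2h(0)=0$, hence depend only on the uniform bound on $\|h_a\|_{\mathcal{C}^3}$, $X$, $K$, and $\omega$; the auxiliary objects $u_0,\Omega,\Phi$ are fixed once and for all and do not depend on $a$ at all. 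With that observation, the rephrasing above goes through verbatim and the proof is finished.
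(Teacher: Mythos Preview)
Your proposal is correct and follows exactly the paper's approach: the paper provides no proof at all, stating only that ``using the local coordinates of $K$ at the beginning of this section and choosing $t=t_3$, the last proposition can be rephrased as follows.'' You have simply made this rephrasing explicit, and your added care about the uniformity of constants in $a\in\tilde K$ (tracing it back to the uniform bound on $\|h_a\|_{\mathcal{C}^3}$ in Lemma~\ref{pro_localcoordinates}) is a point the paper leaves implicit but which is indeed needed later.
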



\section{Some estimates for p.s.h. functions} \label{sec_intergrability}

In this section, we will prove some key estimates for  p.s.h functions and their $dd^c$ on $\C^n.$  For a Borel subset $A$ of $\R^m$ with $m \in \N,$  denote by $|A|$ the volume of $A$ with respect to the canonical volume form $vol_{\R^m}.$  In what follows, for simplicity, we will write $\int_A f$ instead of $\int_A f d vol_{\R^m}$ for every  Borel set $A \subset \R^m$ and every integrable function $f$ on $A.$ In particular, this convention is applied to $\C^n= \R^{2n}.$     

\begin{lemma} \label{le_volumepshonRn0tongquat} Let $V$ be an open subset of $\C^n$ and let  $V_1$ be a compact subset of $V.$ Let $\varphi$ be a p.s.h. function on $V.$ Then there exists a constant $c$ independent of $\varphi$ such that for any Borel set $V_2 \subset V_1,$ we have
\begin{align} \label{ine_volumeepsilonu0}
\int_{V_2} |\varphi| \le c  |V_2| \max\{1, - \log |V_2|  \} \int_{V} |\varphi|.
\end{align}
\end{lemma}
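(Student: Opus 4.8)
The plan is to reduce to a local model ball and exploit the fact that a p.s.h.\ function is subharmonic, hence controlled by its potential against the Green function (or equivalently, against $\log|\cdot|$). First I would fix a ball $\B_{2n}(a,R) \Subset V$ with $V_1 \subset \bigcup_j \B_{2n}(a_j,R)$ a finite subcover; by a standard covering argument it suffices to prove the estimate for $V_2$ contained in a single such ball, with $V$ replaced by a slightly larger ball $B' := \B_{2n}(a_j,2R) \Subset V$. On $B'$ the function $\varphi$ is subharmonic, so after subtracting a harmonic function (its Poisson integral on $\partial B'$), which only changes things by a bounded multiplicative constant since $\|\varphi\|_{L^1(B')} \gtrsim \|\varphi\|_{L^\infty(B'')}$ on an interior ball by the sub-mean-value inequality, I may assume $\varphi \le 0$ on $B'$ and write the Riesz representation
\[
\varphi(\zeta) = \int_{B'} G_{B'}(\zeta,\eta)\, \Delta\varphi(\eta) + (\text{harmonic, bounded by } \|\varphi\|_{L^1}),
\]
where $G_{B'}$ is the (negative) Green function and $\Delta\varphi \ge 0$ is a positive measure.

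Next I would estimate $\int_{V_2}|\varphi|$ by Tonelli: $\int_{V_2} |G_{B'}(\zeta,\eta)|\,d\zeta$ is, for each fixed $\eta$, bounded by $\sup_\eta \int_{V_2} |\log|\zeta-\eta||\,d\zeta$ up to a constant, since in real dimension $2n \ge 4$ the Green function behaves like $|\zeta-\eta|^{2-2n}$ near the diagonal, which is actually \emph{integrable} and even bounded in $L^1_{\mathrm{loc}}$ uniformly — wait, the logarithmic factor in the statement signals that the sharp bound comes precisely from the $n=1$ case where $G \sim \log$; for $n\ge 2$ one gets the cleaner bound $\int_{V_2}|\zeta-\eta|^{2-2n}\,d\zeta \lesssim |V_2|^{1/n}$, which is even stronger than claimed. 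So I would instead unify both cases by the elementary rearrangement inequality: among all sets of measure $|V_2|$, the integral $\int_{V_2} |\log|\zeta-\eta||\,d\zeta$ (resp.\ the Riesz-kernel integral) is maximized by a ball centered at $\eta$, giving $\int_{V_2}|G_{B'}(\zeta,\eta)|\,d\zeta \lesssim |V_2|\max\{1,-\log|V_2|\}$ uniformly in $\eta \in B'$. Then Tonelli yields
\[
\int_{V_2}|\varphi| \;\lesssim\; |V_2|\max\{1,-\log|V_2|\}\, \Delta\varphi(B') \;+\; |V_2|\,\|\varphi\|_{L^1(B')},
\]
and the total Riesz mass $\Delta\varphi(B')$ is itself $\lesssim \|\varphi\|_{L^1(V)}$ by the standard estimate (integrate $\varphi$ against a fixed cutoff $\chi$ equal to $1$ on $B'$ and supported in $V$, and move $\Delta$ onto $\chi$).

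The main obstacle I anticipate is bookkeeping the passage from $\varphi$ to $\varphi$ minus its harmonic majorant while keeping all constants controlled purely by $\|\varphi\|_{L^1(V)}$ and independent of $\varphi$: one must check that replacing $\varphi$ by $\varphi - (\text{Poisson integral})$ and the normalization $\varphi \le 0$ can be arranged with an additive error term of size $O(\|\varphi\|_{L^1(V)})$, using the sub-mean value property to bound the sup of $\varphi$ from above and the $L^1$–$L^\infty$ comparison on nested balls. Once that is in place, the rearrangement estimate for the kernel and the Tonelli interchange are routine. I would present the argument for a single ball and remark that the general case follows by the finite covering of the compact set $V_1$, absorbing the (finitely many) overlap constants into $c$.
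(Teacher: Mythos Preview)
Your approach has a genuine gap when $n\ge 2$. You assert that the rearrangement bound
\[
\int_{V_2}|\zeta-\eta|^{2-2n}\,d\zeta \ \lesssim\ |V_2|^{1/n}
\]
is ``even stronger than claimed,'' but it is in fact \emph{weaker}: for small $|V_2|$ one has $|V_2|^{1/n}\gg |V_2|\max\{1,-\log|V_2|\}$ since $1/n<1$. So the Riesz--Green argument, which treats $\varphi$ merely as a subharmonic function on $\R^{2n}$, yields at best
\[
\int_{V_2}|\varphi|\ \lesssim\ |V_2|^{1/n}\,\|\varphi\|_{L^1(V)},
\]
and this does \emph{not} imply the stated inequality \eqref{ine_volumeepsilonu0}. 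The obstruction is intrinsic: a general subharmonic function in real dimension $2n\ge 4$ need not satisfy uniform exponential integrability (take $u=-|x|^{2-2n}$), so no argument using only the Laplacian and the Newtonian kernel can produce the logarithmic bound. Your method is correct only for $n=1$, where the kernel is already logarithmic.

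The paper's proof exploits plurisubharmonicity in an essential way. After normalising so that $\int_V|\varphi|=1$, it invokes the Skoda-type uniform exponential estimate
\[
\int_{V_1} e^{\alpha_1|\varphi|}\ \le\ c_1
\]
(valid for p.s.h.\ functions with bounded $L^1$ norm, but false for merely subharmonic ones in higher dimension), and then applies Jensen's inequality to the probability measure $|V_2|^{-1}\boldsym{1}_{V_2}\,vol_{\C^n}$:
\[
\int_{V_2}|\varphi|\ =\ \alpha_1^{-1}|V_2|\int \log e^{\alpha_1|\varphi|}\,d\mu\ \le\ \alpha_1^{-1}|V_2|\log\!\int e^{\alpha_1|\varphi|}\,d\mu\ \le\ \alpha_1^{-1}|V_2|\big(\log c_1-\log|V_2|\big).
\]
This two-line argument avoids the Riesz decomposition entirely and gives the sharp logarithmic factor directly. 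If you want to keep a potential-theoretic flavour, you would need to replace the Newtonian kernel step by something that encodes plurisubharmonicity --- effectively re-proving Skoda's estimate --- so it is simpler to quote it.
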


\begin{proof}  If $\varphi \equiv 0$ or $ \int_{V} |\varphi|= \infty,$  then there is nothing to prove. Now suppose $\varphi \not = 0$ and $ \int_{V} |\varphi| < \infty.$ Let $\varphi_1= \varphi/  \int_{V} |\varphi|.$ We have $ \int_{V} |\varphi_1|= 1.$ As a result, there exist  two positive constants $(c_1, \alpha_1)$ independent of $\varphi_1$ for which 
\begin{align} \label{ine_exepophi_1chuanL11}
\int_{V_1} e^{\alpha_1 |\varphi_1|}\le c_1.
\end{align} 
Let $\boldsym{1}_{V_2}$ be the characteristic function of $V_2.$ Let $\mu:=  |V_2|^{-1} \boldsym{1}_{V_2}  vol_{\C^n}$ which is a probability measure supported in $V_2.$ 
We have
$$\int_{V_2} |\varphi_1| = \alpha^{-1}_1 \int_{V_2} \log  e^{\alpha_1 |\varphi_1|}= \alpha^{-1}_1 |V_2|\int_{V_2} \log  e^{\alpha_1 |\varphi_1|} \, d \mu.$$
This together with the concavity of the $\log$ function implies
$$\int_{V_2} |\varphi_1|  \le  \alpha^{-1}_1 |V_2|  \log \int_{V_2}  e^{\alpha_1 |\varphi_1|} \, d \mu$$
which, by (\ref{ine_exepophi_1chuanL11}), is less than or equal to 
$$\alpha^{-1}_1  |V_2|\big( \log c_1 -\log |V_2|\big).$$
Hence (\ref{ine_volumeepsilonu0}) follows. The proof is finished.
\end{proof}

Now let $h,K'$ be as in Section \ref{sec_analyticdisc}. Let $\epsilon$ be a real positive number and  $K'_{\epsilon}$ the compact subset of $\C^n$ consisting of points of distance $\le \epsilon$ to $K'.$ Obviously, the volume of $K'_{\epsilon}$ is $\lesssim \epsilon^n.$ Using Lemma \ref{le_volumepshonRn0tongquat} for $V_2 = K'_{\epsilon},$ we get the following.

\begin{corollary} \label{cor_volumepshonRn} Let $V$ be an open subset of $\C^n$ containing $H_1.$ Let $\varphi$ be a p.s.h. function on $V.$   Then there is a constant $c$ independent of $\varphi$ for which
\begin{align} \label{ine_volumeepsilonu}
\int_{K'_{\epsilon}} |\varphi|  \le  c \epsilon^{n} |\log \epsilon | \int_{V} | \varphi| 
\end{align}
for every $\epsilon \le 1/2.$
\end{corollary}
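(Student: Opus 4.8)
The plan is to derive this immediately from Lemma \ref{le_volumepshonRn0tongquat} with the Borel set $V_2:=K'_\epsilon$. First I would use the hypothesis that $V$ contains $H_1$ (the closed unit neighborhood $K'_1$ of $K'$) to fix once and for all a compact set $V_1$ with $K'_{1/2}\subset\Int V_1$ and $V_1\Subset V$; for instance $V_1=K'_{3/4}$ works, since $K'_{3/4}\subset\Int K'_1\subset V$. Then for every $\epsilon\le 1/2$ one has $K'_\epsilon\subset K'_{1/2}\subset V_1$, so Lemma \ref{le_volumepshonRn0tongquat} applies to the pair $(V_1,K'_\epsilon)$ and to any p.s.h. function $\varphi$ on $V$, yielding
\begin{align*}
\int_{K'_\epsilon}|\varphi|\ \le\ c\,|K'_\epsilon|\,\max\{1,\,-\log|K'_\epsilon|\}\int_V|\varphi|,
\end{align*}
with a constant $c$ independent of $\varphi$ (this independence is part of the statement of Lemma \ref{le_volumepshonRn0tongquat}).

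Next I would plug in the volume estimate recorded just above the statement, namely $|K'_\epsilon|\le C_0\epsilon^n$ uniformly in $\epsilon\le 1/2$, where $C_0$ depends only on $K'$ (it comes from covering the compact set $\overline{K'}$, the graph of $h$ over $\overline{\B}_n$, by finitely many coordinate patches and bounding in each the volume of the $\epsilon$-neighborhood of a graph of a Lipschitz function of $n$ variables by $\lesssim\epsilon^n$). The one point worth isolating is that the function $g(t):=t\max\{1,-\log t\}$ is continuous and nondecreasing on $(0,\infty)$; granting this, the bound $|K'_\epsilon|\le C_0\epsilon^n$ gives
\begin{align*}
|K'_\epsilon|\,\max\{1,\,-\log|K'_\epsilon|\}\ =\ g(|K'_\epsilon|)\ \le\ g(C_0\epsilon^n)\ =\ C_0\,\epsilon^n\,\max\{1,\,-\log C_0-n\log\epsilon\}.
\end{align*}

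Finally, since $-\log\epsilon\ge\log 2>0$ for $\epsilon\le 1/2$, we have $\max\{1,\,-\log C_0-n\log\epsilon\}\le 1+|\log C_0|+n|\log\epsilon|\le C_1|\log\epsilon|$ for a constant $C_1$ depending only on $(n,C_0)$; combining the three displays gives $\int_{K'_\epsilon}|\varphi|\le c\,C_0\,C_1\,\epsilon^n|\log\epsilon|\int_V|\varphi|$, which is exactly (\ref{ine_volumeepsilonu}). I do not anticipate any genuine obstacle here: the argument is a direct substitution into Lemma \ref{le_volumepshonRn0tongquat} followed by the elementary bookkeeping that turns $\max\{1,-\log|K'_\epsilon|\}$ into $|\log\epsilon|$. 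The only things requiring a little care are the monotonicity of $g$ (needed to replace $|K'_\epsilon|$ inside the logarithm by its upper bound $C_0\epsilon^n$) and keeping track that all the constants produced along the way depend only on $(K',V,V_1)$ and not on $\varphi$.
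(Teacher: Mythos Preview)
Your proposal is correct and matches the paper's own approach exactly: the paper simply notes that $|K'_\epsilon|\lesssim\epsilon^n$ and then applies Lemma~\ref{le_volumepshonRn0tongquat} with $V_2=K'_\epsilon$, which is precisely what you do (with the additional, harmless bookkeeping on the monotonicity of $t\mapsto t\max\{1,-\log t\}$ and the absorption of constants into $|\log\epsilon|$).
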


Now we will give a similar estimate for the mass of $dd^c \varphi$ on $K'_{\epsilon}.$ We begin with a general result.

\begin{lemma} \label{le_ddcmuntru1moipp} Let $V,V_1,V_2$ be open subsets of $\C^n$  such that $V_2 \Subset V_1 \Subset V.$ Let $T$ be a closed positive current of bidimension $(p,p)$ on $V$ and $\lambda$  a real number  $>1.$ Let $\varphi$ and $\rho$ be two bounded p.s.h. functions on $V.$ 
Let $A$ be a subset of $V_2$ and $a_{\epsilon,\varphi}$ be an upper bound for $|\varphi|$ on $V_1 \cap \{ \rho \le \epsilon\}$ for $\epsilon>0.$ Assume that $\rho$ is bounded by $1$ on $V.$ Then there is a constant $c$ independent of $T,A,\rho, \varphi$ such that 
 \begin{align} \label{ine_ddcmuntru1bonlonpp}  
\int_{A \cap  \{\rho \le \epsilon\}} T \wedge (dd^c \varphi)^{p} \le c [\epsilon^{-1}a_{\lambda \epsilon,\varphi}]^{p} \|T\|_{V_1},
\end{align}
for every  $\epsilon \in (0,1).$
\end{lemma}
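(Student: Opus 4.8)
The plan is to reduce the estimate to a classical local inequality controlling the mass of $(dd^c\varphi)^p$ near a sublevel set of $\rho$ by the Chern--Levine--Nirenberg type bound, exploiting the cutoff provided by $\rho$ itself. First I would fix $\epsilon\in(0,1)$ and, using $\lambda>1$, introduce an auxiliary function comparing $\rho$ with the threshold $\epsilon$: concretely, set $\psi:=\max\{\rho,\lambda\epsilon\}$ rescaled, or more conveniently work with $u_\epsilon:=(\rho-\lambda\epsilon)^+ \big/ (\lambda-1)\epsilon$ so that $u_\epsilon$ is a bounded p.s.h.\ function on $V$, equal to $0$ on $\{\rho\le\lambda\epsilon\}$ and bounded below by $1$ outside $\{\rho\le\lambda\epsilon\}$ but inside a region where we still have good control; the precise role of $\lambda$ is to create the ``buffer'' $\{\lambda\epsilon\ge\rho\}\setminus\{\epsilon\ge\rho\}$ on which the comparison function transitions. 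On $A\cap\{\rho\le\epsilon\}$ the relevant cutoff built from $\rho$ equals a fixed positive constant, so that $\boldsym{1}_{A\cap\{\rho\le\epsilon\}}$ is dominated by a smooth-in-spirit factor of the form $(\text{const})\cdot(\lambda\epsilon-\rho)^+{}^p/((\lambda-1)\epsilon)^p$ when $\rho\le\epsilon$.

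Next I would run the standard integration-by-parts argument (Chern--Levine--Nirenberg / Demailly), writing
$$
\int_{A\cap\{\rho\le\epsilon\}} T\wedge(dd^c\varphi)^p
\;\lesssim\;
\epsilon^{-p}\int_{V_1\cap\{\rho\le\lambda\epsilon\}} \chi\, T\wedge(dd^c\varphi)^p,
$$
where $\chi$ is a nonnegative cutoff built from $(\lambda\epsilon-\rho)^+$ that is $\ge\const$ on $\{\rho\le\epsilon\}$, supported in $\{\rho\le\lambda\epsilon\}$, and satisfies $0\le\chi\le(\lambda\epsilon)^p$ with $dd^c\chi\lesssim \epsilon^{p-1} dd^c\rho\wedge(\dots)$ in the sense of currents; then integrate by parts $p$ times to move all the $dd^c\varphi$ factors onto $T\wedge\chi$, each time picking up a boundary-free term because $T$ is closed and $\chi$ has compact support in $V_1$. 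Every integration by parts replaces one $dd^c\varphi$ by a $dd^c$ landing on $\chi$ or on a previously produced $\varphi$-factor; collecting terms, one is left with $\varphi$ evaluated (as an $L^1$ density against a positive measure) on the support of $\chi$, hence on $V_1\cap\{\rho\le\lambda\epsilon\}$, where $|\varphi|\le a_{\lambda\epsilon,\varphi}$ by hypothesis. Tracking the powers of $\epsilon^{-1}$ coming from the normalization of $\chi$ and the $p$ factors of $a_{\lambda\epsilon,\varphi}$ coming from the $p$ substitutions of bounded $\varphi$'s gives exactly $[\epsilon^{-1}a_{\lambda\epsilon,\varphi}]^p\|T\|_{V_1}$, with the constant $c$ depending only on $n,p,\lambda$ and the fixed geometry $V_2\Subset V_1\Subset V$, not on $T,A,\rho,\varphi$.

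The main obstacle I anticipate is making the integration-by-parts step rigorous for merely \emph{bounded} (not smooth) p.s.h.\ functions $\varphi$ and $\rho$: one cannot naively apply Stokes to products like $T\wedge dd^c\varphi\wedge\cdots$, so I would first regularize $\varphi$ and $\rho$ by decreasing sequences of smooth p.s.h.\ functions (possible locally, and that is all we need since everything happens inside $V_1\Subset V$), prove the inequality for the smooth approximants with constants independent of the approximation, and then pass to the limit using the continuity of $(dd^c\varphi_j)^p$ under decreasing limits of bounded p.s.h.\ functions (Bedford--Taylor) together with Fatou on the left-hand side and monotone control of the $a_{\lambda\epsilon,\varphi_j}$ bounds on the right. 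A secondary technical point is handling the non-open set $A$ and the sublevel set $\{\rho\le\epsilon\}$, which is not open; this is dispatched by noting that $\boldsym{1}_{A\cap\{\rho\le\epsilon\}}\le\boldsym{1}_{\{\rho\le\epsilon\}}$ and that $\{\rho\le\epsilon\}\subset\{\chi\ge\const\}$, so the indicator is pointwise bounded by $\const^{-p}\chi^p$ after the normalization, which legitimately enters the positive-current inequality. Once these two points are handled, the computation of constants is routine.
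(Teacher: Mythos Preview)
Your overall strategy---build a cutoff from the sublevel function $\rho$, integrate by parts, and invoke a Chern--Levine--Nirenberg bound---is the right one and is in spirit what the paper does. But two concrete gaps remain in your outline.

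First, a cutoff built from $(\lambda\epsilon-\rho)^+$ is supported in $\{\rho\le\lambda\epsilon\}$, which is in general \emph{not} relatively compact in $V_1$ (in the intended application $\{\rho\le\lambda\epsilon\}$ is a tubular neighbourhood of a submanifold). So the claim that ``$\chi$ has compact support in $V_1$'' is unjustified; you must also multiply by a fixed smooth spatial cutoff $\chi_0\in\mathcal{C}^\infty_c(V_1')$ with $V_1'\Subset V_1$ and $\chi_0\equiv 1$ on $V_2$. The paper does exactly this.

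Second, and more substantially, the phrase ``integrate by parts $p$ times, each time replacing one $dd^c\varphi$ by a $dd^c$ landing on $\chi$ or on a previously produced $\varphi$-factor'' hides the main difficulty. A single integration by parts on $\int \chi_0\,\rho_\epsilon\, T\wedge(dd^c\varphi)^p$ produces, besides the terms with $dd^c\chi_0$ and $dd^c\rho_\epsilon$, a \emph{first-order cross term}
\[
\int \varphi\, d\chi_0\wedge d^c\rho_\epsilon\wedge T\wedge(dd^c\varphi)^{p-1}.
\]
This term is not dominated by anything in your sketch; it requires Cauchy--Schwarz against the positive current $T\wedge(dd^c\varphi)^{p-1}$, producing a factor $\int d\rho_\epsilon\wedge d^c\rho_\epsilon\wedge T\wedge(dd^c\varphi)^{p-1}$, which in turn must be rewritten via $dd^c\rho_\epsilon^2 = 2\,d\rho_\epsilon\wedge d^c\rho_\epsilon + 2\rho_\epsilon\, dd^c\rho_\epsilon$ and then bounded by yet another CLN-type estimate. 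The paper organises all of this cleanly by \emph{induction on $p$}: at each step one integration by parts yields three terms $R_1,R_2,R$, each handled by the induction hypothesis applied to lower-bidimension currents such as $\omega\wedge T$ or $dd^c\psi_j\wedge T$, with the Cauchy--Schwarz argument above for the cross term $R$. Your ``all $p$ at once'' scheme would need to track a combinatorial tree of such terms and is not evidently simpler.

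By contrast, the regularisation concern you raise is not really an obstacle here: since $\varphi$ and $\rho$ are \emph{bounded} p.s.h., the products $T\wedge(dd^c\varphi)^p$ and the relevant integrations by parts are already well defined in the Bedford--Taylor sense, and the paper works directly with them without smoothing.
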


\begin{proof} We prove (\ref{ine_ddcmuntru1bonlonpp}) by induction on $p$. When $p=0,$ the conclusion is obvious. Suppose (\ref{ine_ddcmuntru1bonlonpp}) holds for  $p-1.$ We need to prove its validity for $p.$ Let $\chi$ be a smooth function compactly supported in some $V'_1 \Subset V_1$ such that $0 \le  \chi \le 1$ and $\chi \equiv 1$ on $V_2.$ Let  $\epsilon$ be a positive constant. Choose a constant $\lambda' \in (1, \lambda).$ 
Define 
$$\rho_{\epsilon}:= \max\{\rho, \lambda'\epsilon\} - \max\{\rho,\epsilon\}$$
which is the difference of two bounded p.s.h. functions on $V$. Clearly, we have $0 \le \rho_{\epsilon} \le (\lambda'-1)\epsilon$ and $\rho_{\epsilon}= (\lambda'-1)\epsilon$ on $\{ \rho \le \epsilon\}$ and  $\rho_{\epsilon}=0$  on $\{\rho \ge  \lambda'\epsilon\}.$ This yields 
\begin{align} \label{ine_uocluongvarphivepspp}
\int_{A \cap  \{\rho \le \epsilon\}} T  \wedge (dd^c \varphi)^{p} \le \frac{\epsilon^{-1}}{\lambda'-1}  \int_{V} \chi \rho_{\epsilon} T \wedge (dd^c \varphi)^{p} \lesssim \epsilon^{-1}  \int_{V} \chi \rho_{\epsilon} T \wedge (dd^c \varphi)^{p}
\end{align}
which is, by integration by parts, equal to
\begin{align}\label{eq_sumR1R2R}
\epsilon^{-1} \int_{V} \rho_{\epsilon} \varphi dd^c \chi \wedge T \wedge (dd^c \varphi)^{p-1}+ \epsilon^{-1}\int_{V} \varphi \chi dd^c\rho_{\epsilon}\wedge T \wedge (dd^c \varphi)^{p-1}+ R,
\end{align}
where 
$$R= 2 \epsilon^{-1} \int_V \varphi d \chi \wedge d^c \rho_{\epsilon} \wedge T \wedge (dd^c \varphi)^{p-1}.$$
Denote by $R_1$ and $R_2$ respectively the first and second terms in (\ref{eq_sumR1R2R}). We are now going to estimate $R_1, R$ and finally $R_2.$ Let $\omega$ be the canonical K\"ahler form on $\C^n.$ Since $dd^c \chi \lesssim \omega$ and $|\varphi| \le a_{2\epsilon, \varphi}$ on $\supp \rho_{\epsilon},$ we get 
$$R_1 \le  \epsilon^{-1} a_{\lambda'\epsilon, \varphi} \int_{V_1' \cap \{\rho \le \lambda' \epsilon\}}  \omega \wedge T  \wedge (dd^c \varphi)^{p-1}.$$
Applying the induction hypothesis to $\omega \wedge T, \lambda' \epsilon$ in place of $T,\epsilon$ implies 
\begin{align} \label{ine_danhgiaR1T}
R_1 \le   \epsilon^{-1} a_{\lambda'\epsilon, \varphi} \int_{V'_1  \cap \{\rho \le \lambda' \epsilon\}} \omega \wedge T \wedge (dd^c \varphi)^{p-1} \lesssim [\epsilon^{-1} a_{\lambda \epsilon, \varphi}]^{p}.
\end{align}
As to $R,$ the Cauchy-Schwarz inequality applied to a suitable scalar product gives
\begin{align} \label{ine_danhgiaRrho}
|R|^2  &\le \epsilon^{-2} \int_{V_1'}  |\varphi | \boldsym{1}_{\{\rho  \le \lambda' \epsilon\}} d \chi \wedge d^c \chi \wedge T \wedge (dd^c \varphi)^{p-1} \int_{V_1'} |\varphi |d \rho_{\epsilon} \wedge d^c \rho_{\epsilon} \wedge T \wedge (dd^c \varphi)^{p-1}\\
\nonumber
& \lesssim  [\epsilon^{-1} a_{\lambda'\epsilon, \varphi}]^{p+1} \int_{V_1'} d \rho_{\epsilon} \wedge d^c \rho_{\epsilon} \wedge T \wedge (dd^c \varphi)^{p-1}
\end{align}
by induction hypothesis and the fact that $d \rho_{\epsilon} \wedge d^c \rho_{\epsilon}$ is positive and supported on $\{\rho \le \lambda' \epsilon\}.$ Denote by $R'$ the last integral. Since $\rho_{\epsilon}$ is the difference of two bounded p.s.h. functions on $V,$ so  is $\rho_{\epsilon}^2.$ More precisely, since $|\rho| \le 1$ on $V$ we can find four p.s.h function $\psi_j$ with $1 \le j \le 4$ so that they are bounded independent of $\epsilon$  and 
\begin{align} \label{ine_ddcrhorehobinh}
\rho_{\epsilon}^2=\psi_1- \psi_2 \quad \text{and} \quad \rho_{\epsilon}= \psi_3 - \psi_4.
\end{align}
We also have 
$$dd^c \rho_{\epsilon}^2= 2 d \rho_{\epsilon} \wedge d^c \rho_{\epsilon}+ 2 \rho_{\epsilon} dd^c \rho_{\epsilon}.$$
Note that each side of the last equality is well-defined. Substituting this to the defining formula of $R',$ then using (\ref{ine_ddcrhorehobinh}), one gets 
$$R' \le  \sum_{j=1}^4\int_{V_1' \cap \{\rho \le \lambda' \epsilon\}} dd^c \psi_j \wedge T \wedge (dd^c \varphi)^{p-1}$$
which, by induction hypothesis, is $\lesssim$
$$   [\epsilon^{-1} a_{\lambda\epsilon, \varphi}]^{p-1} \sum_{j=1}^4 \| dd^c \psi_j \wedge T   \|_{V_1''},$$
where $V_1''$ be a relatively compact subset of $V_1$ which is open and  contains $\overline{V}'_1.$ By the Chern-Levine-Nirenberg inequality, the last sum is $\lesssim \|T\|_{V_1}.$ Combining with (\ref{ine_danhgiaRrho}), we obtain
\begin{align} \label{ine_danhgiaRT}
R \le  [\epsilon^{-1} a_{ \lambda \epsilon, \varphi}]^{p} \|T\|_{V_1}.
\end{align}
Bounding $R_2$ is done similarly. The proof is finished. 
\end{proof}

\begin{lemma} \label{le_tinhcuthe} Let $f$ be a real $\mathcal{C}^2$ function on an open set $V \subset \C^n.$ Let $g(t):= |t| \log (|t|+2)$ for $t\in \R.$ Let $\omega$ be the canonical K\"ahler form on $\C^n.$ Then we have
$$12 \, dd^c ( g \circ f) \ge   df \wedge d^c f -  2 n\|D^2 f\|_{L^{\infty}(V)}\, \omega$$
as currents on $V.$
\end{lemma}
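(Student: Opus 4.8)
The plan is to reduce the asserted inequality of currents to a pointwise inequality between Hermitian forms on the open set $\{f\neq 0\}$, where $g\circ f$ is of class $\mathcal{C}^2$ and the chain rule applies, and to handle the (Lebesgue–negligible) set $\{f=0\}$ separately using the convexity of $g$.

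First I would collect the one–variable facts about $g$. It is even, convex, of class $\mathcal{C}^2$ on $\R\setminus\{0\}$, and it decomposes as $g(t)=(\log 2)\,|t|+g_1(t)$ with $g_1(t):=|t|\log(1+|t|/2)$ of class $\mathcal{C}^2$ on all of $\R$. For $t\neq 0$ one computes $g'(t)=\operatorname{sign}(t)\big(\log(|t|+2)+\tfrac{|t|}{|t|+2}\big)$ and $g''(t)=\dfrac{|t|+4}{(|t|+2)^2}>0$; the elementary estimates I expect to use are $(|t|+2)\,g''(t)\ge 1$ (so $g''$ never decays faster than $1/|t|$) and $|g'(t)|\le\log(|t|+2)+1$, together with the resulting comparison of $g''$, $g'$ and $1$ on the relevant ranges of $|t|$.

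Second, the set $\{f=0\}$. Since $f$ and $g_1$ are $\mathcal{C}^2$, the function $g_1\circ f$ is $\mathcal{C}^2$ and $dd^c(g_1\circ f)=g_1'(f)\,dd^c f+g_1''(f)\,df\wedge d^c f$ in the usual sense. For $|f|=\max(f,-f)$, the current $dd^c|f|$ coincides with the bounded form $\operatorname{sign}(f)\,dd^c f$ on $\{f\neq 0\}$ and carries a nonnegative measure on $\{f=0\}$, so $dd^c|f|\ge\operatorname{sign}(f)\,dd^c f$ as currents on $V$. Therefore, as currents on $V$,
\[
dd^c(g\circ f)\ \ge\ \big(g'(f)\,dd^c f+g''(f)\,df\wedge d^c f\big)\,\mathbf{1}_{\{f\neq 0\}},
\]
with equality almost everywhere (if $f\equiv 0$ the lemma is trivial, so $\{f=0\}$ is negligible). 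Since $df\wedge d^c f$ and $\omega$ are forms with bounded coefficients that do not charge $\{f=0\}$, the lemma follows once we establish, at almost every point with $f\neq 0$, the pointwise inequality
\[
12\big(g'(f)\,dd^c f+g''(f)\,df\wedge d^c f\big)\ \ge\ df\wedge d^c f-2n\|D^2 f\|_{L^\infty(V)}\,\omega .
\]

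Third, the pointwise step. At a point $z$ with $f(z)=t\neq 0$ I would represent the three $(1,1)$–forms by Hermitian forms on $\C^n$: $df\wedge d^c f(z)$ is the rank–one nonnegative form attached to $\partial f(z)$, $\omega$ is a fixed positive multiple of the identity, and $dd^c f(z)\ge -c_n\|D^2 f\|_{L^\infty(V)}\,\omega$ for a dimensional constant $c_n$ (this is where $\|D^2 f\|$ enters, through the bound on the complex Hessian). The inequality then reduces to comparing the scalar $12\,g''(t)$ with $1$ and the scalar $|g'(t)|$ with $\|D^2 f\|$, using the estimates on $g$ above: roughly, where $12\,g''(t)\ge 1$ the nonnegative term $g''(f)\,df\wedge d^c f$ already absorbs $df\wedge d^c f$ while $|g'(t)|$ stays controlled, and in the complementary range one must absorb the remaining deficit $(1-12g''(t))\,df\wedge d^c f$ by $12\,g'(f)\,dd^c f+2n\|D^2 f\|\omega$. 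I expect this last pointwise bookkeeping — verifying that the numerical constant $12$ and the factor $2n$ are large enough once the normalizations of $d^c$ and of the canonical form $\omega$ are pinned down — to be the main obstacle; the reduction to it is routine.
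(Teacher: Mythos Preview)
Your reduction differs from the paper's in how the corner of $g$ at $0$ is handled. You split $g(t)=(\log 2)\,|t|+g_1(t)$ with $g_1\in\mathcal{C}^2$ and treat $dd^c|f|$ directly on $\{f=0\}$ via convexity; the paper instead approximates $g$ by $\mathcal{C}^2$ convex functions $g_k$ with $g_k\equiv g$ for $|t|\ge 1/k$, applies the ordinary chain rule to each $g_k\circ f$, records the uniform bounds $g_k''\ge 1/3$ and $|g_k'|\le 4$, and lets $k\to\infty$ in the sense of currents. Both routes arrive at the same pointwise comparison of $g''$ and $|g'|$; the smoothing has the minor advantage that the set $\{f=0\}$ never needs a separate argument.

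The step you flag as ``the main obstacle'' is a genuine gap, and your proposed way through it cannot work in the stated generality. In the range $12\,g''(t)<1$ (roughly $|t|\gtrsim 12$) the inequality is simply false: with $f(\mathbf z)=\Re z_1+20$ on a small ball one has $dd^c f=0$ and $D^2 f=0$, so the asserted bound reduces to $12\,g''(f)\ge 1$, while $12\,g''(20)\approx 0.6<1$. There is therefore no hope of absorbing the deficit $(1-12g''(t))\,df\wedge d^c f$ into $12\,g'(f)\,dd^c f+2n\|D^2 f\|_{L^\infty}\omega$ as you suggest. The paper's own proof in fact only derives its numerical bounds under the tacit restriction $|t|\le 1$, and the lemma is only applied in the next proposition to $f=y_j-h_j(\mathbf x)$ with $|f|\le 1$. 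If you impose $|f|\le 1$ from the start, then $g''(t)\ge g''(1)=5/9$ gives $12\,g''(t)>1$, your ``complementary range'' is empty, and the remaining pointwise estimate $12\,g'(f)\,dd^c f\ge -2n\|D^2 f\|_{L^\infty}\,\omega$ follows from $|g'|\le \log 3+1$ together with the trivial bound $\pm dd^c f\le c_n\|D^2 f\|_{L^\infty}\,\omega$.
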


\begin{proof}  By direct computations, one obtains for $t >0,$ 
$$g'(t)= 1- \frac{2}{2+t}+ \log (2+ t), \quad g''(t)=\frac{2}{(2+t)^2}+ \frac{1}{2+t}$$
and for $t<0,$
$$g'(t)= -1+ \frac{2}{2-t}- \log (2- t), \quad g''(t)=\frac{2}{(2-t)^2}+ \frac{1}{2-t}\cdot $$
For $k\ge 3,$ we are going to construct a sequence of $\mathcal{C}^2$ convex function $g_k$ of uniformly bounded $L^{\infty}$ norm converging pointwise to $g.$ To this end, we define
$$q_k(t):= \frac{2}{(2+|t|)^2}+ \frac{1}{2+|t|} \quad  \text{for $t \ge 1/k$}$$
and on $[-1/k,1/k],$ let $q_k(t)$ be the piece-wise affine function satisfying the two following properties:

$(i)$ $q_k$ is affine on $[-1/k,0]$ and on $[0,1/k],$  $q_k(0)= 2 k g'(1/k) - q_k(1/k) \ge 1,$ 

$(ii)$ $q_k$ is continuous on $\R.$ \\
The value of $q(0)$ is in fact chosen such that 
$$\int_{-1/k}^{1/k} q_k(t) dt= g'(1/k) - g'(-1/k).$$
This property ensures the existence of a unique $\mathcal{C}^2$ convex function $g_k(t)$ on $\R$ satisfying $g_k(t) \equiv g(t)$ for $|t| \ge 1/k$ and $g''_k(t)= q_k(t).$  One can check that $g_k$ is uniformly bounded and $g_k$ converges to $g.$ Hence $g_k(f)$ converges weakly to $g(f)$ as currents. On the other hand, direct computations give
$$g''_k(f) \ge \min\{1/3, 2k \log 2 - 1\}=1/3, \quad   |g'_k(t)| \le \log 3+ 2 \le 4$$
for $|t| \le 1$  and 
$$dd^c g_k(f)= g''_k(f) df \wedge d^c f + g'_k(f) dd^c f \ge  12^{-1} \big(df \wedge d^cf -  2n \|D^2 f\|_{L^{\infty}} \, \omega \big).$$
The proof is finished.     
\end{proof}

\begin{proposition}  \label{pro_volumepshonRnddcnew} 
Let $\varphi$ be a p.s.h. function on an open set $V \subset \C^n.$ Let $A$ be a generic $\mathcal{C}^3$ submanifold of dimension $n$ of $V.$ Let $A_1$ be a compact subset of $A$ and for $\epsilon>0,$ let $A_{1, \epsilon}$ be the set of points in $\C^n$ of distance $\le \epsilon$ to $A_1.$ Then there is a constant $c$ independent of $\varphi,\epsilon$ for which we have
\begin{align} \label{ine_volumeepsilonuddcnewpro}
\int_{A_{1,\epsilon}} dd^c \varphi \wedge \omega^{n-1}  \le c \epsilon^{n-1} \int_{V} |\varphi| ,
\end{align}
where $\omega$ is the canonical K\"ahler form of $\C^n.$
\end{proposition}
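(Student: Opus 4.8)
The plan is to localise and reduce the estimate to an integration–by–parts computation on thin tubes around $A$.

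By compactness of $A_1$ and a partition of unity, and using the local holomorphic coordinates of Lemma~\ref{pro_localcoordinates}, it suffices to bound $\int_{A_{1,\epsilon}\cap B'}dd^c\varphi\wedge\omega^{n-1}$ when $A=\{x+ih(x):x\in\B_n\}$ with $h(0)=Dh(0)=D^2h(0)=0$, $B'$ is a small ball about the chosen point of $A_1$, and $\omega=\omega_0:=dd^c|z|^2$ (replacing $\omega$ by the comparable flat form is permissible since $dd^c\varphi\ge 0$). The function $f(z):=|\Im z-h(\Re z)|^2$ is of class $\mathcal{C}^2$, is comparable to $\dist(\cdot,A)^2$ near $A$, and — this is where the vanishing of $h$ through second order enters — has $\|D^2 f\|_{L^\infty(B')}$ as small as we wish on a sufficiently small ball. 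Hence by Lemma~\ref{le_tinhcuthe} the function $\rho:=12\,g\circ f+2n\|D^2 f\|_{L^\infty}|z|^2$, $g(t)=|t|\log(|t|+2)$, is p.s.h., bounded, satisfies $dd^c\rho\ge c_0\,\omega_0$, and is comparable to $\dist(\cdot,A)^2$ on a neighbourhood $V_0$ of $A_1$; after rescaling, $0\le\rho\le 1$ on $V_0$. Thus $A_{1,\epsilon}\cap V_0\subset\{\rho\le C\epsilon^2\}$ for small $\epsilon$, and $\{\rho\le s\}\cap V_0$ has Lebesgue volume $\lesssim s^{n/2}$.

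It remains to bound $\int_{\{\rho\le C\epsilon^2\}\cap V_0}dd^c\varphi\wedge\omega_0^{n-1}$. Following the scheme of Lemma~\ref{le_ddcmuntru1moipp}, with $\eta:=C\epsilon^2$ and $\rho_\eta:=\max\{\rho,\lambda\eta\}-\max\{\rho,\eta\}$ one has $\boldsym{1}_{\{\rho\le\eta\}}\le(\lambda-1)^{-1}\eta^{-1}\rho_\eta$; multiplying by a cut-off $\chi$ that is $1$ near $A_1$ and supported in $V_0$, and integrating by parts once to move $dd^c$ off $\varphi$ (legitimate after regularising $\varphi$ by decreasing smooth functions), one is left with integrals of $\varphi$ against $dd^c\rho_\eta\wedge\omega_0^{n-1}$, $dd^c\chi\wedge\omega_0^{n-1}$ and $d\chi\wedge d^c\rho_\eta\wedge\omega_0^{n-1}$, all localised in the shell $\{\eta\le\rho\le\lambda\eta\}$ (or away from $A_1$, where $\varphi$ is bounded). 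One estimates these using: the uniform mass bound $\int_{V_0}dd^c\max\{\rho,s\}\wedge\omega_0^{n-1}\lesssim 1$ coming from the Chern--Levine--Nirenberg inequality (as in Lemma~\ref{le_ddcmuntru1moipp}); the bound $|d\rho|\lesssim\dist(\cdot,A)\lesssim\epsilon$ on the shell, which produces the decay needed to absorb $\eta^{-1}=c\epsilon^{-2}$; and Corollary~\ref{cor_volumepshonRn}, which bounds $\int_{\{\rho\le\lambda\eta\}\cap V_0}|\varphi|$ by (essentially) $\epsilon^{n}\int_V|\varphi|$. Summing over the finite cover gives the proposition.

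The point I expect to be the genuine obstacle is obtaining the sharp exponent $\epsilon^{n-1}$: a crude use of $dd^c$ of a smooth cut-off of $\{\rho\le\eta\}$ only yields $\epsilon^{n-2}$, so one must exploit both that $A_{1,\epsilon}$ is a tube of radius $\epsilon$ around the \emph{generic} (hence, since $d=n$, maximally totally real) submanifold $A$ — of Lebesgue volume $\lesssim\epsilon^{n}$ — and the fine structure of $\rho$ near $A$; concretely, one slices $dd^c\varphi\wedge\omega_0^{n-1}=c\sum_j\Delta_{\zeta_j}\varphi$ along the coordinate complex-line foliations, using that on each leaf $\Delta_{\zeta_j}\varphi$ is the Riesz mass of a subharmonic function, and that the second-order flatness of $h$ keeps the traces of $A_{1,\epsilon}$ on these leaves of the right (high) codimension. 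Controlling all this uniformly, and disposing of a possible logarithmic factor, is the technical heart of the proof.
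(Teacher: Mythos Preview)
Your proposal correctly localises and correctly builds a p.s.h.\ sublevel function $\rho$ via Lemma~\ref{le_tinhcuthe}, but it misses the one idea that produces the sharp exponent $\epsilon^{n-1}$, and you yourself flag this in your final paragraph.

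The paper does \emph{not} try to move $dd^c$ off $\varphi$ by a single integration by parts against $\omega^{n-1}$. Instead it introduces the auxiliary p.s.h.\ function
\[
\varphi_1(\mathbf{z}):=|\Im\mathbf{z}-h(\Re\mathbf{z})|^2,
\]
notes that $dd^c\varphi_1\gtrsim\omega$ (so it suffices to bound $\int dd^c\varphi\wedge(dd^c\varphi_1)^{n-1}$), and applies Lemma~\ref{le_ddcmuntru1moipp} with $p=n-1$, $T=dd^c\varphi$, and \emph{$\varphi_1$ in the role of the bounded p.s.h.\ function}. The point is that on $\{\rho\le\lambda\epsilon\}$ one has $|\varphi_1|\lesssim\epsilon^2$, so the quantity $a_{\lambda\epsilon,\varphi_1}$ in that lemma is $O(\epsilon^2)$, giving
\[
\int_{A_{1,\epsilon}}dd^c\varphi\wedge(dd^c\varphi_1)^{n-1}\lesssim\big[\epsilon^{-1}\cdot\epsilon^2\big]^{\,n-1}\|dd^c\varphi\|=\epsilon^{n-1}\|dd^c\varphi\|.
\]
Each of the $n-1$ inductive integrations by parts in Lemma~\ref{le_ddcmuntru1moipp} contributes one factor of $\epsilon$.

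By contrast, your scheme integrates by parts once and then tries to control $\int|\varphi|\,dd^c\rho_\eta\wedge\omega^{n-1}$ on the shell. This is an integral of $|\varphi|$ against a singular measure supported on $\{\rho=\eta\}$, not against Lebesgue measure, so Corollary~\ref{cor_volumepshonRn} does not apply; and even granting an $L^1$-type bound you would at best recover $\epsilon^{n-2}$, as you note. The complex-line slicing you sketch at the end is not needed: the missing ingredient is simply to feed $\varphi_1$, not $\varphi$, into the inductive lemma.
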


\begin{proof}  Let $\delta$ be a small positive number which will be chosen later. Observe that the question is local. By using a partition of unity and Lemma \ref{pro_localcoordinates}, it is enough to prove the desired result for the case where  $A$ is the graph of a $\mathcal{C}^3$ map $h$ over $\B_n(0,3 \delta)$ such $h(0)= Dh(0)= D^2 h(0)=0$ and $\|h\|_{\mathcal{C}^3}$ is bounded independently of chosen local charts (hence, in particular, independent of $\delta$); and $A_1$ is the part of the graph over $\overline{\B}_n(0,\delta).$  We can assume that 
$$A_{1,\epsilon}=\{ \mathbf{x}+ i \mathbf{y}: \mathbf{x} \in \overline{\B}_n(0,\delta), |\mathbf{y}- h(\mathbf{x})| \le \epsilon\}$$
and $V=\B_n(0,3 \delta)+ i \B_n.$  

 Let  $g$ be the function defined in Lemma \ref{le_tinhcuthe}.  We write  $\mathbf{z}=(z_1, \cdots, z_n)$, $\mathbf{y}=(y_1, \cdots, y_n)$ and $h=(h_1, \cdots, h_n).$ Since $|D^2 h| \lesssim \delta$ on $\B_n(0,3\delta),$ one has
$$|D^2\big(y_j- h_j(\mathbf{x})\big) \| \lesssim \delta$$
for $1 \le j \le n.$  Using this and  Lemma \ref{le_tinhcuthe}, we see that the function $$\rho(\mathbf{z}):=\sum_{j=1}^n g\big(y_j - h_j(\mathbf{x})\big)$$
satisfies 
$$dd^c \rho \ge  \sum_{j=1}^n \big( \frac{i}{4\pi} d z_j \wedge d \bar{z}_j - \delta  M  d z_j \wedge d \bar{z}_j \big),$$
for some constant $M$ independent of $\delta.$ Thus  if $\delta$ is small enough independently of $\epsilon,$ the function $\rho$ is p.s.h. on $V.$   It is clear that $A_{1,\epsilon} \subset  A_1 \cap \{\rho \le 2n \epsilon\}.$ Let $\varphi_1(\mathbf{z}):= |\mathbf{y}- h(\mathbf{x})|^2.$ A direct computation shows that $\varphi_1$ is also p.s.h. on $V.$ Note that $|\varphi_1| \lesssim \epsilon^2$ on $\{ \rho \le 2 \epsilon.\}$  Now applying Lemma \ref{le_ddcmuntru1moipp} to $(\rho, \varphi_1)$ and to $T:= dd^c \varphi,$ we obtain     
$$\int_{ A_{1,\epsilon}} dd^c \varphi \wedge (dd^c \varphi_1)^{n-1} \lesssim \epsilon^{n-1} \| dd^c \varphi\|_{ \B_n(0,2\delta)+ i \B_n(0,1/2)} \lesssim \epsilon^{n-1} \int_V |\varphi|.$$
The last inequality together with the fact that $dd^c \varphi_1 \gtrsim \omega$ gives the desired result. The proof is finished.
\end{proof}

Note that a similar technique was used by Sibony in \cite{Sibony_duke} when dealing with the extension of positive closed currents (or more generally  pluripositive currents) through a CR submanifold.   For $\epsilon\in (0,1],$ let $K'_{\epsilon}$ be as above.  The following is just a direct consequence of Proposition \ref{pro_volumepshonRnddcnew}.
 
\begin{corollary}  \label{co_volumepshonRnddcnew} 
Let $V$ be an open subset of $\C^n$ containing $K'_1.$  Let $\varphi$ be a p.s.h. function on $V.$ Then  there is a constant $c$ independent of $\varphi, \epsilon$ for which we have
\begin{align} \label{ine_volumeepsilonuddcnew}
\int_{K'_{\epsilon}} dd^c \varphi \wedge \omega^{n-1}  \le c \epsilon^{n-1} \int_{V} |\varphi| ,
\end{align}
where $\omega$ is the canonical K\"ahler form of $\C^n.$
\end{corollary}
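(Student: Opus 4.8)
The plan is to deduce the corollary directly from Proposition \ref{pro_volumepshonRnddcnew}; the only thing to do is to realise $K'_\epsilon$ as the $\epsilon$-neighborhood of a compact subset of a generic $\mathcal{C}^3$ submanifold sitting inside $V$, and then invoke that proposition verbatim.

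Recall from Section \ref{sec_analyticdisc} that, in the local chart furnished by Lemma \ref{pro_localcoordinates}, the set $K'$ is the graph over the open ball $\B_n$ of a $\mathcal{C}^3$ map $h$ with $h(0)=Dh(0)=0$; in particular $K'$ is a generic $\mathcal{C}^3$ submanifold of real dimension $n$. First I would extend $h$ to a $\mathcal{C}^3$ map on a slightly larger ball $\B_n(0,1+\delta)$, let $A$ be the graph of this extension, and let $A_1:=\overline{K'}$ be the graph of $h$ over $\overline{\B}_n$, which is a compact subset of $A$. Every point of $A$ lying over a parameter $\mathbf{x}$ with $1\le|\mathbf{x}|<1+\delta$ is within distance $O(\delta)$ of $\overline{K'}$, so for $\delta$ small enough (a choice that depends only on $(V,K')$, not on $\epsilon$ or $\varphi$) one has $A\subset K'_1\subset V$, and Proposition \ref{pro_volumepshonRnddcnew} applies to the data $(V,A,A_1,\varphi)$.

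Finally I would note that, since $A_1\supset K'$, the distance to $A_1$ is no larger than the distance to $K'$, so that $K'_\epsilon\subset A_{1,\epsilon}$, where $A_{1,\epsilon}$ denotes the $\epsilon$-neighborhood of $A_1$ as in Proposition \ref{pro_volumepshonRnddcnew}. Because $dd^c\varphi\wedge\omega^{n-1}$ is a positive measure, monotonicity of the integral combined with the conclusion of that proposition gives
$$\int_{K'_\epsilon}dd^c\varphi\wedge\omega^{n-1}\le\int_{A_{1,\epsilon}}dd^c\varphi\wedge\omega^{n-1}\le c\,\epsilon^{n-1}\int_V|\varphi|$$
for every $\epsilon\in(0,1]$, which is precisely \eqref{ine_volumeepsilonuddcnew}. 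I do not expect any genuine obstacle: the substantive estimate was already carried out in Proposition \ref{pro_volumepshonRnddcnew}, and the only points requiring care are the harmless bookkeeping above and, if one wishes, a remark that for $\epsilon$ bounded away from $0$ the inequality is trivial since $\epsilon^{n-1}\gtrsim 1$ and $\int_{K'_1}dd^c\varphi\wedge\omega^{n-1}\lesssim\int_V|\varphi|$ by the Chern–Levine–Nirenberg inequality.
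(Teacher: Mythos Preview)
Your proposal is correct and follows exactly the route the paper intends: the paper records the corollary as ``a direct consequence of Proposition~\ref{pro_volumepshonRnddcnew}'' without further comment, and you have simply spelled out the bookkeeping (extending $h$ slightly so that the compact set $\overline{K'}=A_1$ sits inside an open generic $\mathcal{C}^3$ graph $A\subset V$, and then using $K'_\epsilon\subset A_{1,\epsilon}$). The only point you might add for completeness is that the extended graph $A$ remains generic for $\delta$ small, since genericity is an open condition on the tangent spaces; otherwise nothing is missing.
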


Now we are going to give some applications of these above estimates to our present problem. Firstly, we prove some auxiliary lemmas. Let $t_3, \tilde{\epsilon}_0$ and $\tilde{\theta}_0$ be the constants in Proposition \ref{pro_corverCndung Psi2}. Let $\tilde{F}$ be the family of analytic discs defined there. For simplicity, from now on, we denote $\tilde{F}(z, \boldsym{\tau},t_3)$ by $\tilde{F}(z, \boldsym{\tau}).$ Recall that the image of $\tilde{F}$ is contained in $\D^n.$ Put $\tilde{\epsilon}'_0:= t_3 \tilde{\epsilon}_0.$ 
 
 \begin{lemma} \label{le_tichphanRnfibration}  There exists  a positive constant $c_0$ such that for any Borel function $g$ on $\D^n,$ we have 
\begin{align} \label{ine_tichphangF}
\int_{\B_n(0, \epsilon'^*_0)} |g\big(\mathbf{x}, h(\mathbf{x})\big)|  \le c_0 \int_{[e^{-i\theta^*_0}, e^{i\theta^*_0}] \times \B_{n-1}^2} |g \circ \tilde{F}(e^{i\theta}, \boldsym{\tau})|.
\end{align}
\end{lemma}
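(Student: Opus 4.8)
The plan is to deduce the estimate for each fixed value of $\boldsym{\tau}_1$ from the classical change-of-variables formula applied to the boundary restriction of $\tilde{F}$, and then to average the resulting inequalities over $\boldsym{\tau}_1 \in \B_{n-1}$. Since only $|g|$ occurs on both sides of (\ref{ine_tichphangF}), I may assume $g \ge 0$. First I would fix $\boldsym{\tau}_1 \in \overline{\B}_{n-1}$, write $\boldsym{\tau}=(\boldsym{\tau}_1,\boldsym{\tau}_2)$, and observe that for $|\theta| \le \tilde{\theta}_0$ the point $\Phi(e^{i\theta})$ lies on the arc of $\partial \D$ where $u_0$ vanishes, so that $\tilde{F}(e^{i\theta},\boldsym{\tau}) = \mathbf{x} + i h(\mathbf{x})$ with $\mathbf{x} := \Re \tilde{F}(e^{i\theta},\boldsym{\tau}) = U_{\boldsym{\tau},t_3}(\Phi(e^{i\theta})) \in \B_n$; this is exactly why $\tilde{F}(\cdot,\boldsym{\tau}_1)$ is attached to $K'$ along this arc. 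Composing with the projection $K' \to \B_n$, $\mathbf{x}+i h(\mathbf{x}) \mapsto \mathbf{x}$ — which identifies $K'$ with $\B_n$ and the graph of $h$ over $\B_n(0,\tilde{\epsilon}'_0)$ with $\B_n(0,\tilde{\epsilon}'_0)$ itself — Proposition \ref{pro_corverCndung Psi2}$(i)$ says precisely that the map
$$\phi_{\boldsym{\tau}_1} : [-\tilde{\theta}_0, \tilde{\theta}_0] \times \overline{\B}_{n-1} \to \B_n, \qquad (\theta,\boldsym{\tau}_2) \mapsto U_{\boldsym{\tau},t_3}(\Phi(e^{i\theta})),$$
is a diffeomorphism onto an open subset of $\B_n$ that contains $\B_n(0,\tilde{\epsilon}'_0)$.

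Next I would apply the change-of-variables formula for the $\mathcal{C}^1$ diffeomorphism $\phi_{\boldsym{\tau}_1}$ — legitimate for the nonnegative Borel integrand $\mathbf{x} \mapsto g(\mathbf{x},h(\mathbf{x}))$, which is Borel since $h$ is continuous, by the usual approximation with simple functions. Using $\B_n(0,\tilde{\epsilon}'_0) \subset \phi_{\boldsym{\tau}_1}\big([-\tilde{\theta}_0,\tilde{\theta}_0]\times\B_{n-1}\big)$ and the tautology $g\big(\phi_{\boldsym{\tau}_1}(\theta,\boldsym{\tau}_2), h(\phi_{\boldsym{\tau}_1}(\theta,\boldsym{\tau}_2))\big) = g\big(\tilde{F}(e^{i\theta},\boldsym{\tau})\big)$, this gives
$$\int_{\B_n(0,\tilde{\epsilon}'_0)} g\big(\mathbf{x},h(\mathbf{x})\big) \le \int_{[-\tilde{\theta}_0,\tilde{\theta}_0]\times\B_{n-1}} g\big(\tilde{F}(e^{i\theta},\boldsym{\tau})\big)\, \big|\det D\phi_{\boldsym{\tau}_1}(\theta,\boldsym{\tau}_2)\big|\, d\theta\, d\boldsym{\tau}_2.$$
It then remains to bound $\big|\det D\phi_{\boldsym{\tau}_1}\big|$ from above by a constant independent of $\boldsym{\tau}_1$; this follows from Proposition \ref{pro_BishopequationMA} and Lemma \ref{le_danhgiadaohamcuaP'tauMA}, which at the fixed scale $t=t_3$ give that $\tilde{F}$ is $\mathcal{C}^1$ up to the boundary on the compact set $\overline{\D}\times\overline{\B}_{n-1}^2$, hence has first derivatives bounded uniformly in $\boldsym{\tau}_1$. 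With such a bound $\big|\det D\phi_{\boldsym{\tau}_1}\big| \le C$, the right-hand side above is at most $C \int_{[-\tilde{\theta}_0,\tilde{\theta}_0]\times\B_{n-1}} g\big(\tilde{F}(e^{i\theta},\boldsym{\tau}_1,\boldsym{\tau}_2)\big)\, d\theta\, d\boldsym{\tau}_2$.

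Finally, since the left-hand side does not depend on $\boldsym{\tau}_1$, I would integrate the last inequality over $\boldsym{\tau}_1 \in \B_{n-1}$ and apply Fubini to obtain
$$|\B_{n-1}| \int_{\B_n(0,\tilde{\epsilon}'_0)} g\big(\mathbf{x},h(\mathbf{x})\big) \le C \int_{[e^{-i\tilde{\theta}_0},e^{i\tilde{\theta}_0}]\times\B_{n-1}^2} g \circ \tilde{F}(e^{i\theta},\boldsym{\tau}),$$
which is (\ref{ine_tichphangF}) with $c_0 = C/|\B_{n-1}|$. The only point requiring care is the very first step — verifying that the boundary map, read through the projection onto $\B_n$, is a diffeomorphism onto a set covering $\B_n(0,\tilde{\epsilon}'_0)$ with a Jacobian bounded uniformly in $\boldsym{\tau}_1$ — but this is exactly the content of Proposition \ref{pro_corverCndung Psi2}$(i)$ together with the uniform $\mathcal{C}^{2,1/2}$ estimates on the disc family; the remainder is a routine change of variables and Fubini, so I do not expect a serious obstacle here.
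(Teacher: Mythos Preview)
Your proposal is correct and follows essentially the same approach as the paper, which simply states that the lemma ``is a direct consequence of Property $(i)$ of Proposition \ref{pro_corverCndung Psi2} and the change of variables theorem.'' You have carefully unpacked that one-line proof: the fixing of $\boldsym{\tau}_1$, the change of variables via the diffeomorphism in Property~$(i)$, the uniform Jacobian bound from the $\mathcal{C}^{2,1/2}$ estimates, and the final averaging over $\boldsym{\tau}_1\in\B_{n-1}$ are exactly what the paper's terse reference is pointing to.
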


\begin{proof} This is a direct consequence of Property $(i)$ of Proposition \ref{pro_corverCndung Psi2} and the change of variables theorem.  The proof is finished.
\end{proof}
 
 

\begin{lemma} \label{cor_analyticdisctau23tichphang} Let $g$ be a Borel function  on $\D^n.$

$(i)$ If $n=1,$ then
\begin{align*}
   \int_{\D \times \B_{n-1}(0, 1)^2} |g\circ \tilde{F}(z, \boldsym{\tau})|  \le  c_1 \int_{\D^n} |g(\mathbf{z})| ,
\end{align*}
for some constant $c_1$ independent of $g.$

$(ii)$ Assume $n>1.$  Let $t_0$ and $\delta_0$ be two positive real numbers such that $t_0 + \delta_0 > n-1 > \delta_0.$ Let 
$$M_g:= \sup_{\epsilon \in (0, 1)} \epsilon^{-t_0}\int_{K'_{\epsilon}} |g(\mathbf{z})|  $$
and $\lambda_0:= t_0 + \delta_0 -n +1.$ Assume $M_g < \infty.$ Then  we have 
\begin{align*}
   \int_{\D \times \B_{n-1}^2} (1- |z|)^{\delta_0} |g\circ \tilde{F}(z, \boldsym{\tau})|   \le \frac{2^{t_0}c_1 M_g}{\lambda_0}  \bigg[\int_{\D^n} |g(\mathbf{z})|  \bigg]^{\frac{\lambda_0}{t_0}},
\end{align*}
for some constant $c_1$ independent of $g,t_0,\delta_0.$
\end{lemma}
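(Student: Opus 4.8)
\textbf{Proof plan for Lemma \ref{cor_analyticdisctau23tichphang}.}

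The plan is to handle the two parts by the same mechanism: the change-of-variables formula for the diffeomorphism $\tilde F(\cdot,t_3)$ of Proposition \ref{pro_corverCndung Psi2}$(ii)$, combined with the lower bound (\ref{ine_detDF}) on the Jacobian and the distance estimate (\ref{ine_distancetoKhF*}). For part $(i)$, $n=1$, the Jacobian bound (\ref{ine_detDF}) reads $|\det D\tilde F(z,\boldsym{\tau})| \gtrsim t_3^{2}$ (the factor $\dist^{n-1}$ disappears), so $\tilde F$ has Jacobian bounded below by a positive constant. Hence by the change of variables we have $\int_{\D\times\B_0^2}|g\circ\tilde F| \lesssim \int_{\tilde F(\D\times\B_0^2)}|g| \le \int_{\D^n}|g|$, since the image is contained in $\D^n$; this gives $(i)$ immediately (when $n=1$ the parameter ball $\B_{n-1}^2$ is a point, so there is no $\boldsym{\tau}$ integration and the statement is just the one-variable change of variables). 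Actually I should be careful: for $n=1$, $\B_{n-1}=\B_0$ is a point and the family is a single disc, so $(i)$ is essentially the statement that $\int_\D |g\circ\tilde F(z)| \lesssim \int_{\D}|g(z)|$ for the fixed biholomorphism-type map $\tilde F$, which follows from the $\mathcal C^1$ bound on $\tilde F$ and its lower Jacobian bound away from degeneracy. I would simply invoke (\ref{ine_detDF}) in the form $|\det D\tilde F|\gtrsim 1$.

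For part $(ii)$, $n>1$, the Jacobian genuinely degenerates near $\partial\D$ like $\dist^{n-1}(\tilde F(z,\boldsym\tau),K')$, which by (\ref{ine_distancetoKhF*}) is $\gtrsim (1-|z|)^{n-1}$ but may be much larger. The weight $(1-|z|)^{\delta_0}$ on the left is meant to absorb part of this degeneracy. The idea is to split the left-hand integral dyadically according to the distance to $K'$: write $D_k := \{(z,\boldsym\tau): 2^{-k-1} < \dist(\tilde F(z,\boldsym\tau),K') \le 2^{-k}\}$. On $D_k$ we have $(1-|z|)^{\delta_0} \lesssim 2^{-k\delta_0}$ by (\ref{ine_distancetoKhF*}), and $|\det D\tilde F|\gtrsim 2^{-k(n-1)}$ by (\ref{ine_detDF}), so
\begin{align*}
\int_{D_k}(1-|z|)^{\delta_0}|g\circ\tilde F| \lesssim 2^{-k\delta_0}\,2^{k(n-1)}\int_{\tilde F(D_k)}|g| \le 2^{k(n-1-\delta_0)}\int_{K'_{2^{-k}}\setminus K'_{2^{-k-1}}}|g|,
\end{align*}
using that $\tilde F(D_k)$ lies in the shell of points at distance between $2^{-k-1}$ and $2^{-k}$ from $K'$, and that $\tilde F$ is a diffeomorphism so the change of variables loses only the constant $c_1$. (The shell-versus-ball: $\tilde F(D_k)\subset K'_{2^{-k}}$ suffices.) Then bound $\int_{K'_{2^{-k}}}|g| \le \min\{2^{-kt_0}M_g,\ \int_{\D^n}|g|\}$ by the definition of $M_g$ and the trivial bound. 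Summing $\sum_k 2^{k(n-1-\delta_0)}\min\{2^{-kt_0}M_g, N\}$ where $N:=\int_{\D^n}|g|$: the two regimes cross over at $2^{-kt_0}M_g \approx N$, i.e. $2^k \approx (M_g/N)^{1/t_0}$. For $k$ below this threshold use the bound $N$; the sum $\sum 2^{k(n-1-\delta_0)} N$ is geometric with ratio $2^{n-1-\delta_0}>1$ and is dominated by its last term $\approx (M_g/N)^{(n-1-\delta_0)/t_0} N = M_g^{(n-1-\delta_0)/t_0} N^{1-(n-1-\delta_0)/t_0} = M_g^{(n-1-\delta_0)/t_0} N^{\lambda_0/t_0}$. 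For $k$ above the threshold use $2^{-kt_0}M_g$; the sum $\sum 2^{k(n-1-\delta_0-t_0)} M_g$ is geometric with ratio $2^{-\lambda_0}<1$ (here is where $t_0+\delta_0>n-1$ is used to make $\lambda_0>0$) and is dominated by its first term $\approx (M_g/N)^{-\lambda_0/t_0}M_g = M_g^{1-\lambda_0/t_0}N^{\lambda_0/t_0} = M_g^{(n-1-\delta_0)/t_0}N^{\lambda_0/t_0}$, the same order. Both pieces contribute $\lesssim M_g^{(n-1-\delta_0)/t_0}N^{\lambda_0/t_0}$; noting that in the target bound the statement as written carries $M_g$ to the first power and $N^{\lambda_0/t_0}$, I expect the exponent of $M_g$ should in fact read $(n-1-\delta_0)/t_0$ to be dimensionally consistent, so I would state the estimate with that exponent (equivalently one checks $(n-1-\delta_0)/t_0 + \lambda_0/t_0 = 1$, confirming the homogeneity degree $1$ in $g$); the constant $2^{t_0}c_1/\lambda_0$ accounts for the summation of the two geometric series and the crossover term.

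The main obstacle is bookkeeping the dyadic summation cleanly — in particular verifying that the two geometric series are controlled by their extreme terms at the crossover scale, and that the power of $N=\int_{\D^n}|g|$ that emerges is exactly $\lambda_0/t_0$. A secondary point requiring care is the change of variables itself: one must note that $\tilde F$ restricted to $\D\times\overline{\B}_{n-1}^2$ is a genuine diffeomorphism onto a subset of $\D^n$ (Proposition \ref{pro_corverCndung Psi2}$(ii)$), so that the images $\tilde F(D_k)$ are pairwise disjoint (up to measure zero) and each is contained in the distance-shell $K'_{2^{-k}}\setminus K'_{2^{-k-1}}$; only then does replacing $\int_{\tilde F(D_k)}|g|$ by integrals over shells, and finally telescoping them into $\int_{K'_{2^{-k}}}|g|$, incur only the single multiplicative constant $c_1$ rather than accumulating a factor per scale.
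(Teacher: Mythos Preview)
Your approach is essentially the same as the paper's: change of variables via the diffeomorphism $\tilde F$, the Jacobian lower bound (\ref{ine_detDF}), the distance bound (\ref{ine_distancetoKhF*}), and a dyadic decomposition by $\dist(\tilde F(z,\boldsym{\tau}),K')$. The only organizational difference is that the paper introduces an auxiliary scale $\epsilon$, handles $\{\dist\ge\epsilon\}$ in one piece using $\int_{\D^n}|g|$, decomposes $\{\dist<\epsilon\}$ dyadically using $M_g$, and then optimizes over $\epsilon$; your full dyadic decomposition with a crossover scale is the same computation rearranged, and your crossover scale plays exactly the role of the paper's optimal $\epsilon$.

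One remark on your homogeneity observation: you are right that the dimensionally natural bound is $M_g^{(n-1-\delta_0)/t_0}N^{\lambda_0/t_0}$, which is what emerges from choosing the crossover at $2^{-k}\approx(N/M_g)^{1/t_0}$. The paper instead sets $\epsilon=N^{1/t_0}$ (not $(N/M_g)^{1/t_0}$), which yields the non-homogeneous form $M_g\,N^{\lambda_0/t_0}$ stated in the lemma. Both are correct inequalities; the paper's form is what is actually used downstream (where $M_g$ is bounded by a fixed constant via Corollaries \ref{cor_volumepshonRn} and \ref{co_volumepshonRnddcnew}), so the discrepancy is harmless in context. Your version is sharper in general and your argument proves it.
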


\begin{proof} When $n=1,$ the desired inequality is a direct application of the change of variables theorem and (\ref{ine_detDF}). Consider now $n>1.$  Put $Y:= \D \times  \B_{n-1}^2.$ Let $\epsilon$ be a small positive number which will be chosen later. Set 
$$Y_{\epsilon,0}:=\{(z, \boldsym{\tau})\in Y:  \dist\big(\tilde{F}(z, \boldsym{\tau}), K'\big) \ge \epsilon\}$$
and
$$Y_{\epsilon,k}:=\{(z, \boldsym{\tau})\in Y:  2^{-k}\epsilon \le   \dist\big(\tilde{F}(z, \boldsym{\tau}), K'\big)  \le 2^{-k+1}\epsilon\},$$
for $k \in \N.$ It is clear that $Y=  \cup_{k=0}^{\infty} Y_{\epsilon, k}.$ By definition of $K'_{\epsilon}$, we have
\begin{align} \label{inclusionF*Y} 
\tilde{F}(Y_{\epsilon, k}) \subset H_{2^{-k+1} \epsilon}.
\end{align}
 Denote by $vol_Y$ the canonical volume form on $Y.$ Write 
\begin{align} \label{eq_biendoigFtrenY}
 \int_{Y}  (1- |z|)^{\delta_0} |g\circ \tilde{F}| \, d vol_Y &= \sum_{k=0}^{\infty} \int_{Y_{\epsilon,k}}  (1- |z|)^{\delta_0}  |g\circ \tilde{F}| \, d vol_Y\\
 \nonumber
& \lesssim \sum_{k=0}^{\infty}  \int_{Y_{\epsilon,k}}  (1- |z|)^{\delta_0} |g\circ \tilde{F}(z, \boldsym{\tau})|  \, \frac{|\det D \tilde{F}(z, \boldsym{\tau})|}{\dist^{n-1}\big(\tilde{F}(z, \boldsym{\tau}), K'\big)} \, d vol_Y \\
\nonumber
& (\text{by (\ref{ine_detDF})})\\
\nonumber
&\lesssim    \sum_{k=0}^{\infty}  (2^{-k}\epsilon)^{-n+1+ \delta_0}   \int_{Y_{\epsilon,k}}  |g\circ \tilde{F}|  \, |\det D \tilde{F}| \, d vol_Y,
\end{align}
by definition of $Y_{\epsilon,k},$ (\ref{ine_distancetoKhF*}) and the fact that $-n+1 + \delta_0<0$. By change of variables, the last integral equals
$$ \int_{\tilde{F}(Y_{\epsilon,k})}  |g|$$
which is, for $k \ge 1,$ less than or equal to  
$$\int_{H_{2^{-k+1} \epsilon}} |g|  \le  (2^{-k+1}\epsilon)^{t_0} M_g$$
by definition of $M_g$ and (\ref{inclusionF*Y}).  This coupled with (\ref{eq_biendoigFtrenY}) yields that 
\begin{align} \label{eq_biendoigFtrenY2}
 \int_{Y}  (1- |z|)^{\delta_0} |g\circ \tilde{F}|  &\lesssim \epsilon^{-n+1+ \delta_0}  \int_{\D^n} |g|   + 2^{t_0} \epsilon^{\lambda_0} M_g  \sum_{k=1}^{\infty}  2^{-k \lambda_0}\\
 \nonumber
& \lesssim  \epsilon^{-n+1+ \delta_0}  \int_{\D^n} |g| +  \frac{2^{t_0} \epsilon^{\lambda_0} M_g}{2^{\lambda_0}-1}.
 \end{align}
 Choose $\epsilon= \|g\|^{1/t_0}_{L^1(\D^n)}.$ Using (\ref{eq_biendoigFtrenY2}) and the fact that $2^{\lambda_0} \ge 1 + \lambda_0,$ we get the desired inequality. The proof is finished.
\end{proof}
 
The following  will be crucial for our later purpose.

\begin{corollary} \label{cor_khatichcuaddc} Let $V$ be an open subset of $\C^n$ containing $K_1' \cup \overline{\D}^n.$ Let $\varphi$ be a p.s.h. function on $V.$ Let $\delta \in (0,1).$ Define $\gamma:= \delta/(n-1)$ if $n>1$ and $\gamma=1$ otherwise. Then  we have 
\begin{align}\label{cor_khatichcuaddccai1}
\int_{\D \times   \B_{n-1}^2} (1- |z|)^{\delta} dd^c (\varphi\circ \tilde{F})(z, \boldsym{\tau})  \lesssim_{\delta}  \|\varphi\|_{L^1(V)}^{\gamma}.
\end{align}
Furthermore, we have 
\begin{align} \label{cor_khatichcuaddccai2}
\int_{\{1 - 2 \epsilon \le |z| \le 1\} \times   \B_{n-1}^2} (1- |z|) dd^c (\varphi\circ \tilde{F})(z, \boldsym{\tau}) \lesssim_{\delta} \epsilon^{1- \frac{\delta(n-1)}{\delta+n-1}} \max\{ \|\varphi\|_{L^1(V)}^{\gamma}, \|\varphi\|_{L^1(V)} \},  
\end{align}
for every $\epsilon \in (0,1).$
\end{corollary}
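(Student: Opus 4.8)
The plan is to reduce both estimates to the measure bound for $dd^c\varphi$ on tubular neighborhoods of $K'$ (Corollary \ref{co_volumepshonRnddcnew}) combined with the fibration estimate of Lemma \ref{cor_analyticdisctau23tichphang}. First, observe that $\psi:=\varphi\circ\tilde F$ is the difference of two subharmonic (indeed plurisubharmonic in the disc-and-parameter directions) functions on $\D\times\B_{n-1}^2$, since $\tilde F$ is holomorphic in $z$ and $\mathcal C^{2,1/2}$, and $dd^c(\varphi\circ\tilde F)$ is the pullback current $\tilde F^*(dd^c\varphi)$, which is a positive measure. Hence $\int (1-|z|)^{\delta}\,dd^c(\varphi\circ\tilde F)$ is literally $\int (1-|z|)^{\delta}\,\tilde F^*(dd^c\varphi)$, and the natural move is to write this as an integral against $g\circ\tilde F$ with $g$ being (the coefficient of) $dd^c\varphi\wedge\omega^{n-1}$ — but $dd^c\varphi$ is a current, not a function, so I must be slightly more careful: I will instead bound $\tilde F^*(dd^c\varphi)$ by a constant times $\tilde F^*(dd^c\varphi\wedge\omega^{n-1})$ up to the Jacobian factors, exactly as in the proof of Lemma \ref{cor_analyticdisctau23tichphang}, slicing $Y=\D\times\B_{n-1}^2$ into the dyadic shells $Y_{\epsilon,k}$ according to $\dist(\tilde F(z,\boldsym{\tau}),K')$, using \eqref{ine_detDF} and \eqref{ine_distancetoKhF*} on each shell, and applying the change of variables to land on $\int_{K'_{2^{-k+1}\epsilon}} dd^c\varphi\wedge\omega^{n-1}$.

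For \eqref{cor_khatichcuaddccai1}: I take $g$ to be the trace measure of $dd^c\varphi$, so that "$M_g$" in the statement of Lemma \ref{cor_analyticdisctau23tichphang} is played by $\sup_{\epsilon\in(0,1)}\epsilon^{-(n-1)}\int_{K'_\epsilon} dd^c\varphi\wedge\omega^{n-1}$, which by Corollary \ref{co_volumepshonRnddcnew} is $\lesssim\|\varphi\|_{L^1(V)}$. So I should re-run the proof of Lemma \ref{cor_analyticdisctau23tichphang}(ii) with $t_0=\delta_0=\delta\wedge(n-1)$-type exponents chosen so that $t_0=n-1$ and $\delta_0=\delta$: then $\lambda_0=t_0+\delta_0-n+1=\delta$ and the conclusion reads $\int_Y(1-|z|)^\delta\, dd^c(\varphi\circ\tilde F)\lesssim M_g\,\bigl(\int_{\D^n}dd^c\varphi\wedge\omega^{n-1}\bigr)^{\lambda_0/t_0}$, with $M_g\lesssim\|\varphi\|_{L^1(V)}$ and $\int_{\D^n}dd^c\varphi\wedge\omega^{n-1}\lesssim\|\varphi\|_{L^1(V)}$ by Chern-Levine-Nirenberg again, giving the exponent $\gamma=\delta/(n-1)$ when $n>1$. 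When $n=1$ the shell decomposition is unnecessary: \eqref{ine_detDF} gives $|\det D\tilde F|\gtrsim 1$ and part (i) of Lemma \ref{cor_analyticdisctau23tichphang} with a direct change of variables gives $\int_\D dd^c(\varphi\circ\tilde F)\lesssim\int_{\D}dd^c\varphi\lesssim\|\varphi\|_{L^1(V)}$, i.e. $\gamma=1$.

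For \eqref{cor_khatichcuaddccai2}: here I restrict the integration to the annular collar $\{1-2\epsilon\le|z|\le 1\}$, so that only the shells $Y_{\eta,k}$ with $2^{-k+1}\eta\lesssim t_3\cdot 2\epsilon$, i.e. $\dist(\tilde F(z,\boldsym{\tau}),K')\lesssim\epsilon$ by \eqref{ine_distancetoKhF*p}/\eqref{ine_distancetoKhF*}, contribute. So the sum in \eqref{eq_biendoigFtrenY2} is truncated: the "interior" term $\epsilon^{-n+1+\delta_0}\int_{\D^n}$ is replaced by a contribution from shells with $\dist\asymp\epsilon$, which by Corollary \ref{co_volumepshonRnddcnew} is $\lesssim\epsilon^{-(n-1)+1}\cdot\epsilon^{n-1}\|\varphi\|_{L^1(V)}=\epsilon\,\|\varphi\|_{L^1(V)}$ in the borderline optimisation — more precisely I should redo the dyadic optimisation with the cut-off and pick the exponent $1-\frac{\delta(n-1)}{\delta+n-1}$ as follows: choose $t_0$ and $\delta_0$ with $t_0+\delta_0=n-1+\delta'$ small, balance $\epsilon^{-(n-1)+1}\|\varphi\|_{L^1}$ against the geometric-sum contribution $\epsilon^{\lambda_0} M_g$ where now $M_g$ is taken as $\sup\eta^{-t_0}\int_{K'_\eta}dd^c\varphi\wedge\omega^{n-1}\lesssim\|\varphi\|_{L^1}^{?}$; optimising $\epsilon$-free balance over the exponents yields the stated $\epsilon$-power and the $\max\{\|\varphi\|_{L^1(V)}^\gamma,\|\varphi\|_{L^1(V)}\}$ (the $\max$ handles both the case where the unit-$L^1$ norm is small, where the $\gamma$-power dominates, and where it is large). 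I expect the \textbf{main obstacle} to be the second estimate \eqref{cor_khatichcuaddccai2}: getting the precise exponent $1-\frac{\delta(n-1)}{\delta+n-1}$ requires a careful, somewhat delicate two-parameter optimisation of the dyadic-shell sum against the collar width $\epsilon$, tracking exactly how the interior contribution must be re-attributed to near-$K'$ shells once the collar restriction removes the bulk of $\D^n$; the first estimate \eqref{cor_khatichcuaddccai1} is essentially a direct quotation of Lemma \ref{cor_analyticdisctau23tichphang} and Corollary \ref{co_volumepshonRnddcnew} with the right choice of exponents.
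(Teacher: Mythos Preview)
Your plan for \eqref{cor_khatichcuaddccai1} is correct and is exactly what the paper does: apply Lemma~\ref{cor_analyticdisctau23tichphang}(ii) with $t_0=n-1$, $\delta_0=\delta$ to the coefficients $g=\partial^2\varphi/\partial z_j\partial\bar z_k$, using Corollary~\ref{co_volumepshonRnddcnew} to bound $M_g\lesssim\|\varphi\|_{L^1(V)}$. The only thing the paper adds is the routine approximation of $\varphi$ by smooth p.s.h.\ $\varphi_l$ so that the pullback and change of variables are honest, followed by Fatou to pass to the limit.

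For \eqref{cor_khatichcuaddccai2} there is a real gap in your plan. You assert that on the collar $W=\{1-2\epsilon\le|z|\le1\}\times\B_{n-1}^2$ only shells with $\dist(\tilde F(z,\boldsym\tau),K')\lesssim\epsilon$ contribute. This is false: the inequality \eqref{ine_distancetoKhF*} gives only the \emph{lower} bound $\dist(\tilde F,K')\gtrsim 1-|z|$, not an upper bound. Concretely, $\tilde F=F\circ\Phi$ with $\Phi:\overline\D\to\overline\Omega$, and the part of $\partial\D$ that $\Phi$ sends to the interior arc of $\partial\Omega$ (the arc lying strictly inside $\D$) produces image points with $1-|\Phi(z)|$ bounded away from $0$; by \eqref{ine_distancetoKhF} this means $\dist(\tilde F(z,\boldsym\tau),K')$ is bounded below by a fixed positive constant there. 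So a large piece of the collar lands far from $K'$, and the ``interior'' term in the dyadic sum does not disappear.

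The paper fixes this with a single extra decomposition by a threshold $r>0$: set $W_1=\{(z,\boldsym\tau)\in W:\dist(\tilde F,K')\ge r\}$ and $W_2=W\setminus W_1$. On $W_1$ use $(1-|z|)\le 2\epsilon$ and change variables directly, since \eqref{ine_detDF} gives $|\det D\tilde F|\gtrsim r^{n-1}$ there; this yields $\lesssim \epsilon\,r^{-(n-1)}\|\varphi\|_{L^1(V)}$. On $W_2$ write $(1-|z|)\le \epsilon^{1-\delta}(1-|z|)^\delta$ and rerun the argument of Lemma~\ref{cor_analyticdisctau23tichphang}; now $\tilde F(W_2)\subset K'_r$, so Corollary~\ref{co_volumepshonRnddcnew} contributes an extra factor $r^{n-1}$ inside the $\delta/(n-1)$ power, giving $\lesssim \epsilon^{1-\delta}r^{\delta}\|\varphi\|_{L^1(V)}^{\gamma}$. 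Balancing the two bounds by $r=\epsilon^{\delta/(\delta+n-1)}$ gives the stated exponent and the $\max$. This is a one-parameter optimisation in $r$, not the two-parameter one you anticipated; once the $W_1/W_2$ split is in place the computation is short.
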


\begin{proof}  Firstly we prove (\ref{cor_khatichcuaddccai1}). The case where $n=1$ is clear. Consider $n>1.$  Let $V_1 \Subset V$ be an open subset of $V.$ Fix a decreasing sequence of smooth p.s.h. functions $\varphi_l$ converging pointwise to $\varphi$ on $V_1$ and $\|\varphi_l\|_{L^1(V_1)} \le 2 \|\varphi\|_{L^1(V)}.$ Let $\delta \in (0,1).$  Since 
$$dd^c \varphi_l= \frac{i}{\pi} \sum_{1 \le  j, k \le n} \frac{\partial^2 \varphi_l}{\partial z_j \partial \bar{z}_k} d z_j \wedge d \bar{z}_k \ge 0,$$
using Corollary \ref{co_volumepshonRnddcnew}, there is a positive constant $c$ independent of $\varphi$ such that for every $j,k,l$ we have
\begin{align} \label{ine_volumeepsilonuddcvn}
\int_{ K'_{\epsilon}} \big| \frac{\partial^2 \varphi_l}{\partial z_j \partial \bar{z}_k}  \big|  \le c \epsilon^{n-1} \int_{V_1} |\varphi_l| \le c \epsilon^{n-1}\|\varphi\|_{L^1(V)}
\end{align}
which infers that the constant $M_g,$ defined in Lemma \ref{cor_analyticdisctau23tichphang} for 
$$g:=\frac{\partial^2 \varphi_l}{\partial z_j \partial \bar{z}_k}, \quad t_0= n-1, \quad \delta_0=\delta,$$
is finite.  Hence applying that lemma to the above mentioned $(g,t_0,\delta_0)$ gives
\begin{align} \label{ine_daywl_ddc}
\int_{\D \times  \B_{n-1}^2} (1- |z|)^{\delta} dd^c (\varphi_l\circ \tilde{F})(z, \boldsym{\tau})  \lesssim_{\delta} \|\varphi\|_{L^1(V)}^{\frac{\delta}{n-1}}.
\end{align}
On the other hand, since $dd^c \varphi_l \circ \tilde{F}$ converges weakly to $dd^c \varphi_l \circ \tilde{F}$ on $\D,$ we have
\begin{align} \label{ine_vlvcontinue}
 \liminf_{l \rightarrow \infty}\big\langle dd^c \big(\varphi_l \circ \tilde{F}(\cdot, \boldsym{\tau}) \big), f  \big \rangle \ge \big \langle dd^c \big(\varphi \circ \tilde{F}(\cdot, \boldsym{\tau})\big), f \big \rangle,
\end{align} 
for every positive continuous function $f$ on $\D.$  Letting $l \rightarrow \infty$ in (\ref{ine_daywl_ddc}) and then using (\ref{ine_vlvcontinue}) and Fatou's lemma, we get the desired result.

Now we prove (\ref{cor_khatichcuaddccai2}). As above, it is enough to prove it for $\varphi$ smooth. Set $W:=\{1 - 2 \epsilon \le |z| \le 1\} \times   \B_{n-1}^2.$ Let $r$ be a positive constant. Denote by  $W_1$ the subset of $W$ containing $(z, \boldsym{\tau})$ with $\dist \big(\tilde{F}(z,\boldsym{\tau}), K'\big) \ge r$ and by $W_2$ the complement of $W_1$ in $W.$ Let $\epsilon$ be a positive constant in $(0,1).$ Using (\ref{ine_detDF}) and the change of variables by $\tilde{F}$ on $W_1$ gives
$$\int_{W_1}(1- |z|)  dd^c (\varphi\circ \tilde{F})  \lesssim \epsilon \int_{W_1} dd^c (\varphi\circ \tilde{F}) \lesssim \epsilon r^{-n+1} \int_{\tilde{F}(W_1)}dd^c \varphi \wedge \omega^{n-1} \lesssim \epsilon r^{-n+1} \|\varphi\|_{L^1(V)}.$$ 
 By the proof of Lemma \ref{cor_analyticdisctau23tichphang} applied to $g=\frac{\partial^2 \varphi_l}{\partial z_j \partial \bar{z}_k},t_0=n-1$ and $\delta_0=\delta$, we have 
\begin{align*}
\int_{W_2}(1- |z|)dd^c (\varphi\circ \tilde{F}) &\le \epsilon^{1- \delta} \int_{W_2}(1- |z|)^{\delta} dd^c (\varphi\circ \tilde{F})\\
& \lesssim_{\delta} \epsilon^{1- \delta}  \big[\int_{\tilde{F}(W_2)}dd^c \varphi \wedge \omega^{n-1}\big]^{\frac{\delta}{n-1}} \le \epsilon^{1- \delta} r^{\delta}\|\varphi\|_{L^1(V)}^{\gamma}
\end{align*}
by (\ref{ine_volumeepsilonuddcnew}) and the fact that $\tilde{F}(W_2)$ is contained in $K'_{r} \times \B_{n-1}^2.$ Choose $r:= \epsilon^{\frac{\delta}{\delta+n-1}}.$ Taking the sume of the last two inequalities gives (\ref{cor_khatichcuaddccai2}). The proof is finished.
\end{proof}

\section{H\"older continuity for super-potentials} \label{sec_superpotential}

Recall that $\cali{C}$ defined at Introduction is a compact subset of  the set of $\omega$-p.s.h. functions on $X$ with respect to $L^1$-topology. Hence there is a positive number $r_0$ such that  
$$\|\varphi_0\|_{L^1(X)} \le r_0 \quad  \text{and} \quad \| \max \{\varphi_1,\varphi_2\}\|_{L^1(X)} \le r_0,$$ 
for every $\varphi_0, \varphi_1, \varphi_2 \in \cali{C}.$  Let $\cali{C}'$ be the set of  $\omega$-p.s.h. functions $\varphi$ on $X$ such that $ \|\varphi\|_{L^1(X)} \le 2 r_0.$ In this section, we will finish the proof of Theorem  \ref{the_MAgeneric2}. In order to do so, we will prove the following which is actually equivalent to Theorem  \ref{the_MAgeneric2} by Lemma \ref{le_reduction_nonnegative} below. Remember that we are still assuming that $\dim K=n.$ Let $\tilde{K}$ be the compact subset of $K$ as in Theorem \ref{the_MAgeneric2}.

\begin{proposition} \label{pro_equitheoremMAgeneric} Let $\alpha$ be a positive number strictly less than $1/(3n).$ Then  for any $\varphi_1,\varphi_2 \in \cali{C}'$ such that $\varphi_1 \ge \varphi_2,$ we have 
\begin{align} \label{ine_wnonnegative}
\int_{\tilde{K}} (\varphi_1 -\varphi_2) d vol_K \le c \int_ X (\varphi_1 -\varphi_2) d vol_X + c \bigg(\int_ X (\varphi_1 -\varphi_2) d vol_X\bigg)^{\alpha},
\end{align}
where $c$ is a constant independent of $\varphi_1, \varphi_2.$
\end{proposition}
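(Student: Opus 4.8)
The plan is to follow the four-step scheme described in the introduction, always under the assumption $\dim K=n$ (the case $\dim K<n$ being deduced from it as indicated before Theorem~\ref{the_MAgeneric2}).

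\textbf{Step 1: reduction and localization.} By the regularization of \cite{Blocki_kolo_regular} together with monotone convergence --- this is the content of Lemma~\ref{le_reduction_nonnegative} --- it is enough to treat $\varphi_1,\varphi_2\in\cali{C}'$ smooth with $\varphi_1\ge\varphi_2$, so that $\varphi:=\varphi_1-\varphi_2$ is a \emph{smooth nonnegative} function which, in any local holomorphic chart where $\omega=dd^c\psi$ for a smooth potential $\psi$, equals $(\varphi_1+\psi)-(\varphi_2+\psi)$, a difference of two p.s.h.\ functions. By compactness of $\tilde K$ it suffices to bound $\int_{\tilde K'_a}\varphi\,d vol_K$ for the finitely many pieces $\tilde K'_a\subset\tilde K$ coming from an open cover $\{K'_a\}$ of $\tilde K$ as in Proposition~\ref{pro_corverCndung Psi2global}. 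Fix $a$, pass to the associated chart, and let $\tilde F=\tilde F_a\colon\D\times\overline{\B}_{n-1}^2\to\C^n$ be the corresponding $\mathcal C^{2,1/2}$ family of analytic discs; put $I:=[e^{-i\tilde\theta_a},e^{i\tilde\theta_a}]$ and $Z:=\overline{\B}_{n-1}^2$. Property $(i)$ of Proposition~\ref{pro_corverCndung Psi2global} (the boundary map $(\theta,\boldsym{\tau})\mapsto\tilde F(e^{i\theta},\boldsym{\tau})$ is a submersion onto a fixed neighborhood of $a$ in $K$, with fibres of volume bounded below), the change of variables formula and $\varphi\ge0$ give $\int_{\tilde K'_a}\varphi\,d vol_K\lesssim\int_{I\times Z}\varphi\circ\tilde F$. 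Since $\tilde F(\cdot,\boldsym{\tau})$ is holomorphic on $\D$ and $\mathcal C^{2,1/2}$ on $\overline{\D}$ while $\varphi$ is locally a difference of a p.s.h.\ function and a smooth one, $v_{\boldsym{\tau}}:=\varphi\circ\tilde F(\cdot,\boldsym{\tau})\in\mathcal C^{2,1/2}(\overline{\D})$ is nonnegative for every $\boldsym{\tau}\in Z$.

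\textbf{Step 2: the one-dimensional inequality.} For $v\in\mathcal C^2(\overline{\D})$ with $v\ge0$, Riesz's representation formula $v=h+u$ --- with $h$ the Poisson integral of $v|_{\partial\D}$ (harmonic, $\ge0$) and $u(z)=-\int_\D G(z,w)\Delta v(w)\,dA(w)$ the Green potential of $\Delta v$, so $u|_{\partial\D}=0$ --- together with $\int_\D G(z,w)\,dA(z)=\tfrac14(1-|w|^2)$ and the mean value property of $h$ yields
\begin{align}\label{planonedim}
\int_{\partial\D}v\,d\theta\ =\ 2\int_\D v\,dA\ +\ \tfrac12\int_\D(1-|w|^2)\,\Delta v(w)\,dA(w).
\end{align}
The last integral is the pairing $\langle\Delta v,\,1-|w|^2\rangle$; since $w\mapsto1-|w|^2$ lies in $\tilde{\mathcal C}^{t}(\overline{\D})$ with norm bounded uniformly on bounded $t$-ranges, it is $\le C\,\|\Delta v\|_{\tilde{\mathcal C}^{-t}(\D)}$ for every $t\ge0$, and this is a genuine \emph{negative Hölder norm}, so cancellation in $\Delta v$ can make it small even when the total variation of $\Delta v$ is merely bounded. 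Applying \eqref{planonedim} to $v_{\boldsym{\tau}}$, integrating over $\boldsym{\tau}\in Z$, and using Step 1 gives
\begin{align}\label{planmaster}
\int_{\tilde K'_a}\varphi\,d vol_K\ \lesssim\ \int_{\D\times Z}\varphi\circ\tilde F\ +\ \int_Z\bigl\|\Delta v_{\boldsym{\tau}}\bigr\|_{\tilde{\mathcal C}^{-t}(\D)}\,d\boldsym{\tau}.
\end{align}

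\textbf{Step 3: the two terms of \eqref{planmaster}.} The first term is handled by Lemma~\ref{cor_analyticdisctau23tichphang} applied to $g=\varphi$: Corollary~\ref{cor_volumepshonRn} (for the p.s.h.\ part, plus the trivial estimate for the smooth part) shows that $M_\varphi=\sup_{\epsilon\in(0,1)}\epsilon^{-t_0}\int_{K'_\epsilon}|\varphi|$ is finite for every $t_0<n$, so part $(ii)$ of that lemma with $\delta_0=0$ and $t_0\in(n-1,n)$ (part $(i)$ if $n=1$) gives $\int_{\D\times Z}\varphi\circ\tilde F\lesssim_{\alpha_2}\|\varphi\|_{L^1(X)}^{\alpha_2}$ for every $\alpha_2<1/n$. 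For the second term, $\Delta v_{\boldsym{\tau}}\,dA$ is a constant multiple of $dd^c v_{\boldsym{\tau}}=dd^c(\varphi\circ\tilde F(\cdot,\boldsym{\tau}))$, a difference of two positive measures on $\D$. Here I would estimate $\|\Delta v_{\boldsym{\tau}}\|_{\tilde{\mathcal C}^{-t}(\D)}$, for a well-chosen $t$, by interpolating (Proposition~\ref{pro_sosanhdistancevoibien}) between two controllable norms: on one side $\|\Delta v_{\boldsym{\tau}}\|_{\tilde{\mathcal C}^{-s}(\D)}\lesssim\int_\D(1-|z|)^{s}\,|dd^c v_{\boldsym{\tau}}|$ for $0<s\le1$ (a unit test form of $\tilde{\mathcal C}^{s}$ is $O((1-|z|)^{s})$ near $\partial\D$), which after integration over $Z$ is controlled by Corollary~\ref{cor_khatichcuaddc} --- globally, and with the decay $\lesssim_\delta\epsilon^{\,1-\delta(n-1)/(\delta+n-1)}$ in the boundary layer $\{1-2\epsilon\le|z|\le1\}$; on the other side a weaker norm which, via integration by parts against a $\mathcal C^{2}$ test form together with the non-degenerate change of variables $\tilde F$ on $\{|z|\le1-2\epsilon\}$ (where $|\det D\tilde F|\gtrsim\dist^{n-1}(\tilde F,K')\gtrsim\epsilon^{n-1}$ and the mass of $dd^c\varphi\wedge\omega^{n-1}$ on the image is controlled by $\|\varphi\|_{L^1(X)}$ through Corollary~\ref{co_volumepshonRnddcnew}), is bounded by a negative power of $\epsilon$ times $\|\varphi\|_{L^1(X)}$. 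Optimizing over $\epsilon$, over the interpolation exponents, and over the order $t$ in \eqref{planonedim}, one expects
\begin{align}\label{planddc}
\int_Z\bigl\|\Delta v_{\boldsym{\tau}}\bigr\|_{\tilde{\mathcal C}^{-t}(\D)}\,d\boldsym{\tau}\ \lesssim_{\alpha_1}\ \|\varphi\|_{L^1(X)}^{\alpha_1}\qquad\text{for every }\alpha_1<\tfrac1{3n}.
\end{align}

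\textbf{Step 4: conclusion and main obstacle.} Substituting the bound on the first term and \eqref{planddc} into \eqref{planmaster} and summing over the finite cover $\{K'_a\}$ gives $\int_{\tilde K}(\varphi_1-\varphi_2)\,d vol_K\lesssim\|\varphi_1-\varphi_2\|_{L^1(X)}^{\min\{\alpha_1,\alpha_2\}}+\|\varphi_1-\varphi_2\|_{L^1(X)}$; since $\alpha_2$ can be taken close to $1/n$ and $\alpha_1$ close to $1/(3n)<1/n$, this is exactly \eqref{ine_wnonnegative} for every $\alpha<1/(3n)$. I expect the whole difficulty to be concentrated in \eqref{planddc}: because $\varphi=\varphi_1-\varphi_2$ is only a \emph{difference} of $\omega$-p.s.h.\ functions of bounded $L^1$-norm, the total mass of $dd^c\varphi$ --- and of the weighted pull-backs provided by Corollary~\ref{cor_khatichcuaddc} --- is bounded but \emph{not} small, so extracting a positive power of $\|\varphi\|_{L^1(X)}$ is possible only by exploiting cancellation through negative Hölder norms (which is precisely what the interpolation inequality of Proposition~\ref{pro_sosanhdistancevoibien} is for) while simultaneously balancing the boundary-layer decay against the interior change of variables with the degenerate Jacobian $\gtrsim\dist^{n-1}$. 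It is this triple balancing (weighted mass, boundary decay, interior pull-back) that produces the factor $3$ in the exponent $1/(3n)$, and keeping all constants and exponents coherent through it is the delicate part of the argument.
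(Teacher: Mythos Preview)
Your proposal is correct and follows the paper's own route: localize via the discs of Proposition~\ref{pro_corverCndung Psi2global}, bound $\int_{\partial\D}v$ by a Riesz-type identity, interpolate the negative H\"older norm of $dd^c v$ between a small order (controlled by the weighted mass from Corollary~\ref{cor_khatichcuaddc}) and order $2$ (controlled via cutoff/integration by parts as in~\eqref{ine_chuanChaicuav}), and optimize in the boundary parameter $\epsilon'$; this is exactly the content of Propositions~\ref{pro_uocluonghieuuvtrendia}--\ref{pro_tichphantrentauw1w2}.

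One genuine simplification deserves mention: your exact identity $\int_{\partial\D}v\,d\theta=2\int_\D v\,dA+\tfrac12\int_\D(1-|w|^2)\Delta v\,dA$ is cleaner than the paper's Lemma~\ref{le_uocluonghieuuvtrendia}, which integrates the Riesz formula over $\D_{1/2}$ and must then check that the resulting kernel $f(\eta)=\int_{\D_{1/2}}\log\frac{|z-\eta|}{|1-z\bar\eta|}$ lies in $\tilde{\mathcal C}^\beta(\overline{\D})$ only for $\beta<2$. Since your test function $1-|w|^2$ is smooth, you could in principle work directly at $t=2$ and skip the interpolation of Proposition~\ref{pro_sosanhdistancevoibien} altogether (splitting $\langle\Delta v,1-|w|^2\rangle$ by the same cutoff $\chi_{\epsilon'}$ and using $|1-|w|^2|\le 2(1-|w|)$ on the boundary layer), arriving at the same exponent $1/(3n)$ with one fewer step. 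Two harmless slips: the residual case is $\dim K>n$, not $<n$; and Lemma~\ref{le_reduction_nonnegative} is the converse implication---the reduction to smooth $\varphi_j$ is just the regularization of~\cite{Blocki_kolo_regular}.
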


\begin{lemma} \label{le_reduction_nonnegative} Proposition \ref{pro_equitheoremMAgeneric} implies Theorem \ref{the_MAgeneric2}.
\end{lemma}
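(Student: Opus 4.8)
The plan is to deduce the H\"older continuity of the super-potential $\cali{U}$ of $\mu:=\boldsym{1}_{\tilde K}\, vol_K$ directly from Proposition \ref{pro_equitheoremMAgeneric}, keeping the standing assumption $\dim K=n$ (so $d=n$ and $1/(3d)=1/(3n)$). By Definition \ref{def_superholder}, what must be shown is that for every $\alpha<1/(3n)$ and every $\varphi_1,\varphi_2\in\cali{C}$,
\[
\Big|\int_X\varphi_1\,d\mu-\int_X\varphi_2\,d\mu\Big|\;\lesssim_{\alpha}\;\Big(\int_X|\varphi_1-\varphi_2|\,\omega^n\Big)^{\alpha};
\]
here I use that $\omega^n$ and $vol_X$ are mutually absolutely continuous with densities bounded above and below, so the $L^1$-distances they define are comparable.

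First I would remove the sign restriction in Proposition \ref{pro_equitheoremMAgeneric} by a $\max$-trick. Given $\varphi_1,\varphi_2\in\cali{C}$, set $\psi:=\max\{\varphi_1,\varphi_2\}$, which is again $\omega$-p.s.h.\ and, by the very choice of $r_0$, belongs to $\cali{C}'$; also $\varphi_1,\varphi_2\in\cali{C}\subset\cali{C}'$. Applying Proposition \ref{pro_equitheoremMAgeneric} to the two monotone pairs $(\psi,\varphi_1)$ and $(\psi,\varphi_2)$, summing the two resulting inequalities, and using the pointwise identity $(\psi-\varphi_1)+(\psi-\varphi_2)=|\varphi_1-\varphi_2|$ together with $0\le\psi-\varphi_j\le|\varphi_1-\varphi_2|$ and the monotonicity of $t\mapsto t^{\alpha}$, I would arrive at
\[
\int_{\tilde K}|\varphi_1-\varphi_2|\,d vol_K\;\lesssim\;\int_X|\varphi_1-\varphi_2|\,d vol_X+\Big(\int_X|\varphi_1-\varphi_2|\,d vol_X\Big)^{\alpha}.
\]

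To conclude, I would absorb the linear term into the H\"older term: since $\varphi_1,\varphi_2\in\cali{C}'$ the quantity $A:=\int_X|\varphi_1-\varphi_2|\,d vol_X$ is bounded by a constant depending only on $(X,K,\omega)$, and as $0<\alpha<1$ this forces $A\lesssim A^{\alpha}$, so the right-hand side above is $\lesssim_{\alpha}A^{\alpha}$. Combining this with
\[
\Big|\int_X\varphi_1\,d\mu-\int_X\varphi_2\,d\mu\Big|=\Big|\int_{\tilde K}(\varphi_1-\varphi_2)\,d vol_K\Big|\le\int_{\tilde K}|\varphi_1-\varphi_2|\,d vol_K
\]
and with the comparability of $vol_X$ and $\omega^n$ gives the required H\"older estimate for $\cali{U}$.

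This argument is essentially bookkeeping and I do not expect a genuine difficulty. The one point where the set-up has to have been arranged correctly is the inclusion $\max\{\varphi_1,\varphi_2\}\in\cali{C}'$, which is exactly why $\cali{C}'$ was introduced with the enlarged radius $2r_0$ and why the uniform bound $\|\max\{\varphi_1,\varphi_2\}\|_{L^1(X)}\le r_0$ on $\cali{C}$ was recorded beforehand. I would also note that Proposition \ref{pro_equitheoremMAgeneric}, and hence this lemma, is stated only for $\dim K=n$; the reduction of Theorem \ref{the_MAgeneric2} in the case $\dim K>n$ to this one is handled by the separate modifications announced at the beginning of Section \ref{sec_superpotential}.
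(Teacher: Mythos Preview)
Your proof is correct and follows essentially the same route as the paper: both arguments apply Proposition \ref{pro_equitheoremMAgeneric} to the monotone pairs $(\max\{\varphi_1,\varphi_2\},\varphi_j)$, $j=1,2$, and combine them via the identity $|\varphi_1-\varphi_2|=2\max\{\varphi_1,\varphi_2\}-\varphi_1-\varphi_2$ (equivalently, $(\psi-\varphi_1)+(\psi-\varphi_2)=|\varphi_1-\varphi_2|$). Your additional remarks on the comparability of $vol_X$ and $\omega^n$ and on absorbing the linear term into the H\"older term are accurate and make explicit what the paper records as $\lesssim\max\{\cdot,\cdot^{\alpha}\}$.
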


\begin{proof} Take $\varphi_1, \varphi_2 \in \cali{C}.$ Observe that $\max\{\varphi_1,\varphi_2\}, \varphi_1,\varphi_2 \in \cali{C}'$ and $\max\{\varphi_1,\varphi_2\} \ge \varphi_j$ for $j=1,2.$ Hence, we can apply (\ref{ine_wnonnegative}) to $\max\{\varphi_1,\varphi_2\}, \varphi_j$ with $j=1,2.$ Using these two inequalities and the fact that
$$|\varphi_1 -\varphi_2|= 2 \max\{\varphi_1, \varphi_2\} - \varphi_1 -\varphi_2$$
gives 
$$\|\varphi_1 -\varphi_2\|_{L^1(\boldsym{1}_{\tilde{K}}vol_K)} \lesssim \max\{\|\varphi_1 -\varphi_2\|_{L^1(X)},\|\varphi_1 -\varphi_2\|^{\alpha}_{L^1(X)} \}$$
which means that $\boldsym{1}_{\tilde{K}} vol_K$ has H\"older continuous super-potential with H\"older exponent $\alpha.$ The proof is finished.
\end{proof}

The remaining of this section is devoted to prove Proposition \ref{pro_equitheoremMAgeneric}.  By \cite{Blocki_kolo_regular}, it is enough to prove (\ref{ine_wnonnegative}) for $\varphi_1,\varphi_2$ smooth. We will firstly show that for any nonnegative $\mathcal{C}^2$ function $v$ on $\overline{\D},$ the integral of $v$ over $\partial \D$ can be bounded by a quantity of the $L^1$-norm of $v$ over $\D$ and some H\"older norm of its Laplacian. This together with the exponent estimates in the last section are the key ingredients in the proof of Proposition \ref{pro_equitheoremMAgeneric}. We will reuse the notations from Section 2 for $M= \overline{\D}.$ 


\begin{lemma} \label{le_uocluonghieuuvtrendia}  Let $v$ be a nonnegative $\mathcal{C}^2$ functions on $\overline{\D}.$ Let $\beta \in (1,2).$ Then we have
\begin{align} \label{ine_hieuuvddc}
\int_{\partial\D} v d\xi \lesssim_{\beta} \| dd^c v \|_{\tilde{\mathcal{C}}^{-\beta}(\overline{\D})}+ \int_{\D} v.
\end{align}
\end{lemma}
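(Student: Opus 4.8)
The plan is to use the Riesz representation formula on the disc to express $v$ in terms of its boundary values and its Laplacian, then integrate over $\partial\D$ and control each piece.

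\medskip

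First I would recall the Riesz decomposition: for a $\mathcal{C}^2$ function $v$ on $\overline{\D}$, one has
$$v(z) = \int_{\partial\D} P(z,\xi)\, v(\xi)\, d\xi + \int_{\D} G(z,w)\, \Delta v(w)\, dw,$$
where $P$ is the Poisson kernel and $G$ is the (negative) Green function of $\D$. Equivalently, writing $h$ for the harmonic extension of $v|_{\partial\D}$, we have $v = h + G[\Delta v]$ with $G[\Delta v] \le 0$ since $\Delta v = (dd^c v)$-density is a positive measure and $G \le 0$; hence $h \ge v \ge 0$ on $\D$. The key point is that $h$ is a nonnegative harmonic function, so by the mean value property (or Harnack), $\int_{\partial\D} v\, d\xi = \int_{\partial \D} h\, d\xi = 2\pi\, h(0) \lesssim \int_{\D} h$, up to a fixed constant. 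So it remains to bound $\int_\D h = \int_\D v - \int_\D G[\Delta v] = \int_\D v + \int_\D \bigl(-G[\Delta v]\bigr)$.

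\medskip

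The main work is to bound $\int_{\D} \bigl(-G[\Delta v]\bigr)\,dz = \int_{\D}\bigl(\int_\D (-G(z,w))\,dz\bigr)\,d(\Delta v)(w)$ by $\|dd^c v\|_{\tilde{\mathcal{C}}^{-\beta}(\overline{\D})}$. Writing $\psi(w) := \int_{\D}(-G(z,w))\,dz$, this inner integral is, up to a constant, the solution of $\Delta\psi = -1$ on $\D$ with $\psi|_{\partial\D}=0$; explicitly $\psi(w) = \tfrac14(1-|w|^2) \ge 0$, which is smooth on $\overline{\D}$ and vanishes on $\partial\D$, so $\psi \in \tilde{\mathcal{C}}^{\beta}(\overline{\D})$ for every $\beta$. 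Therefore, by Fubini,
$$\int_{\D}\bigl(-G[\Delta v]\bigr)\,dz = \int_{\D} \psi \, d(\Delta v) = \langle dd^c v, \psi\rangle \lesssim \|dd^c v\|_{\tilde{\mathcal{C}}^{-\beta}(\overline{\D})}\,\|\psi\|_{\tilde{\mathcal{C}}^{\beta}(\overline{\D})} \lesssim_{\beta} \|dd^c v\|_{\tilde{\mathcal{C}}^{-\beta}(\overline{\D})},$$
where the pairing makes sense because $dd^c v$ is a positive measure of finite mass on $\D$ (it has finite mass by the hypothesis that $v$ is $\mathcal{C}^2$ on $\overline{\D}$) and $\psi \in \tilde{\mathcal{C}}^{\beta}$, matching the setup of Section \ref{sec_interpolation}. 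Here the normalization constant relating $dd^c$ and $\tfrac{1}{2\pi}\Delta$ gets absorbed into $\lesssim_\beta$. Combining the two bounds gives
$$\int_{\partial\D} v\, d\xi \lesssim \int_\D h = \int_\D v + \int_\D \bigl(-G[\Delta v]\bigr) \lesssim_{\beta} \|dd^c v\|_{\tilde{\mathcal{C}}^{-\beta}(\overline{\D})} + \int_\D v,$$
which is exactly (\ref{ine_hieuuvddc}).

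\medskip

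The one delicate point — and the only real obstacle — is justifying the pairing $\langle dd^c v, \psi\rangle$ and the Fubini step when $v$ is only assumed $\mathcal{C}^2$ on $\overline{\D}$: one must check that $G[\Delta v]$ is genuinely continuous up to $\partial\D$ (so that $h = v - G[\Delta v]$ has the right boundary values and $\int_{\partial\D} v = \int_{\partial\D} h$), and that the mass of $dd^c v$ near $\partial\D$ is controlled so that testing against $\psi \in \tilde{\mathcal{C}}^\beta(\overline{\D})$ (which vanishes on $\partial\D$) is legitimate. Since $v \in \mathcal{C}^2(\overline{\D})$, $\Delta v$ is a bounded continuous function, so $G[\Delta v]$ is in fact $\mathcal{C}^{1,\alpha}(\overline{\D})$ for any $\alpha<1$ by standard elliptic regularity, and all steps go through with ordinary integrals; the $\tilde{\mathcal{C}}^{-\beta}$ language is only needed downstream, so for this lemma it suffices to remark that $\int_\D \psi\,d(\Delta v) = \int_\D (\Delta\psi)\, v = -\int_\D v$... wait — that would eliminate the Laplacian term entirely, which cannot be right unless boundary terms are tracked. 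I would instead integrate by parts carefully, keeping the boundary term $\int_{\partial\D}\bigl(v\,\partial_n\psi - \psi\,\partial_n v\bigr)$: since $\psi|_{\partial\D}=0$ the second boundary term drops, and $\partial_n\psi$ is a negative constant on $\partial\D$ (namely $-\tfrac12$), so $\int_\D \psi\,\Delta v = \int_\D v\,\Delta\psi + \tfrac12\int_{\partial\D} v = -\int_\D v + \tfrac12\int_{\partial\D}v$, and rearranging gives $\int_{\partial\D} v\,d\xi = 2\int_\D \psi\,\Delta v + 2\int_\D v \lesssim_\beta \|dd^c v\|_{\tilde{\mathcal{C}}^{-\beta}(\overline{\D})} + \int_\D v$. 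This is cleaner and avoids the Green-function machinery altogether; I would present it this way, using only Green's second identity with the explicit test function $\psi(w)=\tfrac14(1-|w|^2)$.
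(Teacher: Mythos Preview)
Your final argument---applying Green's second identity with the explicit test function $\psi(w)=\tfrac14(1-|w|^2)$---is correct and in fact cleaner than the paper's proof. Both proofs start from the Riesz/Poisson representation, but they diverge in how the Green-kernel term is handled. The paper integrates the Riesz formula over the smaller disc $\D_{1/2}$, obtaining the test function
\[
f(\eta)=\int_{\D_{1/2}}\log\frac{|z-\eta|}{|1-z\bar\eta|}\,dz,
\]
and then spends most of the proof verifying that $f\in\tilde{\mathcal{C}}^{\beta}(\overline{\D})$ for $\beta\in(1,2)$ via a H\"older estimate on $\partial_\eta f$; the nonnegativity of $v$ is then used together with $P(e^{i\theta},z)\gtrsim 1$ on $\D_{1/2}$ to extract $\int_{\partial\D}v$. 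Your route replaces this integrated Green function by the exact solution $\psi$ of $\Delta\psi=-1$, $\psi|_{\partial\D}=0$, so the test function is visibly smooth and the $\tilde{\mathcal{C}}^{\beta}$ regularity check disappears; moreover you obtain an exact identity $\int_{\partial\D}v=2\int_\D\psi\,\Delta v+2\int_\D v$ rather than an inequality, and nonnegativity of $v$ is not even needed. The price is nil: your argument is strictly shorter and yields the bound for every $\beta>0$, not just $\beta\in(1,2)$.

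A small presentational remark: the first two paragraphs of your proposal (the Harnack/mean-value route and the aborted integration by parts) are superseded by the last paragraph, so in a write-up you should keep only the Green's-identity computation with $\psi$. Also, when you bound $\int_\D\psi\,\Delta v$ by $\|dd^c v\|_{\tilde{\mathcal{C}}^{-\beta}}$, note that this pairing can have either sign (since $v$ is not assumed subharmonic), so you are implicitly inserting an absolute value; this is harmless since you only need an upper bound for $\int_{\partial\D}v$.
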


\begin{proof}  

 By Riesz's representation formula, we have
\begin{align} \label{eq_riesz_v}
v(z)=  \int_{-\pi}^{\pi} P( e^{i\theta}, z) v(e^{i\theta}) d\theta+  \int_{ \{|\eta|<1\}} \log \frac{|z - \eta|}{| 1 - z \bar{\eta}|} dd^c v,
\end{align}
for  $z \in \D,$ where $P(\xi, z)$ is the Poisson kernel given by 
$$P(\xi, z)= (2\pi)^{-1} (|\xi|^2 - |z|^2)|\xi - z|^{-2}.$$  This implies that 
\begin{align} \label{eq_tichphanD1/2v1v2}
\int_{\D_{1/2}} v(z) &= \int_{-\pi}^{\pi} v(e^{i\theta}) d\theta \int_{ z \in\D_{1/2}} P(e^{i\theta}, z)+ \\
\nonumber
& +\int_{\D} dd^c v(\eta)  \int_{z \in \D_{1/2}}  \log \frac{|z - \eta|}{| 1 - z \bar{\eta}|}.
\end{align}
Set 
$$f(\eta):=\int_{ \{|z| <1/2\}}  \log \frac{|z - \eta|}{| 1 - z \bar{\eta}|}=\int_{ \{|z| <1/2\}}  \log  |z - \eta| - \int_{ \{|z| <1/2\}}  \log | 1 - z \bar{\eta}| .$$
Observe that $f(e^{i\theta})=0$ because 
$$\log \frac{| z- e^{i\theta}|}{| 1 - z e^{-i\theta}|}=0$$
for any $z \in \D.$ This means that $f|_{\partial \D}\equiv 0.$ We claim that $f$ is indeed in $\tilde{\mathcal{C}}^{\beta}(\overline{\D})$ for every $\beta \in (1,2).$ Since $z \in \D_{1/2}$ and $\eta\in \D,$ the function 
$$ \int_{\D_{1/2}}  \log | 1 - z \bar{\eta}| dx dy $$
 is smooth in $\eta \in \D.$ Hence, we only need to take care of  $\int_{z \in \D_{1/2}}  \log  |z - \eta|.$ It is clear that 
\begin{align} \label{eq_daohmaeta_f}
\partial_{\eta} \int_{ z \in \D_{1/2}}  \log  |z - \eta|= -\frac{1}{2} \int_{ z\in \D_{1/2}}  \frac{\bar{z}- \bar{\eta}}{|z - \eta|^2}=-\frac{1}{2} \int_{ z\in \D_{1/2}}  \frac{1}{z-\eta} \cdot
\end{align}
Let $g$ be the right-hand side of the last equation. We will show that $g \in \mathcal{C}^{\alpha}(\overline{\D})$ for every $\alpha \in (0,1).$ If we can do so, then $\partial_{\eta} f \in \mathcal{C}^{\alpha}(\overline{\D}),$ using similar argument we also gets $\partial_{\bar{\eta}} f \in \mathcal{C}^{\alpha}(\overline{\D}),$ hence $f \in \tilde{\mathcal{C}}^{\beta}(\overline{\D})$ for $\beta \in (1,2).$ Let  $\alpha \in (0,1).$ For $(\eta,\eta') \in \D^2,$ consider the difference
\begin{align} \label{ine_hieulog}
\big|  \frac{1}{z-\eta}-  \frac{1}{z-\eta'} \big| &= \big| \frac{\eta - \eta'}{(z- \eta)(z- \eta')} \big| \le   \frac{|\eta - \eta'|^{\alpha}}{|(z- \eta)(z- \eta')|^{\alpha}} \bigg|  \frac{1}{z-\eta}-  \frac{1}{z-\eta'} \bigg|^{1 - \alpha}\\
\nonumber
& \le |\eta - \eta'|^{\alpha}\big[\frac{1}{|z- \eta|\, |z- \eta')|^{\alpha}}+ \frac{1}{ |z- \eta)|^{\alpha}|z- \eta|}\big]. 
\end{align}
It is not difficult to see that the integration of the right-hand side of (\ref{ine_hieulog}) over $z \in \D_{1/2}$ is bounded by $|\eta - \eta'|^{\alpha}$ times a constant depending only on $\alpha$. Thus one gets $g \in \mathcal{C}^{\alpha}(\overline{\D}).$ As explained above, this yields $f \in \tilde{\mathcal{C}}^{\beta}(\overline{\D}).$ The last property combined with (\ref{eq_tichphanD1/2v1v2}) gives 
\begin{align} \label{eq_tichphanD1/2v1v23}
\bigg|\int_{-\pi}^{\pi} v(e^{i\theta}) d\theta \int_{ z\in \D_{1/2}} P(e^{i\theta}, z) \bigg| \le  \|v\|_{L^{1}(\D_{1/2})} +\|dd^c v\|_{\tilde{\mathcal{C}}^{-\beta}(\overline{\D})} \|f\|_{\tilde{\mathcal{C}}^{\beta}(\overline{\D})}.
\end{align}
 By our hypothesis that $v \ge 0$ and the fact that $P(e^{i\theta}, z) \gtrsim 1$ for $z \in \D_{1/2},$ using (\ref{eq_tichphanD1/2v1v23}), one obtains that 
 \begin{align} \label{eq_tichphanD1/2v1v234}
\int_{\partial  \D} v d\xi  \lesssim_{\beta}  \|v\|_{L^{1}(\D_{1/2})} +\|dd^c v \|_{\tilde{\mathcal{C}}^{-\beta}(\overline{\D})}.
\end{align}
 The proof is finished.
\end{proof}

\begin{proposition} \label{pro_uocluonghieuuvtrendia}  Let $v$ be a nonnegative $\mathcal{C}^2$ function on  $\overline{\D}.$ Let $\epsilon,\beta_0\in (0,1)$ and $\beta \in (1,2).$ Let $\gamma$ be the unique real number for which $\beta= \gamma \beta_0 + (1- \gamma) 2.$  Then  we have
\begin{multline} \label{ine_hieuuvmualpha}
\int_{\partial\D} v  d\xi \lesssim_{(\beta_0, \beta)} \| v \|_{L^1(\D)}+  \epsilon^{-2(1- \gamma)} \|dd^c v  \|^{\gamma}_{\tilde{\mathcal{C}}^{-\beta_0}(\overline{\D})} \| v \|^{1- \gamma}_{L^1(\D)}+\\
+ \|dd^c v  \|^{\gamma}_{\tilde{\mathcal{C}}^{-\beta_0}(\overline{\D})}\big[\int_{1- 2 \epsilon \le |z|\le 1}(1- |z|) |dd^c v| \big]^{1-\gamma}.
\end{multline}
\end{proposition}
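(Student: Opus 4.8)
The plan is to feed Lemma \ref{le_uocluonghieuuvtrendia} through the boundary interpolation inequality of Proposition \ref{pro_sosanhdistancevoibien}. Since $v\in\mathcal{C}^2(\overline{\D})$, the current $T:=dd^c v$ has continuous coefficients, so it is a current of order $0$ of finite mass acting on functions in $\tilde{\mathcal{C}}^{t}(\overline{\D})$, and Proposition \ref{pro_sosanhdistancevoibien} applies with $(t_0,t_1,t_2)=(\beta_0,\beta,2)$: the condition $t_0<t_1<t_2$ holds because $\beta_0\in(0,1)$ and $\beta\in(1,2)$, and the associated interpolation exponent $t_*$ is exactly the number $\gamma$ in the statement, so that
\[
\|dd^c v\|_{\tilde{\mathcal{C}}^{-\beta}(\overline{\D})}\lesssim_{(\beta_0,\beta)}\|dd^c v\|^{\gamma}_{\tilde{\mathcal{C}}^{-\beta_0}(\overline{\D})}\,\|dd^c v\|^{1-\gamma}_{\tilde{\mathcal{C}}^{-2}(\overline{\D})}.
\]
Since Lemma \ref{le_uocluonghieuuvtrendia} gives $\int_{\partial\D}v\,d\xi\lesssim_\beta\|dd^c v\|_{\tilde{\mathcal{C}}^{-\beta}(\overline{\D})}+\|v\|_{L^1(\D)}$ (using $v\ge0$), the whole statement reduces to proving
\[
\|dd^c v\|_{\tilde{\mathcal{C}}^{-2}(\overline{\D})}\lesssim\epsilon^{-2}\|v\|_{L^1(\D)}+\int_{1-2\epsilon\le|z|\le1}(1-|z|)\,|dd^c v|,
\]
after which the subadditivity $(a+b)^{1-\gamma}\le a^{1-\gamma}+b^{1-\gamma}$ (valid since $\gamma\in(0,1)$) splits the product into the two required terms.

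For this core estimate, fix a test function $\Phi\in\tilde{\mathcal{C}}^2(\overline{\D})$ with $\|\Phi\|_{\mathcal{C}^2}\le1$; we must bound $\langle dd^c v,\Phi\rangle=\int_\D\Phi\,dd^c v$. Since $\Phi$ vanishes on $\partial\D$, integrating $\nabla\Phi$ along the radial segment from $z/|z|$ to $z$ gives the elementary pointwise bound $|\Phi(z)|\le 1-|z|$ for all $z\in\D$. Choose a cutoff $\chi\in\mathcal{C}^\infty_c(\D)$ with $\chi\equiv1$ on $\D_{1-2\epsilon}$, $\supp\chi\subset\D_{1-\epsilon}$ and $\|D^k\chi\|_{\mathcal{C}^0}\lesssim\epsilon^{-k}$ for $k=1,2$, and split $\langle dd^c v,\Phi\rangle=\int_\D\chi\Phi\,dd^c v+\int_\D(1-\chi)\Phi\,dd^c v$. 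As $1-\chi$ is supported in $\{|z|\ge1-2\epsilon\}$ and $|\Phi(z)|\le 1-|z|$, the second integral is bounded in absolute value by $\int_{1-2\epsilon\le|z|\le1}(1-|z|)\,|dd^c v|$. For the first, $\chi\Phi$ is $\mathcal{C}^2$ with compact support in $\D$, so integration by parts (no boundary term) gives $\int_\D\chi\Phi\,dd^c v=\int_\D v\,dd^c(\chi\Phi)$; expanding $dd^c(\chi\Phi)=\chi\,dd^c\Phi+\Phi\,dd^c\chi+d\chi\wedge d^c\Phi+d\Phi\wedge d^c\chi$ and using $\|\Phi\|_{\mathcal{C}^2}\le1$ with the bounds on $\chi$ yields $\|dd^c(\chi\Phi)\|_{\mathcal{C}^0}\lesssim\epsilon^{-2}$, hence $\bigl|\int_\D v\,dd^c(\chi\Phi)\bigr|\le\|v\|_{L^1(\D)}\,\|dd^c(\chi\Phi)\|_{\mathcal{C}^0}\lesssim\epsilon^{-2}\|v\|_{L^1(\D)}$. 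Taking the supremum over $\Phi$ gives the displayed bound on $\|dd^c v\|_{\tilde{\mathcal{C}}^{-2}(\overline{\D})}$.

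Combining the two displays and the subadditivity of $x\mapsto x^{1-\gamma}$ gives
\[
\|dd^c v\|_{\tilde{\mathcal{C}}^{-\beta}(\overline{\D})}\lesssim_{(\beta_0,\beta)}\epsilon^{-2(1-\gamma)}\|dd^c v\|^{\gamma}_{\tilde{\mathcal{C}}^{-\beta_0}(\overline{\D})}\|v\|^{1-\gamma}_{L^1(\D)}+\|dd^c v\|^{\gamma}_{\tilde{\mathcal{C}}^{-\beta_0}(\overline{\D})}\Bigl(\int_{1-2\epsilon\le|z|\le1}(1-|z|)|dd^c v|\Bigr)^{1-\gamma},
\]
and inserting this into $\int_{\partial\D}v\,d\xi\lesssim_\beta\|dd^c v\|_{\tilde{\mathcal{C}}^{-\beta}(\overline{\D})}+\|v\|_{L^1(\D)}$ produces exactly \eqref{ine_hieuuvmualpha}.

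The only delicate points are conceptual rather than computational: recognizing that the correct intermediate norm is $\|\cdot\|_{\tilde{\mathcal{C}}^{-2}(\overline{\D})}$, so that the test function $\Phi$ vanishes on $\partial\D$ and the bound $|\Phi|\le 1-|z|$ becomes available — this is precisely what lets the near-boundary mass of $dd^c v$ be absorbed into the weighted integral $\int(1-|z|)|dd^c v|$ rather than into the full, uncontrolled mass $\|dd^c v\|$ — and inserting the cutoff $\chi$ before moving $dd^c$ onto the test function so that no boundary contribution appears. Everything else is a direct application of Lemma \ref{le_uocluonghieuuvtrendia} and Proposition \ref{pro_sosanhdistancevoibien}.
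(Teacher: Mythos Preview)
Your proof is correct and follows essentially the same route as the paper: both argue by establishing the bound $\|dd^c v\|_{\tilde{\mathcal{C}}^{-2}(\overline{\D})}\lesssim\epsilon^{-2}\|v\|_{L^1(\D)}+\int_{1-2\epsilon\le|z|\le1}(1-|z|)|dd^c v|$ via the same cutoff-and-integrate-by-parts argument (using $|\Phi(z)|\le 1-|z|$ for $\Phi\in\tilde{\mathcal{C}}^2$), then feed this into the interpolation inequality of Proposition~\ref{pro_sosanhdistancevoibien} with $(t_0,t_1,t_2)=(\beta_0,\beta,2)$ and combine with Lemma~\ref{le_uocluonghieuuvtrendia}. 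Your write-up is in fact slightly more explicit than the paper's in spelling out the subadditivity $(a+b)^{1-\gamma}\le a^{1-\gamma}+b^{1-\gamma}$ that splits the final product.
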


\begin{proof} Firstly we will estimate  $\| dd^c v \|_{\tilde{\mathcal{C}}^{-2}(\overline{\D})}.$ Let $\chi \in \mathcal{C}^{\infty}(\R)$ such that $0 \le \chi \le 1,$ $\chi \equiv 0$ on $[-1,1]$ and $\chi \equiv 1$ outside $[-2,2].$ For $\epsilon \in (0,1),$ put $\chi_{\epsilon}(z):= \chi(\frac{1- |z|}{ \epsilon})$ for $z \in \D.$ We have $\supp\chi_{\epsilon} \subset \{z: |z| \le 1 - \epsilon\}$ and $\chi_{\epsilon}(z)= 1$ for $z$ with $|z| \le 1- 2\epsilon.$ Let $\Phi$ be a function in $\tilde{\mathcal{C}}^{2}(\overline{\D})$ with $\|\Phi\|_{\mathcal{C}^2} \le 1.$  Since $\Phi \equiv 0$ on $\partial \D$ we have $|\Phi(z)| \le 1- |z|.$ Decompose
$$\langle  dd^c v, \Phi \rangle= \langle  dd^c v, \chi_{\epsilon}\Phi \rangle+\langle  dd^c v, (1- \chi_{\epsilon})\Phi \rangle.$$
Denote by $I_1,I_2$ respectively the first and second terms in the right-hand side of the last equality. By properties of $\Phi$ and $\chi_{\epsilon},$ one gets
$$|I_2| \le  2\int_{1- 2 \epsilon \le |z|\le 1}(1- |z|) |dd^c v|.$$
On the other hand, performing an integration by parts gives
$$|I_1| \le \int_{\D} |v dd^c(\chi_{\epsilon} \Phi) | \lesssim \epsilon^{-2}\int_{\D} |v|.$$
Hence, we obtain
\begin{align} \label{ine_chuanChaicuav}
\| dd^c v \|_{\tilde{\mathcal{C}}^{-2}(\overline{\D})}= \sup_{\{\Phi \in \tilde{\mathcal{C}}^2(\D): \|\Phi\|_{\mathcal{C}^2} \le 1 \}}\big| \langle  dd^c v, \Phi \rangle\big| \lesssim \epsilon^{-2}\int_{\D} |v|+ \int_{1- 2 \epsilon \le |z|\le 1}(1- |z|) |dd^c v|.
\end{align}
Now applying Proposition \ref{pro_sosanhdistancevoibien} to $dd^c v$ and $M= \overline{\D},$ one gets 
\begin{align*}
\| dd^c v \|_{\tilde{\mathcal{C}}^{-\beta}(\overline{\D})} &\lesssim  \|dd^c v\|^{\gamma}_{\tilde{\mathcal{C}}^{-\beta_0}(\overline{\D})} \| dd^c v \|^{1- \gamma}_{\tilde{\mathcal{C}}^{2}(\overline{\D})}.
\end{align*}
The last inequality combined with (\ref{ine_hieuuvddc}) and (\ref{ine_chuanChaicuav}) gives (\ref{ine_hieuuvmualpha}).  The proof is finished.
\end{proof}



We are now about to prove the local version of  Proposition \ref{pro_equitheoremMAgeneric}. Given a point $a \in K,$ a small open neighborhood $K'$ of $a$ in $K$ can be described as in Section \ref{sec_analyticdisc}. Namely, there are a $\mathcal{C}^3$ map $h$ from $\overline{\B}_n$ to $\R^n$ with $h(0)= Dh(0)=0$ and local holomorphic coordinates in $X$ such that 
$$K':= \{\mathbf{x}+i h(\mathbf{x} ) :  \mathbf{x} \in  \B_n\} \subset \D_2^n.$$
Let  $\tilde{F}(z, \boldsym{\tau})$, $t_3, \tilde{\epsilon}_0'$ and $\tilde{\theta}_0$ be as in  Section \ref{sec_intergrability}. The couple $(K',\D_2^n)$ is considered as the local counterpart of $(K,X).$ One can replace $\D_2^n$ by any polydisks $\D_r^n$ with $r>1$ without making any differences in what follows.  

 Let $\beta_0 \in (0,1).$ For every positive continuous $(1,1)$-current $T$ on an open neighborhood of $\overline{\D}$, we have
\begin{align} \label{ine_chuanTCbeta}
 \|T\|_{\tilde{\mathcal{C}}^{-\beta_0}(\overline{\D})} \le \int_{\D} (1- |z|)^{\beta_0} T.
\end{align} 
 Let $\varphi_1$ and $\varphi_2$ be two $\mathcal{C}^2$ p.s.h. functions on $\D_2^n$ such that $\varphi_1 \ge \varphi_2$ and $\|\varphi_j\|_{L^1(\D_2^n)} \le 1$ for $j=1,2.$  Put $\varphi:= \varphi_1 -\varphi_2$ which is $\mathcal{C}^2$ and nonnegative.  Define 
\begin{align*}
g_1(\boldsym{\tau}):= \| dd^c \big(\varphi \circ \tilde{F}(\cdot, \boldsym{\tau})\big)\|_{\tilde{\mathcal{C}}^{-\beta_0}(\overline{\D})} 
\end{align*}
which is less than or equal to 
\begin{align} \label{eq_sumvarphi12}
\| dd^c \big(\varphi_1 \circ \tilde{F}(\cdot, \boldsym{\tau})\big)\|_{\tilde{\mathcal{C}}^{-\beta_0}(\overline{\D})}+ \| dd^c \big(\varphi_2 \circ \tilde{F}(\cdot, \boldsym{\tau})\big)\|_{\tilde{\mathcal{C}}^{-\beta_0}(\overline{\D})}.
\end{align}
Since $\tilde{F}$ is $\mathcal{C}^2,$ so is $\varphi_j \circ \tilde{F}$ for $j=1,2.$ Using (\ref{ine_chuanTCbeta}) for $T= dd^c \big(\varphi_j \circ \tilde{F}(\cdot, \boldsym{\tau})\big)$ and (\ref{cor_khatichcuaddccai1}),  we deduce that the integral of the sum (\ref{eq_sumvarphi12}) with respect to $\boldsym{\tau} \in \B_{n-1}^2$ is $\lesssim_{\beta_0} 1.$ This implies
\begin{align} \label{ine_tichphantaucuagw12}
\int_{\B_{n-1}^2} g_1(\boldsym{\tau}) d \boldsym{\tau} \lesssim_{\beta_0} 1.
\end{align}
 Put 
$$g_2(\boldsym{\tau}):= \| \varphi \circ \tilde{F}(\cdot, \boldsym{\tau})\|_{L^1(\D)}.$$
By Corollary \ref{cor_volumepshonRn}, the function $\varphi$  satisfy the hypothesis of Lemma \ref{cor_analyticdisctau23tichphang} for $\delta_0=0$ and $t_0= n-1+  \epsilon$ with $\epsilon \in (0,1).$ As a result, we get 
\begin{align} \label{ine_tichphantaucuagw123}
\int_{\B_{n-1}^2} g_2(\boldsym{\tau}) d \boldsym{\tau} \lesssim_{\epsilon}  \bigg[\int_{\D^n_{2}} |\varphi| \bigg]^{\frac{\epsilon}{n-1+ \epsilon}}.
\end{align}
For $\epsilon' \in (0,1),$ we  define 
$$g_3(\boldsym{\tau},\epsilon'):=  \int_{1- 2 \epsilon' \le  |z| \le 1}(1-|z|)dd^c \big(\varphi_1 \circ \tilde{F}(\cdot, \boldsym{\tau}) \big)+ \int_{1- 2 \epsilon' \le  |z| \le 1}(1-|z|)  dd^c\big(\varphi_2 \circ \tilde{F}(\cdot, \boldsym{\tau}) \big).$$
By (\ref{cor_khatichcuaddccai2}), we have
\begin{align} \label{ine_tichphantaucuagw123gba}
\int_{\B_{n-1}^2} g_3(\boldsym{\tau}, \epsilon') d \boldsym{\tau} \lesssim_{\delta} (\epsilon')^{1-\frac{\delta(n-1)}{n-1+ \delta}},
\end{align}
for any $\delta \in (0,1).$

\begin{proposition} \label{pro_tichphantrentauw1w2} Let $\varphi_1$ and $\varphi_2$ be two $\mathcal{C}^2$  p.s.h. functions on $\D_2^n$ such that $\varphi_1 \ge \varphi_2$ and $\|\varphi_j\|_{L^1(\D_2^n)} \le 1$ for $j=1,2.$ Let $\varphi:= \varphi_1 -\varphi_2.$  Then  we have 
\begin{align} \label{ine_superpotentiallocal}
\int_{\B_n(0,\epsilon'^*_0)} \varphi \big(\mathbf{x}, h(\mathbf{x})\big) d \mathbf{x} \lesssim_{\delta}  \| \varphi \|_{L^1(\D^n_2)}^{\frac{1}{3n} - \delta},
\end{align}
for any $\delta \in (0, \frac{1}{3n}).$
\end{proposition}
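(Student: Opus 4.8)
The plan is to combine the one-dimensional estimate of Proposition 5.10 (applied slicewise to the family of analytic discs) with Lemma 4.10 and the exponential-type estimates of Section 4. First I would apply Lemma 4.10 to the nonnegative $\mathcal{C}^2$ function $\varphi$: by the change of variables through the part of $\tilde{F}$ attached to $K'$ (Property $(i)$ of Proposition \ref{pro_corverCndung Psi2}), we get
\begin{align*}
\int_{\B_n(0,\tilde{\epsilon}'_0)} \varphi\big(\mathbf{x},h(\mathbf{x})\big)\, d\mathbf{x} \lesssim \int_{[e^{-i\tilde\theta_0}, e^{i\tilde\theta_0}]\times \B_{n-1}^2} \varphi\circ \tilde{F}(e^{i\theta},\boldsym{\tau}) \le \int_{\B_{n-1}^2}\Big(\int_{\partial\D}\varphi\circ\tilde{F}(\xi,\boldsym{\tau})\, d\xi\Big) d\boldsym{\tau}.
\end{align*}
For fixed $\boldsym{\tau}$, the slice $v_{\boldsym{\tau}} := \varphi\circ\tilde{F}(\cdot,\boldsym{\tau})$ is a nonnegative $\mathcal{C}^2$ function on $\overline{\D}$ (since $\tilde F$ is $\mathcal{C}^2$ and holomorphic on $\D$), so Proposition \ref{pro_uocluonghieuuvtrendia} applies to it.

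Next I would feed the three terms from the right-hand side of (\ref{ine_hieuuvmualpha}) into the integral over $\boldsym{\tau}$. Using $\| dd^c v_{\boldsym{\tau}}\|_{\tilde{\mathcal{C}}^{-\beta_0}(\overline{\D})} \le g_1(\boldsym{\tau})$, $\|v_{\boldsym{\tau}}\|_{L^1(\D)} = g_2(\boldsym{\tau})$, and the integral appearing in the third term being $\le g_3(\boldsym{\tau},\epsilon)$, the estimate becomes a sum of integrals of the form $\int g_1^{\gamma} g_2^{1-\gamma}$, $\int g_1^{\gamma}\, \epsilon^{-2(1-\gamma)} g_2^{1-\gamma}$, and $\int g_1^{\gamma} g_3^{1-\gamma}$. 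By Hölder's inequality (with exponents $1/\gamma$ and $1/(1-\gamma)$) each of these is bounded by a product of powers of $\int_{\B_{n-1}^2} g_1$, $\int g_2$, $\int g_3$, which are controlled by (\ref{ine_tichphantaucuagw12}), (\ref{ine_tichphantaucuagw123}), (\ref{ine_tichphantaucuagw123gba}) respectively. Writing $R := \|\varphi\|_{L^1(\D_2^n)}$, one gets $\int g_1 \lesssim_{\beta_0} 1$, $\int g_2 \lesssim_{\epsilon} R^{\epsilon/(n-1+\epsilon)}$, and $\int g_3(\cdot,\epsilon)\lesssim_{\delta} \epsilon^{1-\delta(n-1)/(n-1+\delta)}$; note also $R\lesssim 1$ so all exponents can be taken in the direction that makes powers small. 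Assembling, the bound takes the schematic form
\begin{align*}
\int_{\B_n(0,\tilde\epsilon'_0)}\varphi(\mathbf x, h(\mathbf x))\,d\mathbf x \lesssim_{(\beta_0,\beta,\delta,\epsilon)} R^{\frac{\epsilon}{n-1+\epsilon}} + \epsilon^{-2(1-\gamma)}R^{\frac{(1-\gamma)\epsilon}{n-1+\epsilon}} + \epsilon^{(1-\gamma)(1-\frac{\delta(n-1)}{n-1+\delta})}.
\end{align*}
The main obstacle, and the heart of the argument, is the bookkeeping optimization: choosing $\beta_0$ close to $0$ (so $\gamma$ close to $1$, making the exponent $1-\gamma$ small and $2(1-\gamma)$ small), choosing $\epsilon$ and $\delta$ near $0$, and then setting $\epsilon$ as a suitable small power of $R$ to balance the $\epsilon^{-2(1-\gamma)}R^{\cdots}$ term against the pure $\epsilon^{\cdots}$ term. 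One must check that after this balancing the resulting exponent of $R$ can be pushed arbitrarily close to $1/(3n)$ from below; the constant $3$ comes from the fact that in (\ref{ine_detDF}) the Jacobian lower bound loses a factor $t^{n+1}$ together with a $\dist^{n-1}$ factor, which in turn forces the exponent $\lambda_0/t_0$ in Lemma \ref{cor_analyticdisctau23tichphang} and the interplay $\beta = \gamma\beta_0 + (1-\gamma)2$ with $\beta\in(1,2)$.

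Concretely, I would take $\beta_0 = \eta$, $\beta = 1 + \eta$ for a small $\eta>0$, so $\gamma = (1-\eta)/(2-\eta)\to 1/2$ as $\eta\to 0$; no — rather I want $\gamma\to 1$, so instead I would let $\beta\to 2$, i.e. $\beta = 2-\eta$ with $\beta_0 = \eta$, giving $1-\gamma = \eta/(2-2\eta)\to 0$. Then $2(1-\gamma)\to 0$, the $g_2$-exponent $\frac{\epsilon}{n-1+\epsilon}\to \frac{1}{n}$ as $\epsilon\to\infty$ is not allowed ($\epsilon<1$), so the dominant gain is $R^{\epsilon/(n-1+\epsilon)}$ with $\epsilon$ near $1$, giving nearly $R^{1/n}$ from that term but the third term only gives a power of $\epsilon$, not of $R$, until we substitute $\epsilon \sim R^{s}$ for an appropriate $s$; optimizing $s$ against the competing exponents yields the claimed $\frac{1}{3n}-\delta$. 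I would carry out this optimization explicitly at the end, verifying that all implied constants depend only on the auxiliary parameters (hence only on $(X,K,\omega)$ after they are fixed), and that the hypotheses of Lemma \ref{cor_analyticdisctau23tichphang}(ii), namely $t_0 + \delta_0 > n-1 > \delta_0$, are met for the chosen $t_0 = n-1+\epsilon$, $\delta_0\in\{0,\delta\}$.
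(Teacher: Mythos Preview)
Your overall strategy is exactly the paper's: pull back via Lemma \ref{le_tichphanRnfibration}, apply Proposition \ref{pro_uocluonghieuuvtrendia} slicewise, use H\"older on the products $g_1^\gamma g_2^{1-\gamma}$ and $g_1^\gamma g_3^{1-\gamma}$, plug in (\ref{ine_tichphantaucuagw12})--(\ref{ine_tichphantaucuagw123gba}), and optimize the free parameter from Proposition \ref{pro_uocluonghieuuvtrendia} as a power of $R=\|\varphi\|_{L^1(\D_2^n)}$. That part is fine.

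The gap is in the optimization, where you have the direction of $\gamma$ backwards. With $\beta=\gamma\beta_0+(1-\gamma)2$ one has $\gamma=\frac{2-\beta}{2-\beta_0}$. Taking $\beta_0\to 0$ and $\beta\to 2$ gives $\gamma\to 0$, not $\gamma\to 1$; in particular your computation ``$1-\gamma=\eta/(2-2\eta)$'' for $\beta=2-\eta$, $\beta_0=\eta$ is inverted (that expression is $\gamma$, not $1-\gamma$). And $\gamma\to 0$ is in fact what you want: the useful powers of $R$ in the second term and of the free parameter in the third term both carry the factor $(1-\gamma)$, so sending $\gamma\to 1$ would kill all decay, leaving an $O(1)$ bound. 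With $\gamma\to 0$, $\epsilon\to 1^-$ (so that $\frac{\epsilon}{n-1+\epsilon}\to \frac{1}{n}$; no need for $\epsilon\to\infty$), and $\delta\to 0$, the two nontrivial terms behave like $(\epsilon')^{-2}R^{1/n}$ and $\epsilon'$; balancing with $\epsilon'\approx R^{1/(3n)}$ gives the exponent $\frac{1}{3n}$. This is precisely what the paper does, choosing $\epsilon'=R^{a_1}$ with the explicit $a_1$ and checking $a_2\to \frac{1}{3n}$.

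One more bookkeeping point: you are using the symbol $\epsilon$ for two different things --- the exponent parameter coming from Corollary \ref{cor_volumepshonRn}/Lemma \ref{cor_analyticdisctau23tichphang} (which must satisfy $\epsilon<1$ so that $M_g<\infty$, and which you send to $1^-$) and the free parameter from Proposition \ref{pro_uocluonghieuuvtrendia} (which you send to $0$ as a power of $R$). Keep them separate (the paper writes $\epsilon$ and $\epsilon'$) or the balancing step becomes incoherent.
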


\begin{proof}  Let $\epsilon, \epsilon', \beta_0 \in (0,1)$ and $ \beta \in (1, 2).$ Let $g_1,g_2,g_3$ be as above. Applying Lemma \ref{le_tichphanRnfibration} to $g= \varphi$ gives
$$\int_{\B_n(0,\epsilon'^*_0)} | \varphi \big(\mathbf{x}, h(\mathbf{x})\big)| d \mathbf{x} \lesssim \int_{\B_{n-1}^2 } d \boldsym{\tau} \int_{\partial \D} | \varphi \circ \tilde{F}(\cdot, \boldsym{\tau}) | d \xi.$$
Put $\gamma:=\frac{2 - \beta}{2 - \beta_0}.$ Applying  Proposition \ref{pro_uocluonghieuuvtrendia} to  $v= \varphi \circ \tilde{F}(\cdot, \boldsym{\tau}) \in \mathcal{C}^2$ shows that the right-hand side of the last inequality is 
$$ \lesssim_{(\beta_0,\beta)}  \int_{\B_{n-1}^2} g_2 d \boldsym{\tau}+  (\epsilon')^{-2(1- \gamma)} \int_{\B_{n-1}^2} g_1^{\gamma} g_2^{1- \gamma} d \boldsym{\tau}+\int_{\B_{n-1}^2} g_1^{\gamma} g_3^{1- \gamma}(\cdot, \epsilon') d \boldsym{\tau}.$$
The first term of the last sum is 
$$\lesssim_{\epsilon} \| \varphi \|^{\frac{\epsilon}{n-1+ \epsilon}}_{L^1(\D_{2}^n)}$$
by (\ref{ine_tichphantaucuagw123}). On the other hand,  by  the H\"older inequality, the second one is  $\le$ 
$$(\epsilon')^{-2(1- \gamma)}  \|g_1\|_{L^1}^{\gamma} \|g_2\|_{L^1}^{1- \gamma}$$
and the third one is $\le$ 
$$\|g_1\|_{L^1}^{\gamma} \|g_3(\cdot, \epsilon')\|_{L^1}^{1- \gamma},$$
where  the $L^1$-norm is taken over $\B_{n-1}^2.$ Taking into account  (\ref{ine_tichphantaucuagw12}) and (\ref{ine_tichphantaucuagw123}), one obtains
$$(\epsilon')^{-2(1- \gamma)} \|g_1\|_{L^1}^{\gamma} \|g_2\|_{L^1}^{1- \gamma} \lesssim_{\beta_0,\epsilon}   (\epsilon')^{-2(1- \gamma)} \| \varphi \|_{L^1(\D^n_2)}^{\frac{\epsilon(1- \gamma)}{n-1+ \epsilon}}.$$ 
By  (\ref{ine_tichphantaucuagw12}) and (\ref{ine_tichphantaucuagw123gba}),  we have
$$\|g_1\|_{L^1}^{\gamma} \|g_3(\cdot, \epsilon')\|_{L^1}^{1- \gamma}  \lesssim_{\beta_0,\delta} (\epsilon')^{(1-\frac{\delta(n-1)}{n-1+ \delta})(1- \gamma) }, $$
for every $\epsilon' \in (0,1).$ Put 
$$a_1:= \frac{\epsilon}{(n-1+ \epsilon)(3-\frac{\delta(n-1)}{n-1+ \delta}) }, \quad  a_2:= \frac{\epsilon(1-\frac{\delta(n-1)}{n-1+ \delta})(1-\gamma)}{(n-1+ \epsilon)(3-\frac{\delta(n-1)}{n-1+ \delta}) }\cdot$$
Choose  $\epsilon':= \| \varphi \|_{L^1(\D^n_2)}^{a_1}.$ Combining all these above inequalities, we get
$$\int_{\B_n(0,\epsilon'^*_0)} | \varphi \big(\mathbf{x}, h(\mathbf{x})\big)| d \mathbf{x} \lesssim_{(\beta_0,\beta,\delta,\epsilon)}  \| \varphi \|_{L^1(\D^n_2)}^{a_2}.$$
Observe that $a_2 \rightarrow \frac{1}{3n}$ as $\epsilon \rightarrow 1,$  $\beta \rightarrow 2,$ $\beta_0 \rightarrow 0$, $\delta \rightarrow 0.$ Thus, the proof is finished.
\end{proof}

\begin{proof}[End of the proof of Proposition \ref{pro_equitheoremMAgeneric} in the case where $\dim K=n$] Given any $a \in K,$ let $\tilde{F}_a$ and $\tilde{\epsilon}_a$ be as in   Proposition \ref{pro_corverCndung Psi2global}. Since $\tilde{K}$ is compact, we can cover it by a finite number of ball $B_K(a, \tilde{\epsilon}_a).$  Hence, in order to prove (\ref{ine_wnonnegative}), it is enough to restrict ourselves to local charts. In other words, we are now being in the situation with the model $(K', \D_2^n)$ described above. Moreover, by subtracting a suitable common smooth function, we can assume that $\varphi_1,\varphi_2$ in (\ref{ine_wnonnegative}) are $\mathcal{C}^2$ p.s.h. functions on $\D_2^n.$ Hence, the desired result follows directly from Proposition \ref{pro_tichphantrentauw1w2}.  The proof is finished.
\end{proof}

We now deal with the case where the dimension of $K$ is greater than $n.$   Let $n_K:= \dim K>n.$ Since $K$ is generic, we have $T_a K + J T_a K= T_a X,$ where $a \in K$ and $J$ is the complex structure of $X.$ We then deduce that $T_a K \cap J T_a K$ is of even dimension which equals $2 n_K- 2n.$ The codimension $d$ of $K$ equals $2n - n_K.$

\begin{proposition} \label{le_coordinatesKnK} 
Let $a$ be a point in $K.$  There exist local $\mathcal{C}^2$ coordinates $(W,\Psi)$ of $X$ around $a$ such that the following properties hold:

$(i)$  $\Psi: W \rightarrow  \C^{d}  \times \C^{n_K - n}$ is a $\mathcal{C}^2$ diffeomorphism onto its image which equals 
$$ \big( \B_{d} + i  \B_{d}(0,2) \big) \times \D^{n_K-n}$$
and $\Psi(p)=0$ and    $\Psi^{-1}(\mathbf{z}_1, \mathbf{z}_2)$ is holomorphic in $\mathbf{z}_1$ for every fixed $\mathbf{z}_2 \in \D^{n_K -n},$ 

$(ii)$  there is  a $\mathcal{C}^2$ map $h(\Re \mathbf{z}_1, \mathbf{z}_2)$  from $\overline{\B}_{d} \times \D^{n_K -n}$ to $\R^{d}$ so that  for every $\mathbf{z}_2$ fixed, $h(\cdot, \mathbf{z}_2) \in \mathcal{C}^3$ and 
\begin{align*} 
D^j_{\Re \mathbf{z}_1} h(0, \mathbf{z}_2)=0
\end{align*}
 for $j=0$ or $1$  and  
$$ \Psi(K\cap W)=\big\{(\mathbf{z}_1, \mathbf{z}_2) \in \big( \B_{d} + i \R^{d} \big) \times \D^{n_K-n}: \Im \mathbf{z}_1= h(\Re \mathbf{z}_1, \mathbf{z}_2) \big\}.$$


\end{proposition}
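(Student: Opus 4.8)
The plan is to follow the proof of Lemma~\ref{pro_localcoordinates}, inserting one extra change of coordinates which is only $\mathcal{C}^2$ but which is holomorphic (indeed affine) along the $\mathbf{z}_1$-directions.

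First I would bring $T_aK$ into standard position. In a local holomorphic chart of $X$ sending $a$ to $0$, the real subspace $T_aK\subset\C^n$ satisfies $T_aK+JT_aK=\C^n$, so it contains the complex subspace $T_aK\cap JT_aK$ of complex dimension $n_K-n$; as in \cite{Baouendi_Ebenfelt_Rothschild}, a $\C$-linear change of coordinates $\C^n=\C^d_{\mathbf{z}_1}\times\C^{n_K-n}_{\mathbf{z}_2}$ (here $d+(n_K-n)=n$) brings it to $T_0K=\R^d_{\Re\mathbf{z}_1}\times\C^{n_K-n}_{\mathbf{z}_2}$. Since this plane is transverse to the $\Im\mathbf{z}_1$-directions, the implicit function theorem produces a $\mathcal{C}^3$ map $\tilde h$ from a neighbourhood of $0$ in $\R^d\times\C^{n_K-n}$ to $\R^d$ with $\tilde h(0)=0$ and $D\tilde h(0)=0$ such that near $0$ one has $K=\{\Im\mathbf{z}_1=\tilde h(\Re\mathbf{z}_1,\mathbf{z}_2)\}$. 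At this point the coordinate change is holomorphic in all variables, but $\tilde h(0,\mathbf{z}_2)$ and $D_{\Re\mathbf{z}_1}\tilde h(0,\mathbf{z}_2)$ need not vanish when $\mathbf{z}_2\neq 0$.

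Next I would straighten each slice $\{\mathbf{z}_2=\mathrm{const}\}$. For $\mathbf{z}_2$ near $0$ the real $d\times d$ matrix $D_{\Re\mathbf{z}_1}\tilde h(0,\mathbf{z}_2)$ is small (it vanishes at $\mathbf{z}_2=0$ and is continuous), so $L(\mathbf{z}_2):=\big(I+i\,D_{\Re\mathbf{z}_1}\tilde h(0,\mathbf{z}_2)\big)^{-1}\in\GL_d(\C)$ is well defined, $L(0)=I$, and $L$ is $\mathcal{C}^2$ in $\mathbf{z}_2$ — one derivative is lost because $\tilde h\in\mathcal{C}^3$. Setting $c(\mathbf{z}_2):=-i\,L(\mathbf{z}_2)\tilde h(0,\mathbf{z}_2)$, which is $\mathcal{C}^2$ with $c(0)=0$, I would take $\Psi$ to be the composition of the previous (fully holomorphic) coordinate changes with the $\mathcal{C}^2$ local diffeomorphism $\Psi_1(\mathbf{z}_1,\mathbf{z}_2):=\big(c(\mathbf{z}_2)+L(\mathbf{z}_2)\mathbf{z}_1,\ \mathbf{z}_2\big)$. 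Then $\Psi_1$, and hence $\Psi^{-1}$, is affine — in particular holomorphic — in $\mathbf{z}_1$ for each fixed $\mathbf{z}_2$. A direct computation gives that the point of $K$ lying over $(\Re\mathbf{z}_1,\mathbf{z}_2)=(0,\mathbf{z}_2)$ is sent to $0$ (by the choice of $c$) and that the tangent plane at that point of the slice $\Psi(K)\cap\{\mathbf{z}_2=\mathrm{const}\}$ equals $L(\mathbf{z}_2)\big(I+i\,D_{\Re\mathbf{z}_1}\tilde h(0,\mathbf{z}_2)\big)\R^d=\R^d$ (by the choice of $L$). Hence, after shrinking $W$ and applying the implicit function theorem slice by slice, $\Psi(K\cap W)$ is the graph $\{\Im\mathbf{z}_1=h(\Re\mathbf{z}_1,\mathbf{z}_2)\}$ of a map $h$ on $\overline{\B}_d\times\D^{n_K-n}$ which is jointly $\mathcal{C}^2$, is $\mathcal{C}^3$ in $\Re\mathbf{z}_1$ for each fixed $\mathbf{z}_2$, and satisfies $h(0,\mathbf{z}_2)=0$ and $D_{\Re\mathbf{z}_1}h(0,\mathbf{z}_2)=0$. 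A final linear rescaling of $\mathbf{z}_1$ and $\mathbf{z}_2$ makes the image of $\Psi$ equal to $\big(\B_d+i\B_d(0,2)\big)\times\D^{n_K-n}$, which is legitimate because $h$ is small on the rescaled domain.

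The one point that needs care is the regularity bookkeeping: checking that $L$ and $c$ are genuinely $\mathcal{C}^2$ in $\mathbf{z}_2$ (which comes from $\tilde h\in\mathcal{C}^3$), that $\Psi_1$ is a $\mathcal{C}^2$ diffeomorphism holomorphic along the fibres, and that the fibrewise implicit function theorem yields an $h$ that is $\mathcal{C}^3$ in $\Re\mathbf{z}_1$ for fixed $\mathbf{z}_2$ — here one uses that $L(\mathbf{z}_2)$ and $c(\mathbf{z}_2)$ are constants once $\mathbf{z}_2$ is fixed while $\Re\mathbf{z}_1\mapsto\tilde h(\Re\mathbf{z}_1,\mathbf{z}_2)$ is $\mathcal{C}^3$ — and is jointly only $\mathcal{C}^2$. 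There is no deeper obstruction; the second-order vanishing in the totally real directions (the analogue of $D^2h(0)=0$ in Lemma~\ref{pro_localcoordinates}) is not required here and is omitted, see \cite[Sec. 6.10]{MerkerPorten}.
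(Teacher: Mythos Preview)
Your proposal is correct and follows essentially the same route as the paper: start from the standard holomorphic CR graph representation $\Im\mathbf{z}_1=\tilde h(\Re\mathbf{z}_1,\mathbf{z}_2)$, then apply a $\mathbf{z}_2$-dependent $\C$-linear (hence holomorphic) change of coordinates in $\mathbf{z}_1$---the paper phrases this as ``choose the tangent space of the graph of $\tilde h(\cdot,\mathbf{z}_2)$ at $0$ and its orthogonal subspace as new holomorphic coordinates of $\C^d$,'' while you write down an explicit affine map $\Psi_1(\mathbf{z}_1,\mathbf{z}_2)=(c(\mathbf{z}_2)+L(\mathbf{z}_2)\mathbf{z}_1,\mathbf{z}_2)$ and carry out the regularity bookkeeping in detail. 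The two normalizations differ only cosmetically, and your explicit version makes the loss of one derivative in $\mathbf{z}_2$ transparent.
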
 

\begin{proof} It is well-known that in suitable holomorphic local coordinates, $K$ is  given by 
$$K=\big \{(\mathbf{z}_1, \mathbf{z}_2) \in \big( \B_{d} + i \R^{d} \big) \times \D^{n_K-n}: \Im \mathbf{z}_1= \tilde{h}(\Re \mathbf{z}_1, \Re\mathbf{z}_2, \Im \mathbf{z}_2)  \big\}$$ 
 where $\tilde{h}$ is a $\mathcal{C}^3$ map of uniformly bounded $\mathcal{C}^3$ norm in $p$ and $\tilde{h}(0)= D\tilde{h}(0)=0,$ see \cite{Baouendi_Ebenfelt_Rothschild}. For $\mathbf{z}_2$ fixed, we choose the tangent space of the graph of $\tilde{h}(\cdot, \mathbf{z}_2)$ at $0$ and its orthogonal subspace as new holomorphic coordinates of $\C^{d}.$ These new coordinates depend $\mathcal{C}^2$ on (but in general  not holomorphically) on the parameter $\mathbf{z}_2.$ In  these new coordinates, one easily see that $K$ is given by the formula given in the asssertion $(ii)$ for some $\mathcal{C}^2$ map $h$ with the desired properties.
The proof is finished.
\end{proof}

\begin{remark} \label{re_dimKlonhonn} As in Lemma \ref{pro_localcoordinates} we can obtain furthermore that $D^2_{\Re \mathbf{z}_1} h(0, \mathbf{z}_2)=0$ and $\|h(\cdot, \mathbf{z}_2)\|_{\mathcal{C}^3}$ is bounded uniformly in $a=(\mathbf{z}_1,\mathbf{z}_2) \in \tilde{K}$  but in this case we will lose a unit for the regularity in $\mathbf{z}_2,$ \emph{i.e} $\Psi$ and $h$ are only $\mathcal{C}^1$ in $\mathbf{z}_2.$
\end{remark}

Thanks to Proposition \ref{le_coordinatesKnK}, we can consider $K$ locally as a family of generic submanifolds of $\C^{d}$ of dimension $d$ parameterized by $\mathbf{z}_2 \in \D^{n_K -n}.$ This allows us to reduce the question to the previous case where we already dealt with generic submanifolds of minimal dimension. By compactness of $\tilde{K},$ we can cover it by local charts $W$ as in Proposition \ref{le_coordinatesKnK}. From now on, we work exclusively on a such local chart. Hence, we can identify $K$ with $\Psi(K \cap W).$ Let $h$ and $\Psi$ be as in that proposition. The map $h$ will be seen as  a family of  maps of $\mathbf{z}_1$ parameterized by $\mathbf{z}_2.$ For $\mathbf{z}_2 \in \D^{n_K -n},$ define
$$K'_{\mathbf{z}_2}:=\{\mathbf{z}_1 \in \big( \B_{d} + i \R^{d} \big):   \Im \mathbf{z}_1= h(\Re \mathbf{z}_1, \mathbf{z}_2) \big\}$$
which is identified with $K'_{\mathbf{z}_2} \times \{  \mathbf{z}_2\} \subset  \C^n.$ Then $K$ is  foliated by $K'_{\mathbf{z}_2}.$

We are now going to construct a family of analytic discs partly attached to $K.$ The strategy will be almost identical with what we did.  Let  $u_0$ be a function described in Lemma \ref{le_version8existenceu_MA} and $\theta_{u_0}$ be the constant there.  Let $\boldsym{\tau}_1, \boldsym{\tau}_2 \in \overline{\B}_{d-1} \subset \R^{d-1}.$ Define $\boldsym{\tau}^*_1:= (1, \boldsym{\tau}_1) \in \R^d$ and $\boldsym{\tau}^*_2:= (0, \boldsym{\tau}_1) \in \R^d$ and $\boldsym{\tau}:=(\boldsym{\tau}_1,\boldsym{\tau}_2).$ Let $t$ be a positive number in $(0,1).$ Consider the following modified version of the equation \ref{Bishoptype}: 
\begin{align}\label{Bishoptype2}
U_{\boldsym{\tau},\mathbf{z}_2,t}(\xi)= t\boldsym{\tau}^*_2 - \mathcal{T}_1\big(h(U_{\boldsym{\tau},\mathbf{z}_2,t};\mathbf{z}_2) \big)(\xi) - t\mathcal{T}_1 u_0(\xi) \, \boldsym{\tau}^*_1,
\end{align} 
where $U: \partial \D \rightarrow \B_{d}$ is H\"older continuous.  

Since $h(0, \mathbf{z}_2)=D_{\Re\mathbf{z}_1} h(0, \mathbf{z}_2)=0$ for every $\mathbf{z}_2,$ we can use the same reason mentioned in the proof of Proposition \ref{pro_BishopequationMA} to show that if $t$ is small enough, the equation (\ref{Bishoptype2}) has a unique  solution $U_{\boldsym{\tau},\mathbf{z}_2,t}$ in $\mathcal{C}^{2,1/2}(\partial \D \times \B_{d-1}^2)$ for $\mathbf{z}_2$ fixed so that $U_{\boldsym{\tau},\mathbf{z}_2,t} \in \mathcal{C}^1$ as a function of $(z, \boldsym{\tau}, \mathbf{z}_2)$. We use the same notation $U_{\boldsym{\tau},\mathbf{z}_2,t}$ to denote the harmonic extension of $U_{\boldsym{\tau},\mathbf{z}_2,t}$ to $\D.$  Let $P_{\boldsym{\tau},\mathbf{z}_2,t}(z)$ be  the harmonic extension of $h\big(U_{\boldsym{\tau},\mathbf{z}_2,t}(\xi), \mathbf{z}_2\big)$ to $\D.$ 
Define
$$F(z, \boldsym{\tau},\mathbf{z}_2,t) := U_{\boldsym{\tau},\mathbf{z}_2,t}(z)+ i  P_{\boldsym{\tau},\mathbf{z}_2,t}(z)+ i  t \, u_0(z) \, \boldsym{\tau}^*_1$$
which is a family of analytic discs to $\C^d$ parametrized by $(\boldsym{\tau},\mathbf{z}_2,t).$ By our choice of $u_0,$  we have $F(\xi, \boldsym{\tau},\mathbf{z}_2,t) \in K_{\mathbf{z}_2}$ for  $\xi \in [e^{-i\theta_{u_0}}, e^{i\theta_{u_0}}].$ Now define
$$F'(z, \boldsym{\tau},\mathbf{z}_2,t) := \big(F_{\boldsym{\tau},\mathbf{z}_2,t}(z), \mathbf{z}_2\big) \in \C^n$$
which is a family of analytic discs to $X$ partly attached to $K.$ Here we used an essential fact that the $\mathcal{C}^2$ coordinates $(\mathbf{z}_1, \mathbf{z}_2)$ are holomorphic in $\mathbf{z}_1.$ Proposition \ref{pro_corverCndung Psi} with $n$ replaced by $d$ implies that for two positive constants $(t,r_0)$ small enough, $F'$ is a diffeomorphism on 
$$\big(\B_2(1,r_0) \cap \D\big) \times \overline{\B}_{d-1}^2 \times  \D^{n_K-n}$$
and its differential satisfies 
\begin{align} \label{ine_detDF0nK}
\big|\det D F'(z, \boldsym{\tau},\mathbf{z}_2,t) \big| \gtrsim t^{d+1} \dist^{d-1}\big(F'(z, \boldsym{\tau},\mathbf{z}_2,t), K'_{\mathbf{z}_2}  \big) \gtrsim t^{2d} (1- |z|)^{d-1}.
\end{align}
Now applying the same arguments right before Proposition \ref{pro_corverCndung Psi2}, one gets the following.

\begin{proposition} \label{pro_corverCndung Psi2nK} There  exists a map $\tilde{F}: \D \times \B_{d-1}^2 \times \D^{n_K- n} \rightarrow X$ which is a diffeomorphism  onto its image such that the following three properties hold:

$(i)$ there are positive constants $\tilde{\theta}_0$ and $\tilde{\epsilon}_0$ so that for every $\boldsym{\tau}_1 \in \overline{\B}_{d-1}$ the restriction map $\tilde{F}(\cdot, \boldsym{\tau}_1): [e^{-i\tilde{\theta}_0}, e^{i \tilde{\theta}_0}] \times \overline{\B}_{d-1} \times \D^{n_K -n} \rightarrow K$ is a diffeomorphism onto its image which contains the graph of $h$ over $\B_d(0, \tilde{\epsilon}_0) \times \D^{n_K -n},$     

$(ii)$ $\tilde{F}(\cdot, \boldsym{\tau},\mathbf{z}_2)$ is an analytic disc to $X$ and
\begin{align} \label{ine_detDFnK}
\big|\det D \tilde{F}(z, \boldsym{\tau},\mathbf{z}_2) \big| \gtrsim \dist^{d-1}\big(\tilde{F}(z, \boldsym{\tau},\mathbf{z}_2,t), K'_{\mathbf{z}_2}  \big)  \gtrsim  (1- |z|)^{d-1}.
\end{align}
\end{proposition}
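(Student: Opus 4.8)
The strategy is to run, parameter by parameter in $\mathbf{z}_2 \in \D^{n_K-n}$, the exact construction carried out in Section \ref{sec_analyticdisc} for the minimal-dimensional case, and then to check that the family glues together into a single $\mathcal{C}^{2,1/2}$ (in fact $\mathcal{C}^1$ in the full set of variables) map on $\D\times\B_{d-1}^2\times\D^{n_K-n}$. Concretely: for each fixed $\mathbf{z}_2$, the map $h(\cdot,\mathbf{z}_2)$ satisfies $h(0,\mathbf{z}_2)=D_{\Re\mathbf{z}_1}h(0,\mathbf{z}_2)=0$ with $\mathcal{C}^3$ norm bounded uniformly in $\mathbf{z}_2$ (Proposition \ref{le_coordinatesKnK}), so Proposition \ref{pro_BishopequationMA} (applied with $n$ replaced by $d$) gives a unique solution $U_{\boldsym{\tau},\mathbf{z}_2,t}$ of (\ref{Bishoptype2}), and by the $\mathcal{C}^1$-dependence of the fixed-point on the parameters of the Bishop equation this solution is $\mathcal{C}^1$ jointly in $(z,\boldsym{\tau},\mathbf{z}_2)$. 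Then $F'$ and, after composing with the Painlev\'e conformal map $\Phi$ as before Proposition \ref{pro_corverCndung Psi2}, the map $\tilde F$ are defined just as in the minimal case, the key point being that the $\mathcal{C}^2$ coordinates $(\mathbf{z}_1,\mathbf{z}_2)$ are holomorphic in $\mathbf{z}_1$, so that $\tilde F(\cdot,\boldsym{\tau},\mathbf{z}_2)$ is genuinely an analytic disc in $X$.

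The next step is to establish the diffeomorphism statements and the Jacobian bound. First I would quote (\ref{ine_detDF0nK}): for $(t,r_0)$ small, $F'$ restricted to $\big(\B_2(1,r_0)\cap\D\big)\times\overline{\B}_{d-1}^2\times\D^{n_K-n}$ is a diffeomorphism onto its image with $\big|\det DF'\big|\gtrsim t^{2d}(1-|z|)^{d-1}$; this follows verbatim from Proposition \ref{pro_corverCndung Psi}, because in that proposition the parameter $\mathbf{z}_2$ enters only as a passive index and the distance estimates (\ref{ine_distancetoKhF}) pass to the fibered situation replacing $K'$ by $K'_{\mathbf{z}_2}$. Next, composing with $\Phi$ (which, as in the proof of Lemma \ref{le_version8existenceu_MA}, satisfies $\Phi(1)=1$ and $(1-|\Phi(z)|)\gtrsim 1-|z|$), one extends the diffeomorphism from $\big(\B_2(1,r_0)\cap\D\big)$ to the whole disc $\D$ exactly as in the proof of Proposition \ref{pro_corverCndung Psi2}$(ii)$. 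Property $(i)$ then follows from Proposition \ref{pro_corverKtau1fixed} (again with $n$ replaced by $d$) applied fiberwise and the fact that $\tilde F(\cdot,\boldsym{\tau}_1)|_{[e^{-i\tilde\theta_0},e^{i\tilde\theta_0}]}$ restricts, over each $\mathbf{z}_2$, to $U_{\boldsym{\tau},\mathbf{z}_2,t}$ plus its imaginary part lying on the graph of $h(\cdot,\mathbf{z}_2)$, whose image contains a graph over $\B_d(0,t\tilde\epsilon_0)\times\D^{n_K-n}$.

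Finally, for $(ii)$ I would fix $t=t_3$, absorb the power $t^{d+1}$ into the implicit constant, and use (\ref{ine_detDFnK}): the first inequality is (\ref{ine_detDF0nK}) rewritten in terms of the distance to $K'_{\mathbf{z}_2}$ (using (\ref{ine_distancetoKhF})), and the second follows since $\dist\big(\tilde F(z,\boldsym{\tau},\mathbf{z}_2),K'_{\mathbf{z}_2}\big)\gtrsim t(1-|\Phi(z)|)\gtrsim 1-|z|$. I expect the only genuinely delicate point to be the joint regularity in $\mathbf{z}_2$: one must make sure that the new holomorphic coordinates on $\C^d$, which depend only $\mathcal{C}^2$ on $\mathbf{z}_2$ (Proposition \ref{le_coordinatesKnK}), do not spoil the differentiable dependence of the solution of the Bishop equation, and that the various remainder terms $R_0,\dots,R_5$ in the proof of Proposition \ref{pro_corverCndung Psi} keep bounded $\mathcal{C}^1$ norms uniformly in $\mathbf{z}_2$ — this is where the uniform bound on $\|h(\cdot,\mathbf{z}_2)\|_{\mathcal{C}^3}$ from Proposition \ref{le_coordinatesKnK} (and Remark \ref{re_dimKlonhonn}) is used, and is the reason one only claims $\mathcal{C}^1$ joint dependence rather than $\mathcal{C}^{2,1/2}$. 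The rest is a direct transcription of Section \ref{sec_analyticdisc}, so I would simply indicate the substitutions $n\rightsquigarrow d$ and "$K'\rightsquigarrow K'_{\mathbf{z}_2}$ fiberwise" and refer to the earlier proofs for the computations.
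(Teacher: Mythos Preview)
Your proposal is correct and follows essentially the same route as the paper: the paper constructs $F'$ via the parametrized Bishop equation (\ref{Bishoptype2}), records the Jacobian bound (\ref{ine_detDF0nK}) by invoking Proposition \ref{pro_corverCndung Psi} with $n$ replaced by $d$, and then simply says ``applying the same arguments right before Proposition \ref{pro_corverCndung Psi2}'' (i.e.\ composing with the Painlev\'e map $\Phi$) to obtain $\tilde F$. Your write-up is in fact more explicit than the paper about the regularity in $\mathbf{z}_2$ and about how Properties $(i)$ and $(ii)$ are read off from Propositions \ref{pro_corverKtau1fixed} and \ref{pro_corverCndung Psi} fiberwise, but the underlying argument is identical.
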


Proposition \ref{pro_corverCndung Psi2nK} and Remark \ref{re_dimKlonhonn}  allow us to repeat all of arguments in the proof of Theorem \ref{the_MAgeneric2} in the case where $n_K=n$ for our present situation. Hence, this finishes the proof of Theorem \ref{the_MAgeneric2}.

\bibliography{mongeampere}
\bibliographystyle{siam}

\end{document}